\def\<{\langle}
\def\>{\rangle}
\def\I{\mathrm{I}}
\newcommand{\Hom}{\mathrm{Hom}}
\newcommand{\M}{\mathcal{M}}
\newcommand{\II}{\mathrm{II}}
\newcommand{\Surf}{\mathcal{S}}
\newcommand{\Pic}{{F\scriptsize{IGURE}\ }}
\newcommand{\rbullet}{{\color{red}{\circ}}}
\newcommand{\Eqcls}{{\mathrm{Eq}\text{-}\mathrm{cls}}}
\newcommand{\Htpcls}{{\mathrm{Htp}\text{-}\mathrm{cls}}}
\newcommand{\pdim}{\mathrm{proj.dim}}
\newcommand{\idim}{\mathrm{inj.dim}}
\newcommand{\gldim}{\mathrm{gl.dim}}
\newcommand{\ind}{\mathrm{ind}}
\newcommand{\soc}{\mathrm{soc}}
\newcommand{\rad}{\mathrm{rad}}
\newcommand{\Gpdim}{\mathrm{G}\text{-}\mathrm{proj.dim}}
\newcommand{\proj}{\mathrm{proj}}
\newcommand{\inj}{\mathrm{inj}}
\newcommand{\Ker}{\mathrm{Ker}}
\newcommand{\simp}{\mathrm{simp}}
\newcommand{\tgamma}{\tilde{\gamma}}
\newcommand{\finiteDelta}{\mathsf{f}\text{-}\Delta}
\newcommand{\fforbid}{\mathsf{f}\text{-}\mathcal{F}}
\begin{document}

%% ----------------------------------------------------------------------

%\def\cosilting{quasi-cotilting }
%\def\Cosilting{Quasi-cotilting }

%%%%%%%%FROM latexexam.tex

\allowdisplaybreaks[4]

\newtheorem{theorem}{Theorem}[section]
\newtheorem{lemma}[theorem]{Lemma}
\newtheorem{corollary}[theorem]{Corollary}
\newtheorem{main theorem}[theorem]{Main Theorem}
\newtheorem{proposition}[theorem]{Proposition}
\newtheorem{definition}[theorem]{Definition}
\newtheorem{construction}[theorem]{Construction}
\newtheorem{remark}[theorem]{Remark}
\newtheorem{example}[theorem]{Example}
\newtheorem{notation}[theorem]{Notation}
\newtheorem{application}[theorem]{Application}

\usetikzlibrary{arrows}

\numberwithin{equation}{section}
%\def\KET{\mr{KerExt}_A^{i\ge 1}(T,-)\setcounter{equation}{0}}
%\def\KEC{\mr{KerExt}_A^{i\ge 1}(C,-)\setcounter{equation}{0}}
%\def\KEC{\mr{KerExt}_A^{i\ge 1}(-,C)\setcounter{equation}{0}}
%%%%%%%%%%

%-----------------------------------------------------------------------------------------
\renewcommand{\thefootnote}{\fnsymbol{footnote}}
\setcounter{footnote}{0}

\title{\bf Homological Dimensions of Gentle Algebras via Geometric Models
\footnote{This research was partially supported by NSFC (Grant Nos. 11971225, 12171207).}}
\footnotetext{E-mail: yzliu3@163.com (Y.-Z. Liu), hpgao07@163.com (H. Gao), huangzy@nju.edu.cn (Z. Huang).}
\smallskip
\author{\small Yu-Zhe Liu$^1$, Hanpeng Gao$^2$, Zhaoyong Huang\thanks{Corresponding author. } $^{,1}$\\
{\it \footnotesize $^1$ Department of Mathematics, Nanjing University, Nanjing 210093, Jiangsu Province, China; }\\
{\it \footnotesize $^2$ School of Mathematical Sciences, Anhui University, Hefei 230601, Anhui Province, China. }
}
\date{}
\maketitle
\baselineskip 15pt%16pt%14pt%15.5pt%\baselineskip  25.5pt %
%%%%%%%\hskip 18pt
%
% Abstract ------------------------------------------------------
%
\begin{abstract}
Let $A=kQ/I$ be a finite dimensional basic algebra
over an algebraically closed field $k$ which is a gentle algebra with the marked ribbon surface $(\mathcal{S}_A,\mathcal{M}_A,\Gamma_A)$.
It is known that $\mathcal{S}_A$ can be divided into some elementary polygons $\{\Delta_i\mid 1\le i\le d\}$ by $\Gamma_A$ which
has exactly one side in the boundary of $\mathcal{S}_A$. Let $\mathfrak{C}(\Delta_i)$ be
the number of sides of $\Delta_i$ belonging to $\Gamma_A$ if the unmarked boundary component of $\mathcal{S}_A$ is not a side of $\Delta_i$;
otherwise, $\mathfrak{C}(\Delta_i)=\infty$, and let
$\mathsf{f}\text{-}\Delta$ be the set of all non-$\infty$-elementary polygons
and $\mathcal{F}_A$ (respectively, ${\mathsf{f}\text{-}\mathcal{F}}_A$) the set of all forbidden threads (respectively, of finite length).
Then we have
\begin{enumerate}
\item[{\rm (1)}]
The global dimension of $A=\max\limits_{1\leq i\leq d}{\mathfrak{C}(\Delta_i)}-1
=\max\limits_{\mathit{\Pi}\in\mathcal{F}_A} l(\mathit{\Pi})$, where $l(\mathit{\Pi})$ is the length of $\mathit{\Pi}$.
\item[{\rm (2)}]
The left and right self-injective dimensions of $A=$
\begin{center}
$\begin{cases}
0,\ \mbox{\text{if {\it Q} is either a point or an oriented cycle with full relations};}\\
\max\limits_{\Delta_i\in{\mathsf{f}\text{-}\Delta}}\big\{1, {\mathfrak{C}(\Delta_i)}-1 \big\}=
\max\limits_{\mathit{\Pi}\in{\mathsf{f}\text{-}\mathcal{F}}_A} l(\mathit{\Pi}),\ \mbox{\text{otherwise}.}
\end{cases}$
\end{center}
\end{enumerate}
As a consequence, we get that the finiteness of the global dimension of gentle algebras
is invariant under AG-equivalence. In addition, we get that the number of indecomposable
non-projective Gorenstein projective modules over gentle algebras is
also invariant under AG-equivalence.
%\mskip\
\vspace{0.8cm}

\noindent {\it 2020 Mathematics Subject Classification}: 16E10; 16G10.

\vspace{0.2cm}
%\sskip\

\noindent {\it Key words}: global dimension; self-injective dimension; gentle algebras;
marked ribbon surfaces; geometric models; AG-equivalence.

\end{abstract}
%\smallskip
%
\vskip 30pt
% ----------------------------------------------------------------------
%% ----------------------------------------------------------------------
%\def\baselinestretch{1}

\section{\bf Introduction}
Gentle algebras were introduced by Assem and Skowro\'{n}ski \cite{AS1987} as appropriate context for the study of
algebras derived equivalent to hereditary algebras of type $\widetilde{\mathbb{A}}_n$. Note that
every gentle algebra is a special biserial algebra and all indecomposable modules over a special biserial algebra were
described by Butler and Ringel \cite[Section 3]{BR87} and Wald and Waschb\"{a}sch \cite[Proposition 2.3]{WW1985}.
Thus we can portray all indecomposable modules over a gentle algebra.
To be precise, each indecomposable module over a gentle algebra is either a string module or a band module.
Moreover, every gentle algebra is also a string algebra and all irreducible morphisms between string and band modules
over a string algebra were studied by Butler and Ringel \cite{BR87}, Crawley-Boevey \cite{C1989} and Krause \cite{Kra91}.
%Moreover, all Auslander-Reiten sequences over a string algebra were described by \cite[II.6]{Erd1990}.
%For the derived category of gentle algebra, the indecomposable objects and morphisms between two objects have been
%studied in \cite{BM2003, AAG2008, AL2016}.
In addition, by proving that the left and right self-injective dimensions of a gentle algebra are equal to the maximal length of certain
paths starting with a gentle arrow and bounded by one if there is no such arrow, Gei{\ss} and Reiten \cite {GR2005} obtained
that every gentle algebra is Gorenstein, that is, its left self-injective dimension $\idim_AA$
and right self-injective dimension $\idim A_A$ are finite.

The geometric models of gentle algebras were introduced in \cite{BS2018, HKK2017, OPS2018} and have been extensively
studied in \cite{APS2019,HZZ2020,QZ2017} based on the works in \cite{ALP2016,BM2003,Erd1990,R97}.
They originated in triangulated surfaces which is used in the study of cluster algebras and cluster categories,
such as \cite{BZ2011,LF2009a, LF2009b} and so on.
Opper, Plamondon and Schroll introduced ribbon graphs and marked ribbon surfaces for gentle algebras \cite{OPS2018}.
One can calculate all objects, morphisms and AG-invariants (a derived invariant defined in \cite{AAG2008})
in the derived category of a gentle algebra by marked ribbon surfaces (see \cite[Theorems 2.5, 3.3 and 6.1]{OPS2018}).
Furthermore, Baur and Simoes \cite{BS2018} introduced permissible curves
(see Definition \ref{BS2018, Def 3.1} below) to describe indecomposable modules,
and construct the geometric model of the category of finitely generated modules over a gentle algebra; and they also
provided the depictions of the Auslander-Reiten translate $\tau$ and Auslander-Reiten sequences through the rotations
of permissible curves in marked ribbon surfaces. Recently,
He, Zhou and Zhu \cite{HZZ2020} studied the category of finitely generated modules
over skew-gentle algebras (a generalization of gentle algebras) by punctured marked surfaces (a generalization of marked ribbon surfaces).
They provided a dimension formula for calculating morphisms between the indecomposable modules $M$ and $\tau M$ by equivalence classes
of tagged permissible curves, tagged intersections and intersection numbers.
In addition, by using the Koszul dual, Opper, Plamondon and Schroll \cite[Section 1, 1.7]{OPS2018}
proved that the global dimension $\gldim A$ of
a gentle algebra $A$ is infinite if and only if its quiver has
at least one oriented cycle with full relations.

The aim of this paper is to give some definite formulae for calculating the global
and self-injective dimensions of a given gentle algebra by marked ribbon surfaces.

Let $A=kQ/I$ be a gentle algebra and $\Surf_A=(\Surf_A,\M_A,\Gamma_A)$ its marked ribbon surface.
Then $\Surf_A$ is divided into some elementary polygons $\{\Delta_i\mid 1\le i\le d\}$ by $\Gamma_A$ which
has exactly one side $\subseteq \partial\Surf_A$ (the boundary of $\Surf_A$). We use $\mathfrak{C}(\Delta_i)$ to denote
the number of sides of $\Delta_i$ belonging to $\Gamma_A$ if the unmarked boundary component of $\Surf_A$ is not a side of $\Delta_i$;
otherwise, $\mathfrak{C}(\Delta_i)=\infty$.
We use $\finiteDelta$ to denote the set of all non-$\infty$-elementary polygons (see Remark \ref{rk. polygon-inf}),
and use $\mathcal{F}_A$ (respectively, $\fforbid_A$) to denote the set of all forbidden threads (respectively, of finite length)
(see Definition \ref{def. per. and forb.}). For any $\mathit{\Pi}\in\mathcal{F}_A$, we use $l(\mathit{\Pi})$ to denote its length.
Our main result is as follows.

\begin{theorem}\label{thm-1.2} {\rm (Theorems \ref{thm. gl.dim} and \ref{thm. Gdim A})}
Let $A=kQ/I$ be a gentle algebra and $\Surf_A=(\Surf_A,\M_A,\Gamma_A)$ its marked ribbon surface.
Then we have
\begin{enumerate}
\item[{\rm (1)}]
$\gldim A=\max\limits_{1\leq i\leq d}{\mathfrak{C}(\Delta_i)}-1
=\max\limits_{\mathit{\Pi}\in\mathcal{F}_A} l(\mathit{\Pi})$.
\item[{\rm (2)}]
$\idim_AA=\idim A_A=$
  \begin{center}
  $\begin{cases}
    0,\ \mbox{{if $Q$ is either a point or an oriented cycle with full relations};}\\
    \max\limits_{\Delta_i\in\finiteDelta}\big\{1, {\mathfrak{C}(\Delta_i)}-1 \big\}=
    \max\limits_{\mathit{\Pi}\in\fforbid_A} l(\mathit{\Pi}),\ \mbox{{otherwise}.}
  \end{cases}$
  \end{center}
\end{enumerate}
\end{theorem}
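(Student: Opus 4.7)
The plan is to translate both parts of Theorem \ref{thm-1.2} into combinatorial statements about forbidden threads, and to separately establish the geometric identity $\mathfrak{C}(\Delta_i)-1 = l(\mathit{\Pi}_i)$ that relates an elementary polygon on $\Surf_A$ to the length of the forbidden thread it traces out. With this polygon--thread dictionary in hand, the two equalities appearing inside each part of the theorem reduce to a single computation, performed once on the algebraic side and once on the surface side.

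For part (1), I would first prove the algebraic equality $\gldim A = \max_{\mathit{\Pi}\in\mathcal{F}_A} l(\mathit{\Pi})$ by explicitly describing the minimal projective resolutions of the simple modules $S_i$. Over a gentle algebra the $n$-th syzygy $\Omega^n S_i$ decomposes into summands indexed by the ways of extending the previous step through a quadratic relation $\beta\alpha \in I$; the gentle hypothesis guarantees that such an extension is unique when it exists, so the walk that produces $\Omega^{\bullet}S_i$ is precisely a forbidden thread through $i$. Consequently $\pdim S_i$ equals the length of the longest forbidden thread passing through $i$, and taking a maximum over vertices yields the forbidden-thread formula. Invoking the polygon--thread dictionary, each non-boundary side of $\Delta_i$ in $\Gamma_A$ contributes one arrow of the associated forbidden thread $\mathit{\Pi}_i$, giving $\mathfrak{C}(\Delta_i)-1 = l(\mathit{\Pi}_i)$ and hence the geometric formula.

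For part (2), I would first dispose of the two degenerate cases by direct inspection: $A \cong k$ is semisimple when $Q$ is a single point, and $A$ is self-injective when $Q$ is an oriented cycle with full relations, so $\idim{}_AA = \idim A_A = 0$ in both situations. In the generic case I would invoke the Gei{\ss}--Reiten description of $\idim A$ as the maximal length of a path $p$ for which suitable arrows $\alpha, \alpha'$ make $p\alpha$ and $\alpha'p$ belong to $I$ (capped at $1$ when no such path exists), and match such maximal paths with \emph{finite} forbidden threads. The role of $\finiteDelta$ is precisely to exclude the $\infty$-elementary polygons adjacent to the unmarked boundary component, which by the dictionary of the previous paragraph correspond to infinite forbidden threads; those threads witness $\pdim S_i = \infty$ for some simple $S_i$ but do not contribute to $\idim A$, exactly as the formula demands.

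The main obstacle I expect is pinning down the polygon--thread correspondence rigorously in the presence of an unmarked boundary and on surfaces of positive genus. One has to verify that walking around the sides of a single elementary polygon really does read out one forbidden thread in order, handling corner degenerations (arrows shared between two polygons) and the $\infty$-case carefully, and that the algebraic walk along consecutive relations in $\Omega^n S_i$ matches the geometric walk around the polygon. Once this bijection is secured, the four equalities in the theorem follow uniformly, with the dichotomy in part (2) arising only from whether the polygon attached to a given relation belongs to $\finiteDelta$ or not.
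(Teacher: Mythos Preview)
Your plan is correct. For part~(1) it is essentially the paper's argument run in the opposite order: the paper works geometrically throughout, building the minimal projective resolution of each simple via permissible curves (Lemmas \ref{lemm. proj. cover of simple mod. 1}--\ref{lemm. ker. of proj. cover}), showing in Proposition \ref{prop. pdM and idM} that the iteration walks around the two elementary polygons adjacent to the starting arc, deducing $\gldim A = \max_i \mathfrak{C}(\Delta_i)-1$, and only then invoking the bijection $\mathfrak{f}$ between polygons and forbidden threads. You compute $\Omega^\bullet S_i$ on the quiver side first and translate afterward; the mathematical content is identical.

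For part~(2) your route diverges more. You treat the Gei\ss--Reiten formula as a black box and match it with finite forbidden threads; the paper instead gives two self-contained arguments. The first (Proposition \ref{prop-6.5}) combines Kalck's classification of Gorenstein projectives with the identity $\idim A_A = \sup_S \Gpdim S$ from \cite{Ho,HuH}, computing $\Gpdim S(v)$ geometrically. The second (Theorem \ref{thm. Gdim A}) computes $\pdim D(A)$ directly by describing the minimal projective resolution of each indecomposable injective on the surface (Lemma \ref{lemm. proj. covers of ind. inj. mods.}, Proposition \ref{prop. pdim. of inj. mod.}). Your shortcut is shorter and perfectly valid, but be careful: the Gei\ss--Reiten formulation is phrased in terms of ``gentle arrows'' and maximal paths with a relation condition at one end, not literally as forbidden threads, so the identification needs a line of justification. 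The paper's approaches cost more but yield reusable geometric descriptions (of $\Omega I(\gamma)$ and of non-projective Gorenstein projectives via permissible curves) that feed into its later results on AG-invariance.
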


The paper is organized as follows. In Section 2,
we recall some terminologies and some preliminary results needed in
this paper. In particular, we give the definition of gentle algebras
and some related notions related to their geometric models.
In Section 3, we give the descriptions of some short exact sequences by geometric models,
which will be used frequently in the sequel.

\begin{theorem}\label{thm-1.1} {\rm (Theorem \ref{thm. s. e. s.})}
Let $A=kQ/I$ be a gentle algebra and $\Surf_A = (\Surf_A, \M_A, \Gamma_A)$ its marked ribbon surface.
\begin{enumerate}
\item[{\rm(1)}] If $c$, $c'$ and $c''$ are permissible curves such that
the positional relationship of them is given by \Pic \ref{Pic. s. e. s.} {Case I},
then there exists an exact sequence
$$0 \longrightarrow M(c') \longrightarrow M(c) \longrightarrow M(c'') \longrightarrow 0.$$
\item[{\rm(2)}] If $c$, $c'$, $c_{\I}''$ and $c_{\II}''$ are permissible curves
such that the positional relationship of them is given by \Pic \ref{Pic. s. e. s.} {Case II},
then there exists an exact sequence
$$0 \longrightarrow M(c') \longrightarrow M(c)
\longrightarrow M(c_{\I}''\oplus c_{\II}'') \longrightarrow 0.$$
\item[\rm{(3)}] If $c$, $c_{\I}'$, $c_{\II}'$ and $c''$ are permissible curves
such that the positional relationship of them is given by \Pic \ref{Pic. s. e. s.} {Case III},
then there exists an exact sequence $$0 \longrightarrow M(c_{\I}'\oplus c_{\II}')
\longrightarrow M(c) \longrightarrow M(c'') \longrightarrow 0.$$
\end{enumerate}
\end{theorem}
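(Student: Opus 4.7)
The plan is to translate each of the three geometric configurations of \Pic \ref{Pic. s. e. s.} into the string-module language and then exhibit the claimed short exact sequence on the module side. By the dictionary between permissible curves and string modules recalled in Section 2 (each curve $c$ determines a string $w(c)$ whose letters are recorded by the ordered sequence of $\Gamma_A$-arcs crossed by $c$, up to formal inversion, and $M(c)$ is the associated string module), the three cases amount to three distinct ways a curve can be split at a common pivot sitting in an elementary polygon of the dissection or at a marked point on $\partial \mathcal{S}_A$.

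For Case I, I expect the configuration to glue $c'$ and $c''$ into $c$ across exactly one additional $\Gamma_A$-arc $\gamma$, so that as strings one has $w(c) = w(c')\,\alpha^{\varepsilon}\,w(c'')$, where $\alpha$ is the arrow of $Q$ dual to $\gamma$ and $\varepsilon \in \{\pm 1\}$ is determined by the local orientation at the pivot. The classical construction of \cite{BR87} then yields a canonical short exact sequence $0 \to M(c') \to M(c) \to M(c'') \to 0$: the first map is the inclusion of $M(c')$ as the submodule of $M(c)$ supported on the terminal substring $w(c')$, and the second is the projection onto the quotient supported on the initial substring $w(c'')$. Exactness is a direct basis count once the correct sign $\varepsilon$ has been identified.

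For Cases II and III, the geometric picture places the pivot at an interior marked point of $\mathcal{M}_A$ whose incident $\Gamma_A$-arcs include two distinct arcs $\gamma_{\I}$ and $\gamma_{\II}$; splitting $c$ at this point produces two permissible curves, $c_{\I}''$ and $c_{\II}''$ (respectively $c_{\I}'$ and $c_{\II}'$). In string terms this corresponds to separating $w(c)$ at a connecting piece that is forbidden by the gentle relations, so that the two resulting substrings are no longer contiguous as a single string; this yields $w(c_{\I}'')$ and $w(c_{\II}'')$ as independent strings. The short exact sequence is then assembled exactly as in Case I, but with the quotient (respectively the kernel) replaced by the direct sum $M(c_{\I}'') \oplus M(c_{\II}'')$ (respectively $M(c_{\I}') \oplus M(c_{\II}')$). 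Case III is the mirror image of Case II and follows from the same analysis applied to the opposite string, i.e.~by dualising the direction of the concatenation.

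The main obstacle will be the local bookkeeping at the pivot, not the homological algebra. One must verify, for each sub-configuration drawn in \Pic \ref{Pic. s. e. s.}, that the proposed concatenation $w(c)$ does not violate the relations in $I$, that the summands appearing in Cases II and III genuinely split as a direct sum on the module side (i.e.~the would-be connecting letter is a forbidden pair under the gentle relations), and that all curves involved remain permissible in the sense of Definition \ref{BS2018, Def 3.1}. Once this case analysis is complete, the module maps are forced by the standard string-module formalism, and exactness follows by inspection on the natural bases indexed by the letters of the strings.
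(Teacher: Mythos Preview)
Your treatment of Case I is essentially the paper's argument: the curve $c$ corresponds to a string of the form $w(c')\,\alpha^{\varepsilon}\,w(c'')$ where $\alpha$ is the arrow between the two arcs $\gamma'$ and $\gamma''$, and the short exact sequence is the standard ``cut at one arrow'' sequence for string modules.

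However, your reading of Cases II and III is incorrect, and the mistake is structural, not cosmetic. You describe the picture as a single pivot at an ``interior marked point'' at which the connecting letter is a relation in $I$, so that the two pieces fall apart as a direct sum. That is not what \Pic \ref{Pic. s. e. s.} depicts, and gentle relations play no role here. In Case II the curve $c'$ crosses a \emph{contiguous interior block} $\gamma_m,\dots,\gamma_n$ of the arcs crossed by $c$, and in string language
\[
w(c)=\ \cdots \longrightarrow \gamma_m \longrightarrow \cdots \longrightarrow \gamma_i \longleftarrow \cdots \longleftarrow \gamma_n \longleftarrow \cdots,
\qquad
w(c')=\ \gamma_m \longrightarrow \cdots \longrightarrow \gamma_i \longleftarrow \cdots \longleftarrow \gamma_n.
\]
The piece $w(c')$ is a ``valley'' substring, hence $M(c')$ is a submodule of $M(c)$; the quotient is supported on the two disjoint outer substrings $\gamma_1,\dots,\gamma_{m-1}$ and $\gamma_{n+1},\dots,\gamma_{r-1}$, which is precisely $M(c_{\I}'')\oplus M(c_{\II}'')$. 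Thus there are \emph{two} cutting arrows (one at each end of the valley), not one, and the direct sum arises because deleting a contiguous block from the middle of a string leaves two components---not because any path is in $I$. Case III is the dual ``peak'' picture: $w(c'')$ sits as a quotient substring of $w(c)$ and the kernel is the direct sum of the two outer pieces. Once you rewrite your string analysis this way (two arrows, valley/peak factor substrings, as in \cite{BR87}), the exactness is immediate and matches Propositions \ref{prop. submod. of string mod.3} and \ref{prop. quotientmod.} in the paper.
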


\begin{figure}[htbp]
\begin{center}
%%%%%%%%%%%%%%%%%%%%%%%%%%%%%%%%%%%%%%%%%%%%%%%%%%%%%%%%%%%%%%
%%%%%%%%%%%%%%%%%%%%%%%%%%% pic 1 %%%%%%%%%%%%%%%%%%%%%%%%%%%%
%%%%%%%%%%%%%%%%%%%%%%%%%%%%%%%%%%%%%%%%%%%%%%%%%%%%%%%%%%%%%%
\begin{tikzpicture}[scale=0.7] % ÍŒÆ¬µÈ±ÈÀý·ÅËõÎª³õÊŒµÄ0.7±¶
\draw [line width=1.2pt] (-2, 1)-- (-2,-1);
\draw [line width=1.2pt] ( 2, 1)-- ( 2,-1);
\draw [line width=1.2pt] (-1, 2)-- ( 1, 2);
\draw [line width=1.2pt] (-1,-2)-- ( 1,-2);
\draw [line width=12pt] [white] (-0.2,2) -- (0.2,2);
\draw (0,-2)-- (-0.5,2);
\draw (0,-2)-- (0.5,2);
\begin{scriptsize}
\fill [color=black] (0,-2)    circle (2.2pt);
\fill [color=black] (-0.5,2)  circle (2.2pt);
\fill [color=black] (0.5,2)   circle (2.2pt);
\fill [color=black] (-2,-0.5) circle (2.2pt);
\fill [color=black] (2,0.5)   circle (2.2pt);
\end{scriptsize}
\draw[red] (-2,-0.5) to[out=0, in=180] (2,0.5); \draw[red] (-1,-0.5) node {$c$};
\draw[blue] (-2,-0.5) to (0.5,2); \draw[blue] (-1,0.7) node {$c'$};
\draw[orange] (2,0.5) to (-0.5,2); \draw[orange] (1,1.3) node {$c''$};
% number
\draw  (0,-2.2) node[below]{$\text{Case I}$};
\end{tikzpicture}
\ \
%%%%%%%%%%%%%%%%%%%%%%%%%%%%%%%%%%%%%%%%%%%%%%%%%%%%%%%%%%%%%%
%%%%%%%%%%%%%%%%%%%%%%%%%%% pic 2 %%%%%%%%%%%%%%%%%%%%%%%%%%%%
%%%%%%%%%%%%%%%%%%%%%%%%%%%%%%%%%%%%%%%%%%%%%%%%%%%%%%%%%%%%%%
\ \
\begin{tikzpicture}[scale=0.7]
\draw [line width=1.2pt] (-2,1)-- (-2,-1);
\draw [line width=1.2pt] (2,1)-- (2,-1);
\draw [line width=1.2pt] (-1,2)-- (1,2);
\draw [line width=1.2pt] (-1,-2)-- (1,-2);
\draw [line width=12pt] [white] (-2,-0.2) -- (-2,0.2);
\draw [line width=12pt] [white] ( 2,-0.2) -- ( 2,0.2);
\draw [line width=12pt] [white] ( 0.2, 2) -- ( 0.6, 2);
\draw [line width=12pt] [white] (-0.2,-2) -- (-0.6,-2);
\draw (0,-2)-- (0, 2);
\draw (0, 2)-- (-0.8,-2.0); \draw (0,-2)-- ( 0.8, 2.0);
\draw (0, 2)-- (-1.5,-1.5); \draw (0,-2)-- ( 1.5, 1.5);
\draw (0, 2)-- (-2.0,-0.8); \draw (0,-2)-- ( 2.0, 0.8);
\draw (0, 2)-- (-2.0, 0.4); \draw (0,-2)-- ( 2.0,-0.4);
\begin{scriptsize}
\fill [color=black] ( 0.0,-2.0) circle (2.2pt); \fill [color=black] ( 0.0, 2.0) circle (2.2pt);
\fill [color=black] ( 0.8, 2.0) circle (2.2pt); \fill [color=black] (-0.8,-2.0) circle (2.2pt);
\fill [color=black] (-2.0,-0.8) circle (2.2pt); \fill [color=black] ( 2.0, 0.8) circle (2.2pt);
\end{scriptsize}
\draw[red] (-2,1.5) to[out=0, in=180] ( 2,-1.5); \draw[red] (-1, 1.1) node[above] {$c$};
\draw[blue] (-2, 0.4) to[out=0, in=180] ( 2,-0.4); \draw[blue] (-1,0.4) node {$c'$};
\draw[orange] (-2.0,-0.8) to[out=70, in=0] (-2, 1.45); \draw[orange] (-1.8, 1.4) node[below] {$c_{\I}''$};
\draw[orange] ( 2.0, 0.8) to[out=-110, in=180] ( 2,-1.45); \draw[orange] ( 1.8,-1.4) node[above] {$c_{\II}''$};
\fill [color=black] (-2, 0.4) circle (2.2pt); \fill [color=black] ( 2,-0.4) circle (2.2pt);
% number
\draw  (0,-2.2) node[below]{$\text{Case II}$};
\end{tikzpicture}
\ \
%%%%%%%%%%%%%%%%%%%%%%%%%%%%%%%%%%%%%%%%%%%%%%%%%%%%%%%%%%%%%%
%%%%%%%%%%%%%%%%%%%%%%%%%%% pic 3 %%%%%%%%%%%%%%%%%%%%%%%%%%%%
%%%%%%%%%%%%%%%%%%%%%%%%%%%%%%%%%%%%%%%%%%%%%%%%%%%%%%%%%%%%%%
\ \
\begin{tikzpicture}[scale=0.7]
\draw [line width=1.2pt] (-2,1)-- (-2,-1);
\draw [line width=1.2pt] (2,1)-- (2,-1);
\draw [line width=1.2pt] (-1,2)-- (1,2);
\draw [line width=1.2pt] (-1,-2)-- (1,-2);
\draw [line width=12pt] [white] (-2,-0.2) -- (-2,0.2);
\draw [line width=12pt] [white] ( 2,-0.2) -- ( 2,0.2);
\draw [line width=12pt] [white] (-0.2, 2) -- (-0.6, 2);
\draw [line width=12pt] [white] ( 0.2,-2) -- ( 0.6,-2);
\draw (0,-2)-- (0, 2);
\draw (0,-2)-- (-0.8, 2.0); \draw (0, 2)-- ( 0.8,-2.0);
\draw (0,-2)-- (-1.5, 1.5); \draw (0, 2)-- ( 1.5,-1.5);
\draw (0,-2)-- (-2.0,-0.8); \draw (0, 2)-- ( 2.0, 0.8);
\draw (0,-2)-- (-2.0, 0.4); \draw (0, 2)-- ( 2.0,-0.4);
\begin{scriptsize}
\fill [color=black] ( 0.0,-2.0) circle (2.2pt); \fill [color=black] ( 0.0, 2.0) circle (2.2pt);
\fill [color=black] ( 0.8,-2.0) circle (2.2pt); \fill [color=black] (-0.8, 2.0) circle (2.2pt);
\fill [color=black] (-2.0,-0.8) circle (2.2pt); \fill [color=black] ( 2.0, 0.8) circle (2.2pt);
\end{scriptsize}
\draw[red] (-2,1.5) to[out=0, in=180] ( 2,-1.5); \draw[red] (-1, 1.1) node[above] {$c$};
\draw[orange] (-2, 0.4) to[out=0, in=180] ( 2,-0.4); \draw[orange] (-1,0.4) node {$c''$};
\draw[blue] (-2.0,-0.8) to[out=70, in=0] (-2, 1.45); \draw[blue] (-1.8, 1.4) node[below] {$c_{\I}'$};
\draw[blue] ( 2.0, 0.8) to[out=-110, in=180] ( 2,-1.45); \draw[blue] ( 1.8,-1.4) node[above] {$c_{\II}'$};
\fill [color=black] (-2, 0.4) circle (2.2pt); \fill [color=black] ( 2,-0.4) circle (2.2pt);
% number
\draw  (0,-2.2) node[below]{$\text{Case III}$};
\end{tikzpicture}
\caption{For each case, the module $M'$ corresponding to blue permissible curve(s) is a submodule of $M$ corresponding
to the red permissible curve, and the module $M''$ corresponding to orange permissible curve(s) is isomorphic to the quotient $M/M'$.}
\label{Pic. s. e. s.}
\end{center}
\end{figure}
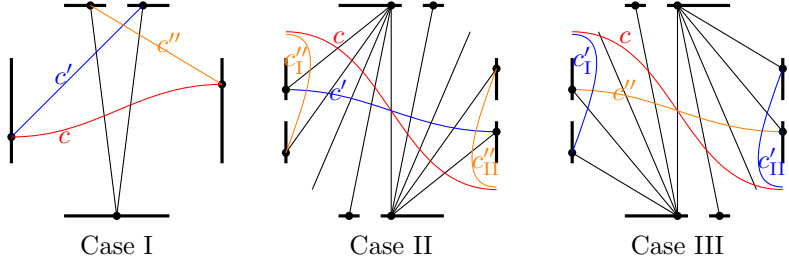

In Section 4, we describe the projective covers and injective envelopes of indecomposable modules
over gentle algebras by marked ribbon surfaces (Theorems \ref{thm. proj. cover} and \ref{thm. inj. envelope}).
Then in Section 5, we give the proof of Theorem \ref{thm-1.2}(1).

In Section \ref{Section Gproj}, by describing all Gorenstein projective modules in geometric models,
we give a proof of the following equalities
$$\idim_AA=\idim A_A=\begin{cases}
0,\ \mbox{{if $Q$ is either a point or an oriented cycle with full relations};}\\
\max\limits_{\Delta_i\in\finiteDelta}\big\{1, {\mathfrak{C}(\Delta_i)}-1 \big\},\ \mbox{{otherwise}.}
\end{cases}$$
Then by describing minimal projective resolutions of all injective modules, we obtain a formula for calculating
the left and right self-injective dimensions of $A$ by marked ribbon surfaces (Proposition \ref{prop. pdim. of inj. mod.}).
As a consequence, we give the proof of Theorem \ref{thm-1.2}(2) including another proof of
the above equalities. In addition, we prove that the number of indecomposable
non-projective Gorenstein projective modules over gentle algebras is
invariant under AG-equivalence (Proposition \ref{prop. CM-corresponding}).
In Section 7, we give some examples to illustrate the obtained results; that is,
we apply Theorem \ref{thm-1.2} to calculate the global and self-injective dimensions of some gentle algebras.

\section{\bf The geometric models of gentle algebras}

In this paper, assume that $A=kQ/I$ is a finite dimensional basic algebra
over an algebraically closed field $k$, where $I$ is an admissible ideal of $kQ$
and $Q=(Q_0, Q_1)$ is a finite quiver with $Q_0,Q_1$ the sets of all vertices and arrows,
respectively. We use $s,t$ to denote two functions from $Q_1$ to $Q_0$ which send each arrow
to its source and target, respectively. The multiplication $\alpha_1\alpha_2$ of two arrows
$\alpha_1$ and $\alpha_2$ in $Q_1$ is defined by the concatenation if $t(\alpha_1)=s(\alpha_2)$
or zero if $t(\alpha_1)\ne s(\alpha_2)$. For a set $X$, the number of elements of $X$ is denoted by $\sharp X$.
We use $\bmod A$ to denote the category of finitely generated right $A$-modules, and use
$\gldim A$ to denote the global dimension of $A$. For a module $M\in\bmod A$, we use
$\pdim M$ and $\idim M$ to denote the projective and injective dimensions of $M$, respectively, and
use ${\rm top} M$, $\soc M$ and $P(M)$ to denote the top, socle and projective cover of $M$, respectively.
We denote by $N \le M$ (respectively, $N\le_{\oplus} M$) if $N$ is a submodule (respectively, direct summand) of $M$.

\subsection{Marked ribbon surfaces}

In gentle algebras, Opper, Plamondon and Schrollin \cite{OPS2018} introduced the notion of marked ribbon surfaces
by defining ribbon graphs first. Marked ribbon surfaces are often referred to as marked surfaces,
such as \cite{APS2019, BS2018, HZZ2020}.
In this paper we still use the terminology from \cite{OPS2018} (but we do not need the definition of ribbon graphs).

\begin{definition}{\rm (Marked ribbon surfaces) \cite{APS2019,BS2018,OPS2018} \label{def. m. r. s.}
A {\it marked ribbon surface} is a triple $(\Surf, \M, \Gamma)$, where
\begin{itemize}
\item $\Surf$ is an oriented connected surface with non-empty boundary, and we use $\partial \Surf$ to denote its boundary.
\item $\M$ is a finite set of points on $\partial \Surf$, and
each element in $\M$ is called a {\it marked point} and denoted by the symbol $\bullet$.
\item $\Gamma$ is called a {\it full formal $\bullet$-arc system} of $\Surf$ such that the following hold:
\begin{itemize}
\item Each element in $\Gamma$, called a {\it $\bullet$-arc} (or an arc), is a curve whose ending points belong to $\M$.
\item For any two $\bullet$-arcs $\gamma$ and $\gamma'$, there is no intersection in the inner of $\Surf$.
\item $\Surf$ is divided into some parts $\{\Delta_i\mid 1\le i\le d\}$ by $\Gamma$
such that each part is a polygon which has exactly a side, say a {\it single boundary arc}, not belonging to $\Gamma$.
The set of all polygons $\Delta = \{ \Delta_i\mid 1\le i\le d \}$ is called an {\it original $\bullet$-dissection}
(or {\it original dissection}) of $\Surf$ \cite{APS2019}, and each polygon $\Delta_i$ above is said to be {\it elementary}.
We denote by $\mathfrak{S}(\Delta_i)$, say the {\it arc set}, the set of all sides of $\Delta_i$ belong to $\Gamma$.
\end{itemize}
\end{itemize}

We give some supplements to Definition \ref{def. m. r. s.} as follows.
\begin{enumerate}
\item[(1)] Each arc in $\Gamma$ can be seen as a pair $(\gamma, \gamma^-)$, where $\gamma$ is a continuous function
$\gamma: (0,1)\to \Surf\backslash \partial\Surf$ such that $\gamma(0^+)=\lim\limits_{x\to 0^+}\gamma(x)$ and
$\gamma(1^-)=\lim\limits_{x\to 1^-}\gamma(x)$ lie in $\M$, and $\gamma^-$ is the formal inverse of $\gamma$,
that is, $\gamma^-(x) = \gamma(1-x)$.
Based on this, $\gamma = \gamma(x)$ and $\gamma^- = \gamma(1-x)$ correspond to the same arc in $\Gamma$.
For the sake of simplicity, we suppose that $\gamma$ and $\gamma^-$ both are elements in $\Gamma$ and
$\gamma \simeq \gamma^- \simeq (\gamma, \gamma^-)$. This hypothesis can be used to simplify some descriptions,
such as Theorem \ref{prop. pdM and idM} and Example \ref{exp. 1}.

\item[(2)] If there is some $i$ with $1\leq i \leq d$ such that $\Delta_i$ is a two-gons,
then $\Delta_i$ is one of the two forms (i) and (ii) shown in \Pic \ref{fig:2-gon}.
If $\Delta_i$ is of the form (i), then adding a new point (will be denoted by the symbol $\rbullet$ and called an {\it extra point})
on the side not belonging to $\Gamma$.  The set of all extra points is denoted by $E$.

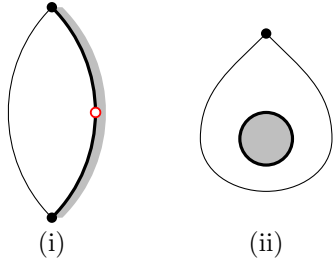
\begin{figure}[H]
\begin{center}
%%%%%%%%%%%%%%%%%%%%%%%%%%%%%%%%%%%%%%%%%%%%%%%%%%%%%%%%%%%%%%
%%%%%%%%%%%%%%%%%%%%%%%%%%% pic 1 %%%%%%%%%%%%%%%%%%%%%%%%%%%%
%%%%%%%%%%%%%%%%%%%%%%%%%%%%%%%%%%%%%%%%%%%%%%%%%%%%%%%%%%%%%%
\begin{tikzpicture}[scale=0.7] %
\draw (0,-2) to[out=135, in=225] (0,2);
\fill[black!25] (0,-2) to[out=45, in=-45] (0,2) -- (0.2,2) to[out=-45, in=45] (0.2,-2) -- (0,-2);
\draw (0,-2) to[out=45, in=-45] (0,2)[line width=1.2pt];
\begin{scriptsize}
\fill [color=black] (0, 2) circle (2.8pt);
\fill [color=black] (0,-2) circle (2.8pt);
\end{scriptsize}
\filldraw[color=red] (0.83, 0) circle (2.8pt);
\filldraw[color=white] (0.83, 0) circle (1.8pt);
\draw (0,-2.5) node{(i)};
\end{tikzpicture}
\ \ \ \ \ \ \ \
%%%%%%%%%%%%%%%%%%%%%%%%%%%%%%%%%%%%%%%%%%%%%%%%%%%%%%%%%%%%%%
%%%%%%%%%%%%%%%%%%%%%%%%%%% pic 2 %%%%%%%%%%%%%%%%%%%%%%%%%%%%
%%%%%%%%%%%%%%%%%%%%%%%%%%%%%%%%%%%%%%%%%%%%%%%%%%%%%%%%%%%%%%
\begin{tikzpicture}[scale=0.7] %
\draw[shift={(0,-0.5)}] (0,2) to[out=-135, in=90] (-1.25,0) to[out=-90,in=180] (0,-1)
to[out=0,in=-90] (1.25,0) to[out=90,in=-45] (0,2);
\fill[shift={(0,-0.5)}][black!25] (0,0) circle (0.5) [line width=1.2pt];
\draw[shift={(0,-0.5)}] (0,0) circle (0.5) [line width=1.2pt];
\begin{scriptsize}
\fill[shift={(0,-0.5)}] [color=black] (0, 2) circle (2.8pt);
\end{scriptsize}
\draw (0,-2.5) node{(ii)};
\end{tikzpicture}
\caption{ (i) and (ii) are 2-gons. In (i), we add an extra point on the side not belonging to $\Gamma$. }
\label{fig:2-gon}
\end{center}
\end{figure}

\item[(3)] The positive direction of $\partial\Surf$ is defined as follows:
the interior of $\Surf$ is on the left while walking along the boundary.
\end{enumerate}}
\end{definition}

\begin{remark} \rm
\begin{itemize}
\item[ ]
\item[(1)] In some cases, we do not know whether a point in the surface is a marked point or an extra point,
we use ``$\pmb{\times}$'' to express it.
\item[(2)] We say that arcs $\gamma_1, \cdots, \gamma_n$ with a common endpoint $p$
{\it surround $p$ in the clockwise $($respectively, counterclockwise$)$ order} if $\gamma_{i+1}$
is right (respectively, left) to $\gamma_{i}$ ($1\le i\le n-1$) at the point $p$ (see \Pic \ref{fig:surround}).

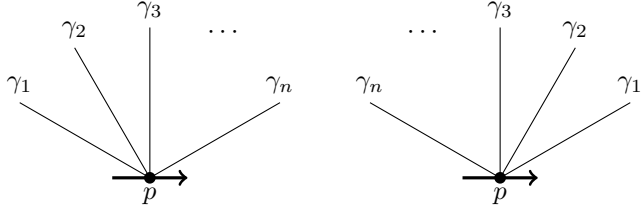
\begin{figure}[htbp]
\begin{center}
\begin{tikzpicture}%[xscale=0.8]
\draw [line width =1.2pt][->] (-0.5,0) -- (0.5,0);
\fill (0,0) circle (2.2pt) node[below]{$p$};
\draw (0,0) -- (-1.73, 1) node[above]{$\gamma_1$};
\draw (0,0) -- (-1, 1.73) node[above]{$\gamma_2$};
\draw (0,0) -- (0, 2) node[above]{$\gamma_3$};
\draw ( 1, 1.73) node[above]{$\cdots$};
\draw (0,0) -- ( 1.73, 1) node[above]{$\gamma_n$};
\end{tikzpicture}
\ \ \ \
\begin{tikzpicture}%[xscale=0.8]
\draw [line width =1.2pt][->] (-0.5,0) -- (0.5,0);
\fill (0,0) circle (2.2pt) node[below]{$p$};
\draw (0,0) -- (-1.73, 1) node[above]{$\gamma_n$};
\draw (-1, 1.73) node[above]{$\cdots$};
\draw (0,0) -- (0, 2) node[above]{$\gamma_3$};
\draw (0,0) -- ( 1, 1.73) node[above]{$\gamma_2$};
\draw (0,0) -- ( 1.73, 1) node[above]{$\gamma_1$};
\end{tikzpicture}
\caption{$\gamma_1, \cdots, \gamma_n$ with common endpoint $p$ are surround $p$
in the clockwise $($respectively, counterclockwise$)$ order. }
\label{fig:surround}
\end{center}
\end{figure}

\item[(3)] In \cite{APS2019}, Amiot, Plamondon and Schroll defined the $\rbullet$-points and $\rbullet$-dissection for marked ribbon surfaces.
The {\it  $\rbullet$-point} is either a point belonging to $\partial\Surf$ which lies between two adjacent marked points or a
boundary component of $\Surf$ with no marked point.
The {\it  $\rbullet$-dissection} $\Delta_{\rbullet}$ of $\Surf$, whose elements are called {\it $\rbullet$-arcs},
is dual of $\bullet$-dissection $\Delta=\Delta_{\bullet}$, that is,
each $\rbullet$-arc is a curve whose endpoints are $\rbullet$-points,
any two $\rbullet$-arcs have no intersection point in $\Surf\backslash\partial\Surf$,
and for every $\bullet$-arc (respectively, $\rbullet$-arc), there exists a unique $\rbullet$-arc (respectively, $\bullet$-arc)
such that they have only one intersection point in $\Surf\backslash\partial\Surf$.
Indeed, $\Delta_{\rbullet}$ is uniquely determined by $\Delta_{\bullet}$, and it is said to be
{\it lamination} of $\Surf$ by Opper, Plamondon and Schroll in \cite{OPS2018}, although their definitions seem different.
In this paper, we do not need the definitions of $\rbullet$-point and $\rbullet$-dissection,
but we need to define the extra points which are special $\rbullet$-points.
They are used in the description of some indecomposable modules.
\end{itemize}
\end{remark}

\begin{definition}\rm ($k$-algebras of marked ribbon surfaces) \cite{APS2019, OPS2018} \label{def. alg. of m. r. s.}
Let $\Surf=(\Surf,\M, \Gamma)$ be a marked ribbon surface.
We associate a quiver $Q_{\Surf} = (Q_0, Q_1)$ and relation $I_{\Surf} = \langle R \rangle$ to $\Surf$ as follows:
\begin{enumerate}
\item[(1)] The vertexes in $Q_0$ correspond to the $\bullet$-arcs in $\Gamma$ and this corresponding
is one-to-one, denoted by $\mathfrak{v}: Q_0\to \Gamma$, that is, each $\bullet$-arc can be seen as a vertex of $Q_\Surf$;
\item[(2)] For every elementary polygon $\Delta_i$ with two sides $u,v \in \Gamma$,
if $v$ follows $u$ in the counterclockwise order, then there is an arrow $u \to v$ in $Q_1$;
\item[(3)] $R$ is the set of all compositions $\alpha\beta$, where $\alpha: u\to v$ and $\beta: v\to w$ satisfies that
$u$, $v$ and $w$ are sides of the same elementary polygon.
\end{enumerate}
\noindent Then {\it the $k$-algebra $A_{\Surf}$ of the marked ribbon surfaces $\Surf$}
is defined by $A_{\Surf}:=kQ_{\Surf}/I_{\Surf}$.
\end{definition}

\begin{definition}\rm (Gentle algebras) \cite{AAG2008,APS2019,AS1987,OPS2018}
A $k$-algebra $A=kQ/I$ is said to be {\it  special biserial} if the following conditions are satisfied.
\begin{enumerate}
\item[(1)] For each vertex $v\in Q_0$, $\sharp\{\alpha\in Q_1 \mid s(\alpha)=v \}\le 2$
and $\sharp\{\alpha\in Q_1\mid t(\alpha)=v \}\le 2$;
\item[(2)] For each arrow $\beta\in Q_1$, $\sharp\{\alpha\in Q_1\mid s(\beta)=t(\alpha), \alpha\beta \notin I\}\le 1$
and $\sharp\{\alpha\in Q_1\mid s(\alpha)=t(\beta), \beta\alpha \notin I\}\le 1$;
\item[(3)] For each arrow $\beta\in Q_1$, there exists a bound $n\in\mathbb{N}^+$ such that
\begin{itemize}
\item For each path $p=\beta_1\cdots\beta_{n-1}$ where $t(p)=s(\beta)$, the path $p\beta$ contains a subpath in $I$;
\item For each path $p'=\beta_1'\cdots\beta_{n-1}'$ where $t(\beta)=s(p')$, the path $\beta p'$ contains a subpath in $I$.
\end{itemize}
\end{enumerate}
Moreover, $A=kQ/I$ is said to be {\it  gentle} if it is a special biserial algebra such that the following hold:
\begin{enumerate}
\item[(4)] $Q$ is a bound quiver;
\item[(5)] All relations in $I$ are paths of length $2$;
\item[(6)] For each arrow $\beta\in Q_1$, $\sharp\{\alpha\in Q_1\mid s(\beta)=t(\alpha), \alpha\beta \in I\}\le 1$
and $\sharp\{\alpha\in Q_1\mid s(\alpha)=t(\beta), \beta\alpha \in I\}\le 1$.
\end{enumerate}
\end{definition}

Notice that a special biserial algebra is finite dimensional if and only if $Q$ is finite, thus a gentle algebra
is always finite dimensional. Moreover, Opper, Plamondon and Schroll proved that $A_{\Surf} := kQ_{\Surf}/I_{\Surf}$
is gentle, and that there is a bijection between isomorphism classes of gentle algebras and homotopy
classes of marked ribbon surfaces (see \cite[Proposition 1.21]{OPS2018}).
It should be pointed out that there are different definitions of gentle algebras.
For example, the definition of gentle algebras in \cite{AAG2008,AS1987} does not require the fourth condition,
and so there are gentle algebras whose quivers are not bound;
and that of gentle algebras in \cite{APS2019,OPS2018} does not require the third condition,
and so there are gentle algebras whose quivers has at least one oriented cycles with no relations.

\subsection{Permissible curves and permissible closed curves}
Each curve $c$ in a marked ribbon surface $\Surf=(\Surf, \M, \Gamma)$ can be defined
as a function $c: [0,1]\to \Surf$ where $c(0)$ and $c(1)$ are its endpoints.
We say that $c(0)$ and $c(1)$ are its {\it starting point} and {\it ending point}, respectively.
In this paper, we only consider such curves whose points lie in the interior of $\Surf$ except endpoints,
that is, $\{c(x)\mid 0<x<1\}\subseteq \Surf\backslash\partial\Surf$, and whose endpoints are elements belong to $\M\cup E$.
Thus each non-closed curve can be regarded as a function $c: (0,1) \to \Surf\backslash\partial\Surf$ where $c(0^+)$ and $c(1^-)$ are its endpoints
and each closed curve can be regarded as a function $c:[0,1]\to \Surf\backslash\partial\Surf$ where $c(0)=c(1)\in \Surf\backslash\partial\Surf$.
In this paper, for each curve $c$ with the endpoints $c(0^+)$ and $c(1^-)$ on the marked ribbon surface,
we always suppose that the number of intersections between $c$ and $\Gamma$ is minimal up to homotopy.

\begin{definition} \rm (Permissible curves and permissible closed curves) \cite[Definition 3.1]{BS2018} \label{BS2018, Def 3.1}
Let $\Surf=(\Surf,\M,\Gamma)$ be a marked ribbon surface.
\begin{enumerate}
\item[(1)] A curve $c:(0,1)\to \Surf\backslash\partial\Surf$ (or a closed curve $c:[0,1]\to \Surf\backslash\partial\Surf$)
in $\Surf$ is said to {\it consecutively cross} $u, v \in \Gamma$ if
$p_1 = c \cap u$ and $p_2 = c \cap v$ belong to $\Surf\backslash \partial \Surf$,
and the segment of $c$ between the points $p_1$ and $p_2$ does not cross any other arc in $\Gamma$.

Then each non-closed curve $c$ consecutively crossing $\gamma_1,\cdots, \gamma_r \in \Gamma$ can be written as $c=c_1c_2\cdots c_rc_{r+1}$, where
each $c_i$ ($2\le i\le r$) is a segment between $\gamma_{i-1}$ and $\gamma_i$ does not cross any other arc in $\Gamma$,
and $c_1$ (respectively, $c_{r+1}$) is a segment between $c(0^+)$ and $\gamma_1$ (respectively, $\gamma_r$ and $c(1^-)$)
does not cross any other arc in $\Gamma$.
The segment $c_i$ ($2\le i\le r-1$) is called {\it middle}, and the segments $c_1$ and $c_r$ are called {\it end}.
Similarly, each closed curve $c$ consecutively crossing $\gamma_1,\cdots, \gamma_r \in \Gamma$ can be written as $c=c_1c_2\cdots c_r$, where
each $c_i$ $(2\le i\le r)$ is a segment between $\gamma_{i-1}$ and $\gamma_i$ does not cross any other arc in $\Gamma$,
and $c_1$ is a segment between $\gamma_r$ and $\gamma_1$.

\item[(2)] Let $B$ be a boundary component with no marked point of $\Surf$, called an
{\it unmarked boundary component} of $\Surf$, and let $c:(0,1)\to \Surf\backslash\partial \Surf$ be a curve in $\Surf$.
Write $c=c_1c'c_2$ where $c_1$ (respectively, $c_2$) is the end segment between $c(0^+)$ (respectively, $c(1^-)$)
and the first (respectively, last) crossing point in $\Gamma$.
We define the {\it winding number} of $c_i$ $(i=1,2)$ around $B$ as the minimum number of times travels
$\bar{c}$ around $B$ in either direction, where $\bar{c}$ lies in the homotopy class of $c_i$.

\item[(3)] A curve $c$ is called {\it permissible} if the following conditions are satisfied.
\begin{enumerate}
\item The winding number of $c$ around an unmarked boundary component is either 0 or 1;
\item If $c$ consecutively crosses two arcs $u$ and $v$ in $\Gamma$,
then $u$ and $v$ have a common endpoint $p$, denoted by $p(u,v,c)$, lying in $\M$, and locally we have a triangle with $p$ a vertex
(as Picture I in \Pic \ref{exp. of perm. c.}, the curves $c_1$, $c$, $c'$ and $c''$ are permissible).
\end{enumerate}
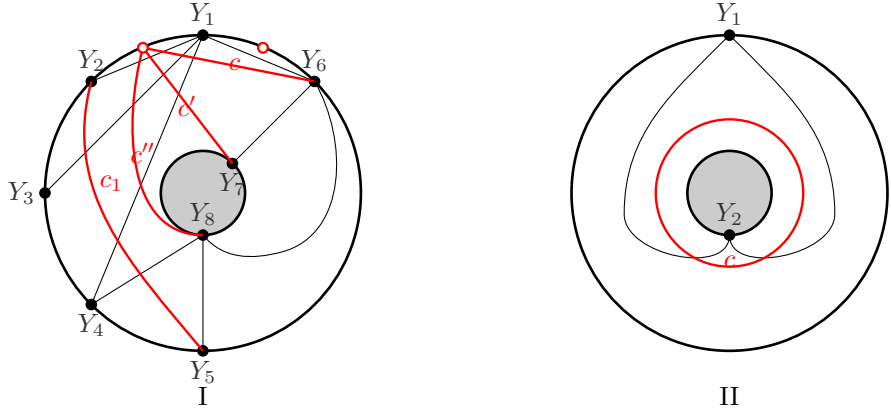
\begin{figure}[htbp]
\begin{center}
\begin{tikzpicture} [scale=1.4] % \tiny
\filldraw[color=black!20] (0,0) circle (0.4);
 \draw[line width=1pt] (0,0) circle (1.5);
 \draw[line width=1pt] (0,0) circle (0.4);
 %%%%%%%%%% half-edges and points %%%%%%%%%%
 \draw ( 0.00,-0.40)--( 0.00,-1.50);     \filldraw ( 0.00,-0.40) circle (0.05);
                                         \filldraw ( 0.00,-1.50) circle (0.05);
 \draw ( 0.00,-0.40)--(-1.06,-1.06);     \filldraw (-1.06,-1.06) circle (0.05);
 \draw (-1.06,-1.06)--( 0.00, 1.50);     \filldraw ( 0.00, 1.50) circle (0.05);
 \draw ( 0.00, 1.50)--(-1.06, 1.06);     \filldraw (-1.06, 1.06) circle (0.05);
 \draw ( 0.00, 1.50)--(-1.50, 0.00);     \filldraw (-1.50, 0.00) circle (0.05);
 \draw ( 0.00, 1.50)--( 1.06, 1.06);     \filldraw ( 1.06, 1.06) circle (0.05);
 \draw ( 1.06, 1.06)--( 0.28, 0.28);     \filldraw ( 0.28, 0.28) circle (0.05);
 \draw ( 0.00,-0.40) to[out=-50,in=195] ( 0.75,-0.56) to[out=15,in=300] ( 1.06, 1.06);
 \filldraw[color=red] ( 0.57, 1.38) circle (0.05);
 \filldraw[color=red] (-0.57, 1.38) circle (0.05);
 \draw[thick,color=black!80] ( 0.00, 1.50) node[above]{$Y_1$};
 \draw[thick,color=black!80] (-1.06, 1.06) node[above]{$Y_2$};
 \draw[thick,color=black!80] (-1.50, 0.00) node[ left]{$Y_3$};
 \draw[thick,color=black!80] (-1.06,-1.06) node[below]{$Y_4$};
 \draw[thick,color=black!80] ( 0.00,-1.50) node[below]{$Y_5$};
 \draw[thick,color=black!80] ( 1.06, 1.06) node[above]{$Y_6$};
 \draw[thick,color=black!80] ( 0.28, 0.28) node[below]{$Y_7$};
 \draw[thick,color=black!80] ( 0.00,-0.40) node[above]{$Y_8$};
%%%%%%%%%%%%%%%%%%%%%%%%%%%%%%%%%%%%%%%%%%%%
 \draw[color=red, line width=0.9pt] ( -1.06, 1.06) to[out=-105,in=130] ( 0.00,-1.50);
 \draw ( -0.87, 0.09) node[color=red]{$c_1$};
 \draw[color=red, line width=0.9pt] ( -0.57, 1.38) to ( 1.06, 1.06);
 \draw ( 0.30, 1.22) node[color=red]{$c$};
 \draw[color=red, line width=0.9pt] ( -0.57, 1.38) to ( 0.28, 0.28);
 \draw ( -0.15, 0.83) node[color=red]{$c'$};
 \draw[color=red, line width=0.9pt] ( -0.57, 1.38) to[out=-105,in=-180] ( 0.00,-0.40);
 \draw ( -0.57, 0.38) node[color=red]{$c''$};
% an example of permissible curve %
%%%%%%%%%%%%%%%%%%%%%%%%%%%%%%%%%%%%%%%%%%%%
\filldraw[color=black!20] (+5,0) circle (0.4);
 \draw[line width=1pt] (0+5,0) circle (1.5);
 \draw[line width=1pt] (0+5,0) circle (0.4);
 \draw ( 0.00+5, 1.50) to[out=-135, in=90] (-1.00+5,-0.20) to[out=-90, in=-90] ( 0.00+5, -0.40);
 \draw ( 0.00+5, 1.50) to[out=-45,  in=90] ( 1.00+5,-0.20) to[out=-90, in=-90] ( 0.00+5, -0.40);
 \filldraw ( 0.00+5, 1.50) circle (0.05); \draw[thick,color=black!80] ( 0.00+5, 1.50) node[above]{$Y_1$};
 \filldraw ( 0.00+5,-0.40) circle (0.05); \draw[thick,color=black!80] ( 0.00+5,-0.40) node[above]{$Y_2$};
 \draw[color=red, line width=0.9pt] (0+5,0) circle (0.7); \draw[thick,color=red] ( 0.00+5,-0.50) node[below]{$c$};
%%%%%%%%%%%%%%%%%%%%%%%%%%%%%%%%%%%%%%
 \filldraw[color=red]   ( 0.57, 1.38) circle (0.05); \filldraw[color=white] ( 0.57, 1.38) circle (0.03);
 \filldraw[color=red]   (-0.57, 1.38) circle (0.05); \filldraw[color=white] (-0.57, 1.38) circle (0.03);
%%%%%%%%%%%%%%%%%%%%%%%%%%%%%%%%%%%%%%
\draw ( 0.00,-1.75) node[below]{$\mathrm{I}$};
\draw ( 0.00+5,-1.75) node[below]{$\mathrm{II}$};
\end{tikzpicture}
\caption{Some examples of permissible (closed) curves in marked ribbon surfaces}
\label{exp. of perm. c.}
\end{center}
\end{figure}
\item[(4)] A {\it permissible closed curve} is a closed curve $c$ satisfying condition (3)(b)
(as shown Picture II in \Pic \ref{exp. of perm. c.}, the algebra of the marked ribbon surface
is a 2-Kronecker algebra, and the curve $c$ is a permissible closed curve).
\end{enumerate}
\end{definition}

\begin{definition}\rm (Equivalence class of permissible curves) \cite{BS2018} \label{def. p.c. eq. cls.}
Let $\Surf=(\Surf,\M,\Gamma)$ be a marked ribbon surface.
Two permissible curves $c, c' : (0,1) \to \Surf\backslash \partial \Surf $ in $\Surf$ are called {\it equivalent},
denoted by $c \simeq c'$, if one of the following conditions is satisfied.
\begin{enumerate}
\item[(1)] $c'$ is the {\it reverse curve} $c^-$ of $c$, that is, $c(x) = c'(1-x)$ for any $x\in (0,1)$.
\item[(2)] There is a sequence of consecutive arcs $(u_i : (0,1) \to \Surf\backslash \partial \Surf)_{1\le i\le n}$
in the arc system $\Gamma$ such that
\begin{enumerate}
\item All $u_i$ $(1\le i\le n)$ are sides of the same elementary polygon $\Delta_j$.
\item $c(0^+)$ (respectively, $c(1^-)$) is a marked point $\bullet$ or an extra point $\rbullet$,
$c(1^-) = u_1(0^+)$ (respectively, $c(0^+) = u_1(0^+)$), $u_{i-1}(1^-)=u_i(0^+)$ for any $1\le i\le n$.
\item $c$ is homotopic to the concatenation of $c'$ and $(u_i)_{1\le i\le n}$.
\item $c'$ starts at an endpoint of $u_n$, and the first crossing with $\Gamma$ of $c$ and $c'$ are with the same side of $\Delta_j$.
\end{enumerate}
\item[(3)] $c(0^+) = c'(0^+)$ and $c(1^-) = c'(1^-)$ (respectively, $c(0^+) = c'(1^-)$ and $c(1^-) = c'(0^+)$) are marked points or extra points,
arcs consecutively crossed by $c$ and $c'$ ($c^-$ and $c'$) are same, and each segment of $c$ cut by two arcs is homotopic to that of $c'$.
\item[(4)] $c$ (respectively, $c'$) is either a curve satisfying $c\cap\Gamma=\varnothing$ (respectively, $c'\cap\Gamma=\varnothing$) or a arc;
in this case, we call $c, c'$ are {\it trivial}. Thus each arc can be regarded as a trivial permissible curve.
\end{enumerate}
\end{definition}

\begin{example}\rm
In \Pic \ref{exp. of perm. c.} Picture I, the curves $c$, $c'$ and $c''$ are permissible and
$c \simeq c' \simeq c''$, but $c \not\simeq c_1$.
\end{example}

\section{\bf Finitely generated module categories}
%The descriptions of indecomposable modules and Auslander-Reiten quiver
%in marked ribbon surface of gentle algebra are given in \cite{APS2019},
%the Auslander-Reiten translate $\tau$ can be described by the rotation of permissible curve.

\subsection{The geometric models of string and band modules}
Let $A=kQ/I$ be a gentle algebra.
For each arrow $\alpha \in Q_1$, the {\it formal inverse} of $\alpha$, denoted by $\alpha^{-}$,
is an arrow with $s(\alpha^{-})=t(\alpha)$ and $t(\alpha^{-1})=s(\alpha)$. Obviously, $(\alpha^{-})^{-}=\alpha$.
We use $Q_1^{-}$ to denote the set of all formal inverses of arrows in $Q_1$.
Also we can define that the formal inverse of any path $p=\alpha_1\cdots\alpha_n$ is $p^{-}=\alpha_n^{-}\cdots\alpha_1^{-}$.
In this subsection, we recall the definitions of string modules and band modules over a gentle algebra $A$ \cite{BR87},
and review the description of indecomposable modules in marked ribbon surfaces \cite{BS2018}.

A {\it string} $s$ is a reduced walk in the quiver $Q$ with no relations,
that is, $s=a_1\cdots a_n$ with $a_i\in Q_1\cup Q_1^{-}$ has neither subwalks of the
form $aa^{-}$ and $a^{-}a$ nor subwalks of the form $ab$ such that $ab\in I$ or $b^{-}a^{-}\in I$;
particularly, $s=1_v$ is a path of length $0$ corresponding to a vertex $v\in Q_0$, it is called a {\it simple string}
(note that $1_v^{-}=1_v$ for each $v\in Q_0$). Trivially, we define the {\it trivial string} $s=0$.

Each string $s=a_1\cdots a_n$ in $A$ corresponds to a module $M(s)\in \bmod A$ as follows:
\begin{itemize}
\item replacing each vertex of $s$ by a copy of the field $k$, that is,
for each $v\in Q_0$, $\dim_k M(s) 1_v = 1$ if $1_v$ is a trivial subwalk of $s$ or zero if otherwise; and
\item the action of an arrow $\alpha\in Q_1$ on $M(s)$ is the identity morphism if $\alpha$
is an arrow of $s$ or zero if otherwise.
\end{itemize}
Obviously, $s=0$ yields $M(s)=0$; $s$ being simple yields that $M(s)$ is simple;
and $M(s)\cong M(s')$ if and only if $s'=s^{-}$ or $s'=s$.

A {\it band} $b=a_1\cdots a_n$ is a cyclic string (that is, $t(a_n)=s(a_1)$) such that each power $b^n$
is a string but $b$ is not a proper power of any string.

Each band $b=a_1\cdots a_n$ in $A$ corresponds to a family of modules $M(b,\varphi)\in \bmod A$ as follows:
\begin{itemize}
\item $\varphi$ is an indecomposable $k$-linear automorphism of $k^n$ with $n\geq 0$
(that is, $\varphi \in \mathrm{Aut}(k^n)$ is a Jordan block);
\item replacing each vertex of $s$ by a copy of the vector space $k^n$; and
\item the action of an arrow $\alpha\in Q_1$ on $M(b,\varphi)$ is the identity morphism
if $\alpha=a_i$ for $1\le i\le n-1$ or $\varphi$ if $\alpha=a_n$ (thus $M(b,\varphi)\alpha = 0$
if $\alpha$ is not an arrow of $b$).
\end{itemize}

\begin{remark}\rm
By \cite{BR87}, we know that each indecomposable $A$-module is either a string module or a band module.
Baur and Coelho Sim\~oes \cite{BS2018} showed that there is a bijection between the equivalence classes of
non-trivial permissible curves in $\Surf_A$ and non-zero strings of $A$,
and there is a bijection between the homotopy classes of permissible closed curves $c$ in $\Surf_A$
with $|I_{\Gamma_A}(c)| \ge 2$ and the powers of bands of $A$, where $|I_{\Gamma_A}(c)|$ is
the number of intersection points of $c$ and $\Gamma_A$ \cite{BS2018}. To be precise:
\begin{itemize}
\item If $[c]$ is an equivalence class of non-trivial permissible curve,
then $c=c_1 \cdots c_r$, where each $c_i$ is a segment cut by two arcs $\gamma_{i-1}$
and $\gamma_{i}$ (we suppose that $c$ consecutively crosses $\gamma_1, \cdots, \gamma_{r-1}$),
and $c$ corresponds to such string $s=s_1 \cdots s_r$ that $s_i$ is one of:
\begin{itemize}
\item the arrow $\gamma_{i-1} \to \gamma_{i}$, if $c$ counterclockwise crosses
$\gamma_{i-1}$ and $\gamma_{i}$ around their common endpoint $p(\gamma_{i-1}, \gamma_i, c)$;
\item the formal inversion of the arrow $\gamma_{i} \to \gamma_{i-1}$, if $c$ clockwise crosses
$\gamma_{i-1}$ and $\gamma_{i}$ around their common endpoint $p(\gamma_{i-1}, \gamma_i, c)$.

\noindent (Note that each arc in $\Gamma$ can be regarded as a vertex of quiver $Q$ by
Definition \ref{def. alg. of m. r. s.}).
\end{itemize}
Thus $c$ can be corresponded to the string module of $s$.
\item If $c$ is a homotopy class of permissible closed curves, then $c$ with an indecomposable
invertible linear transformation $\varphi:k^n\to k^n$ corresponds to a band which induces a band module $M(c, \varphi)$.
Band modules lie in homogeneous tubes of the Auslander-Reiten quiver, thus the Auslander-Reiten translate
$\tau$ acts on them as the identity morphism.
\end{itemize}
\end{remark}

\begin{remark} \label{rmk. in BS} \rm \cite[Theorem 3.8 and Proposition 3.9]{BS2018}
Let $[c]$ (respectively, $\mathrm{htp}(c)$) be the equivalence class of permissible curve
(respectively, the homotopy class of permissible closed curve) $c$. Define the following three sets:
\begin{itemize}
\item $\Eqcls(\Surf_A)= \{[c]\mid c \text{ is a permissible curve}\}$;
\item $\Htpcls(\Surf_A)= \{(\mathrm{htp}(c), \varphi)\mid c \text{ is a permissible closed curve with }
                  |I_{\Gamma_A}(c)| \ge 2 \ \text{and}\ \varphi\ \text{a Jordan block}\}$;
\item $\mathfrak{P}(\Surf_A)= \Eqcls(\Surf_A) \cup \Htpcls(\Surf_A)$.
\end{itemize}
Then we have a bijection
\[M: \mathfrak{P}(\Surf_A) \to  \{\text{string modules}\}\cup \{\text{band modules}\} =
\mathrm{ind}(\bmod A), x\mapsto M(x).\]
Obviously, each trivial permissible curve corresponds to zero by $M$.
\end{remark}

\begin{definition} \rm (Formal direct sums) \label{def. d. s.}
Let $\Surf_A$ be the marked ribbon surface of a gentle algebra $A$.
A {\it formal direct sum} of $c_1, \cdots, c_n$, denoted by $\bigoplus_{i=1}^n c^i$,
%is the pair $(c_1,\cdots, c_n)$ which
naturally corresponds to the direct sum of modules $\bigoplus_{i=1}^n M(c^i)$
by the corresponding $M$ in Remark \ref{rmk. in BS}.
\end{definition}

\subsection{The geometric models of some short exact sequences}
In this subsection, we depict certain short exact sequences by marked ribbon surfaces for gentle algebras,
which are used to describe the minimal projective and injective resolutions of simple modules in Section \ref{Section gl.dim},
and describe the minimal projective resolutions of injective modules in Section \ref{Section Gproj}.
First at all, we fix some terms and notations.

We always assume that each non-trivial permissible curve
$c: (0,1) \to \Surf\backslash\partial\Surf$ in $\Surf=(\Surf,\M,\Gamma)$ consecutively crossing arcs
$\gamma_1, \cdots, \gamma_{r-1}$ satisfies that $\gamma_1$ is the first arc crossed by $c$,
that is, there is a sequence $0=x_0<x_1<x_2\cdots <x_r = 1$ such that
$c = c_1\cup c_2 \cup \cdots \cup c_r$ (we write $c=c_1c_2\cdots c_r$ for simplicity),
where each $c_i=\{c(x)\mid x_{i-1}\le x\le x_i\}$ is a segment of $c$ satisfying
$c_i\cap \gamma_i=c(x_i) = c_{i+1}\cap \gamma_i $ if $2\le i\le r-1$;
and $c_1=\{c(x)\mid 0< x\le x_1\}$ and $c_r=\{c(x)\mid x_{r-1}\le x< 1\}$ are the end segments of
$c$ satisfying $c_1\cap\gamma_1 = c(x_1) = c_2\cap\gamma_1$ and
$c_r\cap\gamma_{r-1}=c(x_{r-1})=c_{r-1}\cap\gamma_{r-1}$, respectively.

Let $c=c_1c_2\cdots c_r$ be a permissible curve consecutively crossing $\gamma_1, \cdots, \gamma_{r-1}$.
Then it induces a new permissible curve $c'=c^{\mathfrak{T}}_{i-1} c_{i}\cdots c_{j} c^{\mathfrak{T}}_{j+1}$
for any $2\le i \le j \le r-1$ such that
\begin{enumerate}
\item[(1)] $c^{\mathfrak{T}}_{i-1}$ and $c_{i-1}$ lie in the inner of the same elementary polygon of $\Surf$;
\item[(2)] one endpoint of $c^{\mathfrak{T}}_{i-1}$ is $c_{i-1}\cap\gamma_{i-1}$ and the other is an endpoint of $\gamma_{i-2}$;
\item[(3)] the dual of the condition (1), that is, $c^\mathfrak{T}_{j+1}$ and $c_{j+1}$ lie in the inner of the same elementary polygon;
\item[(4)] the dual of the condition (2), that is, one endpoint of $c^{\mathfrak{T}}_{j+1}$ is
$c_{j+1}\cap \gamma_j$ and the other is an endpoint of $\gamma_{j+1}$.
\end{enumerate}
Note that it is a trivial case for $c_1^{\mathfrak{T}}c_2\cdots c_{r-1}c_r^{\mathfrak{T}}=c_1c_2\cdots c_{r-1}c_r$,
that is, $c_1^{\mathfrak{T}}=c_1$, and $c_r^{\mathfrak{T}} =c_r$.
We provide an example in the \Pic\ref{exp. permissible curve}, in which
$c = \cdots c_{i-1}c_{i}c_{i+1}\cdots c_{j-1}c_{j}c_{j+1}$ and
$c' = c_{i}^{\mathfrak{T}}c_{i+1}\cdots c_{j-1}c_{j}^{\mathfrak{T}}$ are the red and blue permissible curves respectively.
In this example, $c$ corresponds to the string
\[ \gamma_1 \longrightarrow \cdots -\!\!\!-\!\!\!-
\gamma_{i-1} \longrightarrow \gamma_i \longrightarrow \cdots \longrightarrow \gamma_j \longrightarrow \gamma_{j+1}
-\!\!\!-\!\!\!- \cdots \longleftarrow \gamma_{r-1},\]
and $c'$ corresponds to the string
\[ \gamma_i \longrightarrow \cdots \longrightarrow \gamma_j .\]

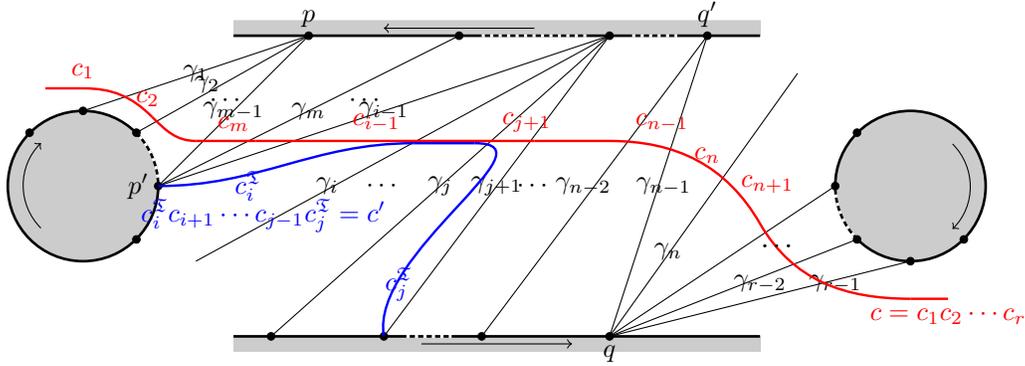
\begin{figure}[htbp]
\begin{center}
{% \tiny
\begin{tikzpicture}% [scale=0.8]
% boundary
\filldraw[color=black!20] (0,0) circle (1.0);
\draw[line width =1pt] (0,0) circle (1.0); \draw[->] (-0.56, -0.56) arc (225:135:0.8);
\filldraw[color=black!20] (11,0) circle (1.0);
\draw[line width =1pt] (11,0) circle (1.0); \draw[->] (0.56+11, 0.56) arc (45:-45:0.8);
\filldraw[color=black!20] (2.0, 2.0) to (9.0, 2.0) to (9.0, 2.2) to (2.0, 2.2);
\filldraw[color=black!20] (2.0,-2.0) to (9.0,-2.0) to (9.0,-2.2) to (2.0,-2.2);
\draw[line width =1pt] (2.0, 2.0) to (9.0, 2.0); \draw[->] (6.0, 2.1) to (4.0, 2.1);
\draw[line width =1pt] (2.0,-2.0) to (9.0,-2.0); \draw[->] (4.5,-2.1) to (6.5,-2.1);
\draw[line width =1.2pt, dotted, color=white] (1,0) arc (0:45:1);
\draw[line width =1.2pt, dotted, color=white] (10,0) arc (180:225:1);
\draw[line width =1.2pt, dotted, color=white] (5.3, 2.0) to (6.7, 2.0);
\draw[line width =1.2pt, dotted, color=white] (7.3, 2.0) to (7.9, 2.0);
\draw[line width =1.2pt, dotted, color=white] (4.3,-2.0) to (4.9,-2.0);
%%%%%%%%%%%%%%%%%%%%%%%%%%%%
%% marked points and arcs %%
%%%%%%%%%%%%%%%%%%%%%%%%%%%%
% p, p', q and q'
\draw (3.00, 2.00) node[above]{$p$};
\draw (1.00, 0.00) node[left]{$p'$};
\draw (8.30, 2.00) node[above]{$q'$};
\draw (7.00,-2.00) node[below]{$q$};
% left marked points
\filldraw ( 1.00, 0.00) circle (0.05); % p'
\filldraw ( 0.71, 0.71) circle (0.05);
\filldraw ( 0.00, 1.00) circle (0.05);
\filldraw (-0.71, 0.71) circle (0.05);
%\filldraw (-1.00, 0.00) circle (0.05);
%\filldraw (-0.71,-0.71) circle (0.05);
%\filldraw ( 0.00,-1.00) circle (0.05);
\filldraw ( 0.71,-0.71) circle (0.05);
% right marked points
%\filldraw ( 1.00+11, 0.00) circle (0.05);
%\filldraw ( 0.71+11, 0.71) circle (0.05);
%\filldraw ( 0.00+11, 1.00) circle (0.05);
\filldraw (-0.71+11, 0.71) circle (0.05);
\filldraw (-1.00+11, 0.00) circle (0.05);
\filldraw (-0.71+11,-0.71) circle (0.05);
\filldraw ( 0.00+11,-1.00) circle (0.05);
\filldraw ( 0.71+11,-0.71) circle (0.05);
% above marked points
\filldraw (3.00, 2.00) circle (0.05); % p
\filldraw (5.00, 2.00) circle (0.05);
\filldraw (7.00, 2.00) circle (0.05);
\filldraw (8.30, 2.00) circle (0.05); % q'
% bellow marked points
\filldraw (2.50,-2.00) circle (0.05);
\filldraw (4.00,-2.00) circle (0.05);
\filldraw (5.30,-2.00) circle (0.05);
\filldraw (7.00,-2.00) circle (0.05); % q
% arcs form left to above
\draw (1.00, 0.00) to (3.00, 2.00); \draw (1.92, 1.14) node{$\cdots$}; % p' to p
                                    \draw (2.00, 1.00) node{$\gamma_{m-1}$};
\draw (0.71, 0.71) to (3.00, 2.00); \draw (1.65, 1.35) node{$\gamma_2$};
\draw (0.00, 1.00) to (3.00, 2.00); \draw (1.50, 1.50) node{$\gamma_1$};
\draw (1.00, 0.00) to (5.00, 2.00); \draw (3.00, 1.00) node{$\gamma_m$};
\draw (1.00, 0.00) to (7.00, 2.00); \draw (4.00, 1.00) node{$\gamma_{i-1}$};
                                    \draw (3.77, 1.14) node{$\cdots$};
% arcs form above to below
\draw (7.00, 2.00) to (1.50,-1.00); \draw (3.25, 0.00) node{$\gamma_{i}$};
\draw (7.00, 2.00) to (2.50,-2.00); \draw (4.75, 0.00) node{$\gamma_{j}$};
                                    \draw (4.00, 0.00) node{$\cdots$};
\draw (7.00, 2.00) to (4.00,-2.00); \draw (5.50, 0.00) node{$\gamma_{j+1}$};
\draw (8.30, 2.00) to (5.30,-2.00); \draw (6.65, 0.00) node{$\gamma_{n-2}$};
                                    \draw (6.00, 0.00) node{$\cdots$};
\draw (8.30, 2.00) to (7.00,-2.00); \draw (7.72, 0.00) node{$\gamma_{n-1}$}; % q' to q
\draw (7.00,-2.00) to (9.50, 1.50); \draw (9.25,-0.80) node{$\cdots$}; \draw (7.78,-0.87) node{$\gamma_{n}$};
\draw (7.00,-2.00) to (-1.00+11, 0.00);
\draw (7.00,-2.00) to (-0.71+11,-0.71);  \draw (9.00,-1.30) node{$\gamma_{r-2}$};
\draw (7.00,-2.00) to ( 0.00+11,-1.00);  \draw (10.00,-1.30) node{$\gamma_{r-1}$};
% permissible curve (red)
\draw[red, line width=0.9pt] (-0.5,1.3) to (0,1.3);
\draw[red, line width=0.9pt] (0,1.3) to[out=0, in=135] (0.919239, 0.919239);
\draw[red, line width=0.9pt] (0.919239, 0.919239) to[out=-45, in=180] (1.5, 0.6);
\draw[red, line width=0.9pt] (1.5, 0.6) to (2.3, 0.6);
\draw[red, line width=0.9pt] (2.3, 0.6) to (2.8, 0.6);
\draw[red, line width=0.9pt] (2.8, 0.6) to (6.0, 0.6);
\draw[red, line width=0.9pt] (6.0, 0.6) to (7.0, 0.6);
\draw[red, line width=0.9pt] (7.0, 0.6) to[out=0, in=120] (9.0,-0.5) to[out=-60, in=180] (11,-1.5);
\draw (9.25,-0.80) node{$\cdots$}; %%% repeat
\draw[red, line width=0.9pt] (11,-1.5) to (11.5, -1.5); \draw[red] (11.5, -1.5) node[below]{$c=c_1c_2\cdots c_r$};
\draw[red] (0, 1.3) node[above]{$c_1$};
\draw[red] (0.86, 1.16) node{$c_2$};
\draw[red] (2.00, 0.60) node[above]{$c_m$};
\draw[red] (3.90, 0.60) node[above]{$c_{i-1}$};
\draw[red] (5.90, 0.60) node[above]{$c_{j+1}$};
\draw[red] (7.70, 0.60) node[above]{$c_{n-1}$};
\draw[red] (8.30, 0.41) node{$c_{n}$};
\draw[red] (9.10, 0.02) node{$c_{n+1}$};
% permissible curve (blue)
\draw[blue, line width=0.9pt] (1.00, 0.00) to[out=0, in=180] (4.40, 0.57);
\draw[blue, line width=0.9pt] (2.20, 0.00) node{$c_i^{\mathfrak{T}}$};
\draw[blue, line width=0.9pt] (4.40, 0.57) to (5.21, 0.57);
\draw[blue, line width=0.9pt] (5.21, 0.57) to[out=0, in=100] (4.00,-2.00);
\draw[blue] (4.20,-1.30) node{$c_{j}^{\mathfrak{T}}$};
\draw[blue] (2.40,-0.05) node[below]{$c_i^{\mathfrak{T}}c_{i+1}\cdots c_{j-1}c_{j}^{\mathfrak{T}}=c'$};
\end{tikzpicture}
} % tiny end
  \caption{Two examples of permissible curves corresponding to strings.}
  \label{exp. permissible curve}
  \end{center}
\end{figure}

A string module which corresponds to the permissible curves $c$ (respectively, its equivalence class $[c]$)
is denoted by $M(c)$ (respectively, $M([c])$);
a band module with a Jordan block $\varphi \in \mathrm{Aut}(k^n)$ which corresponds to the permissible closed curve $c$
(respectively, its homotopic class $\mathrm{htp}(c)$) is denoted by $M(c, \varphi)$ (respectively, $M(\mathrm{htp}(c),\varphi)$).

For the convenience of describing rotations, we suppose that the positive direction of
the boundary $\partial\Surf$ of the surface $\Surf$ is the following walking direction:
walking along $\partial\Surf$, the inner of $\Surf$ is on the left.

\begin{definition} \rm (Rotations of Type 1) \label{def-3.4}
Let $c^1=c_1\cdots c_r$ be a permissible curve consecutively crossing $\gamma_1, \cdots, \gamma_{r-1}$,
and let $c^2$ be equivalent to $c'c_{i+1}\cdots c_r$ (in this case $c^1(1^-) = c^2(1^-)$),
where $c'$ is either $c_i^{\mathfrak{T}}$ or a curve with an endpoint being an extra point.
We say that the permissible curve $c^3$, denoted by $\mathrm{prot}_{c^1}(c^2)$,
is obtained by the {\it positive rotating $c^2$} with respect to $c^1$ if it is given as follows:
\begin{enumerate}
\item[Step 1.] move the endpoint of $c^2$ which is the common endpoint of $c^2$ and $c^1$ to the other of $c^1$;
\item[Step 2.] following the positive direction of the boundary,
move the other endpoint of $c^2$ to the vertex $p$ of $\Delta_j$, where
\begin{enumerate}
  \item[-] $\Delta_j$ is the elementary polygon such that $c_i$ lies in the inner of $\Delta_j$
    (in this case, $\gamma_{i-1}, \gamma_{i} \in \mathfrak{S}(\Delta_j)$),
  \item[-] and $p$ is a vertex such that $c^3 = \mathrm{prot}_{c^1}(c^2)$ crosses $\gamma_{i-1}$.
\end{enumerate}
\end{enumerate}
Dually, we can define $\mathrm{nrot}_{c^1}(c^2)$ obtained by the {\it negative rotating} $c^2$ with respect to $c^1$ (see \Pic \ref{fig:def-3.4}).
\end{definition}

\begin{figure}[htbp]
\begin{center}
\begin{tikzpicture}%[xscale=0.8]
\draw[black!25] (2+0.07,-1-0.07) -- (3+0.07,0-0.07) [line width =5pt];
\fill (0,2) circle (2.2pt);
\draw (0,2) -- (-2,0);
\draw (0,2) -- (2,-1); \fill (2,-1) circle (2.2pt); \draw(1.2, 0.2) node[below]{$\gamma_{i-1}$};
\draw (0,2) -- (3,0);  \fill (3,0) circle (2.2pt); \draw(2, 0.5) node{$\gamma_i$};
\draw[dotted] (2,-1) -- (3,0) [line width =1.2pt];
% c^1
\draw[line width =1.2pt] (-2.5,1) node{$\pmb{\times}$};
\draw[line width =1.2pt] ( 3.0,1) node{$\pmb{\times}$};
\draw[red][line width =1.2pt][->] (-2.5,1) -- (0,1) node[above]{$c^1$};
\draw[red][line width =1.2pt] (0,1) -- (3,1);
% c^2
\draw[blue][line width =1.2pt] (3,1) -- (2,-1);
\draw[blue] (2,-0.5) node{$c^2$};
% c^3
\draw[orange][line width =1.2pt] (-2.5,1) -- (3,0) node[right]{\color{black}$p$};
\draw[orange](0.25,0.5) node[below]{$c^3$};
\end{tikzpicture}
\ \ \ \
\begin{tikzpicture}%[scale=0.7]
\draw[black!25] (1,-1.2-0.1) -- (-1,-1-0.1) [line width = 7pt];
\draw[black!25] (1.5,-1.25-0.1) -- (3,-1.25-0.1) [line width = 7pt];
\fill (0,2) circle (2.2pt);
\draw (0,2) -- (-2,0);
\draw (0,2) -- (-1,-1); \fill (-1,-1) circle (2.2pt); \draw(-0.6,0) node[left]{$\gamma_{i-1}$};
\draw (0,2) -- (3,0);  \fill (3,0) circle (2.2pt); \draw(2, 0.5) node{$\gamma_i$};
\draw (3,0) to[out=190,in=50] (1,-1.2); \fill (1,-1.2) circle (2.2pt);
\draw (1,-1.2) -- (-1,-1) [line width = 1.2pt] [dotted];
\draw (1.5,-1.25) to[out=45,in=135] (3,-1.25);
\draw (1.5,-1.25) -- (3,-1.25) [line width = 1.2pt];
\fill[black] (1.5,-1.25) circle (2.2pt); \fill[black] (3,-1.25) circle (2.2pt);
\fill[red] (2.25,-1.25) circle (2.8pt);  \fill[white] (2.25,-1.25) circle (1.8pt);
% \draw (1,-1.2) to[out=35,in=125] (3,-1.25) [dotted];
\draw (2.25,-0.5) node{$\vdots$};
% c^1
\draw[line width =1.2pt] (-2.5,1) node{$\pmb{\times}$};
\draw[line width =1.2pt] ( 3.0,1) node{$\pmb{\times}$};
\draw[red][line width =1.2pt][->] (-2.5,1) -- (0.5,1) node[above]{$c^1$};
\draw[red][line width =1.2pt] (0.5,1) -- (3,1);
% c^2
\draw[blue][line width =1.2pt] (3,1) node[below]{$c^2$} to[out=235,in=80] (2.25,-1.25);
% c^3
\draw[orange][line width =1.2pt] (-2.5,1) -- (3,0) node[right]{\color{black}$p$};
\draw[orange](0.25,0.5) node[below]{$c^3$};
\end{tikzpicture}
\caption{The point ``$\pmb{\times}$'' is either a marked point or an extra point. }
\label{fig:def-3.4}
\end{center}
\end{figure}

\begin{definition} \rm (Rotations of Type 2) \label{def-3.5}
Let $c^1=c_1\cdots c_r$ be a permissible curve consecutively crossing $\gamma_1, \cdots, \gamma_{r-1}$,
and $c^2$ equivalent to either $c_{m}^{\mathfrak{T}}c_{m+1}\cdots c_nc_{n+1}^{\mathfrak{T}}$ ($2\le m\le n+1 \le r-2$)
such that $c^1$ and $c^2$ have an intersection, say $x$, in $\Surf\backslash\partial\Surf$.
Then $c^2$ is divided into two parts $c^2_{\I}$ and $c^2_{\II}$ where $c^2_{\I}(1^-)=x=c^2_{\II}(0^+)$.
We say that the permissible curve $c^3_{\I}$ (respectively, $c^3_{\II}$), denoted by
$\mathrm{prot}_{c^1}(c^2_{\I})$ (respectively, $\mathrm{prot}_{c^1}(c^2_{\II})$), is obtained by
the {\it positive rotating} $c^2_{\I}$ (respectively, $c^2_{\II}$) with respect to $c^1$ if it is given as follows:
\begin{enumerate}
\item[Step 1.] move the endpoint $x$ of $c^2_{\I}$ (respectively, $c^2_{\II}$) to the end point $c^1(0^+)$ (respectively, $c^1(1^-)$);
\item[Step 2.] following the positive direction of boundary, move the endpoint $c^2_{\I}(0^+)$ of $c^2_{\I}$
(respectively, the endpoint $c^2_{\II}(1^-)$ of $c^2_{\II}$)
to the next vertex of $\Delta_j$, where $\Delta_j$ is an elementary polygon such that $c_m$
(respectively, $c_{n+1}$) lies in the inner of $\Delta_j$
(in this case, we have $\gamma_{m-1}, \gamma_{m} \in \mathfrak{S}(\Delta_j)$ (resp., $\gamma_{n}, \gamma_{n+1} \in \mathfrak{S}(\Delta_j)$)).
\end{enumerate}
Dually, we can define $\mathrm{nrot}_{c^1}(c^2_{\I})$ and $\mathrm{nrot}_{c^1}(c^2_{\II})$ obtained by
the {\it negative rotating $c^2_{\I}$ and $c^2_{\II}$} with respect to $c^1$, respectively (see \Pic \ref{fig:def-3.5}).
\end{definition}

\begin{figure}[htbp]
\begin{center}
\begin{tikzpicture}[yscale=1.25]
\draw (-3,-1) -- (-1, 1) -- (-1,-1);
\fill (-3,-1) circle (2.2pt); \fill (-1, 1) circle (2.2pt); \fill (-1,-1) circle (2.2pt);
\draw ( 3, 1) -- ( 1,-1) -- ( 1, 1);
\fill ( 3, 1) circle (2.2pt); \fill ( 1,-1) circle (2.2pt); \fill ( 1, 1) circle (2.2pt);
% c^1
\draw[line width =1.2pt] (-3.5,0) node{$\pmb{\times}$};
\draw[line width =1.2pt] ( 3.5,0) node{$\pmb{\times}$};
\draw[red][line width =1.2pt][->] (-3.5,0) -- (2.5,0) node[below]{$c^1$};
\draw[red][line width =1.2pt] (2.5,0) -- (3.5,0);
% c^2
\draw[blue][line width =1.2pt] (-3,-1) to[out=40,in=220] ( 3, 1);
\draw (0,0) circle (2.2pt) node[below]{$x$};
\draw[blue] (-2.5,-1) node{$c^2_{\mathrm{I}}$};
\draw[blue] ( 2.5, 1) node{$c^2_{\mathrm{II}}$};
% c^3
\draw[orange][line width =1.2pt] (-3.5,0) to[out=-10,in=90] (-1,-1);
\draw[orange][line width =1.2pt] ( 3.5,0) to[out=170,in=-90] ( 1, 1);
\draw[orange] (-1,-1) node[left]{$c^3_{\mathrm{I}}$};
\draw[orange] ( 1, 1) node[right]{$c^3_{\mathrm{II}}$};
\draw (-1,0.6) node[right]{$\gamma_{m}$};
\draw (-1.4,0.6) node[left]{$\gamma_{m-1}$};
\draw (1,-0.6) node[left]{$\gamma_{n}$};
\draw (1.4,-0.6) node[right]{$\gamma_{n+1}$};
\end{tikzpicture}
\caption{The point ``$\pmb{\times}$'' is either a marked point or an extra point. }
\label{fig:def-3.5}
\end{center}
\end{figure}

\begin{proposition} \label{prop. s. e. s.}
Let $A=kQ/I$ be a gentle algebra and $\Surf_A = (\Surf_A, \M_A, \Gamma_A)$ its marked ribbon surface.
If there are three permissible curves $c$, $c'$, $c''$ such that the following conditions are satisfied
{\rm(}see \Pic \ref{Pic. of prop. s. e. s.}{\rm)}:
\begin{enumerate}
\item[{\rm(1)}] $c'$ and $c$ have a common endpoint $m_s= c(0^+)$,
        $c''$ and $c$ have another common endpoint $m_t= c(1^-)$.
\item[{\rm(2)}] $c$ consecutively crosses arcs $\gamma'$ and $\gamma''$ such that $p(\gamma', \gamma'', c)$ is on the right of $c$;
        $c'$ crosses $\gamma'$ such that $c'$ and $\gamma''$ have a common endpoint which is not $m_t$; and
        $c''$ crosses $\gamma''$ such that $c''$ and $\gamma'$ have a common endpoint which is not $m_s$.
\item[{\rm(3)}] $c''\simeq \mathrm{prot}_c(c')$ $($equivalently, $c' \simeq \mathrm{nrot}_c(c''))$
    {\rm(}Definition \ref{def-3.4}{\rm)};.
\end{enumerate}
Then we have a short exact sequence
\[0 \longrightarrow M(c') \longrightarrow M(c) \longrightarrow M(c'') \longrightarrow 0.\]
\end{proposition}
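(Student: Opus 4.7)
The plan is to reduce the statement to the standard construction of canonical short exact sequences of string modules by Butler--Ringel, transporting the geometric hypotheses through the bijection of Remark \ref{rmk. in BS}. To each permissible curve $c$ in $\Surf_A$, that bijection attaches a string $s(c)$ of $A$ satisfying $M(c) = M(s(c))$. Writing $c = c_1 \cdots c_r$ for the segments of $c$ between its consecutive crossings with $\Gamma_A$, let $j$ be the index such that $\gamma_j = \gamma'$ and $\gamma_{j+1} = \gamma''$; the $j$-th letter $\epsilon_j$ of $s(c)$ then records the crossing at the common endpoint $p := p(\gamma', \gamma'', c)$, and $s(c) = u \cdot \epsilon_j \cdot v$ where $u = \epsilon_1 \cdots \epsilon_{j-1}$ and $v = \epsilon_{j+1} \cdots \epsilon_{r-2}$.

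By condition (2), the vertex $p$ lies on the right of $c$. Under the orientation rule spelled out in Remark \ref{rmk. in BS} (clockwise-around-common-endpoint $\Leftrightarrow$ formal inverse), this forces $\epsilon_j = \alpha^{-1}$ for an arrow $\alpha\colon \gamma_{j+1} \to \gamma_j$ of $Q$. I would next match $s(c') \simeq u$ and $s(c'') \simeq v$. Condition (1) together with the geometric description of $c'$ in condition (2), and the minimality of intersection with $\Gamma_A$ up to homotopy, force $c'$ to accompany $c$ through exactly the elementary polygons corresponding to the initial segments $c_1, \ldots, c_j$, so its consecutive crossings are $\gamma_1, \ldots, \gamma_j$ with the same local orientations as $c$; this gives $s(c') \simeq u$. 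The identification $s(c'') \simeq v$ is the genuine input of condition (3): the two-step rotation in Definition \ref{def-3.4}, applied to $c'$ along $c$ at the pivot $j$, first slides the shared endpoint of $c'$ from $m_s$ to $m_t$ along $c$ and then sweeps the free endpoint along $\partial \Surf_A$ to a marked point lying on the opposite side of the polygon at $p$; a segment-by-segment walk through this rotation reproduces exactly the crossings $\gamma_{j+1}, \ldots, \gamma_{r-1}$ of $c$ with the matching local orientations.

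With the factorization $s(c) = u \cdot \alpha^{-1} \cdot v$ in hand, I would invoke the standard canonical short exact sequence for string modules: when a string factors as $u \cdot \alpha^{-1} \cdot v$ with $\alpha \in Q_1$, the basis elements of $M(s(c))$ indexed by the vertices of $u$ span a submodule isomorphic to $M(u)$, and the quotient is isomorphic to $M(v)$, producing
\[
0 \longrightarrow M(u) \longrightarrow M(s(c)) \longrightarrow M(v) \longrightarrow 0.
\]
Translated through the identifications $M(s(c)) = M(c)$, $M(u) = M(c')$, $M(v) = M(c'')$, this is the desired sequence. (If one prefers the direct-arrow convention, one first replaces $c$ by $c^-$, which does not change $M(c)$; the argument is symmetric.)

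The main obstacle is the precise geometric-to-combinatorial translation of the rotation in Step~3 above: one must verify that $\mathrm{prot}_c(c')$ really produces the string $v$, tracking the rotation's two steps across each elementary polygon they traverse, and handling the auxiliary cases where two-gons intervene so that extra points $\rbullet$ appear as endpoints (as in Definition \ref{def. m. r. s.}(2)). Once this bookkeeping is settled, the rest of the argument is a direct application of the string-module formalism; the orientation check in Remark \ref{rmk. in BS} and the formation rule for arrows in Definition \ref{def. alg. of m. r. s.} together guarantee that the sub/quotient roles in the Butler--Ringel sequence align with the red/blue/orange roles in \Pic \ref{Pic. of prop. s. e. s.}.
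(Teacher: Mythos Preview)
Your proposal is correct and follows essentially the same route as the paper: the paper's proof simply observes that $c$ corresponds to a string of the form $\cdots -\!\!- \gamma' \longleftarrow \gamma'' -\!\!- \cdots$, that $c'$ and $c''$ correspond to its left and right substrings $\cdots -\!\!- \gamma'$ and $\gamma'' -\!\!- \cdots$, and concludes the short exact sequence from the standard string-module construction. Your version is a more detailed elaboration of the same idea, making explicit the orientation check and the role of condition (3), both of which the paper leaves implicit.
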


\begin{figure}[htbp]
\begin{center}
\begin{tikzpicture}[scale=0.7]
\filldraw [color=black!20] (-2,1)-- (-2,-1)--(-2.1,-1)--(-2.1,1);
\filldraw [color=black!20] (2,1)-- (2,-1)--(2.1,-1)--(2.1,1);
\filldraw [color=black!20] (-1,2)--(1,2)--(1,2.1)--(-1,2.1);
\filldraw [color=black!20] (-1,-2)--(1,-2)--(1,-2.1)--(-1,-2.1);
\draw [line width=1.2pt] (-2,1)-- (-2,-1);
\draw [line width=1.2pt] (2,1)-- (2,-1);
\draw [line width=1.2pt] (-1,2)-- (1,2);
\draw [line width=1.2pt] (-1,-2)-- (1,-2);
\draw [line width=12pt] [white] (-0.2,2) -- (0.2,2) ;
\draw (0,-2)-- (-0.5,2);
\draw (0,-2)-- (0.5,2);
\begin{scriptsize}
\fill [color=black] (0,-2) circle (2.2pt);
\fill [color=black] (-0.5,2) circle (2.2pt);
\fill [color=black] (0.5,2) circle (2.2pt);
\fill [color=black] (-2,-0.5) circle (2.2pt);
\fill [color=black] (2,0.5) circle (2.2pt);
\end{scriptsize}
\draw[red] (-2,-0.5) to[out=0, in=180] (2,0.5); \draw[->][red] (1.79,0.5)--(1.8,0.5); \draw[red] (-1,-0.5) node {$c$};
\draw[blue] (-2,-0.5) to (0.5,2); \draw[blue] (-1,0.7) node {$c'$};
\draw[orange] (2,0.5) to (-0.5,2); \draw[orange] (1,1.3) node {$c''$};
\draw (-0.4,0.4) node{$\gamma'$}; \draw (0.4,0.4) node{$\gamma''$};
\draw (-2,-0.5) node[left]{$m_s$}; \draw (2,0.5) node[right]{$m_t$};
\end{tikzpicture}
\ \
\begin{tikzpicture}[scale=0.7]
\filldraw [color=black!20] (-2,1)-- (-2,-1)--(-2.1,-1)--(-2.1,1);
\filldraw [color=black!20] (2,1)-- (2,-1)--(2.1,-1)--(2.1,1);
\filldraw [color=black!20] (-1,2)--(1,2)--(1,2.1)--(-1,2.1);
\filldraw [color=black!20] (-1,-2)--(1,-2)--(1,-2.1)--(-1,-2.1);
\draw [line width=1.2pt] (-2,1)-- (-2,-1);
\draw [line width=1.2pt] (2,1)-- (2,-1);
\draw [line width=1.2pt] (-1,2)-- (1,2);
\draw [line width=1.2pt] (-1,-2)-- (1,-2);
\draw [line width=12pt] [white] (-0.2,-2) -- (0.2,-2);
\draw (0,2)-- (-0.5,-2);
\draw (0,2)-- (0.5,-2);
\begin{scriptsize}
\fill [color=black] (0, 2) circle (2.2pt);
\fill [color=black] (-0.5,-2) circle (2.2pt);
\fill [color=black] (0.5,-2) circle (2.2pt);
\fill [color=black] (-2,-0.5) circle (2.2pt);
\fill [color=black] (2,0.5) circle (2.2pt);
\end{scriptsize}
\draw[red] (-2,-0.5) to[out=0, in=180] (2,0.5); \draw[<-][red] (1.79,0.5)--(1.8,0.5); \draw[red] (-1,-0.5) node {$c$};
\draw[orange] (-2,-0.5) to (0.5,-2); \draw[orange] (-1,-1.3) node {$c''$};
\draw[blue] (2,0.5) to (-0.5,-2); \draw[blue] (1,-0.3) node {$c'$};
\draw (-0.4,0.4) node{$\gamma''$}; \draw (0.4,0.4) node{$\gamma'$};
\draw (-2,-0.5) node[left]{$m_t$}; \draw (2,0.5) node[right]{$m_s$};
\end{tikzpicture}
\caption{The short exact sequence is described in marked ribbon surface.}
\label{Pic. of prop. s. e. s.}
\end{center}
\end{figure}
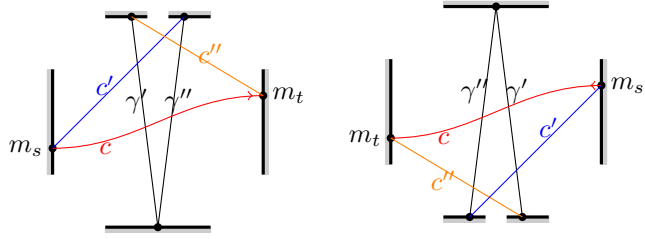

\begin{proof}
Notice that $c$ corresponds to such a string which is of the form
$ s_c:=\ \  \cdots -\!\!\!- \gamma' \longleftarrow \gamma'' -\!\!\!- \cdots. $
Then $c'$ and $c''$ correspond to
$s_{c'}:=\ \ \cdots -\!\!\!- \gamma' \ \text{ and } \ \ s_{c''}:=\ \ \gamma'' -\!\!\!- \cdots, \ \text{ respectively. } $
Furthermore, $M(c)$, $M(c')$ and $M(c'')$ are string modules corresponding to $s_c$, $s_{c'}$ and $s_{c''}$, respectively,
and hence
$$0 \longrightarrow M(c') \longrightarrow M(c) \longrightarrow M(c'') \longrightarrow 0$$ is exact.
\end{proof}

\begin{lemma} \label{lemm. submod.}
Let $A$ be a gentle algebra and $\Surf_A=(\Surf_A, \M_A,\Gamma_A)$ its marked ribbon surface.
Let $c=c_1\cdots c_r:(0,1)\to \Surf_A\backslash\partial\Surf_A$ be a permissible curve consecutively
crossing $\gamma_1, \cdots \gamma_{r-1}$ such that the endpoints $p$ and $q$
of $\gamma_i$ $(1 < i < r-1)$ are on the left and right of $c$,
respectively $($see \Pic \ref{Pic. of lemm. submod.}$)$. Then
for any permissible curve $c'$ consecutively crossing $\gamma_m, \cdots, \gamma_n$ $(1\le m < i < n \le r-1)$
such that
\begin{itemize}
  \item $\gamma_m, \cdots, \gamma_i$ have a common endpoint
    $p = p(\gamma_{m}, \gamma_{m+1}, c) = \cdots = p(\gamma_{i-1}, \gamma_i, c)$, and
  \item $\gamma_i, \cdots, \gamma_n$ have a common endpoint
    $q = p(\gamma_{i}, \gamma_{i+1}, c) = \cdots = p(\gamma_{n-1}, \gamma_n, c)$,
\end{itemize}
we have $M(c')\le M(c)$.
\end{lemma}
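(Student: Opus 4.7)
The plan is to realize $M(c')$ as a submodule of $M(c)$ by peeling off the tails of $c$ that lie outside the peak region $\gamma_m,\ldots,\gamma_n$ via iterated applications of Proposition~\ref{prop. s. e. s.}. By the Baur--Coelho Sim\~{o}es correspondence (Remark~\ref{rmk. in BS}) and the convention that counterclockwise crossings around an endpoint on the left of the curve produce direct arrows, the hypothesis forces the string $s_c$ to contain the substring whose letters $e_m,\ldots,e_{i-1}$ are direct (arising from the shared endpoint $p$ on the left of $c$) and whose letters $e_i,\ldots,e_{n-1}$ are formal inverses (from $q$ on the right). Since $c'$ shares the same local crossing pattern with $c$ on this segment by permissibility, its associated string $s_{c'}$ is precisely the substring $e_m e_{m+1}\cdots e_{n-1}$, and the natural candidate for the inclusion $\iota\colon M(c')\hookrightarrow M(c)$ sends each basis vector of $M(c')$ to the basis vector of $M(c)$ indexed by the same arc.

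The next main step is to truncate $c$ down to $c'$ by removing one outermost crossing at a time. On the right, I would iteratively strip off the outermost crossing past $\gamma_n$: if it exhibits a valley configuration (common endpoint on the right of $c$), Proposition~\ref{prop. s. e. s.} directly supplies a short exact sequence whose left term is the truncated curve still containing the entire peak region; if instead it exhibits a peak configuration, the orientation-reversed analogue of Proposition~\ref{prop. s. e. s.} (obtained by interchanging positive and negative rotations in Definitions~\ref{def-3.4} and \ref{def-3.5}) produces a symmetric short exact sequence in which the relevant submodule is again the truncated curve. In either case the peak region around $\gamma_i$ is preserved, so after finitely many iterations the right end of the curve terminates at $\gamma_n$. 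A symmetric procedure, applied to the left tail, reduces the left end to $\gamma_m$. Composing the resulting chain of submodule inclusions then yields $M(c')\le M(c)$.

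The principal obstacle will be the bookkeeping required to ensure that the appropriate form of Proposition~\ref{prop. s. e. s.} applies at each peeling stage and that the iterated inclusions genuinely compose to $\iota$. I would address this by reading off the necessary configuration directly from the direction of the outermost letter of the current string: whether the next truncation uses the valley form or the peak form of Proposition~\ref{prop. s. e. s.} is forced by the sign of that letter, and the rotations in Definitions~\ref{def-3.4} and \ref{def-3.5} provide explicit geometric witnesses. A short verification then shows that the substring $s_{c'}$ in the peak region is left intact by every truncation, so the composition of the resulting chain of inclusions places $M(c')$ as a submodule of $M(c)$, as claimed.
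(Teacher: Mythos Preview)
Your first paragraph already contains the paper's entire argument. Once you have identified $s_{c'}$ as the substring
\[
\gamma_m \longrightarrow \gamma_{m+1} \longrightarrow \cdots \longrightarrow \gamma_i \longleftarrow \cdots \longleftarrow \gamma_{n-1} \longleftarrow \gamma_n
\]
of $s_c$, the standard combinatorics of string modules gives $M(c') \le M(c)$ immediately, because the letters of $s_c$ adjacent to this substring on either side (when they exist) point into it. The paper's proof is exactly this: it writes down $s_c$ and $s_{c'}$ and concludes in one line. Your ``candidate'' map $\iota$ is already the inclusion; no further verification via Proposition~\ref{prop. s. e. s.} is needed.

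The peeling procedure in your second and third paragraphs is both unnecessary and genuinely flawed. The error is in the ``peak configuration'' case. If the common endpoint of the outermost pair $\gamma_{k-1},\gamma_k$ lies on the \emph{left} of $c$, then by your own convention the corresponding letter is the direct arrow $\gamma_{k-1}\to\gamma_k$, so $\gamma_k$ is a sink of the string. Removing that last vertex therefore produces a \emph{quotient} of the current module, not a submodule: the orientation-reversed analogue of Proposition~\ref{prop. s. e. s.} places the truncated curve on the right of the short exact sequence, not the left. Your chain of inclusions breaks at every peak encountered past $\gamma_n$ (and symmetrically before $\gamma_m$), and there is no way to repair this by peeling one crossing at a time. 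The only way to guarantee that a truncation stays a submodule is to cut where the boundary arrow points inward --- which is precisely the one-line string argument you already had.
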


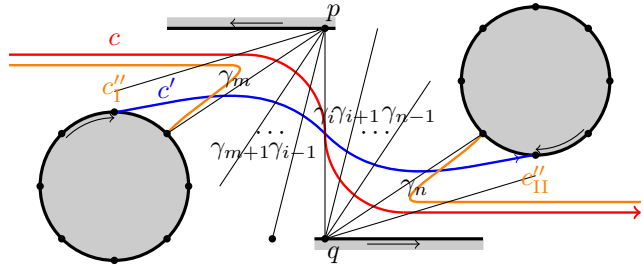
\begin{figure}[htbp]
\begin{center}
\begin{tikzpicture}[scale=0.7]
\filldraw[color=black!20] (-3,2) to (1,2) to (1,2.2) to (-3,2.2);
\filldraw[color=black!20] (-1,-2) to (3,-2) to (3,-2.2) to (-1,-2.2);
\filldraw[color=black!20] (-4,-1) circle (1.41cm);
\filldraw[color=black!20] (4,1) circle (1.41cm);
% permissible curves
\draw [line width=1.2pt] (-4,-1) circle (1.41cm);
\draw [line width=1.2pt] (4,1) circle (1.41cm);
\draw [line width=1.2pt] (-3,2)-- (1,2);
\draw [line width=1.2pt] (-1,-2)-- (3,-2);
\draw [blue, line width = 0.9pt]
      (-4,0.4) to[out=10, in=135] (0,0) to[out=-45, in=-170] (4,-0.4);
\draw[->] [blue](-4,0.4) to[out=10, in=135] (0,0) to[out=-45, in=-170] (3.7,-0.46);
\draw [shift={(-1.5,0)}, red, line width = 0.9pt]
      plot[domain=0:1.57,variable=\t]({1*1.5*cos(\t r)+0*1.5*sin(\t r)},{0*1.5*cos(\t r)+1*1.5*sin(\t r)});
\draw [shift={(1.5,0)}, red, line width = 0.9pt]
      plot[domain=3.14:4.71,variable=\t]({1*1.5*cos(\t r)+0*1.5*sin(\t r)},{0*1.5*cos(\t r)+1*1.5*sin(\t r)});
\draw [red, line width = 0.9pt] (-1.5,1.5)-- (-6,1.5);
\draw[->] [red, line width = 0.9pt] (1.5,-1.5)-- (6,-1.5);
\draw [orange, line width = 0.9pt] (-6,1.3) to (-1.8,1.3) to[out=  0,in= 50] (-3,0);
\draw [orange, line width = 0.9pt] (6,-1.3) to (1.8,-1.3) to[out=180,in=230] (3,0);
\draw [white][line width = 12pt] ( 0.2, 2.1) -- ( 1.6, 2.1);
\draw [white][line width = 12pt] (-1.6,-2.1) -- (-0.2,-2.1);
% oriented
\draw[<-] (-1.8, 2.1)-- (-0.8, 2.1);
\draw[->] ( 0.8,-2.1)-- ( 1.8,-2.1);
\draw[<-] (-4, 0.3) arc ( 90: 135:1.3);
\draw[<-] (4, -0.3) arc (-90:-45 :1.3);
% arc
\draw (0,2)-- (0,-2);
\draw (0,2)-- (-1,-2);
\draw (0,2)-- (-2,-1);
\draw (0,2)-- (-3,0);
\draw (0,-2)-- (1,2);
\draw (0,-2)-- (2,1);
\draw (0,-2)-- (3,0);
\draw (0,2)-- (-4,0.8);
\draw (0,-2)-- (4,-0.8);
\begin{scriptsize}
%\fill [color=black] (-3,2) circle (2pt);
%\fill [color=black] (3,-2) circle (2pt);
\fill [color=black] (0,2) circle (2pt);
\fill [color=black] (0,-2) circle (2pt);
\fill [color=black] (-1,-2) circle (2pt);
%\fill [color=black] (-2,-1) circle (2pt);
\fill [color=black] (-3,0) circle (2pt);
%\fill [color=black] (1,2) circle (2pt);
%\fill [color=black] (2,1) circle (2pt);
\fill [color=black] (3,0) circle (2pt);
\fill [color=black] (-4,0.41) circle (2pt);
\fill [color=black] (4,-0.41) circle (2pt);
\fill [color=black] (-5,0) circle (2pt);
\fill [color=black] (-5.41,-1) circle (2pt);
\fill [color=black] (5,0) circle (2pt);
\fill [color=black] (5.41,1) circle (2pt);
\fill [color=black] (-2.59,-1) circle (2pt);
\fill [color=black] (-5,-2) circle (2pt);
\fill [color=black] (-4,-2.41) circle (2pt);
\fill [color=black] (-3,-2) circle (2pt);
\fill [color=black] (2.59,1) circle (2pt);
\fill [color=black] (3,2) circle (2pt);
\fill [color=black] (5,2) circle (2pt);
\fill [color=black] (4,2.41) circle (2pt);
\end{scriptsize}
\draw[color=black] (0.16, 2.3) node{$p$};
\draw[color=black] (0.16,-2.3) node{$q$};
\draw (   0, 0) node[above]{$\gamma_i$};
\draw (-0.6, 0) node[below]{$\gamma_{i-1}$};
\draw (+0.6, 0) node[above]{$\gamma_{i+1}$};
\draw (-1  , 0) node{$\cdots$};
\draw ( 1.0, 0) node{$\cdots$};
\draw (-1.6, 0) node[below]{$\gamma_{m+1}$};
\draw ( 1.6, 0) node[above]{$\gamma_{n-1}$};
\draw (-1.7, 0.7) node[above]{$\gamma_{m}$};
\draw ( 1.7,-0.7) node[below]{$\gamma_{n}$};
\draw[red]    (-4.0, 1.5) node[above]{$c$};
\draw[blue]   (-3.0, 0.5) node[above]{$c'$};
\draw[orange] (-4.0, 1.3) node[below]{$c_{\I}''$};
\draw[orange] ( 4.0,-1.3) node[above]{$c_{\II}''$};
\end{tikzpicture}
\caption{$c$ and $c'$ given in Lemma \ref{lemm. submod.} corresponding to two module $M(c)$ and $M(c')$,
we have $M(c')\le M(c)$.
The formal direct sum $c_{\I}'' \oplus c_{\II}''$ corresponds to the module $M(c_{\I}''\oplus c_{\II}'')$
which isomorphic to the quotient $M(c)/M(c')$.}
\label{Pic. of lemm. submod.}
\end{center}
\end{figure}

\begin{proof}
The strings $s_c$ and $s_{c'}$ corresponding to $c$ and $c'$ are
\begin{align}
 \cdots \longrightarrow \gamma_m \longrightarrow \gamma_{m+1} \longrightarrow \cdots \longrightarrow \gamma_i
 \longleftarrow \cdots \longleftarrow \gamma_{n-1}\longleftarrow \gamma_n \longleftarrow  \cdots\ \text{ and } \nonumber \\
 \gamma_m \longrightarrow \gamma_{m+1} \longrightarrow \cdots \longrightarrow \gamma_i
 \longleftarrow \cdots \longleftarrow \gamma_{n-1} \longleftarrow \gamma_n \text{, respectively. }\nonumber
\end{align}
Then the string module corresponding to $s_{c'}$ is a submodule of the module corresponding to $s_c$,
and thus $M(c')\le M(c)$.
\end{proof}

\begin{proposition}\label{prop. submod. of string mod.3}
Keeping the notations in Lemma \ref{lemm. submod.}, we have that
$c' = c_m^{\mathfrak{T}}c_{m+1}\cdots c_nc_{n+1}^{\mathfrak{T}}: (0,1) \to \Surf_A\backslash\partial\Surf_A$
crosses $\gamma_i$ and that $c$ and $c'$ have an intersection.
Assume that
\begin{itemize}
\item $c_{\I}''$ is a permissible curve which is given as follows:
\begin{itemize}
\item $c'$ is divided into two parts by $c$, where denote the part with endpoints $c'(0^+)$ and $c' \cap \gamma_i$ by $c_{\I}'$;
\item $c_{\I}''\simeq\mathrm{prot}_{c}(c'_{\I})$ $($Definition \ref{def-3.5}$)$;
\end{itemize}
\item and $c_{\II}''$ is a permissible curve which is given as follows:
\begin{itemize}
\item $c'$ is divided into two parts by $c$, where denote the part with endpoints $c' \cap \gamma_i$ and $c'(1^-)$ by $c_{\II}'$;
\item $c_{\II}''\simeq\mathrm{prot}_{c}(c'_{\II})$ $($Definition \ref{def-3.5}$)$.
\end{itemize}
\end{itemize}
Then the quotient module $M(c)/M(c')$ is isomorphic to $M(c_{\I}''\oplus c_{\II}'')$
{\rm(}see \Pic \ref{fig:Prop-3.8 and 3.11} $(1)${\rm)}.
\end{proposition}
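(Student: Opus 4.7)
The plan is to translate the claim into a short argument at the level of strings, using the dictionary between permissible curves and string modules from Remark \ref{rmk. in BS} together with the rotation operations of Definition \ref{def-3.5}. First, by the proof of Lemma \ref{lemm. submod.} the string $s_c$ takes the form
\[
s_c:\ \cdots \gamma_{m-1} \to \gamma_m \to \gamma_{m+1} \to \cdots \to \gamma_i \leftarrow \gamma_{i+1} \leftarrow \cdots \leftarrow \gamma_n \leftarrow \gamma_{n+1} \cdots,
\]
and $s_{c'}$ is the peak substring running from $\gamma_m$ up to $\gamma_n$. Choosing the standard basis $\{b_1,\dots,b_{r-1}\}$ of $M(c)$ indexed by the vertices of $s_c$, the copy of $M(c')$ provided by Lemma \ref{lemm. submod.} sits inside $M(c)$ as $\mathrm{span}_k\{b_m,\dots,b_n\}$.

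Passing to $M(c)/M(c')$ annihilates $b_m,\dots,b_n$, so the two letters of $s_c$ linking $b_{m-1}$ to $b_m$ and $b_n$ to $b_{n+1}$ act as zero on the quotient; no other arrow of $A$ can mix the residues of $b_1,\dots,b_{m-1}$ with those of $b_{n+1},\dots,b_{r-1}$, and consequently
\[
M(c)/M(c')\;\cong\; N_{\I}\oplus N_{\II},
\]
where $N_{\I}$ (respectively $N_{\II}$) is the string module on the initial substring $\gamma_1\cdots\gamma_{m-1}$ (respectively on the terminal substring $\gamma_{n+1}\cdots\gamma_{r-1}$) of $s_c$.

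It remains to identify $N_{\I}\cong M(c_{\I}'')$ and $N_{\II}\cong M(c_{\II}'')$. The intersection point $x=c\cap c'$ lies on $\gamma_i$, so $c_{\I}'$ runs from $c'(0^+)$ to $x$; by Definition \ref{def-3.5}, $c_{\I}''=\mathrm{prot}_c(c_{\I}')$ drags $x$ to $c(0^+)$ and slides the other endpoint of $c_{\I}'$ along $\partial\Surf_A$ in the positive direction to the neighbouring vertex of the elementary polygon containing the end segment $c_m$ of $c$. A local inspection inside each elementary polygon that $c$ traverses before $\gamma_m$ shows that $c_{\I}''$ consecutively crosses exactly $\gamma_1,\dots,\gamma_{m-1}$ and turns, at every crossing, in the same rotational sense as $c$; hence its string coincides with the initial substring $\gamma_1\cdots\gamma_{m-1}$ of $s_c$, and so $M(c_{\I}'')\cong N_{\I}$. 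The symmetric argument for $\mathrm{prot}_c(c_{\II}')$ gives $M(c_{\II}'')\cong N_{\II}$, and combined with Definition \ref{def. d. s.} this yields the claim.

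The main obstacle I expect is the last identification: one has to verify that positive rotation along $c$ preserves the direct-versus-inverse character of every letter appearing on the two tails of $s_c$. This is a local case analysis inside the two elementary polygons bordering $\gamma_m$ and $\gamma_n$, guided by the picture but not entirely automatic; once settled, the quotient computation above closes the argument.
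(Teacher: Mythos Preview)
Your proposal is correct and follows essentially the same strategy as the paper: compute the quotient $M(c)/M(c')$ at the level of strings, obtaining the two tail pieces on $\gamma_1,\dots,\gamma_{m-1}$ and $\gamma_{n+1},\dots,\gamma_{r-1}$, and then identify these with $M(c_{\I}'')$ and $M(c_{\II}'')$. The only cosmetic difference is that the paper names the quotient pieces directly as the truncated curves $\tilde{c}_{\I}''=c_1\cdots c_{m-1}c_m^{\mathfrak{T}}$ and $\tilde{c}_{\II}''=c_{n+1}^{\mathfrak{T}}c_{n+2}\cdots c_r$ and then asserts $\tilde{c}_{\I}''\simeq c_{\I}''$, $\tilde{c}_{\II}''\simeq c_{\II}''$, whereas you spell out the quotient computation with an explicit basis before making the same identification; the ``local inspection'' you flag as the remaining obstacle is exactly the step the paper handles by that equivalence of permissible curves.
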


\begin{figure}[htbp]
\begin{center}
%%%%%%%%%%%%%%%%%%%%%%%%%%%%%%%%%%%%%%%%%%%%%%%%%%%%%%%%%%%%%%
%%%%%%%%%%%%%%%%%%%%%%%%% prop 3.8 %%%%%%%%%%%%%%%%%%%%%%%%%%%
%%%%%%%%%%%%%%%%%%%%%%%%%%%%%%%%%%%%%%%%%%%%%%%%%%%%%%%%%%%%%%
\ \
\begin{tikzpicture}[scale=0.7]
\draw [line width=1.2pt] (-2,1)-- (-2,-1);
\draw [line width=1.2pt] (2,1)-- (2,-1);
\draw [line width=1.2pt] (-1,2)-- (1,2);
\draw [line width=1.2pt] (-1,-2)-- (1,-2);
\draw [line width=12pt] [white] (-2,-0.2) -- (-2,0.2);
\draw [line width=12pt] [white] ( 2,-0.2) -- ( 2,0.2);
\draw [line width=12pt] [white] ( 0.2, 2) -- ( 0.6, 2);
\draw [line width=12pt] [white] (-0.2,-2) -- (-0.6,-2);
\draw (0,-2)-- (0, 2);
\draw (0, 2)-- (-0.8,-2.0); \draw (0,-2)-- ( 0.8, 2.0);
\draw (0, 2)-- (-1.5,-1.5); \draw (0,-2)-- ( 1.5, 1.5);
\draw (0, 2)-- (-2.0,-0.8); \draw (0,-2)-- ( 2.0, 0.8);
\draw (0, 2)-- (-2.0, 0.4); \draw (0,-2)-- ( 2.0,-0.4);
\begin{scriptsize}
\fill [color=black] ( 0.0,-2.0) circle (2.2pt); \fill [color=black] ( 0.0, 2.0) circle (2.2pt);
\fill [color=black] ( 0.8, 2.0) circle (2.2pt); \fill [color=black] (-0.8,-2.0) circle (2.2pt);
\fill [color=black] (-2.0,-0.8) circle (2.2pt); \fill [color=black] ( 2.0, 0.8) circle (2.2pt);
\end{scriptsize}
\draw[red] (-2,1.5) to[out=0, in=180] ( 2,-1.5); \draw[red] (-1, 1.1) node[above] {$c$};
\draw[blue] (-2, 0.4) to[out=0, in=180] ( 2,-0.4); \draw[blue] (-1,0.4) node {$c'$};
\draw[orange] (-2.0,-0.8) to[out=70, in=0] (-2, 1.45); \draw[orange] (-1.8, 1.4) node[below] {$c_{\I}''$};
\draw[orange] ( 2.0, 0.8) to[out=-110, in=180] ( 2,-1.45); \draw[orange] ( 1.8,-1.4) node[above] {$c_{\II}''$};
\fill [color=black] (-2, 0.4) circle (2.2pt); \fill [color=black] ( 2,-0.4) circle (2.2pt);
% number
\draw  (0,-2.2) node[below]{(1)};
\end{tikzpicture}
\ \ \ \
%%%%%%%%%%%%%%%%%%%%%%%%%%%%%%%%%%%%%%%%%%%%%%%%%%%%%%%%%%%%%%
%%%%%%%%%%%%%%%%%%%%%%%%% pro 3.11 %%%%%%%%%%%%%%%%%%%%%%%%%%%
%%%%%%%%%%%%%%%%%%%%%%%%%%%%%%%%%%%%%%%%%%%%%%%%%%%%%%%%%%%%%%
\ \ \ \
\begin{tikzpicture}[scale=0.7]
\draw [line width=1.2pt] (-2,1)-- (-2,-1);
\draw [line width=1.2pt] (2,1)-- (2,-1);
\draw [line width=1.2pt] (-1,2)-- (1,2);
\draw [line width=1.2pt] (-1,-2)-- (1,-2);
\draw [line width=12pt] [white] (-2,-0.2) -- (-2,0.2);
\draw [line width=12pt] [white] ( 2,-0.2) -- ( 2,0.2);
\draw [line width=12pt] [white] (-0.2, 2) -- (-0.6, 2);
\draw [line width=12pt] [white] ( 0.2,-2) -- ( 0.6,-2);
\draw (0,-2)-- (0, 2);
\draw (0,-2)-- (-0.8, 2.0); \draw (0, 2)-- ( 0.8,-2.0);
\draw (0,-2)-- (-1.5, 1.5); \draw (0, 2)-- ( 1.5,-1.5);
\draw (0,-2)-- (-2.0,-0.8); \draw (0, 2)-- ( 2.0, 0.8);
\draw (0,-2)-- (-2.0, 0.4); \draw (0, 2)-- ( 2.0,-0.4);
\begin{scriptsize}
\fill [color=black] ( 0.0,-2.0) circle (2.2pt); \fill [color=black] ( 0.0, 2.0) circle (2.2pt);
\fill [color=black] ( 0.8,-2.0) circle (2.2pt); \fill [color=black] (-0.8, 2.0) circle (2.2pt);
\fill [color=black] (-2.0,-0.8) circle (2.2pt); \fill [color=black] ( 2.0, 0.8) circle (2.2pt);
\end{scriptsize}
\draw[red] (-2,1.5) to[out=0, in=180] ( 2,-1.5); \draw[red] (-1, 1.1) node[above] {$c$};
\draw[orange] (-2, 0.4) to[out=0, in=180] ( 2,-0.4); \draw[orange] (-1,0.4) node {$c''$};
\draw[blue] (-2.0,-0.8) to[out=70, in=0] (-2, 1.45); \draw[blue] (-1.8, 1.4) node[below] {$c_{\I}'$};
\draw[blue] ( 2.0, 0.8) to[out=-110, in=180] ( 2,-1.45); \draw[blue] ( 1.8,-1.4) node[above] {$c_{\II}'$};
\fill [color=black] (-2, 0.4) circle (2.2pt); \fill [color=black] ( 2,-0.4) circle (2.2pt);
% number
\draw  (0,-2.2) node[below]{(2)};
\end{tikzpicture}
\caption{The descriptions of short exact sequences given in Propositions \ref{prop. submod. of string mod.3} and \ref{prop. quotientmod.}.}
\label{fig:Prop-3.8 and 3.11}
\end{center}
\end{figure}
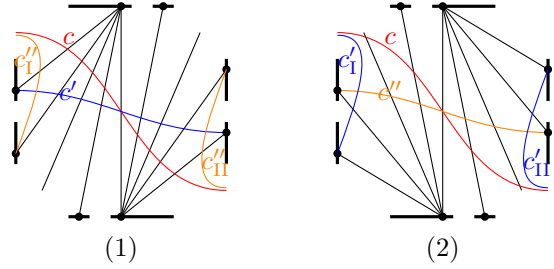

\begin{proof}
Similar to the proof of Lemma \ref{lemm. submod.}, we have that $M(c)/M(c')$ corresponds to two permissible curves
$$\tilde{c}_{\I}''=c_1\cdots c_{m-1}c_{m}^{\mathfrak{T}}\ {\rm and}\ \tilde{c}_{\II}''=c_{n+1}^{\mathfrak{T}}c_{n+2}\cdots c_r.$$
We can see that $\tilde{c}_{\I}''$ consecutively crosses arcs $\gamma_1, \cdots, \gamma_{m-1}$ and $\tilde{c}_{\II}''$
consecutively crosses arcs $\gamma_{n+1}, \cdots, \gamma_{r-1}$ ($1\le m\le i\le n\le r-1$) (see \Pic \ref{Pic. of lemm. submod.}).
Thus $\tilde{c}_{\I}''$ and $\tilde{c}_{\II}''$ lie in the equivalence class $[c_{\I}'']$ and $[c_{\II}'']$, respectively,
and therefore $M(c)/M(c')\cong M(\tilde{c}_{\I}''\oplus \tilde{c}_{\II}'') \cong M(c_{\I}''\oplus c_{\II}'')$.
\end{proof}

\begin{remark}\rm
If $\gamma_1=\gamma_m=\gamma_i$ (respectively, $\gamma_i=\gamma_n=\gamma_r$), then
$c' = c_1^{\mathfrak{T}}c_2\cdots c_nc_{n+1}^{\mathfrak{T}} \simeq c_1c_2\cdots c_nc_{n+1}^{\mathfrak{T}}$
(respectively, $c' = c_m^{\mathfrak{T}}c_{m+1}\cdots c_{r-1}c_r^{\mathfrak{T}} \simeq c_m^{\mathfrak{T}}c_{m+1}\cdots c_{r-1}c_r$),
and thus $c_{\I}''$ (respectively, $c_{\II}''$) is zero.
\end{remark}

The following two results are dual to Lemma \ref{lemm. submod.} and Proposition \ref{prop. submod. of string mod.3}, respectively.

\begin{lemma} \label{lemm. quotientmod.}
Let $A$ be a gentle algebra and $\Surf_A=(\Surf_A, \M_A,\Gamma_A)$ its marked ribbon surface.
Let $c=c_1\cdots c_r:(0,1)\to \Surf_A\backslash\partial\Surf_A$ be a permissible curve consecutively crossing
$\gamma_1, \cdots \gamma_{r-1}$ such that the endpoints $p$ and $q$ of $\gamma_i$ $(1\le i\le r-1)$
are on the right and left of $c$, respectively.
Then for any permissible curve $c''$ consecutively crossing $\gamma_m, \cdots, \gamma_n$
$(1\le m < i < n \le r-1)$ such that
\begin{itemize}
  \item $\gamma_m, \cdots, \gamma_i$ have a common endpoint
    $p = p(\gamma_{m}, \gamma_{m+1}, c) = \cdots = p(\gamma_{i-1}, \gamma_i, c)$, and
  \item $\gamma_i, \cdots, \gamma_n$ have a common endpoint
    $q = p(\gamma_{i}, \gamma_{i+1}, c) = \cdots = p(\gamma_{n-1}, \gamma_n, c)$,
\end{itemize}
we have that $M(c'')$ is a quotient module of $M(c)$.
\end{lemma}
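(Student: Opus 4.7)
The plan is to carry out the exact dual of the proof of Lemma \ref{lemm. submod.}, translating the permissible curves $c$ and $c''$ into their associated strings via Remark \ref{rmk. in BS} and then appealing to the standard description of submodules and quotients of string modules (following Butler–Ringel). The only substantive change from the previous lemma is that here the shared endpoints $p$ and $q$ of the arcs $\gamma_m,\ldots,\gamma_i$ and $\gamma_i,\ldots,\gamma_n$ sit on the opposite sides of $c$, which reverses the direction of the associated arrows and consequently replaces ``submodule'' by ``quotient''.

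First I would read off the string $s_c$ corresponding to $c$. By the rule in Remark \ref{rmk. in BS}, a consecutive crossing of $\gamma_{j-1}$ and $\gamma_j$ around a common endpoint on the right of $c$ contributes a formal inverse, while a consecutive crossing around a common endpoint on the left contributes a direct arrow. Since $p$ lies on the right of $c$ and $q$ lies on the left (the reverse of the setup in Lemma \ref{lemm. submod.}), the segment of $s_c$ between $\gamma_m$ and $\gamma_n$ has the shape
\[
\cdots -\!\!\!- \gamma_m \longleftarrow \gamma_{m+1} \longleftarrow \cdots \longleftarrow \gamma_i \longrightarrow \gamma_{i+1} \longrightarrow \cdots \longrightarrow \gamma_n -\!\!\!- \cdots,
\]
and $s_{c''}$ is precisely the indicated peak substring
\[
\gamma_m \longleftarrow \cdots \longleftarrow \gamma_i \longrightarrow \cdots \longrightarrow \gamma_n.
\]

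Next I would invoke the standard description of quotients of string modules: if a substring $t$ of $s$ has the property that the arrows in $s$ immediately extending $t$ at either endpoint (when they exist) point into $t$, then $M(t)$ is a quotient of $M(s)$, obtained by killing the submodule supported on the complementary basis vectors. By the orientation computation above, the arrows of $s_c$ immediately outside the substring $s_{c''}$ point towards $\gamma_m$ and $\gamma_n$, so $s_{c''}$ is a peak substring in this sense, and therefore $M(c'')=M(s_{c''})$ is a quotient of $M(c)=M(s_c)$.

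The only bookkeeping obstacle is the degenerate cases $m=1$ and $n=r-1$, where one or both of the ``arrows immediately outside'' is absent; in these cases the quotient is taken with respect to the trivial extension on that side, and the conclusion still holds. There is also the mild check that the hypothesis ``$c''$ consecutively crosses $\gamma_m,\ldots,\gamma_n$ with the prescribed shared endpoints'' really forces $s_{c''}$ to be this specific peak substring up to taking formal inverses, which follows from the bijection in Remark \ref{rmk. in BS} and the convention $M(s)\cong M(s^-)$. With these checks in hand the proof is complete.
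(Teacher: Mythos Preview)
Your overall strategy—dualize Lemma \ref{lemm. submod.} by translating the curves to strings and reading off the module-theoretic conclusion—is exactly what the paper intends (it simply declares the result dual to Lemma \ref{lemm. submod.} without spelling out the argument). However, your execution contains a sign error in the key step.

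You assert that ``if a substring $t$ of $s$ has the property that the arrows in $s$ immediately extending $t$ at either endpoint point into $t$, then $M(t)$ is a quotient of $M(s)$''. For right modules this is backwards: boundary arrows pointing \emph{into} $t$ make $M(t)$ a \emph{submodule} (the basis vectors supported on $t$ are then closed under the right $A$-action), while boundary arrows pointing \emph{out of} $t$ make the complement a submodule and hence $M(t)$ a quotient. A one-line sanity check: in the string $1 \to 2$, the simple $S(2)$ (boundary arrow pointing in) is the socle, whereas $S(1)$ (boundary arrow pointing out) is the top.

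Correspondingly, your claim that the boundary arrows of $s_c$ ``point towards $\gamma_m$ and $\gamma_n$'' is neither what you want nor what you actually computed (your orientation computation only determined the arrows \emph{inside} the substring). The honest dual of the display in the proof of Lemma \ref{lemm. submod.} is
\[
\cdots \longleftarrow \gamma_m \longleftarrow \cdots \longleftarrow \gamma_i \longrightarrow \cdots \longrightarrow \gamma_n \longrightarrow \cdots,
\]
with the boundary arrows pointing \emph{out of} $s_{c''}$; this is precisely the condition that makes $M(c'')$ a quotient. Once you correct this reversal, your argument coincides with the paper's.
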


\begin{proposition}\label{prop. quotientmod.}
Keeping the notations in Lemma \ref{lemm. quotientmod.}, we have that
$c'' = c_m^{\mathfrak{T}}c_{m+1}\cdots c_nc_{n+1}^{\mathfrak{T}}: (0,1) \to \Surf_A\backslash\partial\Surf_A$ crosses $\gamma_i$
and that $c$ and $c''$ have an intersection. Assume that
\begin{itemize}
\item $c_{\I}'$ is a permissible curve which is given as follows:
\begin{itemize}
\item $c''$ is divided into two parts, where denote the part with endpoints $c''(0^+)$ and $c'' \cap \gamma_i$ by $c_{\I}''$;
% \item move the point of $c_{\I}''$ at $c'' \cap \gamma_i$ to the endpoint $c(0^+)$;
% \item follows the positive direction of boundary, move the endpoint $c''(0^+)$ of $c''$ to the previous vertex of elementary polygon whose arc set contains $\gamma_{m-1}$ and $\gamma_{m}$.
\item $c_{\I}'\simeq \mathrm{nrot}_{c}(c_{\I}'')$ $($Definition \ref{def-3.5}$)$;
\end{itemize}
\item and $c_{\II}'$ is a permissible curve which is given as follows:
\begin{itemize}
\item $c''$ is divided into two parts, where denote the part with endpoints $c'' \cap \gamma_i$ and $c''(1^-)$ by $c_{\II}''$;
\item $c_{\II}'\simeq \mathrm{nrot}_{c}(c_{\II}'')$ $($Definition \ref{def-3.5}$)$.
\end{itemize}
\end{itemize}
Then $M(c'')$ is isomorphic to $M(c)/M(c_{\I}'\oplus c_{\II}')$ {\rm(}see \Pic \ref{fig:Prop-3.8 and 3.11} $(2)${\rm)}.
\end{proposition}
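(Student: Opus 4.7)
The plan is to mirror the argument of Proposition \ref{prop. submod. of string mod.3} under the reversal of orientation at $\gamma_i$. First I would read off the string $s_c$ corresponding to $c$ using the dictionary between permissible curves and strings recalled in Remark \ref{rmk. in BS}. Because the endpoints $p,q$ of $\gamma_i$ now sit on the \emph{right} and \emph{left} of $c$ (the opposite of the set-up in Lemma \ref{lemm. submod.}), the arrows around $\gamma_i$ reverse direction, so
\[ s_c \;=\; \cdots \longleftarrow \gamma_m \longleftarrow \cdots \longleftarrow \gamma_i \longrightarrow \gamma_{i+1} \longrightarrow \cdots \longrightarrow \gamma_n \longrightarrow \cdots\, ,\]
i.e.\ the arrows diverge from $\gamma_i$. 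Hence the middle substring
\[ s_{c''} \;=\; \gamma_m \longleftarrow \gamma_{m+1} \longleftarrow \cdots \longleftarrow \gamma_i \longrightarrow \cdots \longrightarrow \gamma_n\]
appears as a quotient string of $s_c$, consistent with Lemma \ref{lemm. quotientmod.}.

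Next I would compute the natural short exact sequence of string modules
\[0 \longrightarrow K \longrightarrow M(c) \longrightarrow M(c'') \longrightarrow 0,\]
where the kernel $K$ is the string module(s) obtained by truncating $s_c$ at the two ends of $s_{c''}$. Concretely, $K$ decomposes as the direct sum of the modules associated with the substrings $\cdots \leftarrow \gamma_{m-1}$ and $\gamma_{n+1}\rightarrow\cdots$ (if they are non-empty), which under the bijection of Remark \ref{rmk. in BS} correspond respectively to the permissible curves
\[\widetilde{c}'_{\I} \;=\; c_1c_2\cdots c_{m-1}c_m^{\mathfrak{T}} \qquad\text{and}\qquad \widetilde{c}'_{\II} \;=\; c_{n+1}^{\mathfrak{T}}c_{n+2}\cdots c_r.\]

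The remaining and main step is to identify these combinatorially defined curves with the geometrically defined $c'_{\I}$ and $c'_{\II}$ produced by the negative rotations $\mathrm{nrot}_c(c''_{\I})$ and $\mathrm{nrot}_c(c''_{\II})$ in Definition \ref{def-3.5}. For this I would argue exactly as in the proof of Proposition \ref{prop. submod. of string mod.3}: the two rotations act on the local picture around $\gamma_i$ by sliding the intersection point $c\cap c''$ off in the two opposite directions along the positively oriented boundary, so the resulting curves must consecutively cross precisely $\gamma_1,\ldots,\gamma_{m-1}$ and $\gamma_{n+1},\ldots,\gamma_{r-1}$ respectively, with endpoints forced by the rotation rule. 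Since equivalence of permissible curves is determined by the sequence of arcs they consecutively cross together with their end segments in the corresponding elementary polygons (Definition \ref{def. p.c. eq. cls.}), this shows $\widetilde{c}'_{\I}\simeq c'_{\I}$ and $\widetilde{c}'_{\II}\simeq c'_{\II}$, so that $K\cong M(c'_{\I}\oplus c'_{\II})$ and the proposition follows.

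The step I expect to require most care is the last one, namely verifying that the two negative rotations produce curves in the \emph{same} equivalence classes as $\widetilde{c}'_{\I}$ and $\widetilde{c}'_{\II}$ in every local configuration. In particular the degenerate cases where $m=1$ or $n=r-1$ (so that one of $c'_{\I}$, $c'_{\II}$ is trivial) and the cases where the rotation endpoint lands on an extra point $\rbullet$ rather than a marked point $\bullet$ need to be inspected separately, and I would dispose of them by reading off the definition of $\mathrm{nrot}$ and comparing with the truncated string, as is done in the parallel remark following Proposition \ref{prop. submod. of string mod.3}.
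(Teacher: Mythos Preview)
Your proposal is correct and is exactly the approach the paper intends: the paper does not give a separate proof of this proposition but declares it dual to Proposition~\ref{prop. submod. of string mod.3}, and your argument is precisely that duality carried out explicitly (reverse the orientation at $\gamma_i$, identify the kernel strings with the truncated curves $c_1\cdots c_{m-1}c_m^{\mathfrak{T}}$ and $c_{n+1}^{\mathfrak{T}}c_{n+2}\cdots c_r$, and check these lie in the equivalence classes of $\mathrm{nrot}_c(c''_{\I})$ and $\mathrm{nrot}_c(c''_{\II})$). Your caution about the degenerate cases $m=1$ and $n=r-1$ is also appropriate and mirrors the remark following Proposition~\ref{prop. submod. of string mod.3}.
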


By Propositions \ref{prop. s. e. s.}, \ref{prop. submod. of string mod.3} and \ref{prop. quotientmod.},
we get the following theorem.

\begin{theorem} \label{thm. s. e. s.}
Let $A=kQ/I$ be a gentle algebra and $\Surf_A = (\Surf_A, \M_A, \Gamma_A)$ its marked ribbon surface.
\begin{enumerate}
\item[{\rm(1)}] If $c$, $c'$ and $c''$ are permissible curves satisfying the conditions in Proposition \ref{prop. s. e. s.}, then there is
%(i.e., the positional relationship of $c$, $c'$ and $c''$ is given by \Pic \ref{Pic. of prop. s. e. s.}),
a short exact sequence
$$0 \longrightarrow M(c') \longrightarrow M(c) \longrightarrow M(c'') \longrightarrow 0.$$

\item[{\rm(2)}] If $c$ and $c'$ are permissible curves satisfy the conditions in Lemma \ref{lemm. submod.},
and if $c_{\I}''$ and $c_{\II}''$ are permissible curves satisfying the conditions in Proposition \ref{prop. submod. of string mod.3},
%(i.e.,  the positional relationship of $c$, $c'$, $c_{\I}''$ and $c_{\II}''$ is given by \Pic \ref{Pic. of lemm. submod.}),
then there is a short exact sequence
$$0 \longrightarrow M(c') \longrightarrow M(c) \longrightarrow M(c_{\I}''\oplus c_{\II}'') \longrightarrow 0.$$

\item[{\rm(3)}] {\rm (The dual of (2))} If $c$ and $c''$ are permissible curves satisfying the conditions in Lemma \ref{lemm. quotientmod.},
and if $c_{\I}'$ and $c_{\II}'$ are permissible curves satisfying the conditions in Proposition \ref{prop. quotientmod.},
then there is a short exact sequence
$$0 \longrightarrow M(c_{\I}'\oplus c_{\II}') \longrightarrow M(c) \longrightarrow M(c'') \longrightarrow 0.$$
\end{enumerate}
\end{theorem}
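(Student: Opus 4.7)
The plan is to assemble the three parts of Theorem \ref{thm. s. e. s.} directly from the propositions and lemmas already established earlier in this section; no essentially new argument is needed beyond verifying that the three pictures in Figure \ref{Pic. s. e. s.} are redrawings of the configurations considered in the preceding results.

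For part (1), I would simply check that Case I of Figure \ref{Pic. s. e. s.} is precisely the configuration appearing in Proposition \ref{prop. s. e. s.} (compare with Figure \ref{Pic. of prop. s. e. s.}): the curves $c'$ and $c''$ share with $c$ the endpoints $m_s=c(0^+)$ and $m_t=c(1^-)$, respectively; $c$ consecutively crosses two arcs $\gamma',\gamma''$ whose common endpoint lies on the right of $c$, with $c'$ crossing $\gamma'$ and $c''$ crossing $\gamma''$; and $c''$ is the positive rotation $\mathrm{prot}_c(c')$ in the sense of Definition \ref{def-3.4}. Hence Proposition \ref{prop. s. e. s.} applies and yields the desired short exact sequence verbatim.

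For part (2), the strategy is to splice Lemma \ref{lemm. submod.} and Proposition \ref{prop. submod. of string mod.3}. Matching Case II of Figure \ref{Pic. s. e. s.} with Figure \ref{Pic. of lemm. submod.} and Figure \ref{fig:Prop-3.8 and 3.11}(1), Lemma \ref{lemm. submod.} produces the submodule inclusion $M(c')\le M(c)$ (the hypothesis is that $c'$ runs through the string segment of $c$ over the left-right triangle at some $\gamma_i$), and Proposition \ref{prop. submod. of string mod.3} identifies the quotient $M(c)/M(c')$ with $M(c_{\I}''\oplus c_{\II}'')$, where $c_{\I}''$ and $c_{\II}''$ are the two permissible curves obtained by positive rotation (Definition \ref{def-3.5}) of the two halves of $c'$ cut by its intersection with $c$. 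Concatenating these two facts yields the asserted short exact sequence. Part (3) is then the formal dual of part (2), obtained by swapping the roles of submodule and quotient: Lemma \ref{lemm. quotientmod.} realizes $M(c'')$ as a quotient of $M(c)$, and Proposition \ref{prop. quotientmod.} identifies the kernel as $M(c_{\I}'\oplus c_{\II}')$, where $c_{\I}'$ and $c_{\II}'$ are produced by negative rotation; Case III of Figure \ref{Pic. s. e. s.} is exactly the left-right mirror of Case II, so the dual statement applies directly.

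The only delicate point in this synthesis — and the step where I would slow down — is the bookkeeping of orientation conventions: one must confirm that the sides on which the endpoints of $\gamma_i$ lie relative to $c$, and the choice of positive versus negative rotation, are consistent between the three cases of Figure \ref{Pic. s. e. s.} and the hypotheses of the cited results. Once this pictorial compatibility is verified, the theorem follows immediately by concatenation of the previous statements.
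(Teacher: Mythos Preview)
Your proposal is correct and matches the paper's approach exactly: the paper presents Theorem \ref{thm. s. e. s.} as an immediate consequence of Propositions \ref{prop. s. e. s.}, \ref{prop. submod. of string mod.3} and \ref{prop. quotientmod.} (together with Lemmas \ref{lemm. submod.} and \ref{lemm. quotientmod.}), with no additional argument beyond assembling those statements. Your extra remark about checking orientation conventions is a reasonable caution but is already absorbed into the hypotheses of the cited results.
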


\section{\bf Special modules}

\subsection{Simple modules and top (respectively, socle) of indecomposable modules}
A permissible curve in $\Surf_A$ corresponding to a simple $A$-module is a permissible curve crossing a unique arc in $\Gamma_A$,
we call it {\it simple}. The top (respectively, socle) of an $A$-module is semi-simple,
thus we can use a family of simple permissible curves to describe the top (respectively, socle) of any $A$-module.
Moreover, for a permissible curve $c$ consecutively crossing arcs $\gamma$ and $\gamma'$,
recall that $p(\gamma, \gamma', c)$ is the marked point lying in $\gamma \cap \gamma'$ such that
it is the vertex of the triangle decided by $\gamma$, $\gamma'$ and the segment given by $\gamma$ and $\gamma'$ cutting $c$.
We will use this notation frequently in the sequel.

\begin{lemma}\label{lemm. tops of string mods.}
Let $A=kQ/I$ be a gentle algebra and $\Surf_A=(\Surf_A,\M_A,\Gamma_A)$ its marked ribbon surface.
Let $c=c_1c_2\cdots c_r$ be a non-trivial permissible $($non-closed$)$ curve in $\Surf_A$ consecutively crossing arcs
$\gamma_1, \gamma_2, \cdots, \gamma_{r-1}$ $($note: $c_i$ is the segment between $\gamma_{i-1}$ and $\gamma_i$, $c_1$
is the segment between $\partial\Surf_A$ and $\gamma_1$, and $c_r$ is the segment between $\gamma_{r-1}$ and $\partial\Surf_A$$)$.
If there exists some $i$ with $1\le i\le r$ such that one of the following conditions is satisfied:
\begin{itemize}
\item Case 1 {\rm(}respectively, Case 1$'${\rm)}: $p(\gamma_{i-1},\gamma_{i},c)$ is on the right
{\rm(}respectively, left{\rm)} of $c$, and $p(\gamma_{i},\gamma_{i+1},c)$ is on the left {\rm(}respectively, right{\rm) of $c$} $(2\le i \le r-2)$;
\item Case 2 {\rm(}respectively, Case 2$'${\rm)}: $c_i=c_1$ is an end segment such that $p(\gamma_1,\gamma_2,c)$
is on the left {\rm(}respectively, right{\rm)} of $c$;
\item Case 3 {\rm(}respectively, Case 3$'${\rm)}: $c_i=c_r$ is an end segment such that $p(\gamma_{r-2}, \gamma_{r-1}, c)$
is on the right {\rm(}respectively, left{\rm)} of $c$,
\end{itemize}
\noindent then $M(c_{\simp}^{\gamma_i}) \le_{\oplus} {\rm top} M(c)$ $($respectively, $M(c_{\simp}^{\gamma_i}) \le_{\oplus} \soc M(c))$,
where $c_{\simp}^{\gamma_i}$ is the simple permissible curve crossing $\gamma_i$
and $M:\mathfrak{P}(\Surf_A) \to \ind \bmod A$ is the bijection given in Remark \ref{rmk. in BS}.
\end{lemma}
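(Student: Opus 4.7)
The plan is to transport the statement through the bijection $M$ of Remark \ref{rmk. in BS} to the string $s$ of $c$, and then read off the top and socle from its combinatorial shape. Write $s = \ell_2 \ell_3 \cdots \ell_{r-1}$, where $\ell_k$ is the letter joining the vertices $\gamma_{k-1}$ and $\gamma_k$ of $s$. The classical description of the top of a string module asserts that $M(c_{\simp}^{\gamma_i})$ is a direct summand of $\mathrm{top}\, M(s)$ exactly when the basis vector at the vertex $\gamma_i$ does not lie in the image of any arrow action, i.e., iff the incoming letter $\ell_i$ is inverse or absent (the latter meaning $i=1$) and the outgoing letter $\ell_{i+1}$ is direct or absent (the latter meaning $i=r-1$); dually for the socle. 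The lemma therefore reduces to matching the six geometric conditions in its statement with the six resulting combinatorial conditions on $s$.

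The dictionary between the two sides is built from Remark \ref{rmk. in BS}: the letter $\ell_k$ is direct precisely when $c$ crosses $\gamma_{k-1}$ and $\gamma_k$ in counterclockwise order around their common endpoint $p(\gamma_{k-1}, \gamma_k, c)$. Combined with the convention of Definition \ref{def. alg. of m. r. s.} that arrows in an elementary polygon run counterclockwise---with respect to the orientation in which the interior of $\Surf_A$ sits on the left of $\partial \Surf_A$---a short local picture inside a single elementary polygon shows that $p(\gamma_{k-1}, \gamma_k, c)$ lies to the \emph{left} of $c$ iff $\ell_k$ is direct, and to the \emph{right} of $c$ iff $\ell_k$ is inverse.

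With this dictionary in hand, each of Cases 1, 2, 3 is precisely a way for $\gamma_i$ to be a top vertex of $s$: Case 1 forces $\ell_i$ to be inverse and $\ell_{i+1}$ to be direct at an interior vertex $\gamma_i$; Case 2 is the start of $s$ with outgoing letter $\ell_2$ direct and no incoming letter; Case 3 is the end of $s$ with incoming letter $\ell_{r-1}$ inverse and no outgoing letter. The primed cases are the mirror images and handle the socle. This yields the claimed direct summand inclusions $M(c_{\simp}^{\gamma_i}) \le_\oplus \mathrm{top}\, M(c)$ or $M(c_{\simp}^{\gamma_i}) \le_\oplus \soc M(c)$, respectively. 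The argument is insensitive to whether the endpoints $c(0^+)$ and $c(1^-)$ are marked points in $\M_A$ or extra points, since the string $s$ does not record that distinction.

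The only step requiring real care is establishing the left/right $\leftrightarrow$ direct/inverse dictionary, because it simultaneously invokes the three conventions of the paper---orientation of $\partial \Surf_A$, the counterclockwise rule for arrows in an elementary polygon, and the definition of the string associated to a permissible curve. All subsequent checks are mechanical.
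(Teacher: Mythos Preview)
Your proposal is correct and follows essentially the same approach as the paper's own proof: both translate the geometric Cases 1--3 into the three possible source configurations of the underlying string (interior source, left-end source, right-end source) via the curve-to-string dictionary of Remark~\ref{rmk. in BS}, and then invoke the standard fact that direct summands of the top of a string module are indexed by sources. Your version is somewhat more explicit in spelling out the left/right $\leftrightarrow$ direct/inverse correspondence, which the paper simply asserts by saying the curve ``satisfies one of the conditions in Cases 1--3 accordingly''.
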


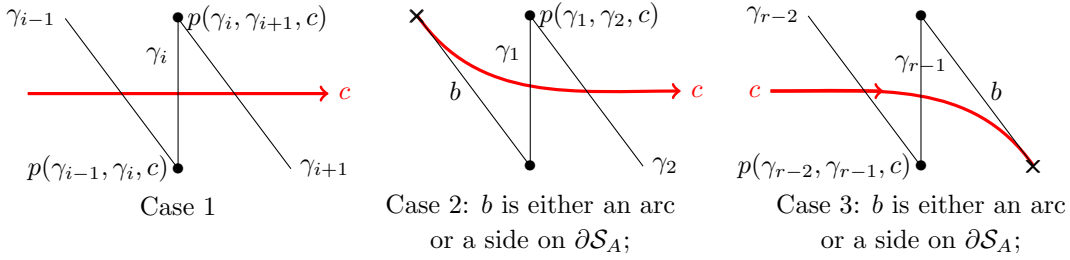
\begin{figure}[htbp]
\begin{center}
\begin{tikzpicture}%[xscale=0.8]
\draw[red][line width=1.2pt][->] (-2,0) -- (2,0) node[right]{$c$};
\draw[black] (0,1) node{$\bullet$} node[right]{$p(\gamma_i, \gamma_{i+1}, c)$};
\draw[black] (0,-1) node{$\bullet$} node[left]{$p(\gamma_{i-1}, \gamma_i, c)$};
\draw[black] (-1.5,1) node[left]{$\gamma_{i-1}$} -- (0,-1) -- (0,1) -- (1.5,-1) node[right]{$\gamma_{i+1}$};
\draw[black] (0,0.5) node[left]{$\gamma_i$};
\draw (0,-1.5) node{Case 1};
\draw[opacity=0] (0,-2) node{Sentences};
\end{tikzpicture}
\
\begin{tikzpicture}%[xscale=0.8]
\draw[red][line width=1.2pt][->] (-1.5,1) to[out=-55,in=180] (2,0) node[right]{$c$};
\draw[black] (0,1) node{$\bullet$} node[right]{$p(\gamma_1, \gamma_2, c)$};
\draw[black] (0,-1) node{$\bullet$};
\draw[black] (-1.5,1) node{$\pmb{\times}$} -- (0,-1) -- (0,1) -- (1.5,-1) node[right]{$\gamma_2$};
\draw[black] (-1,0) node{$b$};
\draw[black] (0,0.5) node[left]{$\gamma_1$};
\draw (0,-1.5) node{Case 2: $b$ is either an arc};
\draw (0,-2) node{or a side on $\partial\mathcal{S}_A$;};
\end{tikzpicture}
\
\begin{tikzpicture}%[xscale=0.8]
\draw[red][line width=1.2pt] (-2,0)node[left]{$c$} to[out=0,in=125] (1.5,-1);
\draw[red][line width=1.2pt][->] (-2,0) -- (-0.5,0);
\draw[black] (0,1) node{$\bullet$};
\draw[black] (0,-1) node{$\bullet$} node[left]{$p(\gamma_{r-2}, \gamma_{r-1}, c)$};
\draw[black] (-1.5,1) node[left]{$\gamma_{r-2}$} -- (0,-1) -- (0,1) -- (1.5,-1) node{$\pmb{\times}$};
\draw[black] ( 1,0) node{$b$};
\draw[black] (0,0.35) node{$\gamma_{r-1}$};
\draw (0,-1.5) node{Case 3: $b$ is either an arc};
\draw (0,-2) node{or a side on $\partial\mathcal{S}_A$;};
\end{tikzpicture}
\caption{$M(c_{\mathrm{simp}}^{\gamma_i})$ is a direct summand of $\mathrm{top}M(c)$
(the point ``$\pmb{\times}$'' is either a marked point or an extra point),
where $i=1$ in Case 2 and $i=r-1$ in Case 3.
Cases $1'$, $2'$ and $3'$ are dual. }
\label{fig:lemma-4.1}
\end{center}
\end{figure}

\begin{proof}
We only prove the case for top, and the case for socle is dual.
For any string module $M$, the source, denoted by $i$, of the string $s$ corresponding to $M$
is one of the following three types (we denote by $\gamma_i$ the arc corresponding to $i$):
\[\text{Type 1:}\ \ \cdots  \longleftarrow  i \longrightarrow \cdots  \ ; \text{Type 2:} \ \ i \longrightarrow \cdots\ ;\
\text{and}\ \text{Type 3:}\ \ \cdots \longleftarrow i\ .\]
Then the module $S(i)$, the simple module corresponding to the vertex $i$, is a direct summand of ${\rm top} M$,
that is, $S(i) \le_{\oplus}{\rm top} M$.
On the other hand, the permissible curve $c=c_1\cdots c_r$ corresponding to $s$ satisfies one of the conditions in Cases 1--3
accordingly (in Cases 2 and 3, we have $i=1$ and $i=r-1$, respectively).
Thus $S(i)$ corresponds to the permissible curve $c_{\simp}^{\gamma_i}$ and
$M(c_{\simp}^{\gamma_i}) \le_{\oplus}{\rm top}  M \cong {\rm top} M(c)$.
\end{proof}

\begin{lemma}\label{lemm. tops of band mods.}
Let $A=kQ/I$ be a gentle algebra and $\Surf_A=(\Surf_A,\M_A,\Gamma_A)$ its marked ribbon surface.
Let $c=c_1c_2\cdots c_r$ be a non-trivial permissible closed curve in $\Surf_A$  consecutively crossing arcs
$\gamma_1, \gamma_2, \cdots, \gamma_{r-1}, \gamma_r=\gamma_0$
{\rm(}$c_i$ is the segment between $\gamma_{\overline{i-1}}$ and $\gamma_{\overline{i}}$, where $\overline{i}$ is $i$ modulo $r${\rm)},
and let $\varphi\in\mathrm{Aut}(k^m)$ with $m\geq 1$ be a Jordan block.
If there exists some $i$ with $1\le i\le r$ such that the following condition is satisfied:
\begin{itemize}
\item Case RL {\rm(}respectively, Case LR{\rm)}: $p(\gamma_{\overline{i-1}}, \gamma_{\overline{i}}, c)$
is on the right {\rm(}respectively, left{\rm)} of $c$ and
  $p(\gamma_{\overline{i}},\gamma_{\overline{i+1}},c)$ is on the left {\rm(}respectively, right{\rm)} of $c$,
\end{itemize}
\noindent then $M(c_{\simp}^{\gamma_i})^{\oplus m} \le_{\oplus} \mathrm{top}(M(c,\varphi))$
$($respectively, $M(c_{\simp}^{\gamma_i})^{\oplus m} \le_{\oplus} \mathrm{soc}(M(c,\varphi)))$.
\end{lemma}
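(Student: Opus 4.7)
The plan is to reduce this to a combinatorial statement about the band $b$ associated with $c$, in parallel with the proof of Lemma \ref{lemm. tops of string mods.}, and then invoke the standard description of tops and socles of band modules.

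First I would translate the geometric hypothesis into the combinatorial one. By the correspondence recalled in the remark after Definition \ref{BS2018, Def 3.1}, the permissible closed curve $c = c_1\cdots c_r$ consecutively crossing $\gamma_1,\ldots,\gamma_r=\gamma_0$ corresponds to a band $b = b_1\cdots b_r$ where each letter $b_j\in Q_1\cup Q_1^{-}$ is determined by whether $c$ crosses $\gamma_{\overline{j-1}},\gamma_{\overline{j}}$ counterclockwise or clockwise around their common endpoint $p(\gamma_{\overline{j-1}},\gamma_{\overline{j}},c)$. In Case RL, the common endpoint $p(\gamma_{\overline{i-1}},\gamma_{\overline{i}},c)$ being on the right of $c$ means locally $c$ turns clockwise there, so $b_i$ is (the formal inverse of) an arrow ending at $\gamma_i$; and the common endpoint $p(\gamma_{\overline{i}},\gamma_{\overline{i+1}},c)$ being on the left of $c$ means locally $c$ turns counterclockwise there, so $b_{i+1}$ is an arrow starting at $\gamma_i$. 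Thus the letter at position $i$ of the cyclic word $b$ looks like a source:
\[
\cdots \longleftarrow \gamma_i \longrightarrow \cdots .
\]
Case LR is the dual situation producing a sink.

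Next I would apply the standard construction of band modules. For a band $b$ of length $r$ and a Jordan block $\varphi\in\mathrm{Aut}(k^m)$, the module $M(b,\varphi)$ places a copy of $k^m$ at every vertex of $b$, and the arrows of $b$ act as the identity on $k^m$ except for one distinguished arrow which acts as $\varphi$. The top of $M(b,\varphi)$ is therefore the quotient by the image of the radical, and a direct computation shows that for every position $j$ that is a source in the cyclic word $b$, the vertex $\gamma_j$ contributes a summand $S(\gamma_j)^{\oplus m}$ to $\mathrm{top}\, M(b,\varphi)$. Dually, every sink contributes $S(\gamma_j)^{\oplus m}$ to $\mathrm{soc}\, M(b,\varphi)$. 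Applying this to the position $i$ identified in the first step yields
\[
M(c_{\simp}^{\gamma_i})^{\oplus m} \;\leq_{\oplus}\; \mathrm{top}\, M(c,\varphi),
\]
and dually for Case LR.

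The one point requiring care is that positions in a cyclic word are only defined up to the choice of starting letter, whereas the distinguished arrow on which $\varphi$ acts is also chosen up to cyclic rotation; thus I must check that the contribution $S(\gamma_i)^{\oplus m}$ to the top does not degenerate to something smaller when the distinguished arrow happens to be one of the two arrows incident to position $i$. This is exactly the standard check that $\varphi$ being invertible ensures that the image of the radical at vertex $\gamma_i$ is still an $(m-\dim\mathrm{top})$-dimensional subspace of $k^m$, so the full $m$-dimensional summand $S(\gamma_i)^{\oplus m}$ survives in the top. This is the only delicate point; once it is verified, the lemma follows.
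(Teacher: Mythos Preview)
Your proposal is correct and follows essentially the same approach as the paper: both arguments translate Case~RL into the statement that the vertex $\gamma_i$ is a source of the cyclic word $b$, and then appeal to the standard description of $\mathrm{top}\,M(b,\varphi)$ at sources (the paper phrases this as ``similar to the proof of Lemma~\ref{lemm. tops of string mods.}'' together with the observation $M(c,\varphi)e_i\cong k^m$). Your additional paragraph checking that the distinguished $\varphi$-arrow does not shrink the contribution at position $i$ is a point the paper leaves implicit, so your treatment is in fact more careful than the original.
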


\begin{proof}
For any band module $M$, a source, denoted by $i$, of the band corresponding to $M$ must be
Type 1 in the proof of Lemma \ref{lemm. tops of string mods.}.
Note that $M(c,\varphi) e_i \cong k^m$ is $k$-linearly isomorphic to $M(c_{\simp}^{\gamma_i})^{\oplus m}$,
where $e_i$ is the trivial path corresponding to $i\in Q_0$.
Now, similar to the proof of Lemma \ref{lemm. tops of string mods.}, we have
$M(c_{\simp}^{\gamma_i})^{\oplus m} \le_{\oplus} \mathrm{top} M(c,\varphi)$.
The case for socle is dual.
\end{proof}

As a consequence of Lemmas \ref{lemm. tops of string mods.} and \ref{lemm. tops of band mods.},
we get the following proposition.

\begin{proposition}\label{prop. tops of mods.}
\begin{enumerate}
\item[]
\item[{\rm (1)}] Under the assumptions in Lemma \ref{lemm. tops of string mods.}, let \begin{align}
\mathfrak{I} & =\{1\le i\le r-1\mid i \text{ satisfies one of Cases 1, 2 and 3
in Lemma \ref{lemm. tops of string mods.} } \}, \nonumber \\
(respectively, \mathfrak{I} & =\{1\le i\le r-1\mid i \text{ satisfies one of Cases 1$'$, 2$'$ and 3$'$
in Lemma \ref{lemm. tops of string mods.} } \}). \nonumber
\end{align}
Then $\mathrm{top}M(c) \cong \bigoplus\limits_{i\in \mathfrak{I}} M(c_{\simp}^{\gamma_i})$
(respectively, $\mathrm{soc}M(c) \cong \bigoplus\limits_{i\in \mathfrak{I}} M(c_{\simp}^{\gamma_i}))$.

\item[{\rm (2)}] Under the assumptions in Lemma \ref{lemm. tops of band mods.},
let \begin{align}
\mathfrak{I} &= \{1\le i\le r-1 \mid i \text{ satisfies Case RL  in Lemma \ref{lemm. tops of band mods.} } \}, \nonumber \\
(respectively, \mathfrak{I} &= \{1\le i\le r-1 \mid i \text{ satisfies Case LR in Lemma \ref{lemm. tops of band mods.} } \}).
\nonumber \end{align}
Then $\mathrm{top}M(c,\varphi) \cong \bigoplus\limits_{i\in \mathfrak{I}} M(c_{\simp}^{\gamma_i})^{\oplus m}$
{\rm(}respectively, $\mathrm{soc}M(c,\varphi) \cong \bigoplus\limits_{i\in \mathfrak{I}} M(c_{\simp}^{\gamma_i})^{\oplus m}${\rm)}.
\end{enumerate}
\end{proposition}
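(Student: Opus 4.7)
The plan is to upgrade Lemmas \ref{lemm. tops of string mods.} and \ref{lemm. tops of band mods.} from ``each simple $M(c_{\simp}^{\gamma_i})$ with $i\in\mathfrak{I}$ is a direct summand of the top'' to an equality of modules. Since those lemmas already supply the embedding
$$\bigoplus_{i\in\mathfrak{I}} M(c_{\simp}^{\gamma_i}) \le_{\oplus} {\rm top}\, M(c)$$
(and similarly for band modules and for socles), it suffices to verify that the right-hand side has no further composition factors, for which a dimension count is enough: show $\dim_k {\rm top}\, M(c) = \sharp\mathfrak{I}$ in part (1), and $\dim_k {\rm top}\, M(c,\varphi) = m\cdot\sharp\mathfrak{I}$ in part (2).

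To execute the count, I would invoke the standard basis description of string modules: $M(s_c)$ has a $k$-basis indexed by the vertices traversed by the string $s_c=s_1\cdots s_{r-1}$ associated with $c$, and an arrow $a\in Q_1$ sends the basis vector at $s(a)$ to the basis vector at $t(a)$ whenever $a$ appears in $s_c$, and acts as zero otherwise. Consequently, a basis vector at a vertex $v$ survives in ${\rm top}\, M(s_c)$ if and only if $v$ is a \emph{source} of $s_c$, meaning that every letter of $s_c$ adjacent to $v$ is an outgoing arrow at $v$ (with a boundary endpoint of the string imposing no constraint on the absent side). Therefore $\dim_k {\rm top}\, M(c)$ equals the number of sources of $s_c$.

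Next I would identify the sources of $s_c$ with $\mathfrak{I}$ by translating through the dictionary recalled in Remark \ref{rmk. in BS}: the letter $s_j$ is the arrow $\gamma_{j-1}\to\gamma_j$ precisely when $c$ crosses $\gamma_{j-1}$ and $\gamma_j$ counterclockwise around $p(\gamma_{j-1},\gamma_j,c)$, and is the formal inverse of an arrow in the opposite case. Using the orientation convention that the interior of $\Surf_A$ lies on the left while walking along $\partial\Surf_A$ positively, the position of $p(\gamma_{j-1},\gamma_j,c)$ on the right (respectively, left) of $c$ corresponds to $s_j$ being an outgoing (respectively, incoming) arrow at $\gamma_j$. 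Matching this with the local pictures of Lemma \ref{lemm. tops of string mods.}, Cases 1, 2, 3 are exactly the configurations under which $\gamma_i$ is an interior or boundary source of $s_c$, which gives $\sharp\mathfrak{I} = \sharp\{\text{sources of }s_c\}$ and hence the equality for the top. Cases $1'$, $2'$, $3'$ describe sinks, and the socle statement follows by the dual argument.

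For part (2) the same vertex-by-vertex analysis applies to the band module $M(c,\varphi)$, whose underlying space is $(k^m)^r$ with every arrow acting as the identity on $k^m$ except the closing arrow, which acts as the invertible Jordan block $\varphi$; invertibility of $\varphi$ guarantees that the local source/sink detection at each crossing is unaffected, yielding $\dim_k {\rm top}\, M(c,\varphi) = m\cdot\sharp\mathfrak{I}$ and hence the desired decomposition. The main obstacle I foresee is not conceptual but bookkeeping: one must carefully track the left/right versus counterclockwise/clockwise correspondence at each of the three geometric cases, especially at the end segments $c_1$ and $c_r$, where one adjacent arc is replaced by a piece of $\partial\Surf_A$ and the ``missing'' constraint on the string endpoint must be checked to be vacuous.
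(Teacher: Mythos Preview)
Your proposal is correct and follows essentially the same route as the paper: both arguments reduce to the standard fact that the top of a string module is the direct sum of the simples sitting at the sources of the string, and then identify the set of sources with $\mathfrak{I}$ via the geometric--combinatorial dictionary already set up in the proof of Lemma~\ref{lemm. tops of string mods.}. The paper's proof is simply the one-line version of what you wrote out, invoking directly that $\mathrm{top}\,M(c)\cong\bigoplus_{i\in\mathfrak{I}'}S(i)$ for the set $\mathfrak{I}'$ of source positions and then using $\mathfrak{I}'=\mathfrak{I}$; your ``embedding plus dimension count'' packaging is a slightly roundabout way to reach the same conclusion (note that Lemma~\ref{lemm. tops of string mods.} as stated only gives each summand individually, so the full direct-sum embedding with multiplicities is really a byproduct of your source identification, not a separate input).
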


\begin{proof}
(1) Let $M\in\bmod A$ be string and
\[\mathfrak{I}'=\{1\le i\le r-1 \mid i \text{ is a source which is one of Types 1, 2 and 3 in the proof of
Lemma \ref{lemm. tops of string mods.}} \}.\]
Then $\mathrm{top}M \cong \mathrm{top}M(c) \cong \bigoplus\limits_{i\in \mathfrak{I}'}S(i) \cong
M(\bigoplus\limits_{i\in \mathfrak{I}}c_{\simp}^{\gamma_i}) \cong \bigoplus\limits_{i\in \mathfrak{I}} M(c_{\simp}^{\gamma_i})$
($\bigoplus\limits_{i\in \mathfrak{I}}c_{\simp}^{\gamma_i}$ is the formal direct sum of $c_{\simp}^{\gamma_i}$,
see Definition \ref{def. d. s.}).

(2) It is similar to (1).
\end{proof}

\subsection{Projective modules and projective covers}

Let $A=kQ/I$ be a gentle algebra and $\Surf_A=(\Surf_A,\M_A,\Gamma_A)$ its marked ribbon surface of $A$.
If $s=\alpha_1\alpha_2\cdots\alpha_n$ ($\alpha_i\in Q_0\cup Q_0^-$) is a string corresponding to a projective module,
then $s$ is one of the following cases:
\begin{itemize}
\item $s$ is {\it left-maximal inverse}, that is, $s$ is inverse and $\alpha^- s\in I$ for any $\alpha\in Q_0$ with $t(\alpha_1^-)=s(\alpha)$;
\item $s$ is {\it right-maximal direct}, that is, $s$ is direct and $s\alpha\in I$ for any $\alpha\in Q_0$ with $t(\alpha_n)=s(\alpha)$;
\item there exists a unique $i$ with $1\leq i\leq n-1$
such that $\alpha_1\cdots\alpha_{i}$ is left-maximal inverse and $\alpha_{i+1}\cdots\alpha_n$ is right-maximal direct.
\end{itemize}
Therefore, a permissible curve $c:(0,1)\to \Surf_A\backslash\partial \Surf_A$ (which consecutively crosses arcs $\gamma_1, \cdots, \gamma_{r-1}$)
corresponds to an indecomposable projective module by $M:\mathfrak{P}(\Surf_A) \to \mathrm{ind}(\bmod A)$ defined in Remark \ref{rmk. in BS}
if and only if $c$ is a permissible curve such that the following conditions are satisfied:
\begin{itemize}
\item (\textbf{PQC}) (Projective Quotient Condition) There exits $i$ with $1\leq i\leq r-1$ such that
\begin{itemize}
\item (\textbf{PQC 1}) %$\gamma_1, \gamma_2, \cdots,\gamma_i$ have a common endpoint $m\in\M_A$
    $m:=p(\gamma_1, \gamma_2, c) = \cdots = p(\gamma_{i-1},\gamma_i, c)$ which is on the right of $c$; and
\item (\textbf{PQC 2}) %$\gamma_i, \gamma_{i+1}, \cdots, \gamma_{r-1}$ have a common endpoint $m'\in\M_A$
$m':=p(\gamma_i, \gamma_{i+1}, c) = \cdots = p(\gamma_{r-2},\gamma_{r-1}, c)$ which is on the left of $c$;
\end{itemize}
\item (\textbf{MC}) (Maximal Conditions) There are no arcs $\gamma$ and $\gamma'$ such that
\begin{itemize}
\item (\textbf{LMC}) $\gamma, \gamma_1, \gamma_2, \cdots, \gamma_i$ have a common endpoint $m$ and they surround $m$ in the clockwise order; and
\item (\textbf{RMC}) $\gamma_i,\gamma_{i+1}, \cdots, \gamma_{n}, \gamma'$ have a common endpoint $m'$ and
they surround $m'$ in the counterclockwise order.
\end{itemize}
\end{itemize}

\noindent \Pic\ref{proj. per. curve} provides an example of a permissible curve corresponding to a projective module.
\begin{figure}[htbp]
\begin{center}
\begin{tikzpicture}[scale=0.7]
\draw (-1,2)-- (0,0);
\draw (-2,2)-- (0,0);
\draw (0,0)-- (0,3);
\draw (0,3)-- (1,1);
\draw (0,3)-- (2,1);
\draw (0,0)-- (-3,2);
\draw (0,3)-- (3,1);
\filldraw[color=black!20] (-1,-1)to [out=90, in=180] (0,0) to [out=0,in=90] (1,-1);
\filldraw[color=black!20] (-1,4)to [out=-90, in=180] (0,3) to [out=0,in=-90] (1,4);
\draw[line width =1pt] (-1,-1)to [out=90, in=180] (0,0) to [out=0,in=90] (1,-1);
\draw[line width =1pt] (-1,4)to [out=-90, in=180] (0,3) to [out=0,in=-90] (1,4);
\draw[color=red] (-0.71,-0.29) to[out=90, in=180] (0,1.5) to[out=0,in=-90] (0.71,3.29);
\begin{scriptsize}
\fill [color=black] (0,0) circle (2.2pt);
\fill [color=black] (0,3) circle (2.2pt);
\fill [color=black] (-1,-1) circle (2.2pt);
\fill [color=black] (1,-1) circle (2.2pt);
\fill [color=black] (-1,4) circle (2.2pt);
\fill [color=black] (1,4) circle (2.2pt);
\fill [color=black] (-0.71,3.29) circle (2.2pt);
\fill [color=black] (0.71,3.29) circle (2.2pt);
\fill [color=black] (-0.71,-0.29) circle (2.2pt);
\fill [color=black] (0.71,-0.29) circle (2.2pt);
\end{scriptsize}
\draw (-0.71, 0.2) node[left]{$\color{red}{c}$};
\draw (-1, 1.5) node{$\cdots$};
\draw ( 1, 1.5) node{$\cdots$};
\end{tikzpicture}
\caption{Each point $\bullet$ is either a marked point or an extra point.
The red curve $c$ is a permissible curve satisfying (\textbf{PQC}) and (\textbf{MC}),
thus the string module corresponding to $c$ is a projective module. }
\label{proj. per. curve}
\end{center}
\end{figure}
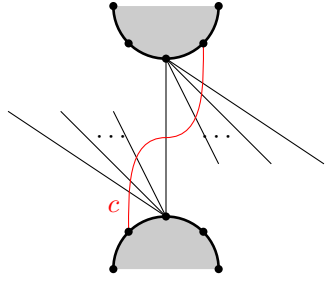

\begin{definition} \label{def. proj. per. curve} \rm (Projective permissible curves)
Let $A=kQ/I$ be a gentle algebra and $\Surf_A=(\Surf_A, \M_A, \Gamma_A)$ its marked ribbon surface.
For an arc $\gamma\in \Gamma_A$, a permissible curve $c$ consecutively crossing arcs
$\gamma_1, \cdots, \gamma_{r-1}$ is called a {\it projective permissible curve} of $\gamma$, denoted by $c_{\proj}^{\gamma}$,
if $c$ is decided by the pair $(c_1, c_2)$ of two curves $c_1:(0,1]\to \Surf_A\backslash\partial\Surf_A$ and $c_2: [0,1)\to \Surf_A\backslash\partial\Surf_A$, where
\begin{itemize}
\item there is a unique $i$ with $1\leq i\leq r-1$ such that $\gamma=\gamma_i$;
\item $c(0^+)=c_1(0^+)$, $c(1^-)=c_2(1^-)$, $c_1(1)=c_2(0)=\gamma\cap c$;
\item $c$ is obtained by connecting $c_1$ and $c_2$, that is,  $c=c_1\cup c_2$; and
\item $c_1$ and $c_2$ consecutively cross arcs $\gamma_1, \cdots, \gamma_i$ and $\gamma_i, \cdots, \gamma_{r-1}$
such that (\textbf{PQC 1}, \textbf{LMC}) and (\textbf{PQC 2}, \textbf{RMC}) are satisfied, respectively.
\end{itemize}
\end{definition}

We use $\mathfrak{P}_{\proj}(\Surf_A)$ to denote the set of all equivalence classes of projective permissible curves.
Obviously, we have $\mathfrak{P}_{\proj}(\Surf_A)\subseteq\Eqcls(\Surf_A)$.

\begin{lemma}{\rm(Projective modules)} \label{Lemm. proj}
Let $A=kQ/I$ be a gentle algebra and $\Surf_A=(\Surf_A,\M_A,\Gamma_A)$ its marked ribbon surface.
Then there exists a one-to-one mapping between the equivalence classes of projective permissible curves
and the isomorphic classes of indecomposable projective modules which is given by
\begin{align}
M|_{\mathfrak{P}_{\proj}(\Surf_A)}: \mathfrak{P}_{\proj}(\Surf_A) \to \mathrm{ind}(\proj A), c \mapsto M(c). \nonumber
\end{align}
\end{lemma}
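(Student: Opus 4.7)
The plan is to deduce the lemma from the existing bijection $M:\mathfrak{P}(\Surf_A)\to\mathrm{ind}(\bmod A)$ recalled in Remark \ref{rmk. in BS}, by checking that the defining conditions (\textbf{PQC}) and (\textbf{MC}) of a projective permissible curve match, on the module side, the string-theoretic description of projective strings given in the three bullets just before Definition \ref{def. proj. per. curve}. Since the injectivity of $M|_{\mathfrak{P}_{\proj}(\Surf_A)}$ is automatic from the injectivity of $M$, the real content is to establish that its image is exactly $\mathrm{ind}(\proj A)$, i.e. well-definedness and surjectivity.

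For well-definedness, I would start from a projective permissible curve $c=c_1c_2\cdots c_r$ consecutively crossing $\gamma_1,\dots,\gamma_{r-1}$ and split it at the distinguished crossing $\gamma=\gamma_i$ into $c_1\cup c_2$ as in Definition \ref{def. proj. per. curve}. By (\textbf{PQC 1}), on the left segment each pair $(\gamma_{j-1},\gamma_j)$ (for $2\le j\le i$) shares a marked point $m$ on the right of $c$, which by the recipe of Remark \ref{rmk. in BS} forces the corresponding piece of the string to be of the form $\gamma_1\leftarrow\gamma_2\leftarrow\cdots\leftarrow\gamma_i$; dually, (\textbf{PQC 2}) makes the right piece into $\gamma_i\rightarrow\gamma_{i+1}\rightarrow\cdots\rightarrow\gamma_{r-1}$. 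Hence the string associated to $c$ has the shape of an inverse subpath followed by a direct subpath. The conditions (\textbf{LMC}) and (\textbf{RMC}) then translate into the statement that no arrow can be prepended to the inverse piece and no arrow can be appended to the direct piece without creating a relation in $I$: indeed, an arc $\gamma$ at $m$ that could be inserted in the clockwise order would correspond, via Definition \ref{def. alg. of m. r. s.}, to an arrow $\gamma\to\gamma_1$ not lying in $I$, contradicting left-maximality, and similarly on the right. Thus the string is left-maximal inverse concatenated with right-maximal direct, so $M(c)$ is an indecomposable projective module.

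For surjectivity, I would reverse this dictionary. Let $P\in\mathrm{ind}(\proj A)$; by the Butler--Ringel classification and the three bullets recalled above its string $s$ has the shape $\alpha_1^{-}\cdots\alpha_i^{-}\alpha_{i+1}\cdots\alpha_n$ with the left part left-maximal inverse and the right part right-maximal direct (the two degenerate cases where one side is empty are handled identically). Let $c\in\mathfrak{P}(\Surf_A)$ be the permissible curve with $M(c)\cong P$ supplied by Remark \ref{rmk. in BS}; reading the translation rule backwards, the inverse letters force the common endpoint $m$ of the corresponding arcs to sit on the right of $c$ and the direct letters force the common endpoint $m'$ to sit on the left, giving (\textbf{PQC}). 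The maximality of the string on both ends rules out the existence of an extra clockwise arc at $m$ or counterclockwise arc at $m'$, which is exactly (\textbf{MC}). So $c\in\mathfrak{P}_{\proj}(\Surf_A)$.

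Injectivity is then immediate: two projective permissible curves with isomorphic images under $M$ are already equivalent in $\mathfrak{P}(\Surf_A)$ since $M$ itself is a bijection. The main obstacle, and the step where I would take the most care, is the correct geometric reading of the \emph{maximality} conditions (\textbf{LMC}) and (\textbf{RMC}): one must verify that a hypothetical extra arc at $m$ (respectively $m'$) in the prescribed rotational order really produces a composable arrow not in $I$ in $Q_{\Surf_A}$, using precisely the construction in Definition \ref{def. alg. of m. r. s.}, and that conversely the absence of such an arc in the algebra forces the absence of such an arc in the surface. Up to this careful bookkeeping about orientations at $m$ and $m'$, the argument is a direct transport of the projectivity criterion for strings across the bijection $M$.
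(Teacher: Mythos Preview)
Your proposal is correct and follows essentially the same approach as the paper: both arguments translate the geometric conditions (\textbf{PQC}) and (\textbf{MC}) into the string-theoretic characterization of indecomposable projectives as left-maximal inverse strings concatenated with right-maximal direct strings, invoking the bijection of Remark \ref{rmk. in BS}. Your version is in fact more explicit about separating well-definedness, surjectivity, and injectivity, whereas the paper compresses this into a single ``if and only if'' sentence.
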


\begin{proof}
A permissible $c$ lies in $\mathfrak{P}_{\proj}(\Surf_A)$ if and only if it corresponds to a string $s$
which is the connection of a left-maximal (trivial) inverse string $s_1$ and a right-maximal (trivial) direct string $s_2$,
that is, $s=\alpha_1\alpha_2\cdots\alpha_n$ is of the form
$$\xymatrix@C=0.5cm{\cdot\ar@{<-}[r]^{\alpha_1^-} &\cdot\ar@{<-}[r]^{\alpha_2^-} &\cdots
\ar@{<-}[r]^{\alpha_i^-}&\cdot\ar[r]^{\alpha_{i+1}}&\cdots\ar[r]^{\alpha_{n-1}}&\cdot\ar[r]^{\alpha_n}&\cdot}.$$
Thus the string module corresponding to $s$ is projective.
(If $\alpha_1\cdots\alpha_i$ (respectively, $\alpha_{i+1}\cdots\alpha_n$) is trivial, then the module corresponding
to $s$ is $P(v)\cong e_v A$, where $v\in Q_0$ is the source of $\alpha_{i+1}$ (respectively, $\alpha_{i}$)
and $e_v$ is the primitive idempotent corresponding to the vertex $v$).
\end{proof}

Now we consider the projective cover of a module by marked ribbon surfaces of gentle algebras.

\begin{theorem} \label{thm. proj. cover} {\rm(Projective covers of indecomposable modules)}
Let $A=kQ/I$ be a gentle algebra and $\Surf_A=(\Surf_A, \M_A, \Gamma_A)$ its marked ribbon surface.
Let $M = M(c)$ {\rm(}respectively, $M = M(c, \varphi)${\rm)} be a string module (respectively, band module)
which corresponds to the permissible curve $c$ {\rm(}respectively, the permissible closed curve $c$
with a Jordan block over a $k$-linear space $V${\rm)}. Then
\[ P(M) \cong \bigoplus\limits_{i\in \mathfrak{I}}M(c_{\proj}^{\gamma_i}) \ \ \ \
(respectively, P(M) \cong \bigoplus\limits_{i\in \mathfrak{I}}M(c_{\proj}^{\gamma_i})^{\oplus \dim_k V}), \]
where $\{\gamma_i\mid i\in \mathfrak{I}\}$ and $\mathfrak{I}$ are the same as that in
Lemma \ref{lemm. tops of string mods.} and Proposition \ref{prop. tops of mods.}$(2)$
{\rm(}respectively, Lemma \ref{lemm. tops of band mods.} and Proposition \ref{prop. tops of mods.}$(2)${\rm)}, respectively.
\end{theorem}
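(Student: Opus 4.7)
The plan is to reduce the statement to Proposition \ref{prop. tops of mods.} together with Lemma \ref{Lemm. proj} via the standard fact that projective covers are determined by tops. Recall that for any $M \in \bmod A$, if $\mathrm{top}(M) \cong \bigoplus_j S(j)^{\oplus n_j}$ with $S(j)$ the simple corresponding to the vertex $j \in Q_0$, then $P(M) \cong \bigoplus_j P(S(j))^{\oplus n_j}$, where $P(S(j)) = e_j A$ is the indecomposable projective cover of $S(j)$. Therefore the proof reduces to identifying which indecomposable projective corresponds to each simple summand of $\mathrm{top}(M)$ under the bijection $M : \mathfrak{P}(\Surf_A) \to \mathrm{ind}(\bmod A)$.

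First I would treat the string case $M = M(c)$. By Proposition \ref{prop. tops of mods.}(1) applied to $c$, we have
\[
\mathrm{top}\, M(c) \cong \bigoplus_{i \in \mathfrak{I}} M(c_{\simp}^{\gamma_i}),
\]
where each $c_{\simp}^{\gamma_i}$ is the simple permissible curve crossing only $\gamma_i$. Under the identification $\mathfrak{v}\colon Q_0 \to \Gamma_A$ from Definition \ref{def. alg. of m. r. s.}, the simple module $M(c_{\simp}^{\gamma_i})$ is exactly $S(\gamma_i)$. Hence $P(M(c)) \cong \bigoplus_{i \in \mathfrak{I}} P(S(\gamma_i))$. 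To finish the string case, I would verify that for each arc $\gamma \in \Gamma_A$, the projective permissible curve $c_{\proj}^{\gamma}$ from Definition \ref{def. proj. per. curve} satisfies $M(c_{\proj}^{\gamma}) \cong P(S(\gamma))$: by Lemma \ref{Lemm. proj}, $c_{\proj}^{\gamma}$ is one of the projective permissible curves, and its associated string is obtained by concatenating the left-maximal inverse string and the right-maximal direct string starting/ending at $\gamma$; this is precisely the string describing $e_\gamma A$, whose top is the simple $S(\gamma)$. This gives $P(S(\gamma_i)) \cong M(c_{\proj}^{\gamma_i})$ and finishes the string case.

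For the band case $M = M(c, \varphi)$ with $\varphi \in \mathrm{Aut}(V)$ a Jordan block of size $m = \dim_k V$, I would apply Proposition \ref{prop. tops of mods.}(2) to obtain
\[
\mathrm{top}\, M(c,\varphi) \cong \bigoplus_{i \in \mathfrak{I}} M(c_{\simp}^{\gamma_i})^{\oplus m},
\]
the multiplicity $m$ arising because the action at each vertex of the band is on a copy of $k^m$ rather than $k$. The same identification $M(c_{\simp}^{\gamma_i}) = S(\gamma_i)$ together with $P(S(\gamma_i)) \cong M(c_{\proj}^{\gamma_i})$ then yields the desired formula with an extra factor $\dim_k V$.

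The only subtle point—and the main obstacle—is verifying that the projective permissible curve $c_{\proj}^{\gamma}$ of Definition \ref{def. proj. per. curve} really represents the indecomposable projective $P(S(\gamma)) = e_\gamma A$, not merely \emph{some} indecomposable projective. This requires comparing the maximality conditions (\textbf{LMC}) and (\textbf{RMC}) with the description of strings of the form $\alpha_1\cdots\alpha_i^{-} \alpha_{i+1}\cdots\alpha_n$ whose ``turning vertex'' is $\gamma$ (so that the top of the corresponding string module is exactly $S(\gamma)$). Once this local matching at the distinguished arc $\gamma$ is spelled out—observing that (\textbf{LMC}) and (\textbf{RMC}) force the source of the direct portion and the target of the inverse portion to coincide with $\gamma$—the identification $P(S(\gamma)) \cong M(c_{\proj}^{\gamma})$ is immediate from Lemma \ref{Lemm. proj}, and assembling the pieces completes the proof in both the string and band cases.
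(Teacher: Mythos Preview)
Your proposal is correct and follows essentially the same approach as the paper: reduce to tops via $P(M)\cong P(\mathrm{top}\,M)$, apply Proposition~\ref{prop. tops of mods.} to decompose the top into simples $M(c_{\simp}^{\gamma_i})$, and then use the identification $P(M(c_{\simp}^{\gamma}))\cong M(c_{\proj}^{\gamma})$ from Definition~\ref{def. proj. per. curve} and Lemma~\ref{Lemm. proj}. The paper's proof is slightly terser in that it simply asserts this last identification, whereas you take care to explain why the turning vertex of the string for $c_{\proj}^{\gamma}$ is $\gamma$; otherwise the arguments are the same.
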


\begin{proof}
For each indecomposable module $M$, its projective cover $P(M)$ is isomorphic to the projective cover
$P(\mathrm{top}M)$ of its top. Thus we only need to consider the projective cover of each simple module.
Note that, for each simple permissible curve $c_{\simp}^{\gamma}$, we have $P(M(c_{\simp}^{\gamma})) = M(c_{\proj}^{\gamma})$.
Now suppose $\mathrm{top}M \cong \bigoplus\limits_{i\in \mathfrak{I}} M(c_{\simp}^{\gamma_i})$
(when $M$ is string, each $\gamma_i$ correspond the vertex of $Q$ such that one of three cases in Lemma \ref{lemm. tops of string mods.} holds;
when $M$ is band, each $\gamma_i$ corresponds to the vertex of $Q$ such that the condition in Lemma \ref{lemm. tops of band mods.} holds).
If $M$ is a string module, then
\begin{align}
P(M) & \cong P(\mathrm{top}M) \cong P(\bigoplus\limits_{i\in \mathfrak{I}} M(c_{\simp}^{\gamma_i}))
\cong \bigoplus\limits_{i\in \mathfrak{I}} P(M(c_{\simp}^{\gamma_i}))
\cong \bigoplus\limits_{i\in \mathfrak{I}} M(c_{\proj}^{\gamma_i}). \nonumber
\end{align}
If $M$ is a band module, then
\begin{align}
P(M) &\cong P(\mathrm{top}M) \cong P(\bigoplus\limits_{i\in \mathfrak{I}} M(c_{\simp}^{\gamma_i})^{\oplus\dim_k V})
\cong \bigoplus\limits_{i\in \mathfrak{I}} P(M(c_{\simp}^{\gamma_i})^{\oplus\dim_k V})
\cong \bigoplus\limits_{i\in \mathfrak{I}} M(c_{\proj}^{\gamma_i})^{\oplus\dim_k V}. \nonumber
\end{align}
\end{proof}

\subsection{Injective modules and injective envelopes}
If a permissible curve $c:(0,1)\to \Surf_A\backslash\partial\Surf_A$ consecutively crosses arcs $\gamma_1, \cdots, \gamma_{r-1}$
and corresponds to an injective module by $M: \Eqcls(\Surf_A) \cup \Htpcls(\Surf_A) \to \mathrm{ind}(\bmod A)$, then
there is a unique $i$ with $1\leq i\leq n-1$ such that the following conditions are satisfied:
\begin{itemize}
\item (\textbf{ISC}) (Injective Submodule Condition) There exits $i$ with $1\leq i\leq r-1$ such that
\begin{itemize}
\item (\textbf{ISC 1}) $m:=p(\gamma_1, \gamma_2, c) = \cdots = p(\gamma_{i-1},\gamma_i, c)$ which is on the left of $c$; and
\item (\textbf{ISC 2}) $m':=p(\gamma_{i}, \gamma_{i+1}, c) = \cdots = p(\gamma_{r-2},\gamma_{r-1}, c)$ which is on the right of $c$;
\end{itemize}
\item (\textbf{MC}$'$) There are no arcs $\gamma$ and $\gamma'$ such that
\begin{itemize}
\item (\textbf{LMC}$'$) $\gamma, \gamma_1, \gamma_2, \cdots, \gamma_i$ have a common endpoint $m$ and they surround $m$
in the counterclockwise order; and
\item (\textbf{RMC}$'$) $\gamma_i,\gamma_{i+1}, \cdots, \gamma_{n}, \gamma'$ have a common endpoint $m'$
and they surround $m'$ in the clockwise order.
\end{itemize}
\end{itemize}

We define injective permissible curves and $\mathfrak{P}_{\inj}(\Surf_A)$ as follows.

\begin{definition} \rm (Injective permissible curves) \label{def. inj. per. curve}
Let $A=kQ/I$ be a gentle algebra and $\Surf_A=(\Surf_A, \M_A, \Gamma_A)$ its marked ribbon surface.
For an arc $\gamma\in \Gamma_A$, a permissible curve $c$ consecutively crossing arcs $\gamma_1, \cdots, \gamma_{r-1}$
is called an {\it injective permissible curve} of $\gamma$, denoted by $c_{\inj}^{\gamma}$,
if $c$ is decided by the pair $(c_1,c_2)$ of two curves $c_1:(0,1]\to \Surf_A\backslash\partial\Surf_A$
and $c_2: [0,1)\to \Surf_A\backslash\partial\Surf_A$, where
\begin{itemize}
\item there is a unique $i$ with $1\leq i\leq r-1$ such that $\gamma=\gamma_i$;
\item $c(0^+)=c_1(0^+)$, $c(1^-)=c_2(1^-)$, $c_1(1)=c_2(0)=\gamma\cap c$;
\item $c$ is obtained by connecting $c_1$ and $c_2$, i.e.,  $c=c_1\cup c_2$; and
\item $c_1$ and $c_2$ consecutively crosses arcs $\gamma_1, \cdots, \gamma_i$ and $\gamma_i, \cdots, \gamma_{r-1}$
such that (\textbf{ISC 1}, \textbf{LMC}$'$) and (\textbf{ISC 2}, \textbf{RMC}$'$) are satisfied, respectively.
\end{itemize}
\end{definition}

We use $\mathfrak{P}_{\inj}(\Surf_A)$ to denote the set of all equivalence classes of injective permissible curves.
Obviously, we have $\mathfrak{P}_{\inj}(\Surf_A)\subseteq\Eqcls(\Surf_A)$. The following results are dual to
Lemma \ref{Lemm. proj} and Theorem \ref{thm. proj. cover}.

\begin{lemma}{\rm(Injective modules)} \label{lemm. inj}
Let $A=kQ/I$ be a gentle algebra and $\Surf_A=(\Surf_A,\M_A,\Gamma_A)$ its marked ribbon surface.
Then there exists a one-to-one mapping between the equivalence classes of injective permissible
curves and the isomorphic classes of indecomposable injective modules which is given by
\begin{align}
\mathfrak{P}|_{\inj}(\Surf_A): \mathfrak{P}_{\inj}(\Surf_A) \to \mathrm{ind}(\inj A), c \mapsto M(c). \nonumber
\end{align}
\end{lemma}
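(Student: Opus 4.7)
The plan is to prove Lemma \ref{lemm. inj} as the formal dual of Lemma \ref{Lemm. proj}. The conditions (\textbf{ISC 1}), (\textbf{ISC 2}), (\textbf{LMC}$'$), (\textbf{RMC}$'$) defining an injective permissible curve are obtained from the conditions (\textbf{PQC 1}), (\textbf{PQC 2}), (\textbf{LMC}), (\textbf{RMC}) defining a projective permissible curve by interchanging ``left of $c$'' with ``right of $c$'' and ``clockwise'' with ``counterclockwise.'' Under the string/curve dictionary of Remark \ref{rmk. in BS}, this interchange is precisely the one that swaps direct arrows with formal inverses of arrows, so the proof for projectives transports almost verbatim.

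First I would read off the string. By Remark \ref{rmk. in BS}, a segment of $c$ crossing two consecutive arcs $\gamma_{j-1},\gamma_j$ whose common endpoint $p(\gamma_{j-1},\gamma_j,c)$ lies on the left of $c$ contributes the direct arrow $\gamma_{j-1}\to \gamma_j$ to the string of $c$, while an endpoint on the right contributes the formal inverse of $\gamma_j\to \gamma_{j-1}$. Hence (\textbf{ISC 1}) forces the first $i-1$ letters of the string $s$ of $c$ to be direct and (\textbf{ISC 2}) forces the last $r-1-i$ letters to be inverse, so $s$ has the shape
\[
\xymatrix@C=0.5cm{\cdot\ar[r]^{\alpha_1} & \cdots \ar[r]^{\alpha_i} & v \ar@{<-}[r]^{\alpha_{i+1}} & \cdots \ar@{<-}[r]^{\alpha_n} & \cdot}
\]
with pivot $v=\gamma_i$. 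Using the local description of $I$ at a marked point furnished by Definition \ref{def. alg. of m. r. s.}, the maximality conditions (\textbf{LMC}$'$) and (\textbf{RMC}$'$) translate into: there is no $\alpha_0\in Q_1$ with $t(\alpha_0)=s(\alpha_1)$ and $\alpha_0\alpha_1\notin I$, and no $\alpha_{n+1}\in Q_1$ with $s(\alpha_{n+1})=s(\alpha_n)$ and $\alpha_{n+1}\alpha_n\notin I$. Thus $\alpha_1\cdots\alpha_i$ is left-maximal direct and $\alpha_{i+1}^-\cdots\alpha_n^-$ is right-maximal inverse, which is precisely the shape the standard string-algebra analysis attributes to the indecomposable injective envelope $I(v)$; in particular $\soc M(c)\cong S(v)$.

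For the converse I would start from an indecomposable injective $I(v)\in\mathrm{ind}(\mathrm{inj}\,A)$, write its string in the canonical direct/inverse form above, place a crossing on the arc $\gamma\in\Gamma_A$ corresponding to $v$, and extend on either side through the elementary polygons of the dissection; gentleness together with (\textbf{ISC})/(\textbf{MC}$'$) guarantees that each extension is unique up to equivalence, while $\soc I(v)\cong S(v)$ forces the pivot to be unique. Combined with the injectivity of $M$ on $\Eqcls(\Surf_A)$ from Remark \ref{rmk. in BS}, this yields the bijection. Alternatively, one could deduce the statement by applying the duality $D=\Hom_k(-,k)$ to Lemma \ref{Lemm. proj}, noting that the marked ribbon surface of $A^{op}$ is $(\Surf_A,\M_A,\Gamma_A)$ with the opposite orientation, which swaps left/right and clockwise/counterclockwise and thus sends $\mathfrak{P}_{\proj}(\Surf_A)$ bijectively onto $\mathfrak{P}_{\inj}(\Surf_{A^{op}})$. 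The only real bookkeeping issue is matching the orientation conventions of Definition \ref{def. m. r. s.}(3) with the arrow/relation conventions of Definition \ref{def. alg. of m. r. s.}; this has already been settled during the proof of Lemma \ref{Lemm. proj}, so no essentially new obstacle arises.
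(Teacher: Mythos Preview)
Your proposal is correct and matches the paper's approach: the paper does not give an explicit proof of this lemma but simply declares it, together with Theorem~\ref{thm. inj. envelope}, to be dual to Lemma~\ref{Lemm. proj} and Theorem~\ref{thm. proj. cover}. Your detailed spelling-out of the duality (reading off the left-maximal direct/right-maximal inverse string from (\textbf{ISC}) and (\textbf{MC}$'$), and the alternative via $D=\Hom_k(-,k)$ and orientation reversal) is exactly the argument the paper leaves implicit.
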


%Furthermore, we have the dual of Theorem \ref{thm. proj. cover}.

\begin{theorem}\label{thm. inj. envelope} {\rm(Injective envelopes of indecomposable modules)}
Let $A=kQ/I$ be a gentle algebra and $\Surf_A=(\Surf_A, \M_A, \Gamma_A)$ its marked ribbon surface.
Let $M = M(c)$ {\rm(}respectively, $M = M(c, \varphi)${\rm)} be a string module {\rm(}respectively, band module{\rm)}
which corresponds to the permissible curve $c$ {\rm(}respectively, the permissible closed curve
$c$ with a Jordan block over a $k$-linear space $V${\rm)}. Then
\[ E(M) \cong \bigoplus\limits_{i\in \mathfrak{I}}M(c_{\inj}^{\gamma_i}) \ \ \ \
\big(\text{respectively}, E(M) \cong \bigoplus\limits_{i\in \mathfrak{I}}M(c_{\inj}^{\gamma_i})^{\oplus \dim_k V}\big), \]
where $\{\gamma_i\mid i\in \mathfrak{I}\}$ and $\mathfrak{I}$ are the same as that in Lemma \ref{lemm. tops of string mods.}
and Proposition \ref{prop. tops of mods.}$(2)$
{\rm(}respectively, Lemma \ref{lemm. tops of band mods.} and Proposition \ref{prop. tops of mods.}$(2)${\rm)}, respectively.
\end{theorem}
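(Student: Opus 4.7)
The plan is to mirror the proof of Theorem~\ref{thm. proj. cover} by passing to the dual side, replacing tops by socles and projective covers by injective envelopes. The key general fact I will use is that for any finitely generated module $M$ over a finite-dimensional algebra, $E(M) \cong E(\soc M)$, so computing $E(M)$ reduces to computing the injective envelope of the semisimple module $\soc M$, together with the observation that taking injective envelopes commutes with finite direct sums.

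First, I would invoke the socle versions in Proposition~\ref{prop. tops of mods.}: in the string case one has $\soc M(c) \cong \bigoplus_{i\in \mathfrak{I}} M(c_{\simp}^{\gamma_i})$, where $\mathfrak{I}$ indexes the arcs $\gamma_i$ satisfying one of Cases~$1'$, $2'$, $3'$ of Lemma~\ref{lemm. tops of string mods.}; in the band case one has $\soc M(c,\varphi) \cong \bigoplus_{i\in \mathfrak{I}} M(c_{\simp}^{\gamma_i})^{\oplus \dim_k V}$, where $\mathfrak{I}$ indexes arcs satisfying Case~LR of Lemma~\ref{lemm. tops of band mods.}. This identifies $\soc M$ explicitly as a direct sum of simple modules parametrised by arcs in $\Gamma_A$.

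Next, for each arc $\gamma\in\Gamma_A$ I would verify that $E(M(c_{\simp}^{\gamma})) \cong M(c_{\inj}^{\gamma})$. By Lemma~\ref{lemm. inj}, $M(c_{\inj}^{\gamma})$ is the indecomposable injective $A$-module associated with the vertex $\gamma$, so it suffices to show that its socle is the simple $M(c_{\simp}^{\gamma})$. This is immediate from Definition~\ref{def. inj. per. curve}: the string corresponding to $c_{\inj}^{\gamma}$ is the concatenation of a left-maximal direct substring ending at $\gamma$ and a right-maximal inverse substring starting at $\gamma$ (dual to the projective description in Lemma~\ref{Lemm. proj}), whose unique simple submodule sits precisely at the vertex $\gamma$ where the two maximal pieces meet.

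Combining these two ingredients, and using that $E(-)$ distributes over finite direct sums, yields in the string case
\[
E(M(c)) \cong E(\soc M(c)) \cong E\Bigl(\bigoplus_{i\in \mathfrak{I}} M(c_{\simp}^{\gamma_i})\Bigr) \cong \bigoplus_{i\in \mathfrak{I}} E(M(c_{\simp}^{\gamma_i})) \cong \bigoplus_{i\in \mathfrak{I}} M(c_{\inj}^{\gamma_i}),
\]
and analogously for the band case with the multiplicity $\dim_k V$ carried through. The only genuine point of substance, and hence the main obstacle in what is otherwise a routine dualisation of Theorem~\ref{thm. proj. cover}, is checking that the geometrically defined curve $c_{\inj}^{\gamma}$ really does model the indecomposable injective with socle $M(c_{\simp}^{\gamma})$; this hinges on the dual reading of conditions (\textbf{ISC}) and (\textbf{MC}$'$) just as Lemma~\ref{Lemm. proj} relied on (\textbf{PQC}) and (\textbf{MC}) for the projective case.
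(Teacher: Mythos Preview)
Your proposal is correct and is exactly the dualisation the paper has in mind: the paper states Theorem~\ref{thm. inj. envelope} immediately after noting that it and Lemma~\ref{lemm. inj} are dual to Lemma~\ref{Lemm. proj} and Theorem~\ref{thm. proj. cover}, and gives no separate proof. Your chain $E(M)\cong E(\soc M)\cong\bigoplus_i E(M(c_{\simp}^{\gamma_i}))\cong\bigoplus_i M(c_{\inj}^{\gamma_i})$ mirrors the paper's projective argument $P(M)\cong P(\mathrm{top}\,M)\cong\bigoplus_i M(c_{\proj}^{\gamma_i})$ line by line.
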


\section{\bf Global dimension} \label{Section gl.dim}

\subsection{The descriptions of projective and injective resolutions of simple modules in geometric models}
In this subsection, we consider the minimal projective and injective resolutions of simple modules through marked ribbon surfaces.

\begin{definition} \rm (P-condition and I-condition) \label{def. P-cond.}
Let $A=kQ/I$ be a gentle algebra and $\Surf_A=(\Surf_A,\M_A,\Gamma_A)$ its marked ribbon surface.
Let $\gamma$ be an arc in $\Gamma_A$ with two endpoints $p$, $q$.
We say that $\gamma$ satisfies the {\it {\rm P}-condition at $p$} (respectively, {\it {\rm I}-condition at $p$})
if the following conditions are satisfied:
\begin{itemize}
  \item there are arcs $\gamma_1,  \cdots, \gamma_r$ such that $p$ is the common endpoint of $\gamma$ and $\gamma_i$
    (for any $1\le i\le r$), and

  \item $\gamma, \gamma_1,  \cdots, \gamma_r$ (respectively, $\gamma_1,  \cdots, \gamma_r, \gamma$) surround $p$ in the counterclockwise order.
\end{itemize}
Furthermore, we say that $\gamma$ satisfies the {\it {\rm P}-condition} (respectively, {\it {\rm I}-condition})
if the P-conditions (respectively, I-conditions) at $p$ and $q$ are satisfied.
\end{definition}

\begin{remark} \rm
If an arc $\gamma$ with endpoints $p$ and $q$ satisfies the P-condition at $p$ or $q$, then $M(c_{\proj}^{\gamma})$ is not simple.
\end{remark}

We provide an example in \Pic \ref{P-cond.}. In this figure,  $\gamma$ satisfies the P-condition.

\begin{figure}[htbp]
\begin{center}
\begin{tikzpicture} [scale=0.9]
\draw [line width=1.2pt] (-5,1)-- (5,1);
\draw [line width=1.2pt] (-5,-2)-- (5,-2);
\draw (0,1)-- (0,-2);
\draw [shift={(1,1)}] plot[domain=-3.14:0,variable=\t]({1*1*cos(\t r)+0*1*sin(\t r)},{0*1*cos(\t r)+1*1*sin(\t r)});
\draw [shift={(2,1)}] plot[domain=-3.14:0,variable=\t]({1*2*cos(\t r)+0*2*sin(\t r)},{0*2*cos(\t r)+1*2*sin(\t r)});
\draw [shift={(-1,-2)}] plot[domain=0:3.14,variable=\t]({1*1*cos(\t r)+0*1*sin(\t r)},{0*1*cos(\t r)+1*1*sin(\t r)});
\draw [shift={(-2,-2)}] plot[domain=0:3.14,variable=\t]({1*2*cos(\t r)+0*2*sin(\t r)},{0*2*cos(\t r)+1*2*sin(\t r)});
\draw [shift={(0.5,1)}] plot[domain=-3.14:0,variable=\t]({1*0.5*cos(\t r)+0*0.5*sin(\t r)},{0*0.5*cos(\t r)+1*0.5*sin(\t r)});
\draw [shift={(-0.5,-2)}] plot[domain=0:3.14,variable=\t]({1*0.5*cos(\t r)+0*0.5*sin(\t r)},{0*0.5*cos(\t r)+1*0.5*sin(\t r)});
\begin{scriptsize}
\fill [color=black] (0,1) circle (2.2pt);
\fill [color=black] (0,-2) circle (2.2pt);
\fill [color=black] (1,1) circle (2.2pt);
\fill [color=black] (2,1) circle (2.2pt);
\fill [color=black] (3,1) circle (2.2pt);
\fill [color=black] (4,1) circle (2.2pt);
\fill [color=black] (-1,1) circle (2.2pt);
\fill [color=black] (-2,1) circle (2.2pt);
\fill [color=black] (-3,1) circle (2.2pt);
\fill [color=black] (-4,1) circle (2.2pt);
\fill [color=black] (1,-2) circle (2.2pt);
\fill [color=black] (2,-2) circle (2.2pt);
\fill [color=black] (3,-2) circle (2.2pt);
\fill [color=black] (4,-2) circle (2.2pt);
\fill [color=black] (-1,-2) circle (2.2pt);
\fill [color=black] (-2,-2) circle (2.2pt);
\fill [color=black] (-3,-2) circle (2.2pt);
\fill [color=black] (-4,-2) circle (2.2pt);
\end{scriptsize}
% curve
\draw (0,-0.5) node{$\gamma$};
\draw (0, 1) node[above]{$p$};
\draw (0,-2) node[below]{$q$};
\draw (2,-1) node[below]{$\gamma_1$};
\draw (1, 0) node[below]{$\gamma_2$};
\draw (1.5,1.5-1) node[below]{$\cdots$};
\draw (0.5,1.7-1) node[below]{$\gamma_r$};
\draw (-2, 0) node[above]{$\gamma_1'$};
\draw (-1, -1) node[above]{$\gamma_2'$};
\draw (-1.5,-1.5) node[above]{$\cdots$};
\draw (-0.5,-1.7) node[above]{$\gamma_s'$};
\end{tikzpicture}
\caption{The arc $\gamma$ satisfying the P-condition. }
\label{P-cond.}
\end{center}
\end{figure}
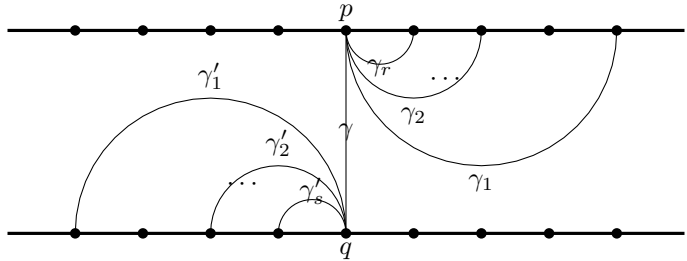

\begin{lemma}\label{lemm. proj. cover of simple mod. 1}
Under the conditions in Definition \ref{def. P-cond.},
let $c=c_{\simp}^\gamma$ be a simple permissible curve, where $\gamma$ is an arc in $\Gamma_A$ with endpoints $p$ and $q$
such that the P-condition is satisfied at $p$ {\rm(}respectively, $q${\rm)} but not at $q$ {\rm(}respectively, $p${\rm)}.
Then the kernel of the projective cover $p(M(c_{\simp}^\gamma)): M(c_{\proj}^\gamma) \twoheadrightarrow M(c_{\simp}^\gamma)$
is the module $M(c')$, where $c'$ is the permissible curve consecutively crossing $\gamma_1, \cdots, \gamma_r$
$($respectively, $\gamma_1', \cdots, \gamma_s')$.
\end{lemma}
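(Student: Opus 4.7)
The plan is to identify $M(c')$ with the radical of the projective module $M(c_{\proj}^\gamma)$. By Lemma~\ref{Lemm. proj}, $M(c_{\proj}^\gamma)$ is the indecomposable projective at the vertex corresponding to $\gamma$; call it $P$. The counterclockwise arrangement $\gamma, \gamma_1, \ldots, \gamma_r$ at $p$ supplied by the P-condition produces, via Definition~\ref{def. alg. of m. r. s.}, the chain of arrows $\gamma \to \gamma_1 \to \cdots \to \gamma_r$. Because the P-condition fails at $q$ and the (MC) clause of Definition~\ref{def. proj. per. curve} forces maximal extension on the $p$-side, the bijection of Remark~\ref{rmk. in BS} identifies the string underlying $P$ as exactly this direct string, with unique source $\gamma$. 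Consequently $\mathrm{top}\,P = M(c_{\simp}^\gamma)$, and the surjection $P \twoheadrightarrow M(c_{\simp}^\gamma)$ is (isomorphic to) the projective cover, whose kernel is $\rad P$.

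In string terms, $\rad P$ is obtained by deleting the source $\gamma$ together with the outgoing arrow $\gamma \to \gamma_1$, which leaves the direct string $\gamma_1 \to \gamma_2 \to \cdots \to \gamma_r$. Reapplying Remark~\ref{rmk. in BS}, the string module for this string corresponds to the permissible curve that consecutively crosses $\gamma_1, \ldots, \gamma_r$ with the counterclockwise orientation around $p$ (so that the induced arrows match), and this is precisely $c'$. The dual case (P-condition at $q$ rather than $p$) follows by symmetry after the relabelling $p \leftrightarrow q$, $\gamma_i \leftrightarrow \gamma_i'$.

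Alternatively, the same conclusion can be derived purely geometrically by applying Theorem~\ref{thm. s. e. s.}(1) to $(c, c', c'') = (c_{\proj}^\gamma, c', c_{\simp}^\gamma)$ with the consecutive crossings $(\gamma', \gamma'') = (\gamma_1, \gamma)$, after checking that $c_{\simp}^\gamma \simeq \mathrm{prot}_{c_{\proj}^\gamma}(c')$ in the sense of Definition~\ref{def-3.4}; the resulting short exact sequence is then necessarily the projective cover sequence. The main obstacle I anticipate is the orientation bookkeeping --- matching the counterclockwise surroundings of Definition~\ref{def. P-cond.} with the ``$p$ on the right of $c$'' convention in Proposition~\ref{prop. s. e. s.}, which is sensitive to the chosen direction of $c_{\proj}^\gamma$ (the equivalence $c \simeq c^-$ permits the choice, but it must be made explicit). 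Degenerate cases such as $r=1$ (where $c'$ is itself simple) and when $\gamma_r$ abuts the boundary are handled by identical routine case analysis.
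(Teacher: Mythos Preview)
Your proposal is correct. Your main argument---computing the string of $P=M(c_{\proj}^\gamma)$ directly as the maximal direct string $\gamma\to\gamma_1\to\cdots\to\gamma_r$ (since the failure of the P-condition at $q$ means no second arrow leaves $\gamma$), and then reading off $\rad P$ by deleting the source---is a more elementary route than the paper takes. The paper instead works entirely inside the geometric framework: it writes $c=c_{\proj}^\gamma$, $c''=c_{\simp}^\gamma$, checks that $c'\simeq\mathrm{nrot}_c(c'')$, verifies that $(c,c',c'')$ satisfy the hypotheses of Proposition~\ref{prop. s. e. s.}, and then invokes Theorem~\ref{thm. s. e. s.}(1) to produce the short exact sequence. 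This is exactly your ``alternative'' paragraph, so you have in fact recorded both proofs. Your direct string computation is shorter and avoids the rotation bookkeeping you flagged as a potential obstacle; the paper's approach has the virtue of exercising the machinery of Section~3, which is the pattern reused in Lemmas~\ref{lemm. proj. cover of simple mod. 2} and~\ref{lemm. ker. of proj. cover} and ultimately in Proposition~\ref{prop. pdM and idM}. Either is acceptable here.
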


\begin{proof}
We only prove the case for $\gamma$ satisfying the {\rm P}-condition at $p$ but not at $q$, and the argument
for the other case is similar.
As shown in \Pic \ref{Proof of Lemm-5.3}, let $\gamma=\tgamma_1$, and $\gamma_j=\tilde{\gamma}_{j+1}$ ($1\le j \le r$).
Then we may suppose $c = c_{\proj}^\gamma = c_{\proj}^{\tilde{\gamma}_1} \simeq c_1c_2\cdots c_{r+2}$, where each $c_t$
($2\le t\le r+1$) is a segment between $\tilde{\gamma}_{t-1}$ and $\tilde{\gamma}_t$, and $c_1$, $c_{r+2}$ are end segments of $c$.
\begin{figure}[htbp]
\begin{center}
\begin{tikzpicture}
\draw[dotted] (-2,2)-- (5,2);
\draw[dotted] (-1,-1)-- (1.5,-1);
\draw (0,2)-- (0,-1);
\draw (0,2) arc ( 180: 360:0.5); \draw (0,2) arc ( 180: 360:1.0); \draw (0,2) arc ( 180: 360:2.0);
% curve simp
\draw[orange, line width = 0.9pt] (-1,2) to[out=-90,in=180] (0,0) to[out=0,in=90]  (1,-1);
\draw[->][orange, line width = 0.9pt] (0.1,0) --(0.2,-0.02);
\draw[orange] (-0.6,0) node{$c_\simp^{\gamma}$};
% curve proj
\draw[red, line width = 0.9pt] (-0.98,2) to[out=-90,in=180] (0,0.2) to[out=0,in=-95] (0.5,2);
\draw[->][red, line width = 0.9pt] (0.1, 0.2) --(0.2,0.2+0.03);
\draw[red] (-0.4,0.5) node[above]{$c_\proj^{\gamma}$};
% curve c^1
\draw[blue, line width = 0.9pt] (0.5,-1)  node{$\pmb{\times}$} to[out=85,in=-85]  (0.5,2);
\draw[->][blue, line width = 0.9pt] (0.57,0.45) -- (0.57,0.5);
\draw[blue, line width = 0.9pt] (0.25,-0.5) node{$c^1$};
% marked/extra points
\draw [color=black] (0, 2) node{$\bullet$};
\draw [color=black] (1, 2) node{$\bullet$};
\draw [color=black] (2, 2) node{$\bullet$};
\draw [color=black] (3, 2) node{$\bullet$};
\draw [color=black] (4, 2) node{$\bullet$};
\draw [color=black] (1,-1) node{$\pmb{\times}$};
\draw [color=black] (0,-1) node{$\bullet$};
\draw [color=black] (-1,2) node{$\pmb{\times}$};
\draw [color=black] (0.5,2) node{$\pmb{\times}$};
% curve
\draw (0,-0.5) node[left]{$\gamma$};
\draw (0, 2) node[above]{$p$};
\draw (0,-1) node[below]{$q$};
\draw (0.5+2, 0.2+0) node[below]{$\gamma_1=\gamma^1_1$};
\draw (0.5+1, 0.2+1) node[below]{$\gamma_2=\gamma^1_2$};
\draw (1.5,1.5) node[below]{$\cdots$};
\draw (1.2,0.2+1.7) node[below]{$\gamma_r=\gamma^1_r$};
\end{tikzpicture}
\caption{
The projective cover $p(M(c_{\simp}^{\gamma})): M(c_{\proj}^{\gamma}) \to M(c_{\simp}^{\gamma})$
of $M(c_{\simp}^{\gamma})$ and its kernel $\Ker\ p(M(c_{\simp}^{\gamma}))$ $\cong$ $M(c^1)$,
where $c^1 \simeq \mathrm{nrot}_{c_{\proj}^{\gamma}}(c_{\simp}^{\gamma})$  by Proposition \ref{prop. quotientmod.}.
}
\label{Proof of Lemm-5.3}
\end{center}
\end{figure}
Let $c''=c_{\simp}^\gamma = c_{\simp}^{\tilde{\gamma}_1}$ and let
$c'$ be the permissible curve consecutively crossing $\gamma_1, \cdots, \gamma_r$. Then $c'' \simeq c_1c_2^{\mathfrak{T}}$ and
$c'\simeq \mathrm{nrot}_c(c'')$ (by Definition \ref{def-3.4}) and
the positional relationship of $c$, $c'$ and $c''$ satisfies the conditions in Proposition \ref{prop. s. e. s.}
(that is, $c$ consecutively crosses $\tgamma_2$ and $\tgamma_1$, $c'$ crosses only $\tgamma_2$ and $c''$ crosses $\tgamma_1$,
where $\tgamma_2$ and $\tgamma_1$ are viewed as arcs $\gamma'$ and $\gamma''$ in Proposition \ref{prop. s. e. s.}, respectively).
Thus we get the following exact sequence
$$0 \longrightarrow M(c') \longrightarrow M(c) \longrightarrow M(c'') \longrightarrow 0, $$
by Theorem \ref{thm. s. e. s.}(1).
\end{proof}

\begin{lemma}\label{lemm. proj. cover of simple mod. 2}
Under the conditions in Definition \ref{def. P-cond.},
let $c=c_{\simp}^\gamma$ be a simple permissible curve, where $\gamma$ is an arc in $\Gamma_A$ with endpoints $p$ and $q$
such that the {\rm P}-condition is satisfied.
Then the kernel of the projective cover $p(M(c_{\simp}^\gamma)): M(c_{\proj}^\gamma) \twoheadrightarrow M(c_{\simp}^\gamma)$
is the module $M(c_{\I}'\oplus c_{\II}')$, where $c_{\I}'$ consecutively crosses $\gamma_1', \cdots, \gamma_s'$ and $c_2'$
consecutively crosses $\gamma_1, \cdots, \gamma_r$.
\end{lemma}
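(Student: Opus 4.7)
The plan is to mirror the structure of the proof of Lemma~\ref{lemm. proj. cover of simple mod. 1}, but now using Theorem~\ref{thm. s. e. s.}(3) in place of Theorem~\ref{thm. s. e. s.}(1): the P-condition at \emph{both} endpoints of $\gamma$ will force the kernel of the projective cover to split as a direct sum, with one piece arising from each side of $\gamma$ inside the projective permissible curve.

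First I would unpack $c_{\proj}^{\gamma}$ using Definition~\ref{def. proj. per. curve}. Because the P-condition holds at both $p$ and $q$, the curve $c_{\proj}^{\gamma}$ consecutively crosses the sequence
$$\gamma_r, \gamma_{r-1}, \ldots, \gamma_1, \gamma, \gamma_1', \gamma_2', \ldots, \gamma_s',$$
where the first $r$ arcs surround $p$ and the last $s$ arcs surround $q$ as in Definition~\ref{def. P-cond.}. By (\textbf{PQC 1}) and (\textbf{PQC 2}) of the definition of $c_{\proj}^{\gamma}$, the endpoint $p$ of $\gamma$ lies on the right of $c_{\proj}^{\gamma}$ and $q$ lies on the left. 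Relabel the crossed arcs as $\tilde\gamma_1, \ldots, \tilde\gamma_{r+s+1}$ with $\tilde\gamma_{r+1} = \gamma$.

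Next I would apply Theorem~\ref{thm. s. e. s.}(3) with $c = c_{\proj}^{\gamma}$ and $c'' = c_{\simp}^{\gamma}$. The simple curve $c_{\simp}^{\gamma}$ crosses only $\tilde\gamma_{r+1} = \gamma$ and hence meets $c_{\proj}^{\gamma}$ once; cutting $c_{\simp}^{\gamma}$ at that intersection produces two halves $c_{\I}''$ and $c_{\II}''$, whose negative rotations (Definition~\ref{def-3.5}) $c_{\I}' \simeq \mathrm{nrot}_{c_{\proj}^{\gamma}}(c_{\I}'')$ and $c_{\II}' \simeq \mathrm{nrot}_{c_{\proj}^{\gamma}}(c_{\II}'')$ sweep around $q$ and around $p$, respectively. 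By the P-condition combined with the maximality condition (\textbf{MC}) built into $c_{\proj}^{\gamma}$, these rotated curves consecutively cross exactly $\gamma_1', \ldots, \gamma_s'$ and $\gamma_1, \ldots, \gamma_r$. Theorem~\ref{thm. s. e. s.}(3) then delivers
$$0 \longrightarrow M(c_{\I}' \oplus c_{\II}') \longrightarrow M(c_{\proj}^{\gamma}) \longrightarrow M(c_{\simp}^{\gamma}) \longrightarrow 0,$$
identifying the kernel of the projective cover.

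The main obstacle is that Lemma~\ref{lemm. quotientmod.} and Proposition~\ref{prop. quotientmod.} are stated under the strict inequality $m < i < n$, whereas $c_{\simp}^{\gamma}$ corresponds to the boundary case $m = i = n$. To justify this degenerate application, I would either verify directly that the rotation argument of Proposition~\ref{prop. quotientmod.} still produces $c_{\I}'$ and $c_{\II}'$ as described (which it does: the two negative rotations of a single-arc crossing naturally yield the two ``side'' curves), or else fall back on applying Proposition~\ref{prop. s. e. s.} twice. In the latter route one first cuts $c_{\proj}^{\gamma}$ at the $(\gamma_1, \gamma)$ boundary to split off the submodule $M(c_{\II}')$ with quotient an intermediate curve $\tilde c$ crossing $\gamma, \gamma_1', \ldots, \gamma_s'$, then cuts $\tilde c$ at the $(\gamma, \gamma_1')$ boundary (after reversing orientation so that the pinch point lies on the right) to split off $M(c_{\I}')$ with quotient $M(c_{\simp}^{\gamma})$. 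The resulting extension $0 \to M(c_{\II}') \to \ker \to M(c_{\I}') \to 0$ splits because $\ker = \rad M(c_{\proj}^{\gamma})$ is the radical of an indecomposable projective whose top is the simple $M(c_{\simp}^{\gamma})$, and the string description of a projective at a vertex satisfying both P-conditions forces its radical to decompose as the two independent branches emanating from $\gamma$.
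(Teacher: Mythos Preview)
Your proposal is correct and follows essentially the same route as the paper: set $c = c_{\proj}^{\gamma}$, $c'' = c_{\simp}^{\gamma}$, note they meet at a single interior point, and invoke Theorem~\ref{thm. s. e. s.}(3) (via Proposition~\ref{prop. quotientmod.}) to obtain the short exact sequence with kernel $M(c_{\I}'\oplus c_{\II}')$. The paper applies Proposition~\ref{prop. quotientmod.} directly without commenting on the degenerate case $m=i=n$ that you flag; your observation that the rotation construction still makes sense there (and your alternative two-step cut via Proposition~\ref{prop. s. e. s.}) is a welcome bit of extra care that the paper omits.
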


\begin{proof}
Let $\gamma_i'=\tilde{\gamma}_{s-i+1}$ ($1\le i\le s$), $\gamma=\tilde{\gamma}_{s+1}$, and
$\gamma_j=\tilde{\gamma}_{s+j+1}$ ($1\le j \le r$). Then we may suppose
$c = c_{\proj}^\gamma = c_{\proj}^{\tilde{\gamma}_{s+1}} \simeq c_1c_2\cdots c_{s+r+2}$ where each $c_t$
($2\le t\le s+r+1$) is a segment between $\tilde{\gamma}_{t-1}$ and $\tilde{\gamma}_t$, and $c_1$, $c_{s+r+2}$ are end segments of $c$.
Let $c''=c_{\simp}^\gamma$, and let $x$ be the intersection of $c$ and $c''$ (note that $\sharp(c\cap c'')=1$ in this case)
such that $c_{\I}'', c_{\II}''$ are the curves obtained by $x$ dividing $c''$.
%Then $\tilde{c}_{\I}'\simeq \mathrm{prot}_{c}(c_{\I}'')$ and $\tilde{c}_{\II}'\simeq \mathrm{prot}_{c}(c_{\II}'')$ by Definition \ref{def-3.5}, and
Then the positional relationship of $c, \mathrm{prot}_{c}(c_{\I}''), \mathrm{prot}_{c}(c_{\II}'')$
and $c''$ satisfies the conditions in Proposition \ref{prop. quotientmod.}.
We get the following exact sequence
\[0 \longrightarrow M(\mathrm{prot}_{c}(c_{\I}'')\oplus \mathrm{prot}_{c}(c_{\II}'') )\longrightarrow M(c)
\mathop{ -\!\!\!-\!\!\!-\!\!\! \longrightarrow}\limits^{p(M(c_{\simp}^{\gamma}))}  M(c'') \longrightarrow 0 \]
by Theorem \ref{thm. s. e. s.}(3).
Notice that $\mathrm{prot}_{c}(c_{\I}'') \simeq c_{\I}' $ and $\mathrm{prot}_{c}(c_{\II}'') \simeq c_{\II}' $, thus
$$\Ker p(M(c_{\simp}^{\gamma})) = M(\mathrm{prot}_{c}(c_{\I}'') \oplus \mathrm{prot}_{c}(c_{\II}'')) \cong M(c_{\I}' \oplus c_{\II}').$$
\end{proof}

\begin{lemma} \label{lemm. ker. of proj. cover}
Under the conditions Definition \ref{def. P-cond.} and Lemma \ref{lemm. proj. cover of simple mod. 1},
let $c^1$ be the permissible curve $c'$ corresponding to $\Ker p(M(c_{\simp}^\gamma))$,
where $p(M(c_{\simp}^\gamma))$ is given by Lemma \ref{lemm. proj. cover of simple mod. 1}.
Then $P(M(c^1))=P(M(c_{\proj}^{\gamma_1}))$ is indecomposable, and the kernel of
$P(M(c^1)) \twoheadrightarrow M(c^1)$ is $M(\mathrm{nrot}_{c_{\proj}^{\gamma_1}}(c^1))$.
To be precise, if $\gamma^2_1, \gamma^2_2, \cdots, \gamma^2_s$ are arcs with the endpoint of $\gamma_1$ which is not $p$
such that $\gamma, \gamma_1, \gamma^2_1$ are sides of the same elementary polygon of $\Surf_A$,
then $\mathrm{nrot}_{c_{\proj}^{\gamma_1}}(c^1) \simeq c^2$, where $c^2$ is the permissible curve consecutively
crosses $\gamma^2_1, \gamma^2_2, \cdots, \gamma^2_s$.
\end{lemma}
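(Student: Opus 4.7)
The plan is to execute three steps: determine $\mathrm{top}\,M(c^1)$ to pin down $P(M(c^1))$, invoke Theorem~\ref{thm. s. e. s.}(1) to realize the kernel as the module of a single permissible curve, and then geometrically identify that curve with $c^2$.

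For the first step, I would note that, by the construction in Lemma~\ref{lemm. proj. cover of simple mod. 1}, the curve $c^1$ consecutively crosses $\gamma_1,\dots,\gamma_r$, all sharing the common endpoint $p$ on the same side of $c^1$. The associated string is therefore monotone and, up to formal inversion, reads $\gamma_1\to\gamma_2\to\cdots\to\gamma_r$. Applying Lemma~\ref{lemm. tops of string mods.} together with Proposition~\ref{prop. tops of mods.}(1), only $\gamma_1$ falls into a top-contributing case (Case~2), so $\mathrm{top}\,M(c^1)\cong M(c_{\simp}^{\gamma_1})$. By Theorem~\ref{thm. proj. cover},
\[ P(M(c^1))\cong P(M(c_{\simp}^{\gamma_1}))=M(c_{\proj}^{\gamma_1}), \]
which is indecomposable as the projective cover of a simple module.

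For the second step, I would set $c:=c_{\proj}^{\gamma_1}$, $c'':=c^1$, and $c':=\mathrm{nrot}_{c_{\proj}^{\gamma_1}}(c^1)$, and verify the hypotheses of Proposition~\ref{prop. s. e. s.}. Writing $p_1$ for the endpoint of $\gamma_1$ opposite to $p$, the elementary polygon $\Delta_1$ containing $\gamma,\gamma_1,\gamma_1^2$ supplies the relevant local picture: at $p_1$ the projective curve $c_{\proj}^{\gamma_1}$ consecutively crosses $\gamma_1$ and $\gamma_1^2$, the submodule curve $c^1$ crosses only $\gamma_1$ on the appropriate side (so that $c^1$ and $c_{\proj}^{\gamma_1}$ share the endpoint of $c_{\proj}^{\gamma_1}$ sitting opposite to $\Delta_1$), and by Definition~\ref{def-3.4} the rotation $c'$ is forced to cross $\gamma_1^2$. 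These ingredients place us in the setting of Proposition~\ref{prop. s. e. s.}, yielding the short exact sequence
\[ 0\longrightarrow M(\mathrm{nrot}_{c_{\proj}^{\gamma_1}}(c^1))\longrightarrow M(c_{\proj}^{\gamma_1})\longrightarrow M(c^1)\longrightarrow 0, \]
which identifies the kernel.

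For the last step, I would unfold Definition~\ref{def-3.4} to verify $\mathrm{nrot}_{c_{\proj}^{\gamma_1}}(c^1)\simeq c^2$. Negative rotation shifts the shared endpoint of $c^1$ along $c_{\proj}^{\gamma_1}$ to its opposite endpoint and moves the free endpoint along $\partial\Surf_A$ in the negative direction, landing it at the vertex of $\Delta_1$ on the $\gamma_1^2$-side of $\gamma_1$, namely $p_1$. From $p_1$ the rotated curve then sweeps through the counterclockwise-ordered arcs $\gamma_1^2,\dots,\gamma_s^2$ emanating from $p_1$. The main obstacle will be this homotopy identification, in particular verifying that the sweep terminates \emph{exactly} at $\gamma_s^2$ --- neither short of it nor past it --- which will rely on the maximality conditions (\textbf{LMC})/(\textbf{RMC}) encoded in the projective permissible curve $c_{\proj}^{\gamma_1}$ (Definition~\ref{def. proj. per. curve}) together with the minimal-crossing hypothesis on permissible curves.
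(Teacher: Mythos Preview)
Your proposal is correct and follows essentially the same approach as the paper: both arguments identify the triple $(c_{\proj}^{\gamma_1},\ c',\ c^1)$ as satisfying the hypotheses of Proposition~\ref{prop. s. e. s.} and then invoke Theorem~\ref{thm. s. e. s.}(1) to obtain the short exact sequence. The only minor difference is that you first compute $\mathrm{top}\,M(c^1)$ via Lemma~\ref{lemm. tops of string mods.} and Theorem~\ref{thm. proj. cover} to identify the projective cover, whereas the paper proceeds more directly by writing out the explicit segment decomposition $c_{\proj}^{\gamma_1}=c_1\cdots c_{r+s+1}$ and recognizing $c^1\simeq c_1\cdots c_{r+1}^{\mathfrak{T}}$ and $c'\simeq c_{r+1}^{\mathfrak{T}}c_{r+2}\cdots c_{r+s+1}$ as the two ``halves''; this segment viewpoint also makes the identification $c'\simeq c^2$ immediate and bypasses the need to unfold Definition~\ref{def-3.4} in detail.
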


\begin{proof}
Let $\gamma_{r-i}=\tilde{\gamma}_{i+1}$ $(0\le i\le r-1)$, $\gamma^2_j = \tilde{\gamma}_{r+j}$ $(1\le j \le s)$.
Then $c'':=c^1=c_1''c_2''\cdots c_{r+1}''$, where each $c_i''$ ($2\le i \le r$) is a segment between
$\tilde{\gamma}_{i-1}$ and $\tilde{\gamma}_{i}$, and $c_1''$, $c_{r+1}''$ are its end segments;
and $c:=c_{\proj}^{\gamma_1}=c_{\proj}^{\tilde{\gamma}_r}=c_1c_2\cdots c_{r+s+1}$ where each $c_j$ ($ 2 \le j\le r+s$)
is a segment between $\tilde{\gamma}_{j-1}$ and $\tilde{\gamma}_{j}$, and $c_1$, $c_j$ are its end segments.
Thus $c'' = c_1''c_2''\cdots c_{r+1}'' \simeq c_1c_2\cdots c_{r+1}^{\mathfrak{T}}$.
We have that the permissible curve $c'=c_{r+1}^{\mathfrak{T}}c_{r+2}\cdots c_{r+s+1}$
which consecutively crosses $\tilde{\gamma}_{r+1} = \gamma^2_1, \tilde{\gamma}_{r+2}
= \gamma^2_2, \cdots, \tilde{\gamma}_{r+s} = \gamma^2_s$, and
the positional relationship of $c$, $c'$ and $c''$ satisfies the conditions in Proposition \ref{prop. s. e. s.}.
So, by Theorem \ref{thm. s. e. s.}(1), we get the following short exact sequence
$$0 \longrightarrow M(c') \longrightarrow M(c) \longrightarrow M(c'') \longrightarrow 0,$$
where $M(c'')=M(c^1)$, $M(c)=M(c_{\proj}^{\gamma_1})$, and $M(c')=M(c_{r+1}^{\mathfrak{T}}c_{r+2}\cdots c_{r+s+1})
\cong M(\mathrm{nrot}_{c_{\proj}^{\gamma_1}}(c^1))$ (Note that $c'\simeq \mathrm{nrot}_{c_{\proj}^{\gamma_1}}(c^1)$
since $\gamma$ and $\gamma_1$ are sides of the same elementary polygon).
\end{proof}

\begin{figure}[htbp]
\begin{center}
\begin{tikzpicture}
\draw [dotted] (-1,2)-- (8,2);
\draw [dotted] (-1,-1)-- (8,-1);
\draw (0,2)-- (0,-1);
\draw (0,2) arc ( 180: 360:0.5); \draw (0,2) arc ( 180: 360:1.0); \draw (0,2) arc ( 180: 360:2.0);
\draw (4,2) arc ( 180: 360:0.5); \draw (4,2) arc ( 180: 360:1.0); \draw (4,2) arc ( 180: 360:2.0);
% curve simp
\draw[gray, dotted, line width = 0.9pt] (-1,2) to[out=-90,in=180] (0,0) to[out=0,in=90]  (1,-1);
\draw[->][gray, line width = 0.9pt] (0.1,0) --(0.2,-0.02);
\draw[gray] (-0.6,0) node{$c_\simp^{\gamma}$};
% curve proj
\draw[gray, dotted, line width = 0.9pt] (-0.98,2) to[out=-90,in=180] (0,0.2) to[out=0,in=-95] (0.5,2);
\draw[->][gray, line width = 0.9pt] (0.1, 0.2) --(0.2,0.2+0.03);
\draw[gray] (-0.4,0.5) node[above]{$c_\proj^{\gamma}$};
% curve c^1
\draw[orange, line width = 0.9pt] (0,-1) (0.5,-1)  node{$\pmb{\times}$} to[out=85,in=-85]  (0.5,2);
\draw[->][orange, line width = 0.9pt] (0.57,0.45) -- (0.57,0.5);
\draw[orange, line width = 0.9pt] (0.7,-0.2) node[left]{$c^1$};
% curve proj(c^1)
\draw[red, line width = 0.9pt] (0.5, 2) arc  (180: 360:2.0);
\draw[->][red] (2.5,0) -- (2.51, 0);
\draw[red] (2.5,0) node[above]{$c_\proj^{\gamma^1_1}$};
% curve kernel
\draw[blue, line width = 0.9pt] (0, 2) arc  (180: 360:2.25);
\draw[->][blue] (2.25,-0.25) -- (2.26,-0.25);
\draw[blue] (2.25,-0.25) node[below]{$\mathrm{nrot}_{c_\proj^{\gamma^1_1}}(c^1)$};
% marked/extra point
\draw [color=black] (-1,2) node{$\pmb{\times}$};
\draw [color=black] (0, 2) node{$\bullet$};
\draw [color=black] (0,-1) node{$\bullet$};
\draw [color=black] (1, 2) node{$\bullet$};
\draw [color=black] (2, 2) node{$\bullet$};
\draw [color=black] (3, 2) node{$\bullet$};
\draw [color=black] (4, 2) node{$\bullet$};
\draw [color=black] (5, 2) node{$\bullet$};
\draw [color=black] (6, 2) node{$\bullet$};
\draw [color=black] (7, 2) node{$\bullet$};
\draw [color=black] (8, 2) node{$\bullet$};
\draw [color=black] (1,-1) node{$\pmb{\times}$};
\draw [color=black] (0.5, 2) node{$\pmb{\times}$};
\draw [color=black] (4.5, 2) node{$\pmb{\times}$};
% curve
\draw (0,-0.5) node[left]{$\gamma$};
\draw (0, 2) node[above]{$p$};
\draw (0,-1) node[below]{$q$};
%\draw (0.5+2, 0.2+0) node[below]{$\gamma_1=\gamma^1_1$};
%\draw (0.5+1, 0.2+1) node[below]{$\gamma_2=\gamma^1_2$};
\draw (1.5,1.5) node[below]{$\cdots$};
%\draw (1.25,0.2+1.7) node[below]{$\gamma_r=\gamma^1_r$};
\draw (4+2, 0) node[below]{$\gamma^2_1$};
\draw (4+1, 1) node[below]{$\gamma^2_2$};
\draw (4+1.5,1.5) node[below]{$\cdots$};
\draw (4+0.5,1.7) node[below]{$\gamma^2_s$};
\end{tikzpicture}
\caption{
The projective cover $p(M(c_{\simp}^{\gamma})): M(c_{\proj}^{\gamma_1^1}) \to M(c_{\simp}^{\gamma})$ of $M(c_{\simp}^{\gamma})$
and its kernel $\Ker\ p(M(c_{\simp}^{\gamma})) \cong M(c^2)$, where $c^2 \simeq \mathrm{nrot}_{c_{\proj}^{\gamma_1}}(c^1)$ by Proposition \ref{prop. quotientmod.}.
}
\label{ker of proj. cover}
\end{center}
\end{figure}
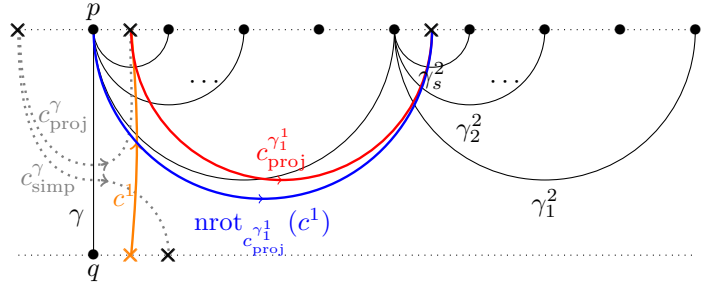

Keep the notations in Lemma \ref{lemm. ker. of proj. cover} and denote by $\gamma^1_i$
the permissible curve $\gamma_i$ for any $1\le i \le r$.
In \Pic \ref{ker of proj. cover}, $p(M(c_{\simp}^{\gamma})): M(c_{\proj}^{\gamma})
\twoheadrightarrow M(c_{\simp}^{\gamma})$ is the projective cover of $M(c_{\simp}^{\gamma})$ which is simple, and
the module $M(c^1)$ corresponding to the orange permissible curve $c^1$ is the kernel of $p(M(c_{\simp}^{\gamma}))$.
In addition, $c_{\proj}^{\gamma^1_1}$, the red permissible curve in this Figure, provides the projective cover of $M(c^1)$,
that is, $p(M(c^1)): M(c_{\proj}^{\gamma^1_1}) \twoheadrightarrow M(c^1)$. % Deleting {is the projective cover of $M(c^1)$.}
Then $\mathrm{nrot}_{c_{\proj}^{\gamma^1_1}}(c^1)$ is the kernel of $p(M(c^1))$.
Furthermore, let $\mathrm{nrot}_{c_\proj^{\gamma^1_1}}(c^1)=c^2$.
If there are arcs $\gamma^3_1, \cdots, \gamma^3_t$ having a common endpoint which is the endpoint of $\gamma^2_1$
but not the common endpoint of $\gamma^2_1, \cdots, \gamma^2_s$
such that $\gamma, \gamma^1_1, \gamma^2_1, \gamma^3_1$ are sides of the same elementary polygon of $\Surf_A$,
then the kernel of $M(c_{\proj}^{\gamma^2_1}) \twoheadrightarrow M(c^2)$ is isomorphic to $M(c^3)$,
where $c^3 \simeq \mathrm{nrot}_{c_{\proj}^{\gamma^2_1}}(c^2)$ is a permissible curve consecutively
crossing $\gamma^3_1, \cdots, \gamma^3_t$.
Repeating this process, we get the minimal projective resolution of $M(c_{\simp}^{\gamma})$ as follows:
\[\cdots \longrightarrow M(c_{\proj}^{\gamma^2_1}) \longrightarrow M(c_{\proj}^{\gamma^1_1})
\longrightarrow M(c_{\proj}^{\gamma}) \longrightarrow M(c_{\simp}^{\gamma}) \longrightarrow 0. \]
Thus we obtain a one-one mapping $M(c_{\proj}^{\gamma^t_1}) \mapsto \gamma^t_1$,
where $\gamma^0_1=\gamma$ and $\gamma^0_1, \gamma^1_1, \cdots$
are sides of the same elementary polygon of $\Surf_A$, see \Pic \ref{ker of proj. cover 2}.

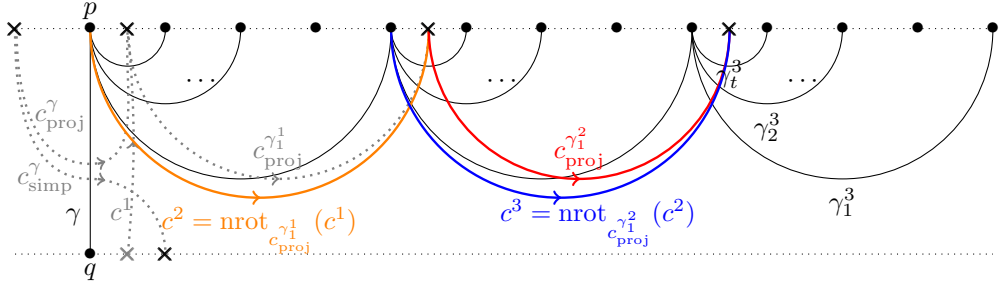
\begin{figure}[htbp]
\begin{center}
\begin{tikzpicture}
\draw [dotted] (-1,2)-- (12,2);
\draw [dotted] (-1,-1)-- (12,-1);
\draw (0,2)-- (0,-1);
\draw (0,2) arc ( 180: 360:0.5); \draw (0,2) arc ( 180: 360:1.0); \draw (0,2) arc ( 180: 360:2.0);
\draw (4,2) arc ( 180: 360:0.5); \draw (4,2) arc ( 180: 360:1.0); \draw (4,2) arc ( 180: 360:2.0);
\draw (8,2) arc ( 180: 360:0.5); \draw (8,2) arc ( 180: 360:1.0); \draw (8,2) arc ( 180: 360:2.0);
% curve simp
\draw[gray, dotted, line width = 0.9pt] (-1,2) to[out=-90,in=180] (0,0) to[out=0,in=90]  (1,-1);
\draw[->][gray, line width = 0.9pt] (0.1,0) --(0.2,-0.02);
\draw[gray] (-0.6,0) node{$c_\simp^{\gamma}$};
% curve proj
\draw[gray, dotted, line width = 0.9pt] (-0.98,2) to[out=-90,in=180] (0,0.2) to[out=0,in=-95] (0.5,2);
\draw[->][gray, line width = 0.9pt] (0.1, 0.2) --(0.2,0.2+0.03);
\draw[gray] (-0.4,0.5) node[above]{$c_\proj^{\gamma}$};
% curve c^1
\draw[gray, dotted, line width = 0.9pt] (0.5,-1)  node{$\pmb{\times}$} to[out=85,in=-85]  (0.5,2);
\draw[->][gray, dotted, line width = 0.9pt] (0.57,0.45) -- (0.57,0.5);
\draw[gray] (0.7,-0.4) node[left]{$c^1$};
% curve proj(c^1)
\draw[gray, dotted, line width = 0.9pt] (0.5, 2) arc  (180: 360:2.0);
\draw[->][gray, dotted, line width = 0.9pt] (2.5,0) -- (2.51, 0);
\draw[gray] (2.5,0) node[above]{$c_\proj^{\gamma^1_1}$};
% curve kernel
\draw[orange, line width = 0.9pt] (0, 2) arc  (180: 360:2.25);
\draw[->][orange][line width = 1pt] (2.25,-0.25) -- (2.26,-0.25);
\draw[orange] (2.25,-0.25) node[below]{$c^2=\mathrm{nrot}_{c_\proj^{\gamma^1_1}}(c^1)$};
% curve proj(c^2)
\draw[red, line width = 0.9pt] (4+0.5, 2) arc  (180: 360:2.0);
\draw[->][red][line width = 1pt] (4+2.5,0) -- (4+2.51, 0);
\draw[red] (4+2.5,0) node[above]{$c_\proj^{\gamma^2_1}$};
% curve kernel
\draw[blue, line width = 0.9pt] (4+0, 2) arc  (180: 360:2.25);
\draw[->][blue][line width = 1pt] (4+2.25,-0.25) -- (4+2.26,-0.25);
\draw[blue] (4+2.75,-0.15) node[below]{$c^3=\mathrm{nrot}_{c_\proj^{\gamma^2_1}}(c^2)$};
% marked/extra points
\draw [color=black] (-1,2) node{$\pmb{\times}$};
\draw [color=black] (0, 2) node{$\bullet$};
\draw [color=black] (0,-1) node{$\bullet$};
\draw [color=black] (1, 2) node{$\bullet$};
\draw [color=black] (2, 2) node{$\bullet$};
\draw [color=black] (3, 2) node{$\bullet$};
\draw [color=black] (4, 2) node{$\bullet$};
\draw [color=black] (5, 2) node{$\bullet$};
\draw [color=black] (6, 2) node{$\bullet$};
\draw [color=black] (7, 2) node{$\bullet$};
\draw [color=black] (8, 2) node{$\bullet$};
\draw [color=black] (9, 2) node{$\bullet$};
\draw [color=black] (10, 2) node{$\bullet$};
\draw [color=black] (11, 2) node{$\bullet$};
\draw [color=black] (12, 2) node{$\bullet$};
\draw [color=black] (1,-1) node{$\pmb{\times}$};
\draw [color=black] (0.5, 2) node{$\pmb{\times}$};
\draw [color=black] (4.5, 2) node{$\pmb{\times}$};
\draw [color=black] (8.5, 2) node{$\pmb{\times}$};
% curve
\draw (0,-0.5) node[left]{$\gamma$};
\draw (0, 2) node[above]{$p$};
\draw (0,-1) node[below]{$q$};
% \draw (2, 0) node[below]{$\gamma^1_1$};
% \draw (1, 1) node[below]{$\gamma^1_2$};
\draw (1.5,1.5) node[below]{$\cdots$};
% \draw (0.5,1.7) node[below]{$\gamma^1_r$};
% \draw (4+2, 0) node[below]{$\gamma^2_1$};
% \draw (4+1, 1) node[below]{$\gamma^2_2$};
\draw (4+1.5,1.5) node[below]{$\cdots$};
% \draw (4+0.5,1.7) node[below]{$\gamma^2_s$};
\draw (8+2, 0) node[below]{$\gamma^3_1$};
\draw (8+1, 1) node[below]{$\gamma^3_2$};
\draw (8+1.5,1.5) node[below]{$\cdots$};
\draw (8+0.5,1.7) node[below]{$\gamma^3_t$};
\end{tikzpicture}
\caption{
Keep the notations in Figure \ref{ker of proj. cover}.
Then the positional relationship of $c^2$, $c_{\proj}^{\gamma^2_1}$ and
$c^3$ satisfies the conditions in Proposition \ref{prop. quotientmod.}. }
\label{ker of proj. cover 2}
\end{center}
\end{figure}

In the case that $\gamma$ satisfies the {\rm P}-condition at $q$ but not at $p$, we get dually
the minimal projective resolution of $M(c_{\simp}^{\gamma})$ as follows:
\[\cdots \longrightarrow M(c_{\proj}^{\gamma'^2_1}) \longrightarrow M(c_{\proj}^{\gamma'^1_1})
\longrightarrow M(c_{\proj}^{\gamma}) \longrightarrow M(c_{\simp}^{\gamma}) \longrightarrow 0. \]
Moreover, if $\gamma$ satisfies the {\rm P}-condition, then the minimal projective resolution
of $M(c_{\simp}^{\gamma})$ is as follows:
\[\cdots
\longrightarrow M(c_{\proj}^{\gamma^2_1}) \oplus  M(c_{\proj}^{\gamma'^2_1})
\longrightarrow M(c_{\proj}^{\gamma^1_1})\oplus M(c_{\proj}^{\gamma'^1_1})
\longrightarrow M(c_{\proj}^{\gamma}) \longrightarrow M(c_{\simp}^{\gamma}) \longrightarrow 0. \]

\begin{proposition} \label{prop. pdM and idM}
Let $A=kQ/I$ be a gentle algebra and $\Surf_A=(\Surf_A,\M_A,\Gamma_A)$ its marked ribbon surface.
Let $\gamma^0$ be an arc $\gamma^0:(0,1)\to \Surf_A\backslash\partial\Surf_A$ whose endpoints are
$p:=\gamma^0(0^+)$ and $q:=\gamma^0(1^-)$. As shown in \Pic \ref{fig:prop-5.6}, assume that
\begin{itemize}
\item {\rm(\textbf{PD})} $($respectively, {\rm(\textbf{ID}))}
there are arcs $\gamma^1,\cdots,\gamma^n :(0,1)\to \Surf_A\backslash\partial\Surf_A $ such that following hold:
\begin{itemize}
  \item {\rm(PD1)}$=${\rm(ID1)}
    $\gamma^0, \gamma^1,\cdots, \gamma^n$ are sides of the same elementary polygon of $\Surf_A$;

  \item {\rm(PD2)}$=${\rm(ID2)}
    $\gamma^1(0^+) = q$ and $\gamma^{i-1}(1^-) = \gamma^{i}(0^+)$ for any $2\le i\le n$;

  \item {\rm(PD3)} {\rm(}respectively, {\rm(ID3))}
    in the clockwise $($respectively, counterclockwise$)$ order at the common endpoint of
    $\gamma^{i-1}$ and $\gamma^i$, $\gamma^i$ appears after $\gamma^{i-1}$ for any $1\le i\le n$;

  \item {\rm(PD4)}$=${\rm(ID4)}
    for any $\gamma \in \Gamma_A$, none of the arcs $\gamma^0, \gamma^1,\cdots,\gamma^n, \gamma$
    satisfies one of above three conditions;
\end{itemize}
and assume that
\item {\rm (\textbf{PD})$'$} $($respectively, {\rm (\textbf{ID})$'$)}
there are arcs $\gamma'^1,\cdots,\gamma'^m :(0,1)\to \Surf_A\backslash\partial\Surf_A $ such that following hold.
\begin{itemize}
  \item {\rm(PD1)$'$}$=${\rm(ID1)$'$}
    $\gamma'^0 := \gamma^0, \gamma'^1,\cdots, \gamma'^m$ are sides of the same elementary polygon of $\Surf_A$;

  \item {\rm(PD2)$'$}$=${\rm(ID2)$'$}
    $\gamma'^1(0^+)=p$ and $\gamma'^{j-1}(1^-) = \gamma'^{j}(0^+)$ for any $2\le j\le m$;

  \item {\rm(PD3)$'$} {\rm(}respectively, {\rm(ID3)$'$)}
    in the counterclockwise $($respectively, clockwise$)$ order at the common endpoint of
    $\gamma'^{j-1}$ and $\gamma'^j$, $\gamma'^j$ appears after $\gamma^{j-1}$ for any $1\le j\le m$;

  \item {\rm(PD4)$'$}$=${\rm(ID4)$'$}
    for any $\gamma' \in \Gamma_A$, none of the arcs $\gamma'^0, \gamma'^1,\cdots,\gamma'^m, \gamma'$
    satisfies one of above three conditions.
\end{itemize}
\end{itemize}
Then the projective dimension $($respectively, injective dimension$)$ of $M(c_{\simp}^{\gamma^0})$ is $\max\{n, m\}$.
\end{proposition}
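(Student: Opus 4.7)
The plan is to establish the projective dimension formula by iteratively constructing the minimal projective resolution of $M(c_\simp^{\gamma^0})$, using Lemmas \ref{lemm. proj. cover of simple mod. 1}, \ref{lemm. proj. cover of simple mod. 2}, and \ref{lemm. ker. of proj. cover} as the engine of the recursion; the injective dimension statement will follow dually using Theorem \ref{thm. inj. envelope} and the dual of Theorem \ref{thm. s. e. s.}.

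First I would set up the resolution at the base step. By Theorem \ref{thm. proj. cover}, $P(M(c_\simp^{\gamma^0})) = M(c_\proj^{\gamma^0})$, and by Lemmas \ref{lemm. proj. cover of simple mod. 1} and \ref{lemm. proj. cover of simple mod. 2} the first syzygy decomposes as a direct sum of at most two pieces $M(c^1) \oplus M(c'^1)$, where the $c^1$ summand (crossing arcs at $q$) appears iff $n \geq 1$, and the $c'^1$ summand (crossing arcs at $p$) appears iff $m \geq 1$. Conditions (PD3) and (PD3)$'$ (counterclockwise/clockwise orientation at the relevant vertices) correspond precisely to the P-condition at $q$ and $p$ respectively, so this description of the first syzygy falls directly out of those two lemmas.

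Next I would iterate, treating the two chains independently. For the chain at $q$, Lemma \ref{lemm. ker. of proj. cover} provides the inductive step: if $c^k$ is the $k$-th intermediate permissible curve in the chain, its projective cover is the single indecomposable $M(c_\proj^{\gamma^k})$ (with $\gamma^k$ being the first arc of the next polygon in the chain by (PD1)), and the next syzygy is $M(c^{k+1})$ where $c^{k+1} \simeq \mathrm{nrot}_{c_\proj^{\gamma^k}}(c^k)$ is the permissible curve crossing the arcs $\gamma^{k+1}, \gamma^{k+1}_2, \ldots$ surrounding the next vertex, whose existence is guaranteed as long as $k < n$. Termination at step $n$ is forced by condition (PD4): since no arc $\gamma$ can be added to the chain $\gamma^0, \ldots, \gamma^n$ under the prescribed orientation, $\mathrm{nrot}_{c_\proj^{\gamma^n}}(c^n)$ is trivial and the curve $c^n$ coincides with $c_\proj^{\gamma^n}$, so $M(c^n) = M(c_\proj^{\gamma^n})$ is already projective. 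The $q$-chain therefore contributes exactly $n$ additional projective terms beyond $M(c_\proj^{\gamma^0})$. The identical argument applied to the chain $\gamma'^1, \ldots, \gamma'^m$ at $p$ yields $m$ additional terms. Since the two chains contribute to disjoint direct summands at every stage (each step applies Theorem \ref{thm. s. e. s.}(2) to one summand at a time), the total length of the minimal projective resolution is $\max\{n,m\}$.

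The main technical obstacle is verifying that the constructed resolution is genuinely minimal and that the two chains remain independent throughout the iteration. Minimality follows because each step of the recursion applies a projective cover (Theorem \ref{thm. proj. cover}), so no summand of the kernel admits a direct-summand projective. Independence of the chains requires checking that the new arcs appearing in $c^{k+1}$ are entirely on one side of $c_\proj^{\gamma^k}$ and cannot interact with the chain growing from the opposite endpoint; this is exactly encoded in (PD1)/(PD1)$'$ which forces all arcs of each chain to belong to a common elementary polygon at each level. Finally, the edge cases $n = 0$ or $m = 0$ are immediate: the absent chain contributes nothing, so $\pdim M(c_\simp^{\gamma^0}) = \max\{n,m\}$ reduces to the length of the sole surviving chain, recovering Lemmas \ref{lemm. proj. cover of simple mod. 1} and \ref{lemm. proj. cover of simple mod. 2} as $n=0$ or $m=0$ boundary instances.
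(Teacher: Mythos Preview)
Your proposal is correct and follows essentially the same approach as the paper: both iteratively construct the minimal projective resolution of $M(c_{\simp}^{\gamma^0})$ using Lemmas \ref{lemm. proj. cover of simple mod. 1}, \ref{lemm. proj. cover of simple mod. 2}, and \ref{lemm. ker. of proj. cover} as the recursive engine, obtaining two independent chains of projectives $M(c_{\proj}^{\gamma^i})$ and $M(c_{\proj}^{\gamma'^j})$ whose lengths are $n$ and $m$ respectively. The paper's proof is terser (it writes down the resolution directly and reads off the length), while you spell out minimality, independence of the two chains, and the termination via (PD4) more carefully; one small slip is your invocation of Theorem \ref{thm. s. e. s.}(2) for the iteration, whereas the actual mechanism at each step is Theorem \ref{thm. s. e. s.}(1) via Proposition \ref{prop. s. e. s.} (as used in Lemma \ref{lemm. ker. of proj. cover}).
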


\begin{figure}[htbp]
\begin{center}
\begin{tikzpicture}
\draw (-1.732,-1) node{$\bullet$} -- (-1.732,1);
\draw[rotate around={-60:(0,0)}] (-1.732,-1)node{$\bullet$} -- (-1.732,1);
\draw[rotate around={-120:(0,0)}] (-1.732,-1)node{$\bullet$} -- (-1.732,1) [dotted];
\draw[rotate around={-180:(0,0)}] (-1.732,-1)node{$\bullet$} -- (-1.732,1);
\draw[rotate around={-240:(0,0)}] [line width=1.2 pt] (-1.732,-1)node{$\bullet$} -- (-1.732,1)node{$\bullet$};
% direction
\draw [->] (-1.732,-0.05) -- (-1.732, 0.05);
\draw[rotate around={-60:(0,0)}] [->] (-1.732,-0.05) -- (-1.732, 0.05);
\draw[rotate around={-120:(0,0)}] [->] (-1.732,-0.05) -- (-1.732, 0.05);
\draw[rotate around={-180:(0,0)}] [->] (-1.732,-0.05) -- (-1.732, 0.05);
\draw (-2,0) node{$\gamma^0$};
\draw[rotate around={-60:(0,0)}] (-2,0) node{$\gamma^1$};
\draw[rotate around={-180:(0,0)}] (-2,0) node{$\gamma^n$};
\draw[rotate around={-240:(0,0)}] (-2,0) node{$\partial\Surf_A$};
\draw (-1.732,-1) node[left]{$p = \gamma^{0}(0^+)$};
\draw (-1.732, 1) node[left]{$q = \gamma^{0}(1^-)$};
\draw (0,-2.5) node {(1): $(\mathbf{PD})$};
\end{tikzpicture}
\ \ \ \
\begin{tikzpicture}
\draw ( 1.732,-1) node{$\bullet$} -- ( 1.732,1);
\draw[rotate around={-60:(0,0)}] ( 1.732,-1) node{$\bullet$} -- ( 1.732,1);
\draw[rotate around={-120:(0,0)}] ( 1.732,-1) node{$\bullet$} -- ( 1.732,1) [dotted];
\draw[rotate around={-180:(0,0)}] ( 1.732,-1) node{$\bullet$} -- ( 1.732,1);
\draw[rotate around={-240:(0,0)}] ( 1.732,-1) node{$\bullet$} -- ( 1.732,1) [line width = 1.2pt];
% direction
\draw [->] ( 1.732,-0.05) -- ( 1.732, 0.05);
\draw[rotate around={-60:(0,0)}] [->] ( 1.732,-0.05) -- ( 1.732, 0.05);
\draw[rotate around={-120:(0,0)}] [->] ( 1.732,-0.05) -- ( 1.732, 0.05);
\draw[rotate around={-180:(0,0)}] [->] ( 1.732,-0.05) -- ( 1.732, 0.05);
\draw (2,0) node[right]{$\gamma'^0 = \gamma^0$};
\draw[rotate around={-60:(0,0)}] (2,0) node{$\gamma'^1$};
\draw[rotate around={-180:(0,0)}] (2,0) node{$\gamma'^m$};
\draw[rotate around={-240:(0,0)}] (2,0) node{$\partial\Surf_A$};
\draw ( 1.732,-1) node[right]{$p = \gamma^{0}(0^+)$};
\draw ( 1.732, 1) node[right]{$q = \gamma^{0}(1^-)$};
\draw (0,-2.5) node {(2): $(\mathbf{PD})'$};
\end{tikzpicture}
\end{center}
\caption{$\gamma^0, \gamma^1, \cdots, \gamma^n$ are arcs satisfying $(\mathbf{PD})$,
and $\gamma'^0, \gamma'^1, \cdots, \gamma'^m$ are arcs satisfying $(\mathbf{PD})'$.
The figures of $(\mathbf{ID})$ and $(\mathbf{ID})'$ are dual. }
\label{fig:prop-5.6}
\end{figure}
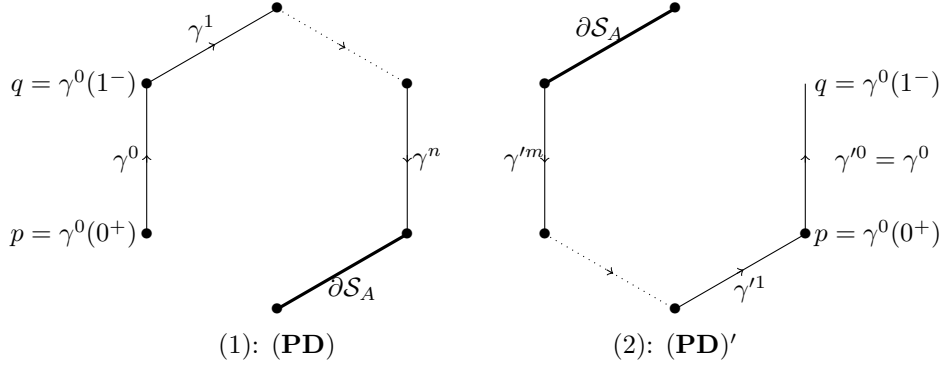

\begin{proof}
%We prove the case of projective resolution, the case of injective resolution is similar.
Notice that $M(c_{\proj}^{\gamma}) \mapsto \gamma$ for any $\gamma\in \{\gamma^i\mid 1\le i\le n\}
\cup \{\gamma^j\mid 1\le j\le m\} \cup \{\gamma^0\}$ by Lemmas \ref{lemm. proj. cover of simple mod. 1},
\ref{lemm. proj. cover of simple mod. 2} and \ref{lemm. ker. of proj. cover},
thus the minimal projective resolution of $M(c_{\simp}^{\gamma^0})$ is as follows:
\begin{align}
0 & \longrightarrow M(c_{\proj}^{\gamma^{\max\{n,m\}}}) \oplus M(c_{\proj}^{\gamma'^{\max\{n,m\}}})
\longrightarrow \cdots \nonumber\\
& \longrightarrow M(c_{\proj}^{\gamma^2}) \oplus M(c_{\proj}^{\gamma'^2})
\longrightarrow M(c_{\proj}^{\gamma^1}) \oplus M(c_{\proj}^{\gamma'^1})
\longrightarrow M(c_{\proj}^{\gamma^0}) \longrightarrow M(c_{\simp}^{\gamma^0}) \longrightarrow 0. \label{proj. res.}
\end{align}
If $m\ge n$, then $M(c_{\proj}^{\gamma^{i}})=0$ for any $i>n$, and so $\pdim M(c_{\simp}^{\gamma^0})= m$.
If $n\ge m$, then $\pdim M(c_{\simp}^{\gamma^0})= n$ similarly. Dually,
we can prove the case for injective dimension.
\end{proof}

The exact sequence (\ref{proj. res.}) is a description of the minimal projective resolution of a simple module in marked ribbon surfaces.
We will give two examples in Section \ref{Section appl.} to illustrate it, see Examples \ref{exp. 1} and \ref{exp. 2}.

\subsection{The descriptions of global dimension in geometric models}

\begin{definition} \rm (Consecutive arcs numbers) \label{def. arc no.}
Let $A=kQ/I$ be a gentle algebra and $\Surf_A = (\Surf_A, \M_A, \Gamma_A)$ its marked ribbon surface.
Recall from Definition \ref{def. m. r. s.} that $\Gamma_A$ divides $\Surf_A$ to some elementary polygons $\{\Delta_i\mid 1\le i \le d\}$
whose sides are arcs in $\Gamma_A$ except one side which is a boundary arc.
We define $\mathfrak{C}(\Delta_i)$, the {\it consecutive arcs numbers} of $\Delta_i$, as the number of sides belonging to $\Gamma_A$,
if the unmarked boundary component of $\Surf_A$ is not a side of $\Delta_i$;
otherwise, $\mathfrak{C}(\Delta_i) = \infty$.
\end{definition}

\begin{remark}\rm \label{rk. polygon-inf}
%If $\Delta_i$ has a side which is an unmarked boundary component $b$ of $\Surf_A$,
%then all sides in $\Gamma_A$ of $\Delta_i$ surround $b$, and we call it an {\it $\infty$-elementary polygon}.
An elementary polygon $\Delta_i$ is called an {\it $\infty$-elementary polygon} if
$\Delta_i$ has a side which is an unmarked boundary component $b$ of $\Surf_A$.
Obviously, $\mathfrak{C}(\Delta_i)=\infty$ if and only if $\Delta_i$ is $\infty$-elementary.
In this case, for any side belonging to $\Gamma_A$ of $\Delta_i$, say $\gamma^0$,
there are arcs $\gamma^0, \gamma^1, \cdots, \gamma^n$ satisfying the conditions (PD1), (PD2) and (PD3)
in Theorem \ref{prop. pdM and idM}. Let $\gamma^t = \gamma^{\bar{t}}$ for all $t>n$ where $\bar{t}$ equals to $t$ modulo $n+1$.
Then for any $t\geq 0$, $\gamma^0, \gamma^1, \cdots, \gamma^t$ are arcs such that (PD1), (PD2) and (PD3) hold.
Thus the minimal projective resolution of $M(c_{\simp}^{\gamma^0})$ is as follows:
\[\cdots \longrightarrow P^i \longrightarrow \cdots \longrightarrow P^2 \longrightarrow P^1
\longrightarrow M(c_{\proj}^{\gamma^0}) \longrightarrow M(c_{\simp}^{\gamma^0}) \longrightarrow 0, \]
where each $M(c_{\proj}^{\gamma^{\overline{i}}})$ is a quotient of $P^i$.  Therefore $\pdim M(c_{\simp}^{\gamma^0})=\infty$.
\end{remark}

We recall some notions from \cite[Section 2]{AAG2008}.

\begin{definition} \label{def. per. and forb.} \rm
Let $A=kQ/I$ be a gentle algebra.
A {\it non-trivial permitted} (respectively, {\it forbidden}) {\it path} is either a path of length $1$,
or a path $\mathit{\Pi}=\alpha_1\cdots\alpha_m$ ($m\ge 2$) such that $\alpha_i\alpha_{i+1}\notin I$
(respectively, $\alpha_i\alpha_{i+1}\in I$) for any $1\le i\le m-1$.
A {\it non-trivial permitted} (respectively, {\it forbidden}) {\it thread} is a maximal non-trivial permitted
(respectively, forbidden) path, that is, for each $\beta\in Q_1$ satisfying $t(\beta)=s(\alpha_1)$
we have $\beta\alpha_1\in I$ (respectively, $\beta\alpha_1\notin I$), and for each $\beta'\in Q_1$
satisfying $t(\alpha_m)=s(\beta')$ we have $\alpha_m\beta' \in I$ (respectively, $\alpha_m\beta' \notin I$).
A {\it trivial permitted} (respectively, {\it forbidden}) {\it thread} is a trivial path $v\in Q_0$
such that one of following holds:
(i) $v$ is a source point and $\sharp \{\beta \in Q_1\mid t(\beta)=v\}=0$;
(ii) $v$ is a sink point and $\sharp \{\beta'\in Q_1\mid s(\beta')=v\}=0$;
(iii) $\sharp \{\beta\in Q_1\mid t(\beta)=v\} = 1 = \sharp \{\beta'\in Q_1\mid s(\beta')=v\}$
and $\beta\beta'\notin I$ (respectively, $\beta\beta'\in I$).
\end{definition}

We use $\mathcal{P}_A$ (respectively, $\mathcal{F}_A$) to denote the set of all permitted (respectively, forbidden) threads.
By \cite[Section 1]{OPS2018}, we have a bijection $\mathfrak{p}: \mathcal{P}_A \to \M_A$
(respectively, $\mathfrak{f}: \mathcal{F}_A \to \{\Delta_i\mid 1\le i\le d\}$), where
$\mathfrak{p}(H)$ is the marked point $p$ such that every arc with endpoint $p\in\M_A$ corresponds to
a vertex of $H \in \mathcal{P}_A$ (respectively, $\mathfrak{f}(\mathit{\Pi})$ is the elementary polygons whose sides
belonging to $\Gamma_A$ correspond to the vertices of $\mathit{\Pi}\in \mathcal{F}_A$).

For any path $\wp$, we use $l(\wp)$ to denote the length of $\wp$. In particular,
for an {\it oriented cycle with full relation} of the quiver $(Q, I)$ of a gentle algebra $A$,
that is, an oriented cycle $\alpha_1\alpha_2\cdots\alpha_\ell$ ($t(\alpha_\ell)= s(\alpha_1)$) such that
$\alpha_{\overline{i}}\alpha_{\overline{i+1}}\in I$ where $\overline{i}$ is equal to $i$ modulo $\ell$,
it deduces a forbidden threads of infinite length $\cdots \alpha_{\ell}\alpha_1\alpha_2\cdots \alpha_{\ell} \alpha_1\alpha_2\cdots$.

\begin{theorem} \label{thm. gl.dim}
Let $A=kQ/I$ be gentle algebra and $\Surf_A = (\Surf_A, \M_A, \Gamma_A)$ its marked ribbon surface, and
let $\{\Delta_i\mid 1\le i\le d\}$ be the set of all elementary polygons of $\Surf_A$. Then
$$\mathrm{gl.dim}A = \max\limits_{1\le i\le d} \mathfrak{C}(\Delta_i)-1= \max\limits_{\mathit{\Pi}\in\mathcal{F}_A } l(\mathit{\Pi}).$$
\end{theorem}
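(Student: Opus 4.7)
The plan is to reduce everything to simple modules and then translate the projective-dimension formula of Proposition \ref{prop. pdM and idM} into the combinatorics of elementary polygons (for the first equality) and of forbidden threads (for the second).

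First I would use the standard fact that $\gldim A = \sup\{\pdim S : S \text{ simple}\}$. Since every simple $A$-module is of the form $M(c_{\simp}^{\gamma^0})$ for exactly one arc $\gamma^0 \in \Gamma_A$, we have $\gldim A = \sup_{\gamma^0 \in \Gamma_A} \pdim M(c_{\simp}^{\gamma^0})$. By Proposition \ref{prop. pdM and idM}, this projective dimension equals $\max\{n,m\}$, where $\gamma^0, \gamma^1,\ldots,\gamma^n$ and $\gamma'^0=\gamma^0,\gamma'^1,\ldots,\gamma'^m$ are the two maximal sequences of arcs sharing the two endpoints of $\gamma^0$ and lying on elementary polygons $\Delta$ and $\Delta'$ (the two polygons having $\gamma^0$ as a side). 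The key observation is that the arcs $\gamma^0,\gamma^1,\ldots,\gamma^n$ are precisely the sides of $\Delta$ that belong to $\Gamma_A$; hence $\mathfrak{C}(\Delta) = n+1$ (and similarly $\mathfrak{C}(\Delta') = m+1$) whenever $\Delta, \Delta'$ are finite, while the infinite case falls under Remark \ref{rk. polygon-inf}.

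Given this identification, $\pdim M(c_{\simp}^{\gamma^0}) = \max\{\mathfrak{C}(\Delta) - 1,\, \mathfrak{C}(\Delta')-1\}$ whenever both polygons are finite; and if either is an $\infty$-elementary polygon, then the argument of Remark \ref{rk. polygon-inf} shows $\pdim M(c_{\simp}^{\gamma^0})=\infty$, matching $\mathfrak{C}(\Delta)-1=\infty$. Taking the supremum over all arcs $\gamma^0$, every elementary polygon $\Delta_i$ is encountered as some $\Delta$ (via any of its sides in $\Gamma_A$), so
\[
\gldim A \;=\; \max_{\gamma^0 \in \Gamma_A} \pdim M(c_{\simp}^{\gamma^0}) \;=\; \max_{1\le i\le d}\mathfrak{C}(\Delta_i)-1,
\]
which is the first equality. (If some $\Delta_i$ has no side in $\Gamma_A$, i.e.\ corresponds to a trivial permitted/forbidden thread situation, it contributes $\mathfrak{C}(\Delta_i)-1 \le 0$ and is harmless.)

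For the second equality, I would invoke the bijection $\mathfrak{f}\colon \mathcal{F}_A \to \{\Delta_1,\ldots,\Delta_d\}$ from \cite{OPS2018} recalled just before the theorem: the vertices of a forbidden thread $\mathit{\Pi}$ correspond bijectively to the sides of $\mathfrak{f}(\mathit{\Pi})$ lying in $\Gamma_A$. Thus if $\mathit{\Pi}=\alpha_1\cdots\alpha_{\ell}$ then $\mathit{\Pi}$ has $\ell+1$ vertices and $\mathfrak{C}(\mathfrak{f}(\mathit{\Pi})) = \ell + 1 = l(\mathit{\Pi}) + 1$, so $\mathfrak{C}(\Delta_i)-1 = l(\mathfrak{f}^{-1}(\Delta_i))$. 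For trivial forbidden threads one checks directly that $l(\mathit{\Pi})=0$ corresponds to a polygon with a single $\Gamma_A$-side, and for $\infty$-elementary polygons the bijection pairs them with forbidden threads of infinite length coming from oriented cycles with full relations. Taking the maximum over $i$ (equivalently over $\mathit{\Pi}\in\mathcal{F}_A$) gives $\max_i \mathfrak{C}(\Delta_i)-1 = \max_{\mathit{\Pi}\in\mathcal{F}_A} l(\mathit{\Pi})$, completing the proof.

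The main obstacle will be the bookkeeping around degenerate cases: arcs lying on a two-gon, trivial forbidden threads (length $0$) coming from vertices of $Q$ with no relations at them, $\gamma^0$ being a loop (so $\Delta = \Delta'$), and the infinite case where the sequence $\gamma^0,\gamma^1,\ldots$ never terminates. All of these need to be matched to the correct combinatorial data; once one checks that $\mathfrak{C}(\Delta)$ indeed equals the number of consecutive arcs produced in Proposition \ref{prop. pdM and idM} (including the correct treatment of the single non-arc side of each polygon), the rest of the argument is essentially a translation through the dictionaries already set up in Sections 2, 4 and 5.
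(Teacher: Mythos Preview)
Your overall strategy matches the paper's, but there is a genuine gap in your ``key observation''. You claim that the arcs $\gamma^0,\gamma^1,\ldots,\gamma^n$ produced by condition (\textbf{PD}) in Proposition~\ref{prop. pdM and idM} are \emph{precisely} the sides of $\Delta$ lying in $\Gamma_A$, hence $\mathfrak{C}(\Delta)=n+1$. This is false in general: the sequence $\gamma^0,\gamma^1,\ldots,\gamma^n$ consists only of those sides of $\Delta$ that one encounters by starting at $\gamma^0$ and walking towards the single boundary side in the direction dictated by (PD3); there may well be further sides of $\Delta$ on the \emph{other} side of $\gamma^0$. If $\gamma^0$ sits in position $t$ among the arcs of $\Delta$ (counted from the boundary side), then $n=\mathfrak{C}(\Delta)-1-t$, so one only has $n+1\le \mathfrak{C}(\Delta)$, with equality exactly when $\gamma^0$ is the arc adjacent to the boundary side. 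Consequently your formula $\pdim M(c_{\simp}^{\gamma^0})=\max\{\mathfrak{C}(\Delta)-1,\mathfrak{C}(\Delta')-1\}$ is wrong for most arcs $\gamma^0$; it is only an upper bound.

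The repair is exactly what the paper does: the inequality $n\le\mathfrak{C}(\Delta)-1$ (and $m\le\mathfrak{C}(\Delta')-1$) already yields $\gldim A\le\max_i\mathfrak{C}(\Delta_i)-1$. For the reverse inequality you cannot use ``any of its sides'' as you suggest; you must choose, for the polygon $\Delta_\ell$ realising the maximum, the specific arc $\gamma^0$ adjacent to its boundary side, so that the (\textbf{PD})-sequence runs through all of $\mathfrak{S}(\Delta_\ell)$ and $n=\mathfrak{C}(\Delta_\ell)-1$. Then $\pdim M(c_{\simp}^{\gamma^0})\ge n=\max_i\mathfrak{C}(\Delta_i)-1$. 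Your treatment of the second equality via the bijection $\mathfrak{f}$ is correct and agrees with the paper.
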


\begin{proof}
If there exists an elementary polygons $\Delta_i$ which is of the form given in Remark \ref{rk. polygon-inf},
then $\mathfrak{C}(\Delta_i) = \infty$. In this case, for each side $\gamma$ of $\Delta_i$, we have
$\pdim M(c_{\simp}^{\gamma})=\infty$ by Remark \ref{rk. polygon-inf}, and the assertion follows.

Otherwise, for each elementary polygons $\Delta_i$ whose sides are denoted by $\gamma^0, \gamma^1, \cdots, \gamma^{m_i}$
such that (\textbf{PD}) holds, then $m_i+1 = \mathfrak{C}(\Delta_i)$.
For the arc $\gamma^t$ ($0\le t\le m_i-1$), there is an elementary polygon $\Delta'_i$ such that $\gamma^t$
is the unique common side of $\Delta_i$ and $\Delta_i'$.
We can find a sequence of arcs, say $\gamma'^0(=\gamma^t), \gamma'^1, \cdots, \gamma'^{n_{it}}$
such that (\textbf{PD}) holds, and thus the minimal projective resolution of $M(c_{\simp}^{\gamma^t})$ is as follows:
\[ \cdots \longrightarrow M(c_{\proj}^{\gamma^{t+2}})\oplus M(c_{\proj}^{\gamma'^2}) \longrightarrow
M(c_{\proj}^{\gamma^{t+1}})\oplus M(c_{\proj}^{\gamma'^1}) \longrightarrow
M(c_{\proj}^{\gamma^t}) \longrightarrow M(c_{\simp}^{\gamma^t}) \longrightarrow 0. \]
It stops from the $\max\{m_i-t, n_{it}\}$-th projective module.
Notice that $n_{it} \le \mathfrak{C}(\Delta_{i}')-1$, so $\pdim M(c_{\simp}^{\gamma^t})
= \max\{m_i, n_{it}\} \le \max \{\mathfrak{C}(\Delta_{i})-1, \mathfrak{C}(\Delta_{i}')-1\}$, and hence
\[\mathrm{gl.dim}A \le \max\limits_{1\le i\le d}\{\max \{\mathfrak{C}(\Delta_{i})-1, \mathfrak{C}(\Delta_{i}')-1\}\}
= \max\limits_{1\le i\le d} \mathfrak{C}(\Delta_{i})-1 .\]

%For the proof of $\mathrm{gl.dim}A \ge \max\limits_{1\le i\le d} \mathfrak{C}(\Delta_{i})-1$,
Let $\Delta_\ell$ be the elementary polygon such that $\max\limits_{1\le i\le d} \mathfrak{C}(\Delta_{i})
= \mathfrak{C}(\Delta_{\ell}) \mathop{=\!=\!=\!=\!=\!=}\limits^{\text{denoted by}} m_\ell$,
and let $\gamma^0, \gamma^1, \cdots, \gamma^{m_\ell}: (0,1)\to \Surf_A\backslash\partial\Surf_A$ be its sides such that
$\gamma_{i}(1^-)=p_i=\gamma_{i+1}(0^+)$ and $\gamma_{i+1}$ follows $\gamma_i$ around $p_i$ in the counterclockwise order ($0\le i \le m_\ell-1$).
Then $\gamma^0, \gamma^1, \cdots, \gamma^{m_\ell}$ satisfy the condition ($\textbf{PD}$), and thus
\[\gldim A \ge \pdim M(c_{\simp}^{\gamma^0}) = m_\ell
= \mathfrak{C}(\Delta_{\ell})-1 = \max\limits_{1\le j\le d} \mathfrak{C}(\Delta_{j})-1\]
by Proposition \ref{prop. pdM and idM}. It follows that $\mathrm{gl.dim}A = \max\limits_{1\le i\le d} \mathfrak{C}(\Delta_i)-1$.

Since the map $\mathfrak{f}$ above is bijective, the length $l(\mathit{\Pi})$ of the forbidden thread
$\mathit{\Pi}$ is equal to $\mathfrak{C}(\mathfrak{f}(\mathit{\Pi}))-1$ and
$\mathrm{gl.dim}A= \max\limits_{\mathit{\Pi}\in\mathcal{F}_A } l(\mathit{\Pi})$.
\end{proof}

%%%%%%%%%%%%%%%%%%%%%%%%%%%%%%%%%%%%%%%%
%%%%%%%%%%%%%%%%%%%%%%%%%%%%%%%%%%%%%%%%

\subsection{\textbf{AG-invariants and AG-equivalence}} \label{Section AG-invariants}
%\subsection{The definition of AG-invariants}
The {\it AG-invariant} of a gentle algebra $A=kQ/I$ introduced by Avella-Alaminos and Gei\ss\ in \cite{AAG2008}
is a function $\phi_A: \mathbb{N}^2\to \mathbb{N}$ with $\mathbb{N}$ the set of natural numbers
mapping $\phi[(m_t,n_t)]$ to the number of pairs $(m_t,n_t)$
in the sequence $(m_t,n_t)_{1\le t\le r}$, where $(m_i,n_i)_{1\le i\le r}$ is obtained as follows:
\begin{itemize}
\item[{Step 1.}]
Let $H_0=\alpha^0_1\cdots\alpha^0_{u_0}$ ($u_0\in\mathbb{N}$) be a (trivial) permitted thread of $A$.
\item[{Step 2.}]
If $H_0$ is non-trivial and there exists a non-trivial forbidden thread $F = \beta^0_1\cdots\beta^0_{v_0}$
($v_0\in\mathbb{N}$) such that $t(\beta^0_{v_0})=t(\alpha^0_{u_0})$ and $\alpha^0_{u_0}\ne\beta^0_{v_0}$,
consider $\mathit{\Pi}_0 = F$;
otherwise, consider the (trivial) forbidden thread $\mathit{\Pi}_0$ satisfying $t(\mathit{\Pi}_0)=t(H_0)$.
\item[{Step 3.}]
If $\mathit{\Pi}_0$ is non-trivial and there exists a non-trivial permissible thread
$P = \alpha^1_1\cdots\alpha^1_{u_1}$ ($u_1\in\mathbb{N}$) such that $s(\alpha^1_1) =  s(\beta^0_1)$ and $\alpha^1_1\ne \beta^0_1$,
consider $H_1 = P$;
otherwise, consider the (trivial) permitted thread $H_1$ satisfying $s(H_1)=s(\mathit{\Pi}_0)$.
\item[{Step 4.}]
If we obtain a permitted thread $H_i$ (respectively, forbidden thread $\mathit{\Pi}_i$),
then just as Step 2 (respectively, Step 3), determine the forbidden thread $\mathit{\Pi}_i$ (respectively, permitted thread $H_{i+1}$).
Repeat this step until $H_{m_1} = H_0$ first appears, we induce a pair $(m_1,n_1)$ where $n_1=\sum_{i=0}^{m_1-1} l(\mathit{\Pi}_i)$.
\item[{Step 5.}]
Until all permitted threads are considered in Steps 1--4, we obtain a sequence of pairs $(m_1, n_1)$, $\cdots$, $(m_{r'}, n_{r'})$
with $1\le r' \le r$.
\item[{Step 6.}]
For every oriented cycle of length $\ell$ having full relations,
we add a pair $(0, \ell)$ in the sequence obtained in Step 5, and obtain the sequence $(m_t,n_t)_{1\le t\le r}$
where $m_t=0$ if and only if $t>r'$. % (Note that in the case of $r'=r$, $Q$ with no oriented cycle with full relations.)
\end{itemize}

\noindent The bijection $\mathfrak{f}: \mathcal{F}_A \to \{\Delta_i\mid {1\le i\le d}\}$ provides a method for calculating
AG-invariants by marked ribbon surfaces. Indeed, each elementary polygon, say $\Delta^t_j \in\{\Delta_i\mid 1\le i\le d\}$,
with a unique single boundary arc on the boundary component $b^t$ of $\Surf_A$
corresponds to a forbidden thread $\mathfrak{f}^{-1}(\Delta^t_j)$, and $\sharp\mathfrak{S}(\Delta^t_j)$ is equal to:
\begin{itemize}
\item either $l(\mathfrak{f}^{-1}(\Delta^t_j))+1$, if $\Delta^t_j$ is not $\infty$-elementary;
\item or the number of vertices of $\Delta^t_j$, otherwise.
\end{itemize}
Thus $(m_t,n_t)_{1\le t\le r}$ can be obtained by
\begin{align}
m_t & = \text{ the number of marked points on the boundary component } b^t, \nonumber\\
n_t & = \sum_{j\in J_t}(\sharp\mathfrak{S}(\Delta^t_j)-1) = \sum_{j\in J_t}\sharp\mathfrak{S}(\Delta^t_j) - m_t, \nonumber
\end{align}
where $J_t \subseteq \{1,2,\cdots,d\}$ is a set of all elementary polygons
which have a side is on the boundary component $b^t$ (in this case, we have $\bigcup_{1\le t\le r} = \{1,2,\cdots, d\}$ and
$J_t \cap J_{t'} = \varnothing$ holds for all $1\le t\ne t'\le r$).
See \cite[Theorem 3.3]{LZ2021}.

\begin{remark} \rm
The AG-invariant of a gentle algebras is an invariant up to derived equivalence; that is, if two gentle algebras $A$ and $B$
are derived equivalent, then the number of cycles of $A$ and that of $B$ are identical and $\phi_A=\phi_B$.
Furthermore, if $A$ and $B$ are gentle one-cycle, that is, the quivers of $A$ and $B$ both have at most one cycle,
then $\phi_A=\phi_B$ yields that $A$ is derived equivalent to $B$ (\cite[Theorem A, Proposition B, Theorem C]{AAG2008}).
\end{remark}

%\subsection{AG-equivalence}
We say that two gentle algebras $A$ and $A'$ are {\it AG-equivalent} if $\phi_A = \phi_{A'}$.
By \cite[Proposition B]{AAG2008}, derived equivalence yields AG-equivalence.
We know that the finiteness of the global dimension is invariant under derived equivalence.
In the following, we show that it is also invariant under AG-equivalence. % (See Corollary \ref{coro. AG-equiv.}).
We need the following observation.

\begin{proposition} \label{prop. infty. gl.dim.}
For a gentle algebra $A=kQ/I$, we have that
$\mathrm{gl.dim} A =\infty$ if and only if there is $\ell\geq 1$ such that $\phi_A(0, \ell) \ne 0$.
\end{proposition}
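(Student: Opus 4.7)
The plan is to reduce the claim to a purely surface-theoretic statement via Theorem \ref{thm. gl.dim} and then translate this through the ribbon-surface formula for the AG-invariant recalled at the end of Section \ref{Section AG-invariants}.

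First I would use Theorem \ref{thm. gl.dim}, which gives $\gldim A = \max_{1\le i\le d}\mathfrak{C}(\Delta_i)-1$. By Definition \ref{def. arc no.} together with Remark \ref{rk. polygon-inf}, this maximum is infinite if and only if some $\mathfrak{C}(\Delta_i)=\infty$, equivalently, if and only if $\Surf_A$ admits an $\infty$-elementary polygon; and a polygon is $\infty$-elementary precisely when its single boundary arc is an unmarked boundary component of $\Surf_A$. Thus $\gldim A=\infty$ amounts to the existence of an unmarked boundary component of $\Surf_A$.

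Next I would translate this condition into a property of $\phi_A$ using the formulas
\[ m_t=\#\{\text{marked points on } b^t\},\qquad n_t=\sum_{j\in J_t}\sharp\mathfrak{S}(\Delta^t_j)-m_t \]
recorded in Section \ref{Section AG-invariants}. A pair $(0,\ell)$ with $\ell\ge 1$ appears in the sequence $(m_t,n_t)_{1\le t\le r}$ if and only if some boundary component $b^t$ has $m_t=0$, i.e.\ is unmarked. For such a $b^t$, the whole component is forced to be the single boundary arc of a unique elementary polygon $\Delta$ (since there are no marked points on $b^t$ to subdivide it into several single boundary arcs), so $J_t=\{j\}$, $\Delta$ is $\infty$-elementary, and $\ell=n_t=\sharp\mathfrak{S}(\Delta)\ge 1$. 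Conversely, every $\infty$-elementary polygon arises in this manner and, via Step 6 of the AG-invariant construction, produces a pair $(0,\ell)$ corresponding to an oriented cycle of length $\ell$ with full relations.

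Chaining the two equivalences yields the proposition. There is essentially no serious obstacle here beyond the bookkeeping; the one point that needs verification is that an unmarked boundary component $b^t$ is incident to exactly one elementary polygon, which is precisely what couples the geometric condition $\mathfrak{C}(\Delta)=\infty$ with the arithmetic condition $m_t=0$, $n_t\ge 1$ on the AG-invariant. I would also briefly note that pairs $(0,\ell)$ cannot be produced by Steps 1--5, since those steps start from a permitted thread and necessarily close up with $m_t\ge 1$, so that the only source of such pairs is indeed Step 6.
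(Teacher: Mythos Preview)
Your proposal is correct and follows essentially the same approach as the paper: both reduce to the surface via Theorem \ref{thm. gl.dim} (so that $\gldim A=\infty$ iff some elementary polygon is $\infty$-elementary, i.e.\ there is an unmarked boundary component) and then identify unmarked boundary components with pairs $(0,\ell)$ in the AG-invariant. Your version is slightly more explicit in justifying that such pairs arise only from Step~6 and that an unmarked boundary component bounds a unique elementary polygon, but these are refinements of the same argument rather than a different route.
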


\begin{proof}
For a gentle algebra $A$, if there is $\ell\geq 1$ such that $\phi_A(0,\ell) = x \ge 1$,
then its marked ribbon surface $\Surf_A=(\Surf_A, \M_A, \Gamma_A)$ has $x$ unmarked boundary component(s), say $b^1, b^2, \cdots, b^x$.
So $\Surf_A$ has $x$ $\infty$-elementary polygon(s) $\Delta^1, \Delta^2, \cdots, \Delta^x$.
% and each $\Delta^t$ ($1\le t \le x$) has $\ell$ sides $\gamma^{t1}, \gamma^{t2}, \cdots, \gamma^{t\ell} \in \Gamma_A$
% surrounding the unmarked boundary component $b^t$.
Thus $\mathrm{gl.dim} A =\infty$ by Theorem \ref{thm. gl.dim}.

Conversely, if $\phi_A(0,\ell)=0$ for any $\ell\geq 1$, then each boundary component $b^t$ of $\Surf_A$
has at least one marked point and each elementary polygon $\Delta^t_{j}$ ($j\in J$ with $J$ some finite set)
whose single boundary arc included in $b^t$ is not a unmarked boundary component of $\Surf_A$.
Then $\mathfrak{C}(\Delta^t_{j}) < \infty $, and so $\mathrm{gl.dim}A =
\max\limits_{1\le t\le r,j\in J}\mathfrak{C}(\Delta^t_{j}) -1 < \infty$ by Theorem \ref{thm. gl.dim}.
\end{proof}

As a consequence, we get the following corollary, which was proved
by Opper, Plamondon and Schroll by using the Koszul dual of gentle algebras.

\begin{corollary} \label{coro. infty gldim and oriented cycles} {\rm \cite[Section 1, 1.7]{OPS2018}}
Let $A=kQ/I$ be a gentle algebra. Then $\gldim A=\infty$ if and only if its quiver $(Q,I)$ has
at least one oriented cycle with full relations.
\end{corollary}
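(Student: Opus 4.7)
The plan is to deduce the corollary directly from Proposition \ref{prop. infty. gl.dim.}, so the only real task is to show that the condition ``$\phi_A(0,\ell)\neq 0$ for some $\ell\geq 1$'' is equivalent to ``$(Q,I)$ has at least one oriented cycle with full relations''. I would argue this by inspecting the construction of the AG-invariant recalled in Section \ref{Section AG-invariants}.

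First, I would observe that any pair $(m_t,n_t)$ produced in Steps 1--5 satisfies $m_t\geq 1$. Indeed, each such pair is obtained from a cyclic sequence $H_0,\mathit{\Pi}_0,H_1,\mathit{\Pi}_1,\ldots,H_{m_t}=H_0$ starting at a genuine permitted thread $H_0$, and $m_t$ counts the number of permitted threads appearing in the cycle, hence is at least $1$. Consequently, the only way a pair of the form $(0,\ell)$ can enter the sequence $(m_t,n_t)_{1\leq t\leq r}$ is through Step 6, which adds one pair $(0,\ell)$ for every oriented cycle of $(Q,I)$ of length $\ell$ with full relations. In particular, $\phi_A(0,\ell)\neq 0$ for some $\ell\geq 1$ if and only if $(Q,I)$ admits at least one oriented cycle with full relations.

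Combining this equivalence with Proposition \ref{prop. infty. gl.dim.} yields the corollary. As a sanity check with the geometric model, an oriented cycle with full relations corresponds, via the bijection between the isomorphism classes of gentle algebras and homotopy classes of marked ribbon surfaces, to an unmarked boundary component of $\Surf_A$, and hence to an $\infty$-elementary polygon $\Delta_i$ with $\mathfrak{C}(\Delta_i)=\infty$. By Theorem \ref{thm. gl.dim}, the presence of such a polygon is precisely what forces $\gldim A=\infty$, matching the AG-invariant description.

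The main (minor) obstacle is really just a careful reading of Step 6 and of the ranges of the indices in Steps 1--5 to confirm that no $(0,\ell)$ pair can accidentally arise from Steps 1--5; once this is clear, the corollary is immediate. No new ideas beyond Proposition \ref{prop. infty. gl.dim.} and the bookkeeping in the definition of $\phi_A$ are needed.
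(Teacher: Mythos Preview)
Your proposal is correct and essentially the same as the paper's argument. The only minor difference is in the forward direction: the paper goes from $\gldim A=\infty$ via Theorem \ref{thm. gl.dim} to the existence of an $\infty$-elementary polygon, and then reads off the oriented cycle with full relations from its sides, whereas you go from $\gldim A=\infty$ via Proposition \ref{prop. infty. gl.dim.} to $\phi_A(0,\ell)\neq 0$ and then invoke the Step~6 bookkeeping to produce the cycle. Both routes are immediate once Proposition \ref{prop. infty. gl.dim.} (and hence Theorem \ref{thm. gl.dim}) is available; your version has the slight advantage of being symmetric in the two implications.
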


\begin{proof}
If $\gldim A=\infty$, then its marked ribbon surface has at least one $\infty$-elementary polygon $\Delta_j$ by Theorem \ref{thm. gl.dim}.
Suppose $\mathfrak{S}(\Delta_j)=\{\gamma_0, \cdots, \gamma_{\ell-1}\}$, where $\gamma_{\overline{i}}(1^-) = \gamma_{\overline{i+1}}(0^+)$
for any $i\geq 0$ and $\overline{i}$ is equals to $i$ modulo $\ell$.
Then $(Q,I)$ has an oriented cycle with full relations of length $\ell$ whose vertices are $\{\mathfrak{v}^{-1}(\gamma_i)\mid 0\le i\le \ell-1\}$.

Conversely, if $(Q,I)$ has at least one oriented cycle of length $\ell(\geq 1)$ with full relations,
then $\phi_A (0,\ell) >0$, and thus $\gldim A = \infty$ by Proposition \ref{prop. infty. gl.dim.}.
\end{proof}

The following result shows that the finiteness of the global dimension of gentle algebras is invariant under AG-equivalence.

\begin{corollary} \label{coro. AG-equiv.}
If two gentle algebras $A$ and $A'$ are AG-equivalent,
then $\mathrm{gl.dim}A < \infty$ if and only if $\mathrm{gl.dim}A' < \infty$.
\end{corollary}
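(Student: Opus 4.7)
The plan is to deduce this as an essentially immediate consequence of Proposition \ref{prop. infty. gl.dim.}, which already characterizes finiteness of the global dimension of a gentle algebra purely in terms of its AG-invariant. In fact, once that proposition is available, AG-equivalence trivially preserves the property of having finite global dimension, because the defining condition ``$\phi_A(0,\ell)\neq 0$ for some $\ell\geq 1$'' depends only on the function $\phi_A$.

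First I would unfold the hypothesis: $A$ and $A'$ being AG-equivalent means $\phi_A=\phi_{A'}$ as functions $\mathbb{N}^2\to\mathbb{N}$. In particular, for every $\ell\geq 1$, the values $\phi_A(0,\ell)$ and $\phi_{A'}(0,\ell)$ coincide. Next, I would run Proposition \ref{prop. infty. gl.dim.} in one direction: suppose $\gldim A=\infty$; then there exists some $\ell\geq 1$ with $\phi_A(0,\ell)\neq 0$, hence $\phi_{A'}(0,\ell)=\phi_A(0,\ell)\neq 0$, and so applying Proposition \ref{prop. infty. gl.dim.} now to $A'$ gives $\gldim A'=\infty$. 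The reverse implication is obtained by interchanging the roles of $A$ and $A'$, since AG-equivalence is symmetric. Contrapositively, $\gldim A<\infty$ if and only if $\gldim A'<\infty$.

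There is no real obstacle left to overcome at this stage: all the substantive work has been carried out in proving Theorem \ref{thm. gl.dim} (which gives the geometric formula $\gldim A=\max_i \mathfrak{C}(\Delta_i)-1$) and in using it to establish Proposition \ref{prop. infty. gl.dim.} (the bridge between $\gldim A=\infty$, the existence of an $\infty$-elementary polygon in $\Surf_A$, and the nonvanishing of some $\phi_A(0,\ell)$). The corollary itself is then one line. The only thing I would double-check while writing the proof is that the pairs $(0,\ell)$ produced by Step 6 in the definition of $\phi_A$ are the only pairs with first coordinate $0$, so that ``finiteness of $\gldim$'' really is detected precisely by the $(0,\ell)$-column of $\phi_A$; this is clear from the construction, since in Steps 1--5 every pair $(m_t,n_t)$ has $m_t\geq 1$ (being the number of marked points on a boundary component that carries a permitted thread starting point).
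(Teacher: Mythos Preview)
Your proposal is correct and takes essentially the same approach as the paper: both deduce the corollary directly from Proposition~\ref{prop. infty. gl.dim.}, using that AG-equivalence means $\phi_A=\phi_{A'}$ and hence the condition ``$\phi_A(0,\ell)\neq 0$ for some $\ell\ge 1$'' is preserved. The only cosmetic difference is that the paper argues via the direct implication ($\gldim A<\infty \Rightarrow \gldim A'<\infty$) while you phrase it contrapositively; your closing remark about Step~6 being the only source of pairs with first coordinate $0$ is a sound sanity check but not needed once Proposition~\ref{prop. infty. gl.dim.} is in hand.
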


\begin{proof}
By Proposition \ref{prop. infty. gl.dim.}, we have that
$\mathrm{gl.dim}A < \infty$ if and only if $\phi_A(0,\ell) = 0$ for any $\ell\geq 1$.
Suppose that $A$ and $A'$ are AG-equivalent and $\mathrm{gl.dim}A < \infty$. Then
$\phi_A=\phi_{A'}$ and $\phi_{A'}(0,\ell)=\phi_{A}(0,\ell) = 0$
for any $\ell\geq 1$. Thus $\mathrm{gl.dim}A' < \infty$ by Proposition \ref{prop. infty. gl.dim.}.
\end{proof}

\section{\textbf{Self-injective dimension}} \label{Section Gproj}

A module $G\in\bmod A$ is called {\it Gorenstein projective} if there exists an exact sequence of projective modules
\[\cdots \longrightarrow P^{-2}
\mathop{\longrightarrow}\limits^{d^{-2}} P^{-1} \mathop{\longrightarrow}\limits^{d^{-1}} P^{0}
\mathop{\longrightarrow}\limits^{d^{0}}  P^{1}  \mathop{\longrightarrow}\limits^{d^{1}}
P^{2} \longrightarrow \cdots \]
which remains still exact after applying the functor $\Hom_A(-,A)$, such that $G\cong \mathrm{Im}d^{-1}$ \cite{AB, EJ1995}.
Obviously, every projective module is Gorenstein projective.
For each module $M$, its {\it Gorenstein projective dimension} $\Gpdim M$, is defined as
\begin{align}
\inf\{n \mid
 & \text{ there exists an exact sequence }  0\to G_n \to \cdots \to G_1 \to G_0 \to M \to 0 \nonumber  \\
 & \text{in}\ \bmod A\ \text{ with all } G_i \text{ Gorenstein projective}\}. \nonumber
\end{align}
%and the global Gorenstein dimension $\Gdim A$ of $A$ is defined as
%\[\sup\{ \Gpdim M | M\in \bmod A \}. \]

\subsection{The descriptions of Gorenstein projective modules}
For a gentle algebra $A=kQ/I$, the following theorem shows that all indecomposable
Gorenstein projective modules can be determined by its quiver $(Q, I)$.

\begin{theorem} {\rm \cite{K2015}}  \label{thm. Gproj}
Let $A=kQ/I$ be a gentle algebra.
An $A$-module $G$ is Gorenstein projective if and only if $G$ is isomorphic to a projective module
or an $\alpha A$ where $\alpha\in Q_1$ is an arrow on some oriented cycle with full relations of $A$
$($note that $\alpha A$ is a direct summand of $\rad P(s(\alpha))\cong \rad (e_{s(\alpha)}A)$$)$.
\end{theorem}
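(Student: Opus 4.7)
The plan is to prove both implications using the string description of modules over gentle algebras together with the structure of radicals of indecomposable projectives. Projective modules are trivially Gorenstein projective, so the substantive content is characterizing the indecomposable non-projective Gorenstein projective modules as the $\alpha A$ with $\alpha$ on an oriented cycle carrying full relations.

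For the sufficiency direction, let $\alpha = \alpha_1$ lie on an oriented cycle $\alpha_1\alpha_2\cdots\alpha_\ell$ (indices taken modulo $\ell$) with $\alpha_i\alpha_{i+1}\in I$ for every $i$. I would construct a complete projective resolution of $\alpha_1 A$ as follows. The module $\alpha_i A$ has top $S(t(\alpha_i))$, so its projective cover is the natural surjection $P(t(\alpha_i)) \twoheadrightarrow \alpha_i A$. The full relation $\alpha_i\alpha_{i+1}\in I$ forces $\alpha_{i+1}A$ to appear as a direct summand of the kernel, and the gentle condition ensures that any other summand comes from a non-cyclic arrow whose contribution has finite projective dimension by Theorem \ref{thm. gl.dim} applied to the corresponding sub-quiver. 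Splicing the short exact sequences
\[ 0 \longrightarrow \alpha_{i+1}A \oplus K_i \longrightarrow P(t(\alpha_i)) \longrightarrow \alpha_i A \longrightarrow 0 \]
for all $i$ (where each $K_i$ admits a finite projective resolution) yields an eventually $\ell$-periodic projective resolution of $\alpha_1 A$. To extend this to a doubly infinite complete resolution, I would use that the opposite algebra $A^{\mathrm{op}}$ is gentle with the reversed cycle still carrying full relations, which furnishes the left continuation. Exactness after applying $\mathrm{Hom}_A(-,A)$ is then checked by matching the dual complex with the analogous resolution computed in $\bmod A^{\mathrm{op}}$.

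For the necessity direction, let $G$ be an indecomposable non-projective Gorenstein projective module. Since $G$ embeds into a projective module as the image of the first differential in its complete resolution, and the indecomposable submodules of $e_v A$ in a gentle algebra have a restricted string shape, I would deduce that $G \cong \alpha A$ for some arrow $\alpha$. Iterating the projective-cover construction generates a sequence of arrows $\alpha = \alpha_0, \alpha_1, \alpha_2, \ldots$ which must be infinite (because $G$ is non-projective) and hence eventually periodic because $Q_1$ is finite. The vanishing $\mathrm{Ext}^n_A(G, A) = 0$ for all $n > 0$ then forces $\alpha_i\alpha_{i+1}\in I$ throughout the periodic part, so the periodic part is exactly an oriented cycle with full relations; a compatibility argument rules out any transient non-periodic arrows preceding it, placing $\alpha$ itself on the cycle.

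The main obstacle will be verifying that $\mathrm{Hom}_A(-,A)$ preserves exactness of the periodic resolution: this reduces to showing that for cyclic gentle data with full relations the dual complex is combinatorially identical to the resolution built from the opposite cycle, and both halves of this equivalence use the full-relation hypothesis essentially. A subordinate difficulty is to identify precisely which submodules of indecomposable projectives are of the form $\alpha A$, for which the permissible-curve description developed in Sections 2--4 provides the natural bookkeeping.
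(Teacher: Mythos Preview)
The paper does not prove this theorem; it is quoted verbatim from Kalck \cite{K2015} and used as a black box. So there is no proof in the paper to compare your proposal against.

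That said, your outline has a concrete miscalculation and a genuine gap. In the sufficiency direction you introduce an extra summand $K_i$ in the kernel of $P(t(\alpha_i))\twoheadrightarrow\alpha_iA$, but in fact $K_i=0$: the kernel consists precisely of those $p\in e_{t(\alpha_i)}A$ with $\alpha_i p=0$, and by the gentle axioms this is exactly the set of paths beginning with $\alpha_{i+1}$, i.e.\ $\alpha_{i+1}A$ on the nose. So the resolution is genuinely $\ell$-periodic from the start, not just eventually, and there is no need to invoke Theorem \ref{thm. gl.dim} or pass to sub-quivers. This actually makes the $\Hom_A(-,A)$-exactness check much cleaner: the dual of the periodic complex is the analogous periodic complex for $A^{\mathrm{op}}$ with the reversed cycle, and exactness follows by symmetry.

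The more serious issue is in the necessity direction. Your step ``the indecomposable submodules of $e_vA$ have a restricted string shape, so $G\cong\alpha A$'' is not valid as stated: plenty of indecomposable submodules of projectives are not of the form $\alpha A$ (e.g.\ $\alpha\beta A$ for a nonzero path $\alpha\beta$). What you actually need is that $G$, being non-projective Gorenstein projective over a Gorenstein algebra, satisfies $G\cong\Omega^n G$ for some $n\ge 1$ (it is $\Omega$-periodic in the stable category), and that for gentle algebras every first syzygy of an indecomposable module is a direct sum of modules of the form $\beta A$ with $\beta\in Q_1$. The latter is the key combinatorial fact (it follows from the string calculus, and is implicit in the computations of Section~\ref{Section gl.dim}); once you have it, periodicity of $\Omega$ forces the sequence $\beta=\beta_0,\beta_1,\ldots$ of arrows produced by iterated syzygies to close up into a cycle with full relations, with no transient part. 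Your sketch of this last step is fine, but the reduction to $G\cong\alpha A$ must go through the syzygy description, not through submodules of projectives in general.
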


In this subsection, we depict all indecomposable Gorenstein projective modules in marked ribbon surfaces.

\begin{lemma}\label{lemm. Gporj}
Let $A=kQ/I$ be a gentle algebra having an oriented cycle with full relation, and let
$\Surf_A=(\Surf_A,\M_A,\Gamma_A)$ be its marked ribbon surface % (which has at least one $\infty$-elementary polygon by),
and $\gamma\in \Gamma_A$ an arc.
\begin{enumerate}
\item[{\rm(1)}] If $\gamma$ is a side of an $\infty$-elementary polygon $\Delta_i$ of $\Surf_A$, then there is an arc $\gamma'$ such that
\begin{itemize}
\item $\gamma'$ and $\gamma$ are two adjacent sides of $\Delta_i$, the common endpoint of them is denoted by $p$; and
\item $\gamma'$ and $\gamma$ surround $p$ in the counterclockwise order.
\end{itemize}
\item[{\rm(2)}]  Moreover, suppose that $c_{\proj}^{\gamma'} = c_1\cdots c_r$ is the projective permissible curve of $\gamma'$,
where $c_j$ $(1\le j\le r)$ is the segment from $\gamma'$ to $\gamma$,
then the module $M(c_j^{\mathfrak{T}}c_{j+1}\cdots c_r)$ is a non-projective Gorenstein projective module
{\rm(}see \Pic \ref{fig:lemm-6.2}{\rm)}.
\end{enumerate}
\end{lemma}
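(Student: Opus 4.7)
The plan is to prove \textup{(1)} directly from the cyclic structure of Remark \ref{rk. polygon-inf}, and to prove \textup{(2)} by identifying $M(c_j^{\mathfrak{T}} c_{j+1} \cdots c_r)$ with the module $\alpha A$ for a suitable arrow $\alpha$ lying on an oriented cycle with full relations, then appealing to Theorem \ref{thm. Gproj}.

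For \textup{(1)}, I would write $\gamma = \gamma^k$ in the cyclic sequence $\gamma^0, \ldots, \gamma^n$ of arc-sides of $\Delta_i$ supplied by Remark \ref{rk. polygon-inf}. Condition \textup{(PD3)} of Proposition \ref{prop. pdM and idM} says that at the common endpoint $p := \gamma^k(1^-) = \gamma^{k+1}(0^+)$, $\gamma^{k+1}$ lies clockwise-adjacent to $\gamma^k$; equivalently, $\gamma = \gamma^k$ is counterclockwise of $\gamma^{k+1}$ at $p$. Setting $\gamma' := \gamma^{k+1}$, the pair $(\gamma', \gamma)$ therefore surrounds $p$ in the counterclockwise order, as required.

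For \textup{(2)}, Definition \ref{def. alg. of m. r. s.} produces an arrow $\alpha: \gamma' \to \gamma$ in $Q_1$ from the counterclockwise adjacency, and the argument used in the proof of Corollary \ref{coro. infty gldim and oriented cycles} shows that the arc-sides of the $\infty$-elementary polygon $\Delta_i$ are precisely the vertices of an oriented cycle of length $n+1$ with full relations, on which $\alpha$ lies. The core identification $M(c_j^{\mathfrak{T}} c_{j+1} \cdots c_r) \cong \alpha A$ proceeds as follows. By Lemma \ref{Lemm. proj}, $M(c_{\proj}^{\gamma'}) \cong P(\gamma') = e_{\gamma'} A$, whose associated string takes the form $\alpha_1^{-} \cdots \alpha_m^{-} \alpha \beta_2 \cdots \beta_\ell$ with $\gamma'$ the bend vertex at the top and $\alpha$ the first letter of the right-maximal direct half; the segment $c_j$ corresponds precisely to traversing $\alpha$ from $\gamma'$ to $\gamma$. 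The $\mathfrak{T}$-modification drags the endpoint of $c_j$ lying on $\gamma'$ off to a marked point at an endpoint of $\gamma'$, effectively cutting off the projective string at $\gamma$, so $c_j^{\mathfrak{T}} c_{j+1} \cdots c_r$ realizes the substring $\beta_2 \cdots \beta_\ell$ starting at $\gamma$. On the module side, this substring is exactly the string of the submodule $\alpha A \subseteq P(\gamma')$ generated by the element $\alpha = e_{\gamma'} \alpha e_\gamma$.

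Theorem \ref{thm. Gproj} then yields that $\alpha A$ is Gorenstein projective. For non-projectivity, note that $\mathrm{top}(\alpha A) = S(\gamma)$, so any projective isomorphic to $\alpha A$ would have to be $P(\gamma)$; however, the canonical surjection $P(\gamma) \twoheadrightarrow \alpha A$ sending $e_\gamma \mapsto \alpha$ has nonzero kernel, since it kills the next arrow $\beta$ on the cycle (because $\alpha\beta \in I$), forcing $\dim \alpha A < \dim P(\gamma)$ and ruling out such an isomorphism. I expect the main technical obstacle to be the careful string-level verification of $M(c_j^{\mathfrak{T}} c_{j+1} \cdots c_r) \cong \alpha A$; this requires a precise matching between the $\mathfrak{T}$-operation of Section 3 and the submodule of $P(\gamma')$ generated by $\alpha$, including the degenerate case $\ell = 1$ in which both sides collapse to $S(\gamma)$.
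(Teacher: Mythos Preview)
Your overall strategy matches the paper's: identify the module as $\alpha A$ for an arrow $\alpha$ on the oriented cycle with full relations and invoke Theorem~\ref{thm. Gproj}, then argue non-projectivity. The paper does the latter geometrically (the curve is not equivalent to $c_{\proj}^{\gamma}$ because the next side $\mathfrak{v}(\overline{x+1})$ of $\Delta_i$ obstructs the maximality condition); your dimension argument via the nonzero kernel of $P(\gamma)\twoheadrightarrow \alpha A$, witnessed by the next arrow $\beta$ on the cycle, is an equally valid alternative.

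There is, however, an orientation error in part~(1). Cross-checking the (\textbf{PD}) sequence against Example~\ref{exp. 1} and the projective resolution displayed in Proposition~\ref{prop. pdM and idM} shows that the arrows run $\gamma^{0}\to\gamma^{1}\to\cdots$; by Definition~\ref{def. alg. of m. r. s.}(2) this forces $\gamma^{k},\gamma^{k+1}$ to surround their common endpoint in \emph{counterclockwise} order. Hence $(\gamma^{k+1},\gamma^{k})$ is in clockwise order, and your choice $\gamma'=\gamma^{k+1}$ does \emph{not} satisfy the lemma's conclusion. The correct predecessor is $\gamma'=\gamma^{k-1}$, which is exactly the paper's $\mathfrak{v}(\overline{x-1})$. (The wording of (PD3) is admittedly confusing here, but the arrow direction along the (\textbf{PD}) sequence is unambiguous from the resolution and from Example~\ref{exp. 1}.)

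This error propagates fatally into~(2). With $\gamma'=\gamma^{k+1}$ the arrow goes $\gamma\to\gamma'$, not $\gamma'\to\gamma$; consequently $\gamma=\gamma^{k}$ is not a composition factor of $P(\gamma')$, the projective permissible curve $c_{\proj}^{\gamma'}$ never crosses $\gamma$, and there is no segment $c_j$ ``from $\gamma'$ to $\gamma$'' to speak of. Replacing $\gamma'$ by $\gamma^{k-1}$ repairs everything, and the remainder of your argument (the string identification $M(c_j^{\mathfrak{T}}c_{j+1}\cdots c_r)\cong \alpha A$ and the non-projectivity) then goes through verbatim.
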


\begin{figure}[htbp]
\begin{center}
\begin{tikzpicture}
\draw (0,2) node{$\bullet$} -- (-1.732,-1) node{$\bullet$} -- (1.732,-1) node{$\bullet$};
\draw (0,2) -- (1.732,-1) [dash pattern=on 2pt off 2pt];
\fill [black!25] (0.25,0.25) circle (0.25cm);
\draw (0.25,0.25) circle (0.25cm) [line width=1.2pt];
\draw (0,1) node{$\Delta_i$};
\draw (0,-1.25) node{$\gamma'$};
\draw[rotate around={-120:(0,0)}] (0,-1.25) node{$\gamma$};
\draw[shift={(1.732,-1)}] (0,0) -- (-1.73,-1.00);
\draw[shift={(1.732,-1)}] (0,0) -- (-1.00,-1.73);
\draw[shift={(1.732,-1)}] (0,0) -- (-0.00,-2.00);
\draw[shift={(-1.732,-1)}] (0,0) -- ( 0.00, 2.00);
\draw[shift={(-1.732,-1)}] (0,0) -- (-1.00, 1.73);
\draw[shift={(-1.732,-1)}] (0,0) -- (-1.73, 1.00);
\draw[red] [line width = 1.2pt] (-2.5,-1) to[out=45,in=180]
  (-0.75,-0.25) node[below]{$c_{\proj}^{\gamma'}$} to[out=0,in=200] (2.5,-1.75);
\draw[blue] [line width = 1.2pt] (-2.5,-1) to[out=45,in=180]
  (-0.6,0) node[above]{$c'$} to[out=0,in=135] (1.73,-1);
\draw (-2.5,-1) node{$\pmb{\times}$};
\draw (2.5,-1.75) node{$\pmb{\times}$};
\end{tikzpicture}
\end{center}
\caption{$c' \simeq c_j^{\mathfrak{T}}c_{j+1}\cdots c_r$ is the permissible curve corresponded by non-projective Gorenstein-projective module $\alpha A$, where $\alpha:\gamma' \to \gamma$ is the arrow on the oriented cycle with full relations corresponded by $\Delta_i$.}
\label{fig:lemm-6.2}
\end{figure}
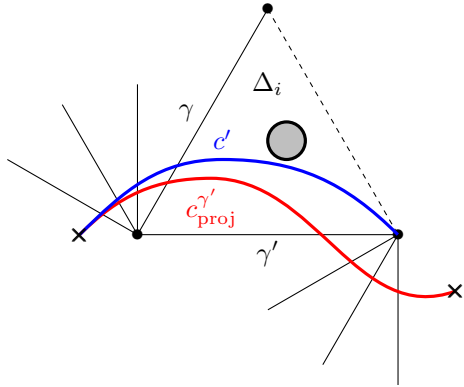

\begin{proof}
(1) Since the quiver $(Q,I)$ of $A$ has an oriented cycle with full relation,
we have $\gldim A =\infty$ by Corollary \ref{coro. infty gldim and oriented cycles},
and moreover each oriented cycle of length $\ell$ with full relation corresponds to a pair $(0, \ell)$ of the AG-invariant $\phi_A$
and an $\infty$-elementary $\ell$-polygon of marked ribbon surface $\Surf_A$.
Let $q$ be another endpoint of $\gamma$. Obviously, in the case for $p=q$, we have $\gamma=\gamma'$.
If $p\ne q$, then $\Delta_i$ corresponds to some oriented cycle with full relation
$0 \mathop{\to}\limits^{\alpha_1} 1  \mathop{\to}\limits^{\alpha_2} 2 \to \cdots \to \ell-1 \mathop{\to}\limits^{\alpha_{\ell}} 0$
such that the side $\gamma=\mathfrak{v}^{-1}(x)$ of $\Delta_i$ corresponds to some vertex
$x \in \{0,1,\cdots,\ell-1\}$ through the map $\mathfrak{v}: Q_0\to \Gamma_A$ which is defined in Definition \ref{def. alg. of m. r. s.}.
Then $\gamma' = \mathfrak{v}(\overline{x-1})$, where $\overline{x-1}$ is equal to $x-1$ modulo $\ell$.

(2) Following Definition \ref{def. alg. of m. r. s.}(1), for simplicity, we use $\gamma$ to denote the vertex $\mathfrak{v}^{-1}(\gamma) \in Q_0$.
We have that $c_{\proj}^{\gamma'}$ corresponds to the string
\[\cdots \mathop{\longleftarrow}\limits^{\alpha(c_{j-1})} \gamma' \mathop{\longrightarrow}\limits^{\alpha(c_j)}
\gamma \mathop{\longrightarrow}\limits^{\alpha(c_{j+1})} \gamma'' \mathop{\longrightarrow}\limits^{\alpha(c_{j+2})}  \cdots,\]
where each arrow $\alpha(c_j) \in Q_1$ is induced by the segment $c_j$ for any $1\le j\le r-1$
(see Definition \ref{def. alg. of m. r. s.}(2)).
Then $c_{j}^{\mathfrak{T}}c_{j+1}\cdots c_r$, denoted by $c$, corresponds to the string
\[\gamma \mathop{\longrightarrow}\limits^{\alpha(c_{j+1})} \gamma'' \mathop{\longrightarrow}\limits^{\alpha(c_{j+2})}  \cdots.\]
Thus $M(c)\cong \alpha(c_j) A \le_{\oplus} \mathrm{rad}M(c_{\proj}^{\gamma'})$ is Gorenstein projective by Theorem \ref{thm. Gproj}.

On the other hand, the side $\mathfrak{v}(\overline{x+1})$ of $\Delta_i$ satisfies that
\begin{itemize}
\item $\gamma$ and $\mathfrak{v}(\overline{x+1})$ are two adjacent sides of $\Delta_i$
(the common endpoint of them is denoted by $q$); and
\item $\gamma$ and $\mathfrak{v}(\overline{x+1})$ surround $q$ in the counterclockwise order.
\end{itemize}
Then $c \not\simeq c_{\proj}^{\gamma}$ and thus $M(c)$ is not projective by Lemma \ref{Lemm. proj}.
\end{proof}

\begin{proposition}{\rm (Gorenstein projective modules)} \label{prop. GProj}
Let $A=kQ/I$ be a gentle algebra and $\Surf_A=(\Surf_A,\M_A,\Gamma_A)$ its marked ribbon surface.
For a permissible curve $c$,
the indecomposable module $M(c)$ is Gorenstein projective if and only if one of the following holds:
\begin{itemize}
\item {\rm (GP1)} $c$ is a projective permissible curve;
\item {\rm (GP2)} $c \simeq c_1 \cdots c_r$ consecutively crosses arcs $\gamma_1, \cdots, \gamma_{r-1}$
which have a common endpoint
$$p = p(\gamma_1, \gamma_2, c) = \cdots = p(\gamma_{r-2}, \gamma_{r-1}, c)$$ such that:
  \begin{itemize}
    \item $\gamma_{i+1}$ appears after $\gamma_{i}$ $(1\le i\le r-2)$ in the counterclockwise order surrounding $p$;
    \item $\gamma_1$ is a side of some $\infty$-elementary polygon $\Delta_j$ with $1\le j\le d$;
    \item the end segment of $c$ between $c(0^+)$ and $\gamma_1$ lies in the inner of $\Delta_j$.
  \end{itemize}
\end{itemize}
\end{proposition}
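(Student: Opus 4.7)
The plan is to reduce to Kalck's classification (Theorem \ref{thm. Gproj}), which states that every indecomposable Gorenstein projective $A$-module is either projective or of the form $\alpha A$ for an arrow $\alpha$ lying on an oriented cycle of $(Q,I)$ with full relations. The proposition then follows by translating each of these two module-theoretic cases into a geometric description of the corresponding permissible curve.

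For the ``if'' direction, the case (GP1) is immediate from Lemma \ref{Lemm. proj}. For (GP2), I would identify $c$ as a suitable truncation of the projective permissible curve of a neighboring arc: take $\gamma'$ to be the side of $\Delta_j$ adjacent to $\gamma_1$ at the common endpoint $p$ and preceding $\gamma_1$ in the clockwise order around $p$ within $\Delta_j$, and match $c$ with $c_j^{\mathfrak{T}} c_{j+1} \cdots c_r$ for the segment $c_j$ of $c_{\proj}^{\gamma'}$ that lies in $\Delta_j$. Lemma \ref{lemm. Gporj} then gives that $M(c)$ is a non-projective Gorenstein projective module.

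For the ``only if'' direction, the projective case of Theorem \ref{thm. Gproj} yields (GP1) by Lemma \ref{Lemm. proj}. If $M(c) \cong \alpha A$ with $\alpha = \alpha_k$ lying on a full-relation cycle $\alpha_1 \cdots \alpha_\ell$ corresponding via $\mathfrak{v}$ to the sides $\tilde{\gamma}_1, \ldots, \tilde{\gamma}_\ell$ of an $\infty$-elementary polygon $\Delta_j$, I would first describe the string of $\alpha_k A$ explicitly. Because $\alpha_k \alpha_{k+1} \in I$ and $A$ is gentle, this string is the maximal direct path $\beta_1 \cdots \beta_s$ starting at $\tilde{\gamma}_{k+1}$ whose first arrow $\beta_1$ (if it exists) is the unique arrow out of $\tilde{\gamma}_{k+1}$ distinct from $\alpha_{k+1}$ with $\alpha_k \beta_1 \notin I$; if no such $\beta_1$ exists, then $\alpha A = S(\tilde{\gamma}_{k+1})$ is simple and $s = 0$. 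Translating through $M$, direct strings correspond to permissible curves fanning counterclockwise around a common vertex. Thus the permissible curve of $\alpha_k A$ has $c(0^+)$ at the common endpoint $p$ of $\tilde{\gamma}_k$ and $\tilde{\gamma}_{k+1}$, initial segment inside $\Delta_j$, first crossing at $\gamma_1 := \tilde{\gamma}_{k+1}$, and subsequent crossings $\gamma_2, \ldots, \gamma_{r-1}$ (coming from $\beta_1, \ldots, \beta_s$) sharing $p$ and arranged counterclockwise around it. This is exactly (GP2).

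The main obstacle will be pinning down the endpoint $c(0^+)$ and verifying that the initial end segment of $c$ sits in the interior of $\Delta_j$. This amounts to a careful reading of how $M$ treats the source vertex of a direct string, combined with the observation that the two arcs $\tilde{\gamma}_k, \tilde{\gamma}_{k+1} \in \mathfrak{S}(\Delta_j)$ meet at $p$ with $\tilde{\gamma}_{k+1}$ immediately following $\tilde{\gamma}_k$ counterclockwise at $p$ (this is precisely the geometric data encoding the arrow $\alpha_k$). The degenerate case $s=0$ (so $r=2$ and the curve is simple across $\tilde{\gamma}_{k+1}$) requires separate but straightforward checking.
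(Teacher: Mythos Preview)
Your proposal is correct and follows essentially the same strategy as the paper: both directions are reduced to Kalck's classification (Theorem \ref{thm. Gproj}), with Lemma \ref{Lemm. proj} handling the projective case and Lemma \ref{lemm. Gporj} handling the $\alpha A$ case for the sufficiency of (GP2). The only notable difference is that for the necessity of (GP2) you describe the string of $\alpha A$ directly as the maximal direct path out of $t(\alpha)$, whereas the paper routes through the projective permissible curve $c_{\proj}^{\mathfrak{v}(s(\alpha))}$ and reads off $\alpha A$ as one half of its radical; both arrive at the same geometric description.

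One small point to tidy up in a full write-up: your orientation clause ``preceding $\gamma_1$ in the clockwise order around $p$'' should read counterclockwise (compare Definition \ref{def. alg. of m. r. s.}(2) and the hypothesis in Lemma \ref{lemm. Gporj}(1)), and the representative with $c(0^+)=p$ is degenerate since $p$ lies on $\gamma_1$; it is cleaner to take $c(0^+)$ at another vertex of $\Delta_j$ (as in the paper's use of $c_j^{\mathfrak{T}}$), which is harmless by the equivalence in Definition \ref{def. p.c. eq. cls.}(2).
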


\begin{proof} We first prove the sufficiency.
By Lemma \ref{Lemm. proj}, if $c$ is a projective permissible curve, then it is trivial that $M(c)$ is a Gorenstein projective module.

If $c$ satisfies (GP2), then there is a unique arc $\gamma'$ such that $\gamma'$ and $\gamma := \gamma_1$ satisfy
the conditions given in Lemma \ref{lemm. Gporj} (1).
Suppose that $c_{\proj}^{\gamma'} = c_1\cdots c_s$ consecutively crosses $\tgamma_1\cdots \tgamma_{s-1}$ ($s\ge r+1$).
Then there is a unique $h$ with $1\leq h \leq s-1$ such that
$\tgamma_{h-1}=\gamma'$, $\tgamma_h =\gamma_1$, $\tgamma_{h+1} =\gamma_2$, $\cdots$, $\tgamma_{s-1}=\gamma_{r-1}$.
To be more precise, $c_{\proj}^{\gamma'}$ corresponds to the following string:
\begin{align}\label{}
& \tgamma_1 \longleftarrow \cdots \longleftarrow \tgamma_{h-2} \longleftarrow \tgamma_{h-1}
\mathop{\longrightarrow}\limits^{\alpha} \tgamma_h \longrightarrow \tgamma_{h+1} \longrightarrow \cdots \longrightarrow \tgamma_{s-1} \nonumber \\
& \text{($\tgamma_{h-1}=\gamma'$ and $\tgamma_{h}=\gamma$ are adjacent sides of $\Delta_j$)}. \nonumber
\end{align}
We have that $c$ consecutively crosses $\tgamma_h$, $\tgamma_{h+1}$, $\cdots$, $\tgamma_{s-1}$,
where $\tgamma_{h}\in\mathfrak{S}(\Delta_j)$ and $\tgamma_{m} \notin \mathfrak{S}(\Delta_j)$ for any $h+1\le m\le s-1$.
Thus $M(c) \cong \alpha A $ is a Gorenstein projective module satisfying $M(c) \le_{\oplus} \mathrm{rad} M(c_{\proj}^{\tgamma_{h-1}}) $.
In this case, $M(c)$ is non-projective.

Next we prove the necessity.
By Theorem \ref{thm. Gproj}, if $M(c)$ is Gorenstein projective, then $M(c)$ is isomorphic to either
a projective module or an $\alpha A$ where $\alpha$ is an arrow on some oriented cycle with full relations.

If $M(c)$ is projective, then $c$ is a projective permissible curve.
Now suppose that $M(c)$ is isomorphic to $\alpha A$. Let $v:=s(\alpha)$,
It follows from Lemma \ref{Lemm. proj} that $\alpha A \cong \mathrm{rad} P(v) \cong \mathrm{rad} M(c_{\proj}^{\tgamma(v)})$
for some arc $\tgamma(v) \in \Gamma_A$,
where $c_{\proj}^{\tgamma(v)}=c_1\cdots c_s$ is a projective permissible curve consecutively crossing arcs $\tgamma_1, \cdots, \tgamma_{s-1}$.
Then $v$ corresponds to some $\tgamma_h$ with $2\leq h\leq s-2$ through $\mathfrak{v}: Q_0\to \Gamma_A$, that is, $\tgamma_h = \tgamma(v)$
(note that $h\ne 1$ and $h\ne s-1$ because $v$ is a vertex on the oriented cycle with full relations).
%For simplicity, we denote by $\tgamma$ the vertex $v=\mathfrak{v}^{-1}(\tgamma)\in Q_0$ for each $\tgamma \in \Gamma_A$.
Then $c_{\proj}^{\tgamma(v)} = c_{\proj}^{\tgamma_h}$ corresponds to the string
\[ \tgamma_1 \longleftarrow \cdots  \longleftarrow \tgamma_{h-1} \mathop{\longleftarrow}\limits^{\alpha_1} \tgamma_h
\mathop{\longrightarrow}\limits^{\alpha_2} \tgamma_{h+1} \longrightarrow \cdots \longrightarrow \tgamma_{s-1}, \]
and the arrow $\alpha$ is either $\alpha_1$ or $\alpha_2$.
Without loss of generality, suppose $\alpha=\alpha_1$. Since $M(c)\cong \alpha_1 A$ is Gorenstein projective,
the arcs $\tgamma_{h-1}$ and $\tgamma_h$ are adjacent sides of some $\infty$-elementary polygon $\Delta_j$ of $\Surf_A$.
Let $\tgamma_{h-1} = \gamma_1$,  $\tgamma_{h-2} = \gamma_2$, $\cdots$, $\tgamma_1 = \gamma_{h-1}$.
Then the permissible curve $c$ consecutively crossing $\gamma_1, \cdots, \gamma_{h-1}$ satisfies the condition (GP2).
\end{proof}

\begin{example} \label{Ex. GProj.} \rm
Let $A$ be a gentle algebra whose marked ribbon surface $\Surf_A$ is given in \Pic \ref{Ex. GProj. in Surf.}.
We can find 12 indecomposable Gorenstein projective modules including
\begin{itemize}
\item 9 indecomposable projective modules (see green permissible curves):
\[ {{}_{^4_5}} {{}^1} {_{^2_{^6_7}}},\ {{}_{^6_7}} {{}^2} {_{^3_{^8_9}}},\ {{}_{^8_9}} {{}^3} {_{^1_{^4_5}}},\
{^4_5},\ {^6_7},\ {^8_9},\ {5},\ {7},\ {9}; \]
\item and 3 indecomposable non-projective Gorenstein projective modules (see pink permissible curves):
\[ {^{_2}_{^6_7}} \big(\le_{\oplus} \mathrm{rad} \big( {{}_{^4_5}} {{}^1} {_{^2_{^6_7}}} \big)\big),
\  {^{_3}_{^8_9}} \big(\le_{\oplus} \mathrm{rad} \big( {{}_{^6_7}} {{}^2} {_{^3_{^8_9}}} \big)\big),
\  {^{_1}_{^4_5}} \big(\le_{\oplus} \mathrm{rad} \big( {{}_{^8_9}} {{}^3} {_{^1_{^4_5}}} \big)\big). \]
\end{itemize}
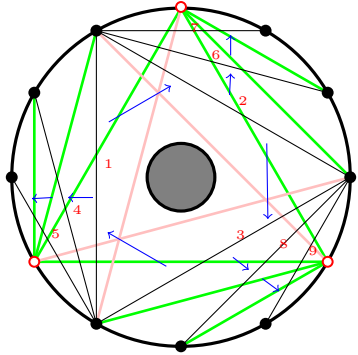
\begin{figure}[htbp]
\begin{center}
\begin{tikzpicture}[scale=1.5]
% proj. perm. curves
\draw [line width=1pt,green] (-1.3,-0.75)-- (0,1.5);
\draw [line width=1pt,green] (-1.3,-0.75)-- (1.3,-0.75);
\draw [line width=1pt,green] (1.3,-0.75)-- (0,1.5);
\draw [line width=1pt,green] (-1.3,-0.75)-- (-0.75,1.3);
\draw [line width=1pt,green] (-1.3,-0.75)-- (-1.3,0.75);
\draw [line width=1pt,green] (1.3,-0.75)-- (-0.75,-1.3);
\draw [line width=1pt,green] (1.3,-0.75)-- (0,-1.5);
\draw [line width=1pt,green] (0,1.5)-- (1.5,0);
\draw [line width=1pt,green] (0,1.5)-- (1.3,0.75);
% Gproj. perm. curves
\draw [line width=1pt,pink] (1.5,0)-- (-1.3,-0.75);
\draw [line width=1pt,pink] (-0.75,-1.3)-- (0,1.5);
\draw [line width=1pt,pink] (-0.75,1.3)-- (1.3,-0.75);
%%%%%%%%%%%%%%%%%%%%%%%%%%%%%%%%%%%%%%%%%%%%%%%
%%%%%%%%%%%%%%%%%%%%%%%%%%%%%%%%%%%%%%%%%%%%%%%
%%%%%%%%%%%%%%%%%%%%%%%%%%%%%%%%%%%%%%%%%%%%%%%
\filldraw[gray] (0,0) circle (0.3);
\draw [line width=1.2pt] (0,0) circle (1.5cm);
\draw [line width=1.2pt] (0,0) circle (0.3cm);
% arcs
\draw (-0.75, 1.30)-- (-0.75,-1.30);
\draw (-0.75,-1.30)-- ( 1.50, 0.00);
\draw ( 1.50, 0.00)-- (-0.75, 1.30);
\draw (-0.75, 1.30)-- ( 0.75, 1.30);
\draw ( 1.50, 0.00)-- ( 0.75,-1.30);
\draw (-0.75,-1.30)-- (-1.50, 0.00);
\draw ( 1.50, 0.00)-- ( 0.00,-1.50);
\draw (-0.75,-1.30)-- (-1.30, 0.75);
\draw (-0.75, 1.30)-- ( 1.30, 0.75);
\draw [->, blue] ( 0.76, 0.30) -- ( 0.77,-0.37);
\draw [->, blue] (-0.64, 0.49) -- (-0.09, 0.81);
\draw [->, blue] (-0.13,-0.79) -- (-0.65,-0.49);
\draw [->, blue] ( 0.43, 0.73) -- ( 0.44, 0.92);
\draw [->, blue] ( 0.44, 1.08) -- ( 0.44, 1.26);
\draw [->, blue] ( 0.46,-0.71) -- ( 0.60,-0.82);
\draw [->, blue] ( 0.72,-0.90) -- ( 0.87,-1.01);
\draw [->, blue] (-0.78,-0.18) -- (-1.00,-0.18);
\draw [->, blue] (-1.14,-0.18) -- (-1.32,-0.19);
\draw [red] (-0.64, 0.12) node {\tiny$1$};
\draw [red] ( 0.55, 0.68) node {\tiny$2$};
\draw [red] ( 0.53,-0.51) node {\tiny$3$};
\draw [red] (-0.92,-0.29) node {\tiny$4$};
\draw [red] (-1.11,-0.50) node {\tiny$5$};
\draw [red] ( 0.31, 1.09) node {\tiny$6$};
\draw [red] ( 0.12, 1.33) node {\tiny$7$};
\draw [red] ( 0.91,-0.59) node {\tiny$8$};
\draw [red] ( 1.17,-0.65) node {\tiny$9$};
% points
\fill ( 1.50, 0.00) circle (1.5pt);
\fill ( 0.75, 1.30) circle (1.5pt);
\fill (-0.75, 1.30) circle (1.5pt);
\fill (-1.50, 0.00) circle (1.5pt);
\fill (-0.75,-1.30) circle (1.5pt);
\fill ( 0.75,-1.30) circle (1.5pt);
\fill ( 0.00,-1.50) circle (1.5pt);
\fill (-1.30, 0.75) circle (1.5pt);
\fill ( 1.30, 0.75) circle (1.5pt);
\fill [red] ( 0.00, 1.51) circle (1.5pt); \filldraw[white] ( 0.00, 1.51) circle (0.03);
\fill [red] ( 1.30,-0.75) circle (1.5pt); \filldraw[white] ( 1.30,-0.75) circle (0.03);
\fill [red] (-1.30,-0.75) circle (1.5pt); \filldraw[white] (-1.30,-0.75) circle (0.03);
\end{tikzpicture}
\caption{An example for calculating all indecomposable Gorenstein projective modules}
\label{Ex. GProj. in Surf.}
\end{center}
\end{figure}
\end{example}

\subsection{The descriptions of self-injective dimension}

Gei\ss\ and Reiten showed that gentle algebras are Gorenstein \cite[Theorem 3.4]{GR2005}.
In this subsection we provide a geometric interpretation of this result by marked ribbon surfaces.
We use $\finiteDelta$ to denote the set of all non-$\infty$-elementary polygons,
and use $\fforbid_A$ to denote the set of all forbidden threads of finite length. We first prove the following proposition.

\begin{proposition}\label{prop-6.5}
Let $A=kQ/I$ be a non-simple gentle algebra and $\Surf_A=(\Surf_A,\M_A,\Gamma_A)$ its marked ribbon surface. Then
$$ \idim_AA = \idim A_A =
\begin{cases}
0,\ \mbox{\text{if $Q$ is an oriented cycle with full relations};}\\
\max\limits_{\Delta_i\in\finiteDelta}\big\{1, {\mathfrak{C}(\Delta_i)}-1 \big\},\ \mbox{\text{otherwise}.}
\end{cases}$$
% $$\color{red} \max\big\{1, \max\limits_{\Delta_i\in\finiteDelta}{\mathfrak{C}(\Delta_i)}-1 \big\}.$$
\end{proposition}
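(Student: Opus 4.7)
My plan splits into two cases according to whether $Q$ is an oriented cycle with full relations.

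If $Q$ is such a cycle, then $A$ is a Nakayama algebra, and by Theorem~\ref{thm. Gproj} the radical of every indecomposable projective is $\alpha A$ for an arrow $\alpha$ on the cycle, hence Gorenstein projective. A direct inspection of the two-dimensional projectives then shows that $A$ is self-injective, so $\idim_A A = \idim A_A = 0$, matching the first branch of the stated formula.

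For the general case, the plan is to invoke Gei\ss--Reiten's theorem from \cite{GR2005} that every gentle algebra is Gorenstein, which gives $\idim_A A = \idim A_A = G$ with $G<\infty$. Since $A$ is an Artin Gorenstein algebra, $G$ coincides with the Gorenstein global dimension and equals $\sup\{\Gpdim S \mid S \text{ a simple } A\text{-module}\}$. The problem therefore reduces to computing $\Gpdim M(c_{\simp}^{\gamma})$ for each arc $\gamma \in \Gamma_A$ and taking the supremum.

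To obtain the upper bound, I would apply the syzygy computation behind Lemmas~\ref{lemm. proj. cover of simple mod. 1}--\ref{lemm. ker. of proj. cover} to $M(c_{\simp}^{\gamma})$: at each step the syzygy decomposes as a direct sum of two summands controlled by the two elementary polygons $\Delta,\Delta'$ adjacent to $\gamma$. On a side belonging to $\finiteDelta$, the summand vanishes after exactly $\mathfrak{C}(\Delta)-1$ steps, by the same counting used in the proof of Theorem~\ref{thm. gl.dim}. On an $\infty$-elementary side, the first-syzygy summand is the module $\alpha A$ for an arrow $\alpha$ on the underlying oriented cycle with full relations, which by condition (GP2) of Proposition~\ref{prop. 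GProj} is Gorenstein projective and needs no further resolution. Combining the two sides bounds $\Gpdim M(c_{\simp}^{\gamma}) \le \max_{\Delta\in\finiteDelta}\{1, \mathfrak{C}(\Delta)-1\}$ for every arc $\gamma$ (the $1$ appearing precisely when at least one neighbor of $\gamma$ is $\infty$-elementary); passing to the supremum yields $G \le \max_{\Delta\in\finiteDelta}\{1, \mathfrak{C}(\Delta)-1\}$.

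For the matching lower bound, pick $\Delta^\ast \in \finiteDelta$ realizing $\max_{\Delta\in\finiteDelta}\mathfrak{C}(\Delta)$ and an arc $\gamma \in \mathfrak{S}(\Delta^\ast)$. The same resolution analysis shows that for every $k$ with $0 \le k < \mathfrak{C}(\Delta^\ast)-1$, the $\Delta^\ast$-side summand of the $k$-th syzygy of $M(c_{\simp}^{\gamma})$ is a non-zero string module all of whose arcs lie in $\Delta^\ast$; by minimality it is non-projective, and since $\Delta^\ast$ is not $\infty$-elementary it cannot be of the form described by (GP2) of Proposition~\ref{prop. GProj}, so it is not Gorenstein projective by Theorem~\ref{thm. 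Gproj}. This forces $\Gpdim M(c_{\simp}^{\gamma}) \ge \mathfrak{C}(\Delta^\ast)-1$. Since $Q$ is not a cycle with full relations, $A$ is not self-injective and $G\ge 1$, so the two estimates combine to give $G = \max_{\Delta\in\finiteDelta}\{1, \mathfrak{C}(\Delta)-1\}$. The principal technical obstacle will be the bookkeeping of syzygies in the mixed case where $\gamma$ has one $\finiteDelta$ and one $\infty$-elementary neighbor: one has to match the geometric description (GP2) of Proposition~\ref{prop. GProj} against the combinatorial description of successive syzygies from Section~\ref{Section gl.dim} and check carefully that the finite-side summand remains outside the Gorenstein projective class until it vanishes.
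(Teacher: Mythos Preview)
Your proposal is correct and follows essentially the same approach as the paper: both handle the oriented-cycle case by self-injectivity, then invoke Gei\ss--Reiten together with the identity $\idim_AA=\idim A_A=\sup\{\Gpdim S\}$ (the paper cites \cite{Ho} and \cite{HuH} for this), and finally compute $\Gpdim$ of simples via the syzygy analysis of Lemmas~\ref{lemm. proj. cover of simple mod. 1}--\ref{lemm. ker. of proj. cover} combined with the Gorenstein-projective characterisation (GP2) of Proposition~\ref{prop. GProj}. The only difference is organizational: the paper splits into the sub-cases ``$(Q,I)$ has an oriented cycle with full relations'' versus ``no such cycle'' (the latter reducing directly to $\gldim A$ via Theorem~\ref{thm. gl.dim}), whereas you treat both uniformly by tracking the finite-polygon and $\infty$-polygon strands of the syzygy in parallel.
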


\begin{proof}
If $Q$ is an oriented cycle with full relations, then $A$ is a self-injective Nakayama algebra.

Now suppose that $Q$ is not an oriented cycle with full relations.
Since $A$ is a gentle algebra, it follows from \cite[Theorem 3.4]{GR2005} that $A$ is Gorenstein.
Then by \cite[Theorem]{Ho} and \cite[Theorem 1.4]{HuH}, we have $\idim_AA=\idim A_A = \sup\{\Gpdim S\mid S$ is simple$\}$.

The proof is divided to the following two cases:
(i) $(Q, I)$ has at least one oriented cycle with full relations;
(ii) $(Q, I)$ has no oriented cycle with full relations.

(i) Note that if $v\in Q_0$ is a vertex on the oriented cycle with full relations, then $\pdim S(v)=\infty$.
Consider the arrow $\alpha$  on this oriented cycle with full relations such that its sink is $v$.
We have that $\alpha A$ is an indecomposable non-projective Gorenstein projective module,
and moreover it is the Gorenstein projective cover of $S(v)$.
In the marked ribbon surface $\Surf_A$, $v$ corresponds to a side, denoted by $\gamma_1$, of an $\infty$-elementary polygon $\Delta_j$
and $M^{-1}(\alpha A)$ is such a permissible curve $c$ consecutively crossing $\gamma_1,  \cdots, \gamma_r$ ($r\ge 1$),
where $\gamma_1, \cdots, \gamma_r$ satisfy the condition (GP2) given in Proposition \ref{prop. GProj} and $\gamma_1=\mathfrak{v}(v)$.

In the case for $r=1$, it is obvious that $S(v)$ is a Gorenstein projective module and $\Gpdim S(v)=0$.
Suppose $r>1$. Then there exists an elementary polygon $\Delta_{j'}$ such that $\gamma_1$ and $\gamma_2$ are two sides of $\Delta_{j'}$.
If $\Delta_{j'}$ is not $\infty$-elementary then we get $\Gpdim S(v)=\mathfrak{C}(\Delta_{j'})-1$
by using an argument similar to that of Proposition \ref{prop. pdM and idM}
(the arcs $\gamma_1$ and $\gamma_2$ can be viewed as $\gamma^0$ and $\gamma^1$ shown in Proposition \ref{prop. pdM and idM}, respectively).
If $\Delta_{j'}$ is $\infty$-elementary, then consider the permissible curve $c'$ consecutively crossing $\gamma_2, \cdots, \gamma_r$
(in this case $\gamma_2\in \mathfrak{S}(\Delta_{j'})$) where $\gamma_2, \cdots, \gamma_r$ satisfy (GP2).
We have that $M(c')$ is the kernel of $M(c) \twoheadrightarrow S(v) \cong M(c_{\simp}^{\gamma_1})$ by Theorem \ref{thm. s. e. s.}(1)
because the positional relationship of $c'$, $c$ and $c_{\simp}^{\gamma_1}$ satisfies the conditions
given in Proposition \ref{prop. s. e. s.}. Then $\Gpdim S(v)=1$.
Consequently we have
$$\idim_AA=\idim A_A=\max\big\{1, \max\limits_{\Delta_i\in\finiteDelta}{\mathfrak{C}(\Delta_i)}-1 \big\}.$$

(ii) If $(Q, I)$ has no oriented cycle with relation, then $A$ has finite global dimension by Theorem \ref{thm. gl.dim}. In this case we have
\[\idim_AA=\idim A_A=\gldim A = \max\limits_{1\le i\le d} \mathfrak{C}(\Delta_i)-1,\]
where $\{\Delta_i\mid 1\le i\le d\} = \finiteDelta$ is the set of all elementary polygons of $\Surf_A$.
\end{proof}

The proof of Proposition \ref{prop-6.5} depends on Theorem \ref{thm. Gproj}.
In the following, we will give its another proof which does not need to use the properties of
Gorenstein projective modules.
We assume that the quiver of $A$ is neither an oriented cycle with full relations nor a point.

\begin{lemma}\label{lemm. proj. covers of ind. inj. mods.}
Let $\Surf_A=(\Surf_A,\M_A,\Gamma_A)$ be a marked ribbon surface of a gentle algebra $A$,
and let $\gamma$ be an arc on $\Surf_A$ with endpoints $p$ and $q$. As shown in \Pic \ref{fig:Lemm-6.6},
suppose that $c_{\inj}^{\gamma}$ consecutively crosses arcs
$\gamma^0_1, \cdots, \gamma^0_{i-1}, \gamma$, $\tgamma^0_{j-1}, \cdots, \tgamma^0_1$,
and $c_{\proj}^{\gamma}$ consecutively crosses arcs
$\tgamma^0_n$, $\cdots$, $\tgamma^0_{j+1}$, $\gamma$, $\gamma^0_{i+1}$, $\cdots$, $\gamma^0_m$,
where $p$ is the common endpoint of $\gamma^0_1, \cdots, \gamma^0_m$ and $q$ is the common endpoint of $\tgamma^0_1, \cdots, \tgamma^0_n$
{\rm(}$1\le i\le m$, $1\le j\le n$ and $\gamma=\gamma^0_i=\tgamma^0_j${\rm)}. Then:
\begin{enumerate}
\item[{\rm(1)}] the projective cover of $M(c_{\inj}^{\gamma})$ is $p(M(c_{\inj}^{\gamma})):
M(c_{\proj}^{\gamma^0_1}\oplus c_{\proj}^{\tgamma^0_1}) \twoheadrightarrow M(c_{\inj}^{\gamma})$;
\item[{\rm(2)}]
$\Ker p(M(c_{\inj}^{\gamma})) \cong M(c'\oplus c_{\proj}^{\gamma} \oplus \tilde{c}')$,
where $c'$ and $c''$ are two permissible curves shown in \Pic \ref{fig:Lemm-6.6}.
%suppose that
%$c_{\proj}^{\gamma^0_1}$ consecutively crosses arcs $\gamma^1_r, \cdots, \gamma^1_1, \gamma^0_1, \gamma^0_2, \cdots, \gamma^0_m$
%and $c_{\proj}^{\tilde{\gamma}^0_1}$ consecutively crosses arcs
%$\tgamma^0_n$, $\cdots$, $\tgamma^0_2$, $\tgamma^0_1$, $\tgamma^1_1$, $\cdots$, $\tgamma^1_s$.
%Let $c'$ and $\tilde{c}'$ be two permissible curves consecutively crossing $\gamma^1_r, \cdots, \gamma^1_1$ and
%$\tgamma^1_s, \cdots, \tgamma^1_1$, respectively.
%Then $\Ker p(M(c_{\inj}^{\gamma})) \cong M(c'\oplus c_{\proj}^{\gamma} \oplus \tilde{c}')$.
\end{enumerate}
\end{lemma}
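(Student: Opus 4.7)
The plan is to treat (1) as a direct computation of the top of $M(c_{\inj}^{\gamma})$, and (2) via a careful decomposition of the kernel of the projective cover using Theorem \ref{thm. s. e. s.}.

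For (1), the conditions (ISC) and (MC$'$) defining $c_{\inj}^{\gamma}$ in Definition \ref{def. inj. per. curve} place $p$ on the left of $c_{\inj}^{\gamma}$ and $q$ on the right. Consequently the corresponding string reads
$$\gamma^0_1 \longrightarrow \gamma^0_2 \longrightarrow \cdots \longrightarrow \gamma^0_{i-1} \longrightarrow \gamma \longleftarrow \tgamma^0_{j-1} \longleftarrow \cdots \longleftarrow \tgamma^0_1,$$
whose only source vertices are the two endpoints $\gamma^0_1$ (falling into Case 2 of Lemma \ref{lemm. tops of string mods.}) and $\tgamma^0_1$ (Case 3), so Proposition \ref{prop. tops of mods.}(1) gives $\mathrm{top}\,M(c_{\inj}^{\gamma}) \cong M(c_{\simp}^{\gamma^0_1}) \oplus M(c_{\simp}^{\tgamma^0_1})$. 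Applying Theorem \ref{thm. proj. cover} then yields (1).

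For (2), set $P := M(c_{\proj}^{\gamma^0_1}) \oplus M(c_{\proj}^{\tgamma^0_1})$ and let $\pi: P \twoheadrightarrow M(c_{\inj}^{\gamma})$ be the projective cover just constructed. Both projective curves $c_{\proj}^{\gamma^0_1}$ and $c_{\proj}^{\tgamma^0_1}$ cross $\gamma$ and, by the maximality in Definition \ref{def. proj. per. curve}, continue through the remaining arcs $\gamma^0_{i+1},\ldots,\gamma^0_m$ at $p$ and $\tgamma^0_{j+1},\ldots,\tgamma^0_n$ at $q$; the substring they share is exactly the string of $c_{\proj}^{\gamma}$. Applying Theorem \ref{thm. s. e. s.}(3) to $M(c_{\proj}^{\gamma^0_1})$ relative to the segment mapping surjectively onto the ``$\gamma^0_1$-side'' of $M(c_{\inj}^{\gamma})$ peels off a residual permissible curve at the other endpoint of $\gamma^0_1$; this curve is $c'$. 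The dual procedure on the second summand produces $\tilde c'$. The third summand $M(c_{\proj}^{\gamma})$ arises because $M(c_{\proj}^{\gamma})$ embeds into each of $M(c_{\proj}^{\gamma^0_1})$ and $M(c_{\proj}^{\tgamma^0_1})$ via the non-relation paths $\gamma^0_1 \to \cdots \to \gamma$ and $\tgamma^0_1 \to \cdots \to \gamma$ respectively, and the antidiagonal copy inside $P$ lies in $\Ker\pi$.

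The main obstacle will be assembling these three pieces into a genuine direct-sum decomposition $\Ker\pi \cong M(c') \oplus M(c_{\proj}^{\gamma}) \oplus M(\tilde c')$. Given the inclusions already exhibited, this reduces to checking that the three summands are mutually disjoint inside $P$ and together account for all of $\Ker\pi$, which can be done by a dimension count against $\dim P - \dim M(c_{\inj}^{\gamma})$ using the explicit lists of crossings in the hypothesis, together with the fact that the antidiagonal of $M(c_{\proj}^{\gamma})$ meets neither the summand $M(c')$ (supported at the $p_1$-side) nor the summand $M(\tilde c')$ (supported at the corresponding $q$-side endpoint).
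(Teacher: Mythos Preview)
Your argument for part (1) matches the paper's exactly: identify the string, read off the two sources, invoke Proposition \ref{prop. tops of mods.} and Theorem \ref{thm. proj. cover}.

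For part (2) the paper takes a more bare-handed route: it simply writes down the strings of $M(c_{\inj}^{\gamma})$, $M(c_{\proj}^{\gamma^0_1})$ and $M(c_{\proj}^{\tgamma^0_1})$ and reads off that the kernel is the direct sum of the three string modules on
\[
\gamma^1_r \leftarrow \cdots \leftarrow \gamma^1_1,\qquad
\tgamma^0_n \leftarrow \cdots \leftarrow \tgamma^0_{j+1} \leftarrow \gamma \to \gamma^0_{i+1} \to \cdots \to \gamma^0_m,\qquad
\tgamma^1_1 \to \cdots \to \tgamma^1_s,
\]
appealing only to the standard combinatorics of maps between string modules. Your approach reaches the same three summands but assembles them more explicitly, via an antidiagonal embedding of $M(c_{\proj}^{\gamma})$ and a dimension count. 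That is fine and arguably more transparent about \emph{how} $M(c_{\proj}^{\gamma})$ sits inside $\Ker\pi$.

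Two small corrections, neither fatal. First, your claim that ``$M(c_{\proj}^{\gamma})$ embeds into each of $M(c_{\proj}^{\gamma^0_1})$ and $M(c_{\proj}^{\tgamma^0_1})$'' is false in general: the natural map $M(c_{\proj}^{\gamma})\to M(c_{\proj}^{\gamma^0_1})$ kills the $\tgamma^0_{j+1},\ldots,\tgamma^0_n$ part, and symmetrically for the other factor. What you actually need, and what is true, is that the pair of maps gives an injective map into the direct sum $P$, since the two kernels intersect trivially. Second, Theorem \ref{thm. s. e. s.}(3) is set up for a ``middle'' quotient $c''$ that shares neither endpoint with $c$; the quotient you want (the $\gamma^0_1$-side of $M(c_{\inj}^{\gamma})$ inside $M(c_{\proj}^{\gamma^0_1})$) shares an endpoint, so case (1) of that theorem, or a direct string computation as the paper does, is the cleaner reference. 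With those adjustments your dimension count and support argument go through and give the same conclusion as the paper.
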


\begin{figure}[htbp]
\begin{center}
\begin{tikzpicture}[scale=1.2]
\draw (0,3)-- (0,-3);
\draw ( 0, 3) arc (   0:-120 :3);
\draw ( 0, 3) arc (   0: -90 :4);
\draw (-5, 3) arc ( 180: 360 :2.5);
\draw (-5, 3) arc ( 180: 360 :1.5);
\draw (-5, 3) arc ( 180: 360 :0.75);
\draw ( 0,-3) arc ( 180:  60 :3);
\draw ( 0,-3) arc ( 180:  90 :4);
\draw ( 0, 3) arc (-180:   0 :2);
\draw ( 0, 3) arc (-180:   0 :1.5);
\draw ( 0, 3) arc (-180:   0 :1);
\draw ( 0,-3) arc (   0: 180 :2);
\draw ( 0,-3) arc (   0: 180 :1.5);
\draw ( 0,-3) arc (   0: 180 :1);
\draw [dotted] (-4.99,3)-- (4,3);
\draw ( 0,-3) arc ( 180:   0 :2.5);
\draw [dotted] (-4,-3)-- (5,-3);
\draw ( 5,-3) arc (   0: 180 :1.5);
\draw ( 5,-3) arc (   0: 180 :0.75);
\draw (-2,3) arc (180:360 :0.5);
\draw (-2,3) arc (180:360 :0.3);
% inj. per. curve
\draw[orange, line width = 0.9pt] (-2,3) to[out=-90, in=180] (0,0) to[out=0, in=90] (2,-3);
\draw[orange] ( 0.80, 0.00) node{$c_{\inj}^{\gamma}$};
% proj. cover of inj. per. curve
\draw[red, line width = 0.9pt] (-1,-3) to[out=90, in=-90] (1,3);
\draw[red] ( 0.40, 0.70) node{$c_{\proj}^{\gamma}$};
\draw[red, line width = 0.9pt] (-4.3,3) to[out=-60, in=-120] (1,3);
\draw[red] (-1.30, 1.50) node{$c_{\proj}^{\gamma^1_1}$};
\draw[red, line width = 0.9pt] (4.3,-3) to[out=120, in=60] (-1,-2.9);
\draw[red] ( 1.30,-1.50) node{$c_{\proj}^{\tgamma^1_1}$};
% cokernel
\draw[blue, line width = 0.9pt] (-4.25,3) to[out=-55, in=-135] (0,3); \draw[blue] (-1.30, 2) node{$c'$};
\draw[blue, line width = 0.9pt] (4.25,-3) to[out=135, in= 55] (0,-3); \draw[blue] ( 1.30,-2) node{$\tilde{c}'$};
% marked points
\fill [color=black] (4,3) circle (1.5pt);
\fill [color=black] (-4,-3) circle (1.5pt);
\fill [color=black] (0,3) circle (1.5pt);
\fill [color=black] (0,-3) circle (1.5pt);
\fill [color=black] (-4.99,2.98) circle (1.5pt);
\fill [color=black] (3,3) circle (1.5pt);
\fill [color=black] (-3,-3) circle (1.5pt);
\fill [color=black] (2,3) circle (1.5pt);
\fill [color=black] (-2,-3) circle (1.5pt);
\fill [color=black] (5,-3) circle (1.5pt);
\fill [color=black] (-3.5,2.99) circle (1.5pt);
\fill [color=black] (-2,2.99) circle (1.5pt);
\fill [color=black] (3.5,-3) circle (1.5pt);
\fill [color=black] (2,-3) circle (1.5pt);
\draw [color=black] (-4.3,3) node{$\pmb{\times}$};
\draw [color=black] ( 4.3,-3) node{$\pmb{\times}$};
\draw [color=black] ( 1,3) node{$\pmb{\times}$};
\draw [color=black] (-1,-3) node{$\pmb{\times}$};
\fill [color=black] (-1,3) circle (1.5pt);
\fill [color=black] (-1.4,3) circle (1.5pt);
% p and q
\draw (0, 3) node[above]{$p$};
\draw (0,-3) node[below]{$q$};
% intersections
\draw (-2.00, 1.50) node{$x$};
\draw ( 2.00,-1.50) node{$y$};
% arcs
\draw ( 0.00, 0.35) node{$\gamma$};
\draw (-2.20, 0.50) node{$\gamma^0_1$};
\draw (-2.20, 0.00) node{$\gamma^0_2$};
\draw (-2.40,-0.25) node{$\vdots$};
\draw (-2.00,-0.50) node{$\gamma^0_{i-1}$};
\draw (-2.20,-1.00) node{$\tgamma^0_{j+1}$};
\draw (-1.50,-1.50) node{$\tgamma^0_{j+2}$};
\draw (-2.00,-2.00) node{$\vdots$};
\draw (-1.00,-2.00) node{$\tgamma^0_n$};
\draw ( 1.00, 2.00) node{$\gamma^0_m$};
\draw ( 2.00, 2.00) node{$\vdots$};
\draw ( 1.50, 1.50) node{$\gamma^0_{i+2}$};
\draw ( 2.20, 1.00) node{$\gamma^0_{i+1}$};
\draw ( 2.00, 0.50) node{$\tgamma^0_{j-1}$};
\draw ( 2.40, 0.25) node{$\vdots$};
\draw ( 2.20, 0.00) node{$\tgamma^0_2$};
\draw ( 2.20,-0.50) node{$\tgamma^0_1$};
\draw (-4.25, 2.25) node{$\gamma^1_r$};
\draw (-3.50, 2.20) node{$\vdots$};
\draw (-3.50, 1.50) node{$\gamma^1_1$};
\draw ( 4.25,-2.25) node{$\tgamma^1_s$};
\draw ( 3.50,-2.20) node{$\vdots$};
\draw ( 3.50,-1.50) node{$\tgamma^1_1$};
\draw (-1.36, 2.48) node{$\gamma^2_1$};
\draw (-1.36, 2.65) node{$\vdots$};
\end{tikzpicture}
\caption{The formal direct sum $c_{\proj}^{\gamma_1^1}\oplus c_{\proj}^{\tgamma_1^1}$
of permissible curves provides the projective cover of
the injective module $M(c_{\inj}^{\gamma})$. }
\label{fig:Lemm-6.6}
\end{center}
\end{figure}
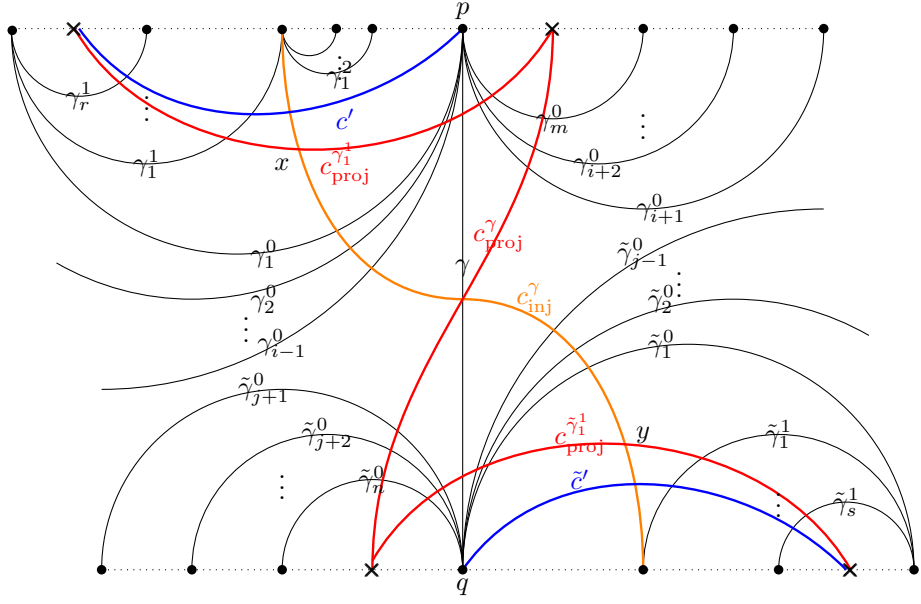

\begin{proof}
(1) This follows from Proposition \ref{prop. tops of mods.} and Theorem \ref{thm. proj. cover}.

(2) %The proof of (2) is similar to that of Proposition \ref{prop. quotientmod.}.
Indeed, $\mathrm{top}M(c_{\inj}^{\gamma}) = M(c_{\simp}^{\gamma^0_1}) \oplus M(c_{\simp}^{\tgamma^0_1})$,
and there exists no arc $\gamma^0$ (respectively, $\tgamma^0$) with endpoint $p$ (respectively, $q$)
such that $\gamma^0, \gamma^0_1, \cdots, \gamma^0_{i-1}, \gamma$
(respectively, $\gamma$, $\tgamma^0_{j-1}$, $\cdots$, $\tgamma^0_1$, $\tgamma$) surround $p$ (respectively, $q$) in the counterclockwise order.
Thus $M(c_{\inj}^{\gamma})$ corresponds to the string
\[\gamma^0_1 \longrightarrow \gamma^0_2 \longrightarrow \cdots \longrightarrow \gamma^0_{i-1} \longrightarrow \gamma
\longleftarrow \tgamma^0_{j-1} \longleftarrow \cdots \longleftarrow \tgamma^0_2 \longleftarrow \tgamma^0_1, \]
and $M(c_{\simp}^{\gamma^0_1})$ and $M(c_{\simp}^{\tgamma^0_1})$ correspond to the strings
\begin{align}
& \gamma^1_r \longleftarrow \cdots \longleftarrow \gamma^1_1 \longleftarrow
\gamma^0_1 \longrightarrow \gamma^0_2 \longrightarrow \cdots \longrightarrow \gamma^0_{i-1} \longrightarrow \gamma
\longrightarrow \gamma^0_{i+1} \longrightarrow \cdots \longrightarrow \gamma^0_m\ \text{ and } \nonumber  \\
& \tgamma^0_n \longleftarrow \cdots \longleftarrow \tgamma^0_{j+1}
\longleftarrow \gamma \longleftarrow \tgamma^0_{j-1} \longleftarrow \cdots \longleftarrow \tgamma^0_2 \longleftarrow \tgamma^0_1
\longrightarrow \tgamma^1_1 \longrightarrow \cdots \longrightarrow \tgamma^1_s
\text{, respectively.} \nonumber
\end{align}
Then the kernel of $p(M(c_{\inj}^{\gamma}))$ is isomorphic to $M_1\oplus M_2\oplus M_3$,
where $M_1$, $M_2$ and $M_3$ correspond to strings
\begin{align}
\gamma^1_r \longleftarrow \cdots \longleftarrow \gamma^1_1,\
\tgamma^0_n \longleftarrow \cdots \longleftarrow \tgamma^0_{j+1}  \longleftarrow \gamma
\longrightarrow \gamma^0_{i+1} \longrightarrow \cdots \longrightarrow \gamma^0_m
\text{ and } \tgamma^1_1 \longrightarrow \cdots \longrightarrow \tgamma^1_s,\ \text{ respectively. } \nonumber
\end{align}
Since $c_{\proj}^{\gamma^0_1}$ (respectively, $c_{\proj}^{\tgamma^0_1}$) is projective, then both conditions
(\textbf{LMC}) and (\textbf{RMC}) are satisfied.
The condition (\textbf{RMC}) (respectively, (\textbf{LMC})) shows that there is no arc $\gamma^0_{m+1}$ (respectively, $\tgamma^0_{n+1}$)
such that $\gamma^0_1, \cdots, \gamma^0_m, \gamma^0_{m+1}$ (respectively, $\tgamma^0_1, \cdots, \tgamma^0_n, \tgamma^0_{n+1}$)
have the same endpoint $p$ (respectively, $q$) and surround $p$ (respectively, $q$) in the counterclockwise order.
Then the string of $M_2$ corresponds to the permissible curve $c_{\proj}^{\gamma}$ which is projective,
and  the strings of $M_1$ and $M_3$ correspond to $c'$ and $\tilde{c}'$, respectively.
\end{proof}

\begin{remark}\rm \label{rmk. pdim. of inj. mod.}
If $\gamma$, $\gamma^0_1$ and $\tgamma^0_1$ satisfy the {\rm P}-condition (we have $i<m$ and $j<n$ in this case),
then in $\Surf_A\backslash \partial\Surf_A$, $c_{\inj}^\gamma$ and $c_{\proj}^{\gamma_1^0}$ have an intersection $x$,
and $c_{\inj}^\gamma$ and $c_{\proj}^{\tgamma_1^0}$ have an intersection $y$.
The permissible curves $c'$, $\tilde{c}'$ and $c_{\proj}^{\gamma}$ can be understood as obtained by
``negative rotating respect to $c_{\proj}^{\gamma^1_1} \oplus c_{\proj}^{\tgamma^1_1}$" of $c_{\inj}^{\gamma}$.
To be precise, $c_{\inj}^{\gamma}$ is divided into three parts $c_{\mathrm{I}}$, $c_{\mathrm{II}}$ and $c_{\mathrm{III}}$,
$x$ is the common endpoint of $c_{\mathrm{I}}$ and $c_{\mathrm{II}}$, and $y$ is the common endpoint of $c_{\mathrm{II}}$ and $c_{\mathrm{III}}$.
Then $c'$ (respectively, $\tilde{c}'$) can be obtained as follows:
\begin{itemize}
\item[{Step 1.}] following the positive direction of boundary, move the endpoint of $c_{\mathrm{I}}$
(respectively, $c_{\mathrm{III}}$) lying in $\M_A$ to the previous vertex of the elementary polygon $\Delta_{\gamma^0_1}$
(respectively, $\Delta_{\tgamma^0_1}$), where $\gamma^0_1$ and $\gamma^1_1$ (respectively, $\tgamma^0_1$ and $\tgamma^1_1$)
are sides of $\Delta_{\gamma^0_1}$ (respectively, $\Delta_{\tgamma^0_1}$);
\item[{Step 2.}] move $x$ (respectively, $y$) to the endpoint of $c_{\proj}^{\gamma^1_1}$ (respectively, $c_{\proj}^{\tgamma^1_1}$)
such that $c_{\mathrm{I}}$ (respectively, $c_{\mathrm{III}}$) makes a negative rotation.
\end{itemize}
\end{remark}

\begin{proposition} \label{prop. pdim. of inj. mod.}
Keeping the notations in Lemma \ref{lemm. proj. covers of ind. inj. mods.}, we have
$$\pdim M(c_{\inj}^\gamma) = \max\big\{1, \mathfrak{C}(\Delta_{\gamma^0_1})-1, \mathfrak{C}(\Delta_{\tgamma^0_1})-1 \big\},$$
where $\Delta_{\gamma^0_1}$ $($respectively, $\Delta_{\tgamma^0_1}$$)$ is the elementary polygon with sides
$\gamma^0_1$ and $\gamma^1_1$ $($respectively, $\tgamma^0_1$ and $\tgamma^1_1$$)$.
To be more precise,
\begin{itemize}
\item[\rm(1)] if $\gamma^0_1$ and $\gamma^1_1$ $($respectively, $\tgamma^0_1$ and $\tgamma^1_1$$)$ exist, then
$\pdim M(c_{\inj}^\gamma) = \max\{\mathfrak{C}(\Delta_{\gamma^0_1}), \mathfrak{C}(\Delta_{\tgamma^0_1})\}-1$;
\item[\rm(2)] if at least one of $\gamma^0_1$ and $\gamma^1_1$ does not exists
and at least one of $\tgamma^0_1$ and $\tgamma^1_1$ does not exists, then $\pdim M(c_{\inj}^\gamma)=1$.
\end{itemize}
\end{proposition}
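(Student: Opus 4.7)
The plan is as follows. Apply Lemma \ref{lemm. proj. covers of ind. inj. mods.} to obtain the short exact sequence
$$0 \longrightarrow M(c') \oplus M(c_{\proj}^\gamma) \oplus M(\tilde{c}') \longrightarrow M(c_{\proj}^{\gamma^0_1}) \oplus M(c_{\proj}^{\tgamma^0_1}) \longrightarrow M(c_{\inj}^\gamma) \longrightarrow 0,$$
adopting the convention that summands whose defining arcs fail to exist are zero (so, for instance, when $\gamma^0_1$ does not exist, both $M(c_{\proj}^{\gamma^0_1})$ and $M(c')$ vanish). Since $M(c_{\proj}^\gamma)$ is always projective, it follows that
$$\pdim M(c_{\inj}^\gamma) \;=\; 1 + \max\{\pdim M(c'),\, \pdim M(\tilde{c}')\},$$
where again a missing summand contributes $0$ to the maximum, immediately giving the lower bound $\pdim M(c_{\inj}^\gamma) \geq 1$.

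The central step is the equality $\pdim M(c') = \mathfrak{C}(\Delta_{\gamma^0_1}) - 2$ whenever $\gamma^0_1$ and $\gamma^1_1$ both exist (and symmetrically for $\tilde{c}'$). The strategy is to iterate the syzygy computation of Lemma \ref{lemm. ker. of proj. cover}. Since $c'$ consecutively crosses the arcs $\gamma^1_1, \ldots, \gamma^1_r$ around a common endpoint $x$, that lemma applies with $c^1 := c'$: the projective cover of $M(c^1)$ is the indecomposable module $M(c_{\proj}^{\gamma^1_1})$ whose kernel is $M(c^2)$, where $c^2$ crosses arcs $\gamma^2_1, \ldots, \gamma^2_s$ around a new common endpoint, and crucially $\gamma^0_1, \gamma^1_1, \gamma^2_1$ are three consecutive sides of $\Delta_{\gamma^0_1}$. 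Re-applying the lemma with $c^1$ replaced by $c^2$, and proceeding by induction, produces a chain of syzygies $M(c^1), M(c^2), M(c^3), \ldots$ in which the distinguished arcs $\gamma^t_1$ are exactly the successive sides of $\Delta_{\gamma^0_1}$ belonging to $\Gamma_A$. The iteration terminates when those sides are exhausted, yielding
$$0 \longrightarrow M(c_{\proj}^{\gamma^{N-1}_1}) \longrightarrow \cdots \longrightarrow M(c_{\proj}^{\gamma^2_1}) \longrightarrow M(c_{\proj}^{\gamma^1_1}) \longrightarrow M(c') \longrightarrow 0$$
with $N = \mathfrak{C}(\Delta_{\gamma^0_1})$, hence $\pdim M(c') = N - 2$. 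If $\Delta_{\gamma^0_1}$ is $\infty$-elementary, the chain of sides cycles as in Remark \ref{rk. polygon-inf}, the resolution is infinite, and $\pdim M(c') = \infty = \mathfrak{C}(\Delta_{\gamma^0_1}) - 2$.

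Substituting gives $\pdim M(c_{\inj}^\gamma) = \max\{1, \mathfrak{C}(\Delta_{\gamma^0_1}) - 1, \mathfrak{C}(\Delta_{\tgamma^0_1}) - 1\}$. In case (1), both pairs of arcs exist, so $\mathfrak{C}(\Delta_{\gamma^0_1}), \mathfrak{C}(\Delta_{\tgamma^0_1}) \geq 2$, the constant $1$ is absorbed, and the formula reduces to $\max\{\mathfrak{C}(\Delta_{\gamma^0_1}), \mathfrak{C}(\Delta_{\tgamma^0_1})\} - 1$ as claimed. In case (2), neither side contributes a non-projective summand to the first syzygy, and $\pdim M(c_{\inj}^\gamma) = 1$. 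The principal obstacle is justifying the inductive step: verifying that after each syzygy the hypotheses of Lemma \ref{lemm. ker. of proj. cover} continue to hold, and that the distinguished arcs $\gamma^t_1$ are precisely the consecutive sides of the single polygon $\Delta_{\gamma^0_1}$. This should follow by combining the (\textbf{MC}) conditions of Definition \ref{def. proj. per. curve} with the combinatorics of (\textbf{PD}) from Proposition \ref{prop. pdM and idM}, which together force each successive rotation to land on the next side of $\Delta_{\gamma^0_1}$ and to halt at the unique boundary arc of that polygon.
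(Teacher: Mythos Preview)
Your approach is essentially the paper's: apply Lemma~\ref{lemm. proj. covers of ind. inj. mods.} to obtain the first syzygy $M(c')\oplus M(c_{\proj}^{\gamma})\oplus M(\tilde c')$, discard the projective summand, and then resolve $M(c')$ (and $M(\tilde c')$) by iterating the rotation mechanism of Lemma~\ref{lemm. ker. of proj. cover} along the sides of $\Delta_{\gamma^0_1}$, exactly as in the argument leading to Proposition~\ref{prop. pdM and idM}. Your bookkeeping $\pdim M(c')=\mathfrak{C}(\Delta_{\gamma^0_1})-2$, hence $\pdim M(c_{\inj}^{\gamma})=\mathfrak{C}(\Delta_{\gamma^0_1})-1$, matches the paper.

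One point you handle differently: you allow $\Delta_{\gamma^0_1}$ to be an $\infty$-elementary polygon and argue that the resolution then cycles, giving $\pdim M(c')=\infty$. The paper instead \emph{proves that this case cannot occur}. The reason is the maximality condition (\textbf{LMC}$'$) built into the injective permissible curve $c_{\inj}^{\gamma}$: if the sides of $\Delta_{\gamma^0_1}$ cycled, some $\gamma^{\vartheta}_1$ would return to the endpoint $p=\gamma^0_1(0^+)$ and sit immediately before $\gamma^0_1$ in the counterclockwise order around $p$, contradicting (\textbf{LMC}$'$). So your treatment of the $\infty$ case is vacuously correct, but you miss the stronger fact that $\Delta_{\gamma^0_1},\Delta_{\tgamma^0_1}\in\finiteDelta$ always. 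This observation is what makes Proposition~\ref{prop. pdim. of inj. mod.} feed directly into Theorem~\ref{thm. Gdim A}, where the maximum is taken only over $\finiteDelta$; without it one would need a separate argument there.
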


\begin{proof} Suppose that $\gamma^0_1$ and $\gamma^1_1$ (respectively, $\tgamma^0_1$ and $\tgamma^1_1$) exist.
Similarly to Proposition \ref{prop. pdM and idM}, regard $\gamma^0_1$ and $\gamma^1_1$ as the functions
$\gamma^0_1, \gamma^1_1 : (0,1)\to \Surf_A\backslash\partial\Surf_A$, where $\gamma^0_1(0^+)=p$
and $\gamma^0_1(1^-) = \gamma^1_1(0^+)$.
If there are arcs $\gamma^2_1, \cdots, \gamma^{\theta}_1: (0,1)\to \Surf_A\backslash\partial\Surf_A$
such that $\gamma^0_1,\cdots,\gamma^{\theta}_1\in\mathfrak{C}(\Delta_{\gamma^0_1})$ satisfy (PD1)--(PD3),
then the minimal projective resolution of $M(c')$ is of the form:
\begin{align}
\cdots \to M(c_{\proj}^{\gamma^{\theta}_1}) \to \cdots\to M(c_{\proj}^{\gamma^2_1})
\to M(c_{\proj}^{\gamma^1_1}) \to M(c') \to 0. \nonumber
\end{align}
We claim that $\Delta_{\gamma_1^0}$ is not $\infty$-elementary.
Otherwise, there exists an arc $\gamma^{\vartheta}_1$ with $\vartheta \ge \theta$ such that
$\gamma^{\vartheta}_1(1^-) = \gamma^0_1(0^+)$, this is a contradiction because $c_{\inj}^{\gamma}$ is injective.
Since $\sharp\M_A < \infty$, there is a sequence of arcs $\gamma^{\theta}_1, \gamma^{\theta+1}_1, \cdots, \gamma^d_1$
($d\ge \theta$) such that $\gamma^0_1, \gamma^1_1, \cdots, \gamma^d_1$ satisfy (\textbf{PD}) by Proposition \ref{prop. pdM and idM}.
Then the minimal projective resolution of $M(c')$ is as follows:
\begin{align}\label{(6.1)}
0 \to M(c_{\proj}^{\gamma^{d}_1}) \to \cdots\to M(c_{\proj}^{\gamma^2_1})
\to M(c_{\proj}^{\gamma^1_1}) \to M(c') \to 0. %\eqno{(6.1)}
\end{align}
%We can consider the elementary $\Delta_{\tgamma_0^1}$ by the same method.}
Without loss of generality, suppose that
\begin{itemize}
  \item there are arcs $\gamma^0_1, \gamma^1_1, \cdots, \gamma^d_1$
    with $d = \mathfrak{C}(\Delta_{\gamma^0_1})$ satisfy (\textbf{PD});

  \item and dually, there are arcs $\tgamma^0_1, \tgamma^1_1, \cdots, \tgamma^{\tilde{d}}_1$
    with $\tilde{d} = \mathfrak{C}(\Delta_{\tgamma^0_1})$ satisfy (\textbf{PD})$'$.
\end{itemize}
\noindent
Similarly, we get the minimal projective resolution of $M(\tilde{c}')$ as follows:
\begin{align}\label{(6.2)}
0 \to M(c_{\proj}^{\tgamma^{\tilde{d}}_1}) \to \cdots \to M(c_{\proj}^{\tgamma^2_1})
\to M(c_{\proj}^{\tgamma^1_1}) \to M(\tilde{c}') \to 0. %\eqno{(6.2)}
\end{align}
Thus
\[\pdim M(c_{\inj}^\gamma) = \max\{ \pdim M(c'), \pdim M(\tilde{c}') \}-1 = \max\{d,\tilde{d}\}-1.\]
%where $\theta=\mathfrak{C}(\Delta_{\gamma^0_1})$ and $\tilde{\theta}=\mathfrak{C}(\Delta_{\tgamma^0_1})$

Now suppose that at least one of $\gamma^0_1$ and $\gamma^1_1$ does not exists
and at least one of $\tgamma^0_1$ and $\tgamma^1_1$ does not exists.
Then $M(c_{\proj}^{\gamma^1_1})=0=M(c_{\proj}^{\tgamma^1_1})$ in (\ref{(6.1)}) and (\ref{(6.2)}),
and hence $ M(c')=0= M(\tilde{c}')$. By Lemma \ref{lemm. proj. covers of ind. inj. mods.}(2),
we have that $\Ker\ p(M(c_{\inj}^{\gamma})) \cong M(c_{\proj}^{\gamma})$ is projective and
$\pdim M(c_{\inj}^\gamma)=1$.
\end{proof}

For any basic finite dimensional $k$-algebra $A$,
we know that $A\cong \bigoplus_{i\in I} e_iA$ and $D(A) \cong \bigoplus_{i\in I}D(Ae_i)$
where $D=\Hom_k(-,k)$ and $\{e_i\mid i\in I\}$ is a complete set of primitive orthogonal idempotents of $A$,
each $e_iA$ is an indecomposable projective right $A$-module, and each $D(Ae_i)$ is an indecomposable injective right $A$-module.
Thus Proposition \ref{prop. pdim. of inj. mod.} provides a method for calculating the projective dimension of $D(A)$.
Similarly, we can establish the dual of Proposition \ref{prop. pdim. of inj. mod.} and calculate the injective dimension of $A$.

\begin{theorem}\label{thm. Gdim A}
Let $A=kQ/I$ be a non-simple gentle algebra and $\Surf_A=(\Surf_A,\M_A,\Gamma_A)$ its marked ribbon surface.
If the quiver $(Q, I)$ of $A$ is not an oriented cycle with full relations, then
\begin{align}\label{(6.3)}
\idim_AA=\idim A_A= \max\limits_{\Delta_j\in\finiteDelta}\big\{1, {\mathfrak{C}(\Delta_j)}-1\big\}
=\max\limits_{\mathit{\Pi}\in\fforbid_A} l(\mathit{\Pi}).
\end{align}
\end{theorem}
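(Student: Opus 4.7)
The plan is to deduce Theorem \ref{thm. Gdim A} from Proposition \ref{prop. pdim. of inj. mod.} by computing $\pdim D(A)$ and $\pdim D({}_AA)$ as a maximum over indecomposable injective modules. Since $A$ is basic, we have $D(A_A)\cong\bigoplus_{i\in I}D(Ae_i)$ where the summands form a complete list of indecomposable injective right modules, so
\[
\idim A_A = \pdim D(A_A) = \max_{i\in I}\pdim D(Ae_i).
\]
By Lemma \ref{lemm. inj}, each indecomposable injective module is of the form $M(c_{\inj}^{\gamma})$ for a unique arc $\gamma \in \Gamma_A$, so
\[
\idim A_A = \max_{\gamma \in \Gamma_A}\pdim M(c_{\inj}^{\gamma}).
\]

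Next, I apply Proposition \ref{prop. pdim. of inj. mod.} termwise. For each $\gamma$, the projective dimension of $M(c_{\inj}^\gamma)$ is governed by the two elementary polygons $\Delta_{\gamma^0_1}$ and $\Delta_{\tgamma^0_1}$ sitting on either side of the first arcs $\gamma^0_1$ and $\tgamma^0_1$ consecutively crossed by $c_{\inj}^\gamma$; concretely
\[
\pdim M(c_{\inj}^{\gamma}) = \max\bigl\{1,\ \mathfrak{C}(\Delta_{\gamma^0_1})-1,\ \mathfrak{C}(\Delta_{\tgamma^0_1})-1\bigr\},
\]
with the convention that a missing polygon contributes nothing. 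The key combinatorial observation is that every non-$\infty$-elementary polygon $\Delta_j \in \finiteDelta$ arises this way: each side $\gamma'$ of $\Delta_j$ that belongs to $\Gamma_A$ is of the form $\gamma^0_1$ (or $\tgamma^0_1$) for the injective permissible curve $c_{\inj}^{\gamma}$ built from the arc $\gamma$ adjacent to $\gamma'$ inside $\Delta_j$ — so as $\gamma$ ranges over $\Gamma_A$, the polygons $\Delta_{\gamma^0_1}$, $\Delta_{\tgamma^0_1}$ exhaust $\finiteDelta$. Taking the maximum therefore yields
\[
\idim A_A = \max_{\Delta_j \in \finiteDelta}\bigl\{1,\ \mathfrak{C}(\Delta_j)-1\bigr\}.
\]
For the equality with $\max_{\mathit{\Pi}\in\fforbid_A} l(\mathit{\Pi})$, I invoke the bijection $\mathfrak{f}:\mathcal{F}_A \to \{\Delta_i\mid 1\le i\le d\}$ recalled before Theorem \ref{thm. gl.dim}: restricted to finite forbidden threads, $\mathfrak{f}$ lands in $\finiteDelta$ and satisfies $l(\mathfrak{f}^{-1}(\Delta_j)) = \mathfrak{C}(\Delta_j)-1$, exactly as in the proof of Theorem \ref{thm. gl.dim}.

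Finally, for $\idim {}_AA$ I pass to the opposite algebra $A^{\mathrm{op}}=kQ^{\mathrm{op}}/I^{\mathrm{op}}$, which is again gentle and whose marked ribbon surface is obtained by reversing the orientation of $\Surf_A$; this operation preserves the set of elementary polygons, preserves the $\infty$/non-$\infty$ classification, and preserves the consecutive arc numbers $\mathfrak{C}(\Delta_j)$. Applying the right-module computation to $A^{\mathrm{op}}$ gives $\idim {}_AA = \idim (A^{\mathrm{op}})_{A^{\mathrm{op}}} = \max_{\Delta_j \in \finiteDelta}\{1,\mathfrak{C}(\Delta_j)-1\}$ as well.

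The most delicate step will be the bookkeeping in the second paragraph: verifying that every non-$\infty$-elementary polygon genuinely appears as some $\Delta_{\gamma^0_1}$ (including correctly handling boundary and corner cases where one of $\gamma^0_1$, $\gamma^1_1$, $\tgamma^0_1$, $\tgamma^1_1$ may fail to exist) and checking that the ``max with $1$'' is the right floor — this matters precisely when every non-$\infty$-elementary polygon is a $2$-gon, so that the purely combinatorial maximum would be $0$ but there is still a nonzero injective envelope contributing $\pdim = 1$. Once this correspondence is set up cleanly, the rest is a direct assembly of Proposition \ref{prop. pdim. of inj. mod.} and the bijection $\mathfrak{f}$.
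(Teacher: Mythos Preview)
Your proposal is correct and follows essentially the same route as the paper: reduce to $\pdim D(A)$, apply Proposition~\ref{prop. pdim. of inj. mod.} to each indecomposable injective $M(c_{\inj}^{\gamma})$, and then use the bijection $\mathfrak{f}$ for the forbidden-thread reformulation. The only noticeable difference is in handling $\idim_AA$: the paper simply asserts $\idim_AA=\idim A_A=\pdim D(A)$ (leaning on the known fact that gentle algebras are Gorenstein, hence the two self-injective dimensions agree), whereas you re-derive it by passing to $A^{\mathrm{op}}$ and observing that reversing the orientation of $\Surf_A$ preserves the elementary polygons and their $\mathfrak{C}$-values---a perfectly valid and arguably more self-contained alternative.
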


\begin{proof}
Let $A=kQ/I$ be a gentle algebra. Then $\idim_AA=\idim A_A= \pdim D(A)$.
Similarly to the proofs of Proposition \ref{prop. pdM and idM} and Theorem \ref{thm. gl.dim}, we get
$$\pdim D(A) = \max\big\{1, \max\limits_{\Delta_i\in\finiteDelta}{\mathfrak{C}(\Delta_i)}-1 \big\}=
\max\limits_{\Delta_j\in\finiteDelta}\big\{1, {\mathfrak{C}(\Delta_j)}-1\big\}=
\max\limits_{\mathit{\Pi}\in\fforbid_A} l(\mathit{\Pi})$$
by Proposition \ref{prop. pdim. of inj. mod.}.
\end{proof}

Note that the number of all elements in $\mathfrak{C}(\Delta_i)$ is equal to that of all sides of $\Delta_i$ minus $1$.
Thus, Theorem \ref{thm. Gdim A} shows that the self-injective dimension of $A$ is equal to the number of sides
of element polygon(s) with the largest number of sides minus $2$.
Moreover, by \cite[Theorem 3.4]{GR2005}, any gentle algebra is Gorenstein.
Thus, the finitistic dimension of a gentle algebra equals to its left and right self-injective dimensions (see \cite{H1991}).
Thus Theorem \ref{thm. Gdim A} provides a description of the finitistic dimension of a gentle algebra.

% notations
\newcommand{\ocf}{\mathrm{ocf}}

\newcommand{\GP}{\mathcal{GP}}
\newcommand{\nonprojGP}{\mathrm{nproj}\text{-}\mathcal{GP}}
\subsection{The number of indecomposable Gorenstein projective modules}
Recall that an algebra is said to be {\it CM-finite} if the number of indecomposable Gorenstein projective modules (up to isomorphism) is finite.
It is known that gentle algebras are CM-finite \cite{CL2019}.
%We use $\GP(A)$ to denote the subcategory of $\bmod A$ consisting of all Gorenstein projective modules,
We use $\ind(\nonprojGP(A))$ to denote the subcategory of $\bmod A$ consisting of all indecomposable non-projective Gorenstein projective modules.
%We use $\sharp\nonprojGP(A)$ to denote the number of indecomposable non-projective Gorenstein projective modules up to isomorphism.
In this subsection, we obtain a formula for calculating $\sharp\ind(\nonprojGP(A))$ by AG-invariants.
%(see Proposition \ref{prop. CM-corresponding}).
%and furthermore, re-prove the conclusion proved by Chen and Lu through this formula.

\begin{lemma} \label{lemm. CM1}
Let $A=kQ/I$ be a gentle algebra, and let $c_1=\alpha_1\alpha_2\cdots\alpha_m$ and $c_2=\beta_1\beta_2\cdots\beta_n$
be two oriented cycles with full relations of $A$.
If $c_1$ and $c_2$ have a common vertex $v_0$, that is, there are arrows $\alpha_h$, $\alpha_{h+1}$,
$\beta_{\hbar}$ and $\beta_{\hbar+1}$ such that $v_0 = t(\alpha_h) = t(\beta_\hbar) = s(\alpha_{h+1}) = s(\beta_{\hbar+1})$,
then $\alpha_{h} A$ and $\beta_{\hbar} A$ are indecomposable Gorenstein projective modules satisfying
$\mathrm{top}(\alpha_{h} A) = \mathrm{top} (\beta_{\hbar} A) \cong M(c_{\simp}^{\gamma_0}) = S(v_0)$.
\end{lemma}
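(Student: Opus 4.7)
The proof will be a short routine verification combining Theorem~\ref{thm. Gproj} with the standard fact that a nonzero quotient of an indecomposable projective module has the same simple top. My plan is as follows.

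First, I will apply Theorem~\ref{thm. Gproj} directly. Since $c_1 = \alpha_1\cdots\alpha_m$ is an oriented cycle with full relations and $\alpha_h$ is one of its arrows, the module $\alpha_h A$ is a (non-projective) Gorenstein projective direct summand of $\mathrm{rad}\, P(s(\alpha_h))$; in particular it is indecomposable. The same argument applied to $c_2 = \beta_1\cdots\beta_n$ and its arrow $\beta_{\hbar}$ yields that $\beta_{\hbar} A$ is an indecomposable Gorenstein projective module.

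Next I will compute the top. Because $t(\alpha_h) = v_0$, we have $\alpha_h\cdot e_{v_0} = \alpha_h$, so the right-multiplication map $e_{v_0} A \twoheadrightarrow \alpha_h A$, $e_{v_0} a \mapsto \alpha_h a$, is surjective; it is nonzero since $\alpha_h\neq 0$ in $A$. Therefore $\alpha_h A$ is a nonzero quotient of the indecomposable projective $P(v_0) = e_{v_0} A$, and hence $\mathrm{top}(\alpha_h A) \cong \mathrm{top}(P(v_0)) = S(v_0)$. The same reasoning with $\beta_{\hbar}$ in place of $\alpha_h$ (using $t(\beta_{\hbar}) = v_0$) gives $\mathrm{top}(\beta_{\hbar} A) \cong S(v_0)$, so the two tops agree.

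Finally I will translate this back to the geometric model. Letting $\gamma_0 = \mathfrak{v}(v_0)$ be the arc assigned to the vertex $v_0$ by the bijection in Definition~\ref{def. alg. of m. r. s.}, the simple $A$-module $S(v_0)$ corresponds under the bijection $M$ of Remark~\ref{rmk. in BS} to the simple permissible curve $c_{\simp}^{\gamma_0}$ crossing $\gamma_0$. Hence $\mathrm{top}(\alpha_h A) = \mathrm{top}(\beta_{\hbar} A) \cong M(c_{\simp}^{\gamma_0}) = S(v_0)$, as required.

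There is no real obstacle here: Gorenstein projectivity and indecomposability are immediate from Theorem~\ref{thm. Gproj}, and the computation of the top reduces to the trivial observation that $\alpha_h A$ is a nonzero cyclic quotient of $P(v_0)$. The only bookkeeping is to verify that the arrow $\alpha_h$ (respectively $\beta_{\hbar}$) is precisely the one whose target is $v_0$, which is built into the hypothesis $v_0 = t(\alpha_h) = t(\beta_{\hbar})$.
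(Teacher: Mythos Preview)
Your proof is correct. Both you and the paper invoke Theorem~\ref{thm. Gproj} to obtain that $\alpha_h A$ and $\beta_{\hbar}A$ are indecomposable (non-projective) Gorenstein projective modules, so there is no difference there.

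The divergence is in the computation of the top. The paper argues entirely inside the geometric model: it identifies $\gamma_0=\mathfrak{v}(v_0)$ as the common side of two $\infty$-elementary polygons, constructs the permissible curves $c$ and $\tilde{c}$ satisfying condition (GP2) of Proposition~\ref{prop. GProj} that correspond to $\alpha_h A$ and $\beta_{\hbar}A$, writes down the associated strings explicitly, and reads off that both have source vertex $\mathfrak{v}^{-1}(\gamma_0)=v_0$, whence both tops are $M(c_{\simp}^{\gamma_0})$. Your route is purely algebraic and shorter: since $t(\alpha_h)=v_0$, left multiplication by $\alpha_h$ gives a surjection $e_{v_0}A\twoheadrightarrow \alpha_h A$, so $\alpha_h A$ is a nonzero quotient of the indecomposable projective $P(v_0)$ and hence has simple top $S(v_0)$. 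This bypasses the surface entirely and is more elementary; the paper's approach, on the other hand, is in keeping with the geometric viewpoint of the article and makes the permissible curves for $\alpha_h A$ and $\beta_{\hbar}A$ explicit, which feeds into the surrounding discussion of (GP2) and Lemma~\ref{lemm. Gporj}.
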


\begin{proof}
We know that $\alpha_{h}A$ and $\beta_{\hbar}A$ are indecomposable non-projective Gorenstein projective by Theorem \ref{thm. Gproj},
Now we show that $\mathrm{top}(\alpha_{h} A) = \mathrm{top} (\beta_{\hbar} A) \cong M(c_{\simp}^{\gamma_0}) = S(v_0)$ by using
the marked ribbon surface $\Surf_A = (\Surf_A,\M_A,\Gamma_A)$ of $A$.
Let $\gamma_0$ be the arc in $\Gamma_A$ corresponding to $v_0$ by the bijection $\mathfrak{v}: Q_0 \to \Gamma_A$.
Then $\gamma_0$ is a common side of two $\infty$-elementary polygons $\Delta_1$ and $\Delta_2$, that is,
there are arcs $\gamma_{-1}, \gamma_1, \gamma_2, \cdots, \gamma_a$ and $\tgamma_{-1}, \tgamma_1, \tgamma_2, \cdots, \tgamma_b$ such that
\begin{itemize}
\item $\gamma_{-1}, \gamma_0 \gamma_1, \cdots, \gamma_a$ (respectively, $\tgamma_{-1}, \tgamma_0=\gamma_0, \tgamma_1, \cdots, \tgamma_b$)
have a common endpoint $p$ (respectively, $q$) and $\gamma_{i+1}$ (respectively, $\tgamma_{j+1}$) appears after $\gamma_{i}$
(respectively, $\tgamma_{j}$) for any $-1\le i\le a-1$ (respectively, $-1\le j\le b-1$);
\item $\gamma_{-1}, \gamma_0\in\mathfrak{S}(\Delta_2)$, $\gamma_0, \gamma_1\in\mathfrak{S}(\Delta_1)$ and
$\gamma_i \notin\mathfrak{S}(\Delta_1)$ for any $3\le i\le a$;
\item $\tgamma_{-1}, \tgamma_0\in\mathfrak{S}(\Delta_1)$, $\tgamma_0, \tgamma_1\in\mathfrak{S}(\Delta_2)$
and $\gamma_j \notin\mathfrak{S}(\Delta_2)$ for any $3\le j\le b$;
\item $\mathfrak{v}^{-1}(\gamma_1) = t(\alpha_{h+1})$ and $\mathfrak{v}^{-1}(\tgamma_1) = t(\beta_{\hbar+1})$.
\end{itemize}
For the permissible curve $c$ consecutively crossing $\gamma_0, \gamma_1, \cdots, \gamma_a$ and $\tilde{c}$ consecutively
crossing $\tgamma_0, \tgamma_1, \cdots, \tgamma_b$ such that

\begin{align}
   & p = p(\gamma_0, \gamma_1, c) = p(\gamma_1, \gamma_2, c) = \cdots = p(\gamma_{a-1}, \gamma_a, c) \nonumber \text{ and }\\
  & q= p(\tgamma_0, \tgamma_1, \tilde{c}) = p(\tgamma_1, \tgamma_2, \tilde{c}) = \cdots = p(\tgamma_{b-1}, \tgamma_b, \tilde{c}), \nonumber
\end{align}
we have $M(c)\cong \alpha_{h} A$, $M(\tilde{c})\cong \beta_{\hbar}A$, and $c$ and $\tilde{c}$ correspond to the strings
\begin{align}
& \mathfrak{v}^{-1}(\gamma_a)  \longleftarrow \cdots
\mathop{\longleftarrow}\limits^{\alpha_{h+2}} \mathfrak{v}^{-1}(\gamma_1)
\mathop{\longleftarrow}\limits^{\alpha_{h+1}} \mathfrak{v}^{-1}(\gamma_0) \text{ and }
\nonumber \\
&
\mathfrak{v}^{-1}(\tgamma_0) \mathop{\longrightarrow}\limits^{\beta_{\hbar+1}}
\mathfrak{v}^{-1}(\tgamma_1) \mathop{\longrightarrow}\limits^{\beta_{\hbar+2}} \cdots \longrightarrow  \mathfrak{v}^{-1}(\tgamma_b),\ \text{respectively,}
\nonumber
\end{align}
where $\mathfrak{v}^{-1}(\gamma_0) = \mathfrak{v}^{-1}(\tgamma_0) = v_0$.
So $\mathrm{top}(\alpha_{h} A) \cong \mathrm{top} M(c) \cong M(c_{\simp}^{\gamma_0})$
and $\mathrm{top}(\beta_{\hbar} A) \cong \mathrm{top} M(\tilde{c}) \cong M(c_{\simp}^{\tgamma_0})$.
Notice that $\gamma_0=\tgamma_0$, thus $M(c_{\simp}^{\gamma_0}) = M(c_{\simp}^{\tgamma_0})$ is
isomorphic to the simple module $S(\mathfrak{v}^{-1}(\gamma_0)) = S(v_0)$.
\end{proof}

\begin{lemma}\label{lemm. CM2: CM-corresponding}
Let $A=kQ/I$ be a gentle algebra, and let $Q_0^{\mathrm{ocf}}$ be the set of all vertices on the oriented cycle with full relations.
Then the map $f: \ind(\nonprojGP(A))\to Q_0^{\mathrm{ocf}}$ via $\alpha A \mapsto t(\alpha)$ is surjective;
furthermore, if $v\in Q_0^{\mathrm{ocf}}$ is a common vertex of two oriented cycles with full relations,
then $\sharp f^{-1}(v) = \sharp \{\alpha A \in \ind(\nonprojGP(A))| t(\alpha)=v \} = 2$;
and if otherwise, then $\sharp f^{-1}(v) = 1$.
\end{lemma}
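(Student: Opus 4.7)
The plan is to combine Theorem~\ref{thm. Gproj} with the local structure of $A$ at $v$: first verify well-definedness and surjectivity of $f$, then count the fiber by analyzing the arrows of $Q_1$ ending at $v$, and finally use the geometric model to distinguish the two Gorenstein projective modules when $v$ lies on two cycles.

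For well-definedness and surjectivity I will invoke Theorem~\ref{thm. Gproj}: every object of $\ind(\nonprojGP(A))$ is of the form $\alpha A$ with $\alpha$ lying on an oriented cycle with full relations. Since $\alpha = \alpha e_{t(\alpha)}$ generates $\alpha A$, one has $\mathrm{top}(\alpha A)\cong S(t(\alpha))$, so $t(\alpha)$ is an isomorphism-invariant of $\alpha A$ and automatically lies in $Q_0^{\ocf}$; hence $f$ is well defined. For surjectivity, given $v\in Q_0^{\ocf}$ I pick any oriented cycle with full relations through $v$, let $\alpha$ be its arrow ending at $v$, and observe that $\alpha A \in \ind(\nonprojGP(A))$ by Theorem~\ref{thm. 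Gproj} with $f(\alpha A)=v$; non-projectivity follows from a dimension count, since the cycle relation $\alpha\alpha'\in I$ for the next cycle arrow $\alpha'$ forces $\dim_k \alpha A < \dim_k e_{t(\alpha)}A$.

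Next I count the fiber. By the special biserial axiom at most two arrows of $Q_1$ end at $v$. For an arrow $\alpha$ ending at $v$, the gentle axioms produce a canonical forward path $\alpha_0=\alpha,\alpha_1,\alpha_2,\ldots$, where $\alpha_{i+1}$ is the unique arrow with $s(\alpha_{i+1})=t(\alpha_i)$ and $\alpha_i\alpha_{i+1}\in I$, together with a dually constructed backward path; these combine to form an oriented cycle with full relations iff $\alpha$ itself lies on such a cycle, and the resulting cycle is uniquely determined by $\alpha$. Thus arrows ending at $v$ that lie on some oriented cycle with full relations biject with the oriented cycles with full relations through $v$. In particular, if $v$ is on a unique such cycle then $\sharp f^{-1}(v)=1$, while if $v$ is a common vertex of two such cycles one obtains two distinct arrows $\alpha,\beta$ ending at $v$, one from each cycle.

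The main delicate point is to verify $\alpha A \not\cong \beta A$ in this two-cycle case. The gentle axioms at $v$ arrange the pairings: writing $\gamma,\gamma'$ for the two arrows starting at $v$, one has, after relabeling, $\alpha\gamma,\beta\gamma'\in I$ and $\alpha\gamma',\beta\gamma\notin I$. Consequently the top-to-radical transition in $\alpha A$ is along $\gamma'$ while in $\beta A$ it is along $\gamma$, so the underlying strings differ in their very first arrow and the two string modules cannot be isomorphic. Equivalently, the picture drawn in the proof of Lemma~\ref{lemm. CM1} places the permissible curves of $\alpha A$ and $\beta A$ in the two distinct $\infty$-elementary polygons meeting along the common arc $\gamma_0=\tgamma_0$, and these curves consecutively cross the disjoint arc sequences $\gamma_0,\gamma_1,\ldots,\gamma_a$ and $\tgamma_0,\tgamma_1,\ldots,\tgamma_b$, so they are inequivalent by Definition~\ref{def. p.c. eq. cls.}; the bijection of Remark~\ref{rmk. in BS} then yields $M(c)\not\cong M(\tilde c)$, which finishes the argument.
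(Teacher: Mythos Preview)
Your proof is correct and follows essentially the same strategy as the paper: surjectivity via Theorem~\ref{thm. Gproj}, the upper bound on $\sharp f^{-1}(v)$ from the gentle axioms at $v$, and the two-cycle case handled through Lemma~\ref{lemm. CM1}. You are more explicit than the paper on two points that the paper leaves implicit: the well-definedness of $f$ on isomorphism classes (via $\mathrm{top}(\alpha A)\cong S(t(\alpha))$) and the non-isomorphism $\alpha A\not\cong\beta A$ in the two-cycle case, which you argue directly from the string description (the first arrow of the string is $\gamma'$ versus $\gamma$) rather than relying solely on the geometric picture in the proof of Lemma~\ref{lemm. CM1}. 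Both additions are sound and make the argument more self-contained.
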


\begin{proof}
For any $v \in Q_0^{\mathrm{ocf}}$, there exists at least one oriented cycle with full relations $\alpha_0\cdots\alpha_{l-1}$
such that $v = t(\alpha_{\overline{i}})=s(\alpha_{\overline{i+1}})$ for some $0\le i\le l$ where $\overline{i}$ is equal to $i$ modulo $l$.
Then by Theorem \ref{thm. Gproj}, we have that $\alpha_{\overline{i}} A$ is an indecomposable non-projective Gorenstein projective module and $f$ is surjective.

If $v$ is a common vertex of two oriented cycles with full relations $\alpha_0\alpha_1\cdots\alpha_{l-1}$ and $\beta_0\beta_1\cdots\beta_{\ell-1}$,
then there are arrows $\alpha_{\overline{i}}$, $\alpha_{\overline{i+1}}$, $\beta_{\underline{j}}$ and $\beta_{\underline{j+1}}$
such that $v = t(\alpha_{\overline{i}}) = s(\alpha_{\overline{i+1}}) = t(\beta_{\underline{j}}) = s(\beta_{\underline{j+1}})$
where $\overline{i}$ is equal to $i$ modulo $l$ and $\underline{j}$ is equal to $j$ modulo $\ell$.
Thus we have $\{\alpha_{\overline{i}} A, \beta_{\underline{j}} A\}\subseteq f^{-1}(v)$ by Lemma \ref{lemm. CM1};
and furthermore $\sharp f^{-1}(v)\ge 2$.
Notice that $A$ is gentle, thus there is no other arrow whose source or sink is $v$. Therefore $\sharp f^{-1}(v)\le 2$
and $\sharp f^{-1}(v)= 2$.

It is obvious that if there exists a unique oriented cycle $\mathsf{s} = \alpha_0\alpha_1\cdots\alpha_l$
with full relations of $A$ such that $v\in Q_0^{\ocf}$ is a vertex of $\mathsf{s}$, then $\sharp f^{-1}(v)=1$.
Indeed, suppose $t(\alpha_i)=v$ with $0\le i\le l$. Then $f^{-1}(v) = \{\alpha_i A\} \subseteq \ind(\nonprojGP(A))$.
\end{proof}

On the marked ribbon surface $\Surf_A = (\Surf_A, \M_A, \Gamma_A)$ of a gentle algebra $A=kQ/I$,
for each $\infty$-elementary polygon, its each side provides an indecomposable non-projective
Gorenstein projective module by the permissible curve satisfying Proposition \ref{prop. GProj} (GP2),
and the arc which is a common side of two $\infty$-elementary polygons provides two indecomposable
non-projective Gorenstein projective modules. From this observation, we can also prove Lemma
\ref{lemm. CM2: CM-corresponding}. Moreover, we have the following proposition.

\begin{proposition} \label{prop. CM-corresponding}
Let $A=kQ/I$ be a gentle algebra with the AG-invariant $\phi_A$, and let $L\subseteq\mathbb{N}^+$
be a finite set such that $\phi_A(0, \ell)>0$ if $\ell \in L$, otherwise, $\phi_A(0, \ell)=0$.
Then we have
\begin{enumerate}
\item[{\rm (1)}]
\[\sharp \ind(\nonprojGP (A)) = \sum\limits_{\ell\in L} \ell\cdot\phi_A(0, \ell), \]
that is, $\sharp \ind(\nonprojGP (A))$ is the number of arrows on oriented cycles with full relations.
\item[{\rm (2)}] Furthermore, if two gentle algebras $A$ and $A'$ are AG-equivalent, then
\[\sharp \ind(\nonprojGP(A)) = \sharp\ind(\nonprojGP(A')).\]
\end{enumerate}
\end{proposition}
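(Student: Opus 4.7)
The plan is to establish part (1) by exhibiting an explicit bijection between $\ind(\nonprojGP(A))$ and the set $Q_1^{\ocf}$ of arrows lying on some oriented cycle with full relations, and then interpreting $\sharp Q_1^{\ocf}$ via the AG-invariant. Part (2) will be an immediate consequence.

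First I would invoke Theorem \ref{thm. Gproj}, which tells us that every indecomposable non-projective Gorenstein projective module is isomorphic to some $\alpha A$ with $\alpha \in Q_1^{\ocf}$, so the map $\Phi: Q_1^{\ocf} \to \ind(\nonprojGP(A))$, $\alpha \mapsto \alpha A$, is surjective. To prove injectivity, I would use Proposition \ref{prop. GProj}(GP2): if $\alpha$ lies on an oriented cycle of full relations associated to the $\infty$-elementary polygon $\Delta_j$ (via the correspondence between oriented cycles with full relations and $\infty$-elementary polygons established in Remark \ref{rk. polygon-inf}), then $\mathfrak{v}(t(\alpha))$ is the first arc $\gamma_1$ crossed by the permissible curve $c$ with $M(c)\cong \alpha A$, and the end segment $c_1$ of $c$ lies in the interior of $\Delta_j$. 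Thus both $t(\alpha)$ and the polygon $\Delta_j$ (hence the cycle containing $\alpha$, hence $\alpha$ itself, being the unique arrow of that cycle whose target is $t(\alpha)$) are recovered from the isomorphism class of $M(c)$. This yields the bijectivity of $\Phi$, so $\sharp \ind(\nonprojGP(A)) = \sharp Q_1^{\ocf}$.

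Next I would invoke the combinatorial interpretation of $\phi_A(0,\ell)$ recalled in Section \ref{Section AG-invariants}: by Step 6 of the algorithm computing the AG-invariant, $\phi_A(0,\ell)$ equals the number of oriented cycles with full relations of length $\ell$, equivalently (by Remark \ref{rk. polygon-inf}) the number of $\infty$-elementary polygons of $\Surf_A$ having exactly $\ell$ sides in $\Gamma_A$. Summing the lengths of all oriented cycles with full relations gives
\[
\sharp Q_1^{\ocf} = \sum_{\ell \in L} \ell \cdot \phi_A(0,\ell),
\]
which combined with $\sharp \ind(\nonprojGP(A)) = \sharp Q_1^{\ocf}$ yields part (1). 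For part (2), if $A$ and $A'$ are AG-equivalent then $\phi_A = \phi_{A'}$, so in particular $\phi_A(0,\ell) = \phi_{A'}(0,\ell)$ for every $\ell \geq 1$, and the formula from (1) applied to both algebras gives $\sharp\ind(\nonprojGP(A)) = \sharp\ind(\nonprojGP(A'))$.

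The only delicate step is the injectivity of $\Phi$; the rest is either a direct appeal to Theorem \ref{thm. Gproj} and the AG-invariant definition, or a consequence of AG-equivalence. For injectivity I expect the cleanest verification to proceed geometrically through Proposition \ref{prop. GProj}, since algebraically one must distinguish the two modules $\alpha A$ and $\beta A$ at a vertex $v_0$ that is shared by two distinct oriented cycles with full relations (the situation of Lemma \ref{lemm. CM1}): here $\alpha A$ and $\beta A$ have isomorphic tops $S(v_0)$ but their associated strings continue in different directions along the two distinct cycles, and this is most transparently seen from the fact that their end segments lie in different $\infty$-elementary polygons.
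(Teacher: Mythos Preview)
Your proposal is correct and follows essentially the same route as the paper: both arguments hinge on the bijection $Q_1^{\ocf}\to\ind(\nonprojGP(A))$, $\alpha\mapsto\alpha A$, together with the identification of $\phi_A(0,\ell)$ as the number of oriented cycles with full relations of length $\ell$, and part~(2) follows immediately from $\phi_A=\phi_{A'}$. The paper simply asserts bijectivity of $\alpha\mapsto\alpha A$ by citing Theorem~\ref{thm. Gproj}, whereas you supply an explicit injectivity argument via Proposition~\ref{prop. GProj}; your geometric justification (recovering both $t(\alpha)$ and the $\infty$-elementary polygon, hence $\alpha$, from the permissible curve of $\alpha A$) is a genuine addition that fills a small gap the paper leaves implicit.
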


\begin{proof}
Recall from the proof of Proposition \ref{prop. infty. gl.dim.} that for each $\ell\in L$, the number of oriented cycles
with full relations of length $\ell$ is $\phi_A(0, \ell)$.
Thus we can suppose that $A$ has $t=\sum\limits_{\ell\in L} \phi_A(0, \ell)$ oriented cycles with full relations
$\mathsf{s}^i = \alpha_1^i\alpha_2^i \cdots \alpha_{\ell_i}^i$ with $1\le i \le t$.
Then there exist are the following maps:
\[   Q_1^{\ocf} \mathop{\longrightarrow}\limits^{g}_{\sim} \ind(\nonprojGP (A))  \mathop{\longrightarrow}\limits^{f} Q_0^{\ocf}, \]
where $Q_1^{\ocf} = \bigcup\limits_{1\le i\le t} \{\alpha_j^{i} | 1\le j\le \ell_i\}$ and
$f$ is surjective (see Lemma \ref{lemm. CM2: CM-corresponding}),
and $g$ defined by $\alpha\mapsto \alpha A$ is bijective (see Theorem \ref{thm. Gproj}).
Thus $\sharp \ind(\nonprojGP(A'))$ is equal to the number $\sum_{\ell\in L} \ell\cdot\phi_A(0, \ell)$
of arrows on oriented cycles with full relations.

Let $A'$ be a gentle algebra such that $\phi_{A'}=\phi_A$. Then
\[\sharp \ind(\nonprojGP(A')) = \sum\limits_{\ell\in L} \ell\cdot\phi_{A'}(0, \ell)
= \sum\limits_{\ell\in L} \ell\cdot\phi_A(0, \ell) =  \sharp \ind(\nonprojGP (A)).\]
The second equality is obtained by $\phi_{A'}=\phi_A$.
\end{proof}

\begin{corollary} \label{coro. gentle is CM finite} {\rm \cite{CL2019,K2015}}
Gentle algebras are CM-finite.
\end{corollary}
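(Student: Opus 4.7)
The plan is to derive the corollary as a direct corollary of Proposition \ref{prop. CM-corresponding}(1) together with the fact that gentle algebras are finite dimensional. Since the number of indecomposable Gorenstein projective modules equals the number of indecomposable projective modules plus $\sharp\ind(\nonprojGP(A))$, it suffices to show both are finite.

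First I would observe that, as $A=kQ/I$ is finite dimensional, $Q_0$ is a finite set, so there are only finitely many isomorphism classes of indecomposable projective modules. Next I would argue that the set $L \subseteq \mathbb{N}^+$ appearing in Proposition \ref{prop. CM-corresponding} is necessarily finite: an oriented cycle with full relations of length $\ell$ uses $\ell$ distinct arrows of $Q$, and since $Q_1$ is finite, the set of lengths of such cycles is bounded, and for each such length $\ell$ the value $\phi_A(0,\ell)$ (the number of such cycles of length $\ell$) is also finite. Hence
\[
\sharp \ind(\nonprojGP(A)) = \sum_{\ell \in L} \ell \cdot \phi_A(0,\ell) < \infty
\]
by Proposition \ref{prop. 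CM-corresponding}(1).

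Putting these two bounds together yields that the total number of isomorphism classes of indecomposable Gorenstein projective $A$-modules is finite, which is exactly the definition of CM-finiteness. There is no real obstacle here: the structural work has already been carried out in Theorem \ref{thm. Gproj} (which classifies all indecomposable Gorenstein projective modules as either indecomposable projectives or modules of the form $\alpha A$ for $\alpha$ an arrow on an oriented cycle with full relations) and in Proposition \ref{prop. CM-corresponding}, so the corollary reduces to the elementary finiteness observations above.
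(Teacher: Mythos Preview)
Your proposal is correct and follows essentially the same approach as the paper: both arguments use the finiteness of the quiver to conclude that the set $L$ and the values $\phi_A(0,\ell)$ are finite, and then apply Proposition \ref{prop. CM-corresponding}(1) to bound $\sharp\ind(\nonprojGP(A))$. Your version adds the explicit (but obvious) observation that the indecomposable projectives are also finite in number, which the paper leaves implicit.
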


\begin{proof}
Let $A=kQ/I$ be a gentle algebra. Since $(Q,I)$ is a finite quiver, the number of oriented cycles with full relations is finite,
and so there exists a finite set $L \subseteq \mathbb{N}^+$ such that $0<\phi_A(0, \ell)<\infty$ for any $\ell\in L$,
and $\phi_A(0,\ell)=0$ for any $\ell\notin L$.
By Proposition \ref{prop. CM-corresponding}, we have $\sharp \ind(\nonprojGP (A))
= \sum\limits_{\ell\in L} \ell\cdot\phi_A(0, \ell) < \infty$.
\end{proof}

\section{\textbf{Examples}} \label{Section appl.}

\subsection{Some examples for calculating global dimension of gentle algebras}
In this subsection, we provide some examples to calculate the global dimension of gentle algebras
and the projective dimension of simple modules.

\begin{example} \rm \label{exp. 1}
For any $n\geq 1$, there exists a gentle one-cycle algebra $A$, that is,
its quiver has exactly one cycle, such that $\mathrm{gl.dim}A = n$.
Indeed, if $A$ is such a gentle one-cycle algebra whose marked ribbon surface $\Surf_A$
is shown in the \Pic \ref{Examples 1} subfigure 1,
then $A\cong kQ/I$, where $Q_0=\{\gamma^0,\gamma^1,\cdots,\gamma^{n-1}\}$,
$Q$ is shown in the \Pic \ref{Examples 1} subfigure 2 and
$I = \langle \alpha_1\alpha_2, \alpha_2\alpha_3, \cdots, \alpha_{n-1}\alpha_n \rangle$.
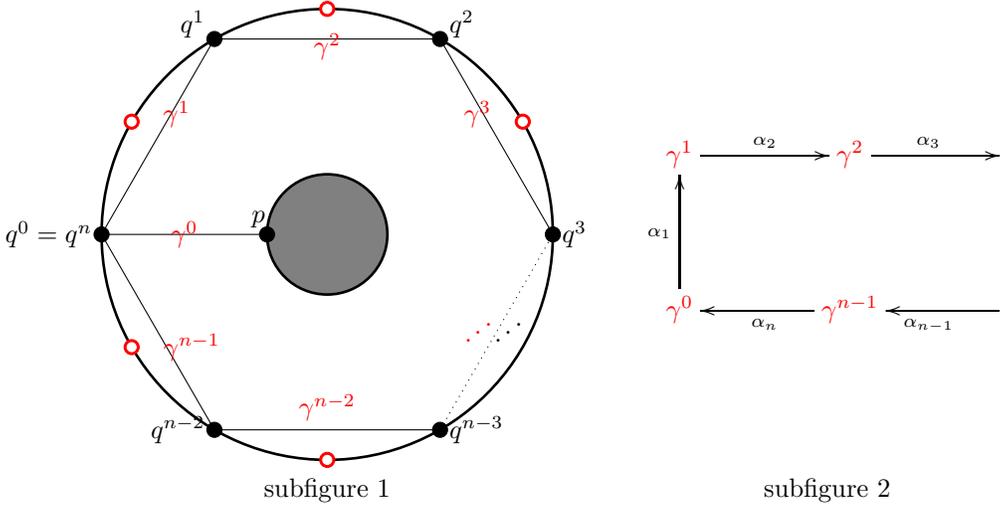
\begin{figure}[htbp]
\begin{center}
\begin{tikzpicture}[scale=2]
 \filldraw[gray] (0,0) circle (0.4);
 \draw[line width=1pt] (0,0) circle (1.5);
 \draw[line width=1pt] (0,0) circle (0.4);
 %%%%%%%%%% half-edges and points %%%%%%%%%%
 \filldraw ( 1.50, 0.00) circle (0.05); \draw ( 1.50, 0.00) node[right] {$q^3$};
 \filldraw ( 0.75, 1.30) circle (0.05); \draw ( 0.75, 1.40) node[right] {$q^2$};
 \filldraw (-0.75, 1.30) circle (0.05); \draw (-0.75, 1.40) node[left]  {$q^1$};
 \filldraw (-1.50, 0.00) circle (0.05); \draw (-1.50, 0.00) node[left]  {$q^0=q^n$};
 \filldraw (-0.75,-1.30) circle (0.05); \draw (-0.75,-1.30) node[left]  {$q^{n-2}$};
 \filldraw ( 0.75,-1.30) circle (0.05); \draw ( 0.75,-1.30) node[right] {$q^{n-3}$};
 \filldraw[color=red]   ( 0.00, 1.50) circle (0.05); \filldraw[color=white] ( 0.00, 1.50) circle (0.03);
 \filldraw[color=red]   (-1.30, 0.75) circle (0.05); \filldraw[color=white] (-1.30, 0.75) circle (0.03);
 \filldraw[color=red]   (-1.30,-0.75) circle (0.05); \filldraw[color=white] (-1.30,-0.75) circle (0.03);
 \filldraw[color=red]   ( 0.00,-1.50) circle (0.05); \filldraw[color=white] ( 0.00,-1.50) circle (0.03);
 \filldraw[color=red]   ( 1.30, 0.75) circle (0.05); \filldraw[color=white] ( 1.30, 0.75) circle (0.03);
%\filldraw (-1.06, 1.06) circle (0.05);
%\filldraw (-1.45, 0.39) circle (0.05);
 \filldraw (-0.40, 0.00) circle (0.05); \draw (-0.35, 0.10) node[left] {$p$};
 \draw (-1.50, 0.00) to (-0.40, 0.00);
 \draw ( 1.50, 0.00) to ( 0.75, 1.30);
 \draw ( 0.75, 1.30) to (-0.75, 1.30);
 \draw (-0.75, 1.30) to (-1.50, 0.00); % \draw (-0.75, 1.30) to[out=-80,in=10] (-1.50, 0.00);
 \draw (-1.50, 0.00) to (-0.75,-1.30);
 \draw (-0.75,-1.30) to ( 0.75,-1.30);
 \draw[dotted] ( 0.75,-1.30) to ( 1.50, 0.00);
% \draw (-0.75, 1.30) to[out=-100,in= -10] (-1.06, 1.06);
% \draw (-1.20, 0.90) node{$\vdots$};
% \draw (-0.75, 1.30) to[out=-90,in=10] (-1.45, 0.39);
 \draw ( 1.20,-0.60) node{$\iddots$};
 %%%%% numbers %%%%%
% \draw[red] (-0.90, 1.15) node{$\gamma^{-m}$};
% \draw[red] (-1.00, 0.95) node{$\vdots$};
% \draw[red] (-0.90, 0.75) node{$\gamma^{-1}$};
 \draw[red] (-1.00, 0.80) node{$\gamma^1$}; % \draw[red] (-0.85, 0.50) node{$\gamma^1$};
 \draw[red] ( 0.00, 1.25) node{$\gamma^2$};
 \draw[red] ( 1.00, 0.80) node{$\gamma^3$};
 \draw[red] ( 1.00,-0.60) node{$\iddots$};
 \draw[red] ( 0.00,-1.15) node{$\gamma^{n-2}$};
 \draw[red] (-0.90,-0.75) node{$\gamma^{n-1}$};
 \draw[red] (-0.95, 0.00) node{$\gamma^{0}$};
\draw (0,-1.70) node{subfigure 1};
\end{tikzpicture}
\ \ \
\begin{tikzpicture}[scale=2]
\draw (0,0) node{$
\xymatrix@C=1.5cm @R=1.5cm{  {\color{red}\gamma^1} \ar[r]^{\alpha_2}&  {\color{red}\gamma^2} \ar[r]^{\alpha_3} & \ar@{.}[d] \\
 {\color{red}\gamma^0} \ar[u]^{\alpha_1} &  {\color{red}\gamma^{n-1}} \ar[l]^{\alpha_{n}} & \ar[l]^{\alpha_{n-1}} }
$};
\draw (0,-1.70) node{subfigure 2};
\end{tikzpicture}
\caption{The subfigure 2 is the gentle algebra whose marked ribbon surface is shown in subfigure 1.}
\label{Examples 1}
\end{center}
\end{figure}

\noindent Thus $\mathrm{gl.dim}A=n$ by Theorem \ref{thm. gl.dim}.
Let $\gamma^0$ be the arc $\gamma^0: (0,1)\to \Surf_A\backslash\partial\Surf_A$
such that $\gamma^0(0^+) = p$ and $\gamma^0(1^-) = q^0$,
and let $\gamma^i$ be the arc $\gamma^i: (0,1)\to \Surf_A\backslash\partial\Surf_A$
such that $\gamma^i(0^+) = q^{i-1}$ and $\gamma^i(1^-) = q^i$
for any $1\le i\le n-1$, and let $\gamma^n$ be the arc $\gamma^0(1-x)$
(note: $\gamma^n=\gamma^0(1-x) \simeq \gamma^0(x)=\gamma^0$).
Then the arcs $\gamma^0, \gamma^1, \cdots, \gamma^{n-1}, \gamma^n$ satisfy the condition (\textbf{PD}),
and the minimal projective resolution of the simple module $M(c_{\simp}^{\gamma^0})$
can be determined by Proposition \ref{prop. pdM and idM} as shown in \Pic \ref{fig:exp-7.1},
where $c_i \simeq c_{\proj}^{\gamma^i}$; $c_{i+1}'$ is the permissible curve corresponding to
the kernel of $\delta_{i}$ ($1\le i\le n-1$) and $c_n \simeq c_n' \simeq c_{\proj}^{\gamma^n}$.
\begin{figure}[htbp]
\begin{center}
\begin{tikzpicture}[scale=2.5]
 \filldraw[black!25] (0,0) circle (0.4);
 \draw[line width=1pt] (0,0) circle (1.5);
 \draw[line width=1pt] (0,0) circle (0.4);
 %%%%%%%%%% half-edges and points %%%%%%%%%%
 \filldraw ( 1.50, 0.00) circle (0.05); \draw ( 1.50, 0.00) node[right] {$q^3$};
 \filldraw ( 0.75, 1.30) circle (0.05); \draw ( 0.75, 1.40) node[right] {$q^2$};
 \filldraw (-0.75, 1.30) circle (0.05); \draw (-0.75, 1.40) node[left]  {$q^1$};
 \filldraw (-1.50, 0.00) circle (0.05); \draw (-1.50, 0.00) node[left]  {$q^0=q^n$};
 \filldraw (-0.75,-1.30) circle (0.05); \draw (-0.75,-1.30) node[left]  {$q^{n-2}$};
 \filldraw ( 0.75,-1.30) circle (0.05); \draw ( 0.75,-1.30) node[right] {$q^{n-3}$};
 \filldraw[color=red]   ( 0.00, 1.50) circle (0.05); \filldraw[color=white] ( 0.00, 1.50) circle (0.03);
 \filldraw[color=red]   (-1.30, 0.75) circle (0.05); \filldraw[color=white] (-1.30, 0.75) circle (0.03);
 \filldraw[color=red]   (-1.30,-0.75) circle (0.05); \filldraw[color=white] (-1.30,-0.75) circle (0.03);
 \filldraw[color=red]   ( 0.00,-1.50) circle (0.05); \filldraw[color=white] ( 0.00,-1.50) circle (0.03);
 \filldraw[color=red]   ( 1.30, 0.75) circle (0.05); \filldraw[color=white] ( 1.30, 0.75) circle (0.03);
 \filldraw (-0.40, 0.00) circle (0.05); \draw (-0.40, 0.00) node[right] {$p$};
 \draw (-1.50, 0.00) to (-0.40, 0.00);
 \draw ( 1.50, 0.00) to ( 0.75, 1.30);
 \draw ( 0.75, 1.30) to (-0.75, 1.30);
 \draw (-0.75, 1.30) to (-1.50, 0.00); % \draw (-0.75, 1.30) to[out=-80,in=10] (-1.50, 0.00);
 \draw (-1.50, 0.00) to (-0.75,-1.30);
 \draw (-0.75,-1.30) to ( 0.75,-1.30);
 \draw[dotted] ( 0.75,-1.30) to ( 1.50, 0.00);
 \draw ( 1.20,-0.60) node{$\iddots$};
 %%%%% numbers %%%%%
% \draw[red] (-1.00, 0.80) node{$\gamma^1$}; % \draw[red] (-0.85, 0.50) node{$\gamma^1$};
% \draw[red] ( 0.00, 1.25) node{$\gamma^2$};
% \draw[red] ( 1.00, 0.80) node{$\gamma^3$};
% \draw[red] ( 1.00,-0.60) node{$\iddots$};
% \draw[red] ( 0.00,-1.15) node{$\gamma^{n-2}$};
% \draw[red] (-1.00,-0.85) node{$\gamma^{n-1}$};
% \draw[red] (-0.95, 0.00) node{$\gamma^{0}$};
% proj resolution
 \draw[orange] [line width=0.8pt] (-0.75,-1.30) to[out=80,in=-80] (-0.75, 1.30);
 \draw[orange]                    (-0.6,0.6) node{$c_{\simp}^{\gamma^0}$};
 \draw[ red  ] [line width=2.5pt] (-0.75,-1.30) to[out=85,in=-45] (-1.30, 0.75);
 \draw[ red  ]                    (-1.00, 0.10) node{$c_0$};
 \draw[ blue ] [line width=1.2pt] (-1.30, 0.75) -- ( 0.75, 1.30);
 \draw[ blue ]                    (-0.27, 1.02) node[below]{$c_1'$};
 \draw[ red  ] [line width=2.5pt] (-1.30, 0.75) -- ( 0.00, 1.50) [opacity=0.80];
 \draw[ red  ]                    (-0.65, 1.10) node[right]{$c_1$};
 \draw[ blue ] [line width=1.2pt] ( 0.00, 1.50) -- ( 1.50, 0.00) [opacity=0.80];
 \draw[ blue ]                    ( 0.75, 0.75) node[below]{$c_2'$};
 \draw[ red  ] [line width=2.5pt] ( 0.00, 1.50) -- ( 1.30, 0.75) [opacity=0.70];
 \draw[ red  ]                    ( 0.65, 1.10) node[left]{$c_2$};
 \draw[ blue ] [line width=1.2pt] ( 1.30, 0.75) -- ( 0.75,-1.30) [opacity=0.70][dotted];
 \draw[ red  ] [line width=2.5pt] ( 1.30, 0.75) -- ( 1.30,-0.75) [opacity=0.60][dotted];
 \draw[ blue ] [line width=1.2pt] ( 1.30,-0.75) -- (-0.75,-1.30) [opacity=0.60];
 \draw[ blue ]                    ( 0.25,-0.75) node[below]{$c_{n-3}'$};
 \draw[ red  ] [line width=2.5pt] ( 1.30,-0.75) -- ( 0.00,-1.50) [opacity=0.50];
 \draw[ red  ]                    ( 0.60,-1.20) node[left]{$c_{n-3}$};
 \draw[ blue ] [line width=1.2pt] ( 0.00,-1.50) -- (-1.50, 0.00) [opacity=0.50];
 \draw[ blue ]                    (-0.40,-0.75) node[below]{$c_{n-2}'$};
 \draw[ red  ] [line width=2.5pt] ( 0.00,-1.50) -- (-1.30,-0.75) [opacity=0.40];
 \draw[ red  ]                    (-1.20,-0.80) node[right]{$c_{n-2}$};
 \draw[ blue ] [line width=1.2pt] (-1.30,-0.75) -- (-0.40, 0.00) [opacity=0.40];
 \draw[ black] [dash pattern=on 2pt off 2pt][->](-0.85,-0.37) -- (-1.7,-0.37) node[left]{{\color{blue}$c_{n-1}'$}};
 \draw[ red  ] [line width=2.5pt] (-1.30,-0.75) -- (-1.30, 0.75) [opacity=0.30];
 \draw[ blue ] [line width=1.2pt] [opacity=0.30]
   (-1.30, 0.75) to[out= -30,in=  90] (-0.75, 0.00) to[out= -90,in= 180]
   ( 0.00,-0.75) to[out=   0,in= -90] ( 0.75, 0.00) to[out=  90,in=   0]
   ( 0.00, 0.75) to[out= 180,in= 135] (-0.40, 0.00);
 \draw[ red  ]                    (-1.10,-0.10) node{$c_{n-1}$};
 \draw[ red  ] [line width=1.2pt] [opacity=0.20]
   (-1.30, 0.75) to[out= -30,in=  90] (-0.70, 0.00) to[out= -90,in= 180]
   ( 0.00,-0.70) to[out=   0,in= -90] ( 0.70, 0.00) to[out=  90,in=   0]
   ( 0.00, 0.70) to[out= 180,in= 135] (-0.40, 0.00);
 \draw (0.725,0.00) node{{\color{red}$c_n$}$\simeq${\color{blue}$c_n'$}};
\end{tikzpicture}
\end{center}
\caption{The minimal projective resolution of $M(c_{\simp}^{\gamma^0})$. }
\label{fig:exp-7.1}
\end{figure}
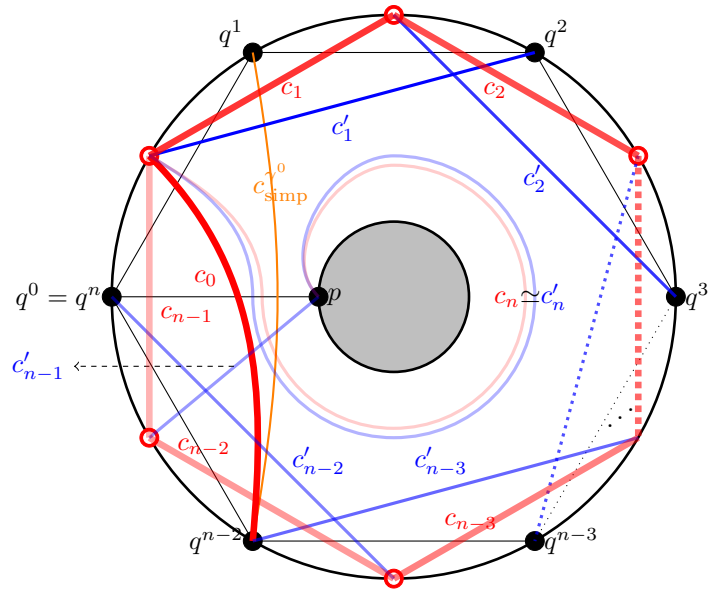
Furthermore, by formula (\ref{proj. res.}), we obtain
\[ 0 \longrightarrow M(c_{\proj}^{\gamma^n})
\mathop{\longrightarrow}\limits^{\delta_n} M(c_{\proj}^{\gamma^{n-1}})
\mathop{\longrightarrow}\limits^{\delta_{n-1}} \cdots
\mathop{\longrightarrow}\limits^{\delta_2} M(c_{\proj}^{\gamma^1})
\mathop{\longrightarrow}\limits^{\delta_1} M(c_{\proj}^{\gamma^0})
\mathop{\longrightarrow}\limits^{\delta_0} M(c_{\simp}^{\gamma^0}) \longrightarrow 0, \]
where $M(c_{\proj}^{\gamma^{n-1}}) \cong {_{\gamma^1}^{_{\gamma^0}^{\gamma^{n-1}}}}$
and $\Ker\delta\cong {_{\gamma^1}^{\gamma^0}} \cong M(c_{\proj}^{\gamma^n}). $
Thus $\pdim M(c_{\simp}^{\gamma^0}) = n$. Similarly, $\pdim M(c_{\simp}^{\gamma^i})=n-i$ for any $0\le i\le n-1$.
\end{example}

\begin{example} \rm \label{exp. 2} See \Pic \ref{Examples 2} subfigure 1, it is the marked ribbon surface of
the gentle algebra $A=kQ/I$, where $Q$ is shown subfigure 2 and
$$I= \langle a_1a_3, a_5a_4, a_8a_7, a_7a_6, a_{10}a_2, a_9a_{10}', a_{10}'a_{14}, a_{14}a_{13} \rangle.$$
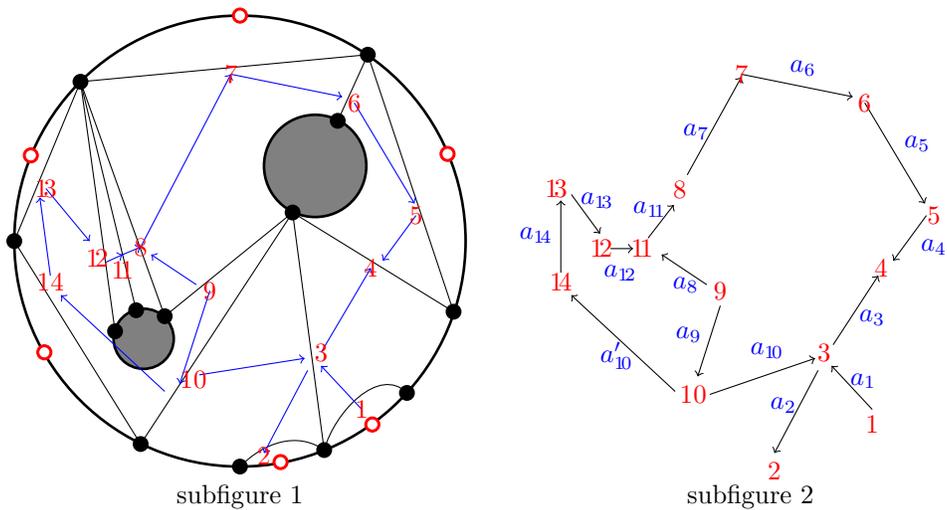
\begin{figure}[htbp]
\begin{center}
\begin{tikzpicture}[scale=2]
\draw [line width=1pt] (0,0) circle (1.5cm);
\filldraw [gray] (0.5,0.5) circle (0.34cm); \draw [line width=1pt] (0.5,0.5) circle (0.34cm);
\filldraw [gray] (-0.64,-0.65) circle (0.2cm);\draw [line width=1pt] (-0.64,-0.65) circle (0.2cm);
\draw (-1.06, 1.06)-- (-0.69,-0.46);
\draw (-1.06, 1.06)-- (-0.50,-0.50);
\draw (-1.06, 1.06)-- (-0.83,-0.60);
\draw (-0.50,-0.50)-- ( 0.35, 0.19);
\draw ( 0.35, 0.19)-- (-0.66,-1.35);
\draw (-0.66,-1.35)-- (-1.50, 0.00);
\draw (-1.06, 1.06)-- (-1.50, 0.00);
\draw (-1.06, 1.06)-- ( 0.85, 1.24);
\draw ( 0.85, 1.24)-- ( 1.42,-0.47);
\draw ( 1.42,-0.47)-- ( 0.35, 0.19);
\draw ( 0.35, 0.19)-- ( 0.56,-1.39);
\draw ( 0.56,-1.39) to[out=145, in=45] (0,-1.5);
\draw ( 0.56,-1.39) to[out=75, in=135] (1.11,-1.01);
\draw ( 0.85, 1.24)-- ( 0.65, 0.80);
\fill [color=black] (-1.06,1.06) circle (1.5pt);
\fill [color=black] (-0.69,-0.46) circle (1.5pt);
\fill [color=black] (-0.5,-0.5) circle (1.5pt);
\fill [color=black] (-0.83,-0.6) circle (1.5pt);
\fill [color=black] (0.35,0.19) circle (1.5pt);
\fill [color=black] (-0.66,-1.35) circle (1.5pt);
\fill [color=black] (-1.5,0) circle (1.5pt);
\fill [color=black] (0.85,1.24) circle (1.5pt);
\fill [color=black] (1.42,-0.47) circle (1.5pt);
\fill [color=black] (0.56,-1.39) circle (1.5pt);
\fill [color=black] (0,-1.5) circle (1.5pt);
\fill [color=black] (1.11,-1.01) circle (1.5pt);
\fill [color=black] (0.65,0.8) circle (1.5pt);
% extra
\filldraw[color=red]   ( 0.00, 1.50) circle (0.05); \filldraw[color=white] ( 0.00, 1.50) circle (0.03);
\filldraw[color=red]   ( 1.38, 0.58) circle (0.05); \filldraw[color=white] ( 1.38, 0.58) circle (0.03);
\filldraw[color=red]   (-1.39, 0.57) circle (0.05); \filldraw[color=white] (-1.39, 0.57) circle (0.03);
\filldraw[color=red]   (-1.30,-0.74) circle (0.05); \filldraw[color=white] (-1.30,-0.74) circle (0.03);
\filldraw[color=red]   ( 0.27,-1.47) circle (0.05); \filldraw[color=white] ( 0.27,-1.47) circle (0.03);
\filldraw[color=red]   ( 0.88,-1.22) circle (0.05); \filldraw[color=white] ( 0.88,-1.22) circle (0.03);
% arcs
\draw [->, blue] (-1.29, 0.35) -- (-1.00, 0.00);
\draw [->, blue] (-0.89,-0.14) -- (-0.78,-0.09);
\draw [->, blue] (-0.78,-0.09) -- (-0.66,-0.04);
\draw [->, blue] (-0.66,-0.04) -- (-0.06, 1.11);
\draw [->, blue] (-0.06, 1.11) -- ( 0.68, 0.96);
\draw [->, blue] ( 0.76, 0.92) -- ( 1.16, 0.25);
\draw [->, blue] ( 1.17, 0.17) -- ( 0.95,-0.13);
\draw [->, blue] ( 0.54,-0.74) -- ( 0.87,-0.18);
\draw [->, blue] ( 0.81,-1.12) -- ( 0.54,-0.83);
\draw [->, blue] ( 0.45,-0.86) -- ( 0.16,-1.41);
\draw [->, blue] (-0.27,-0.89) -- ( 0.43,-0.78);
\draw [->, blue] (-0.20,-0.33) -- (-0.40,-0.95);
\draw [->, blue] (-0.29,-0.29) -- (-0.59,-0.09);
\draw [->, blue] (-1.26,-0.23) -- (-1.33, 0.29);
\draw [->, blue] (-0.50,-1.00) -- (-1.19,-0.36);
\draw[red] (-1.29, 0.35) node{$1\!3$};
\draw[red] (-0.95,-0.11) node{$1\!2$};
\draw[red] (-0.78,-0.19) node{$1\!1$};
\draw[red] (-0.66,-0.04) node{$8$};
\draw[red] (-0.06, 1.11) node{$7$};
\draw[red] ( 0.76, 0.92) node{$6$};
\draw[red] ( 1.17, 0.17) node{$5$};
\draw[red] ( 0.54,-0.74) node{$3$};
\draw[red] ( 0.87,-0.18) node{$4$};
\draw[red] ( 0.81,-1.12) node{$1$};
\draw[red] ( 0.16,-1.43) node{$2$};
\draw[red] (-0.31,-0.92) node{$10$};
\draw[red] (-0.20,-0.33) node{$9$};
\draw[red] (-1.26,-0.27) node{$14$};
\draw (0,-1.70) node{subfigure 1};
\end{tikzpicture}
\ \ \ \
\begin{tikzpicture}[scale=2]
%quiver
\draw [->] (-1.19, 0.31) -- (-1.00, 0.05);  \draw[red] (-1.29, 0.35) node{$1\!3$};
\draw [->] (-0.93,-0.05) -- (-0.78,-0.05);  \draw[red] (-0.99,-0.05) node{$1\!2$};
\draw [->] (-0.68, 0.02) -- (-0.51, 0.24);  \draw[red] (-0.72,-0.05) node{$1\!1$};
\draw [->] (-0.42, 0.44) -- (-0.06, 1.09);  \draw[red] (-0.47, 0.34) node{$8$};
\draw [->] (-0.06, 1.11) -- ( 0.68, 0.96);  \draw[red] (-0.06, 1.11) node{$7$};
\draw [->] ( 0.76, 0.92) -- ( 1.16, 0.25);  \draw[red] ( 0.76, 0.92) node{$6$};
\draw [->] ( 1.17, 0.17) -- ( 0.95,-0.13);  \draw[red] ( 1.22, 0.17) node{$5$};
\draw [->] ( 0.55,-0.69) -- ( 0.85,-0.23);  \draw[red] ( 0.49,-0.74) node{$3$};
                                            \draw[red] ( 0.87,-0.18) node{$4$};
\draw [->] ( 0.81,-1.12) -- ( 0.54,-0.83);  \draw[red] ( 0.81,-1.22) node{$1$};
\draw [->] ( 0.45,-0.86) -- ( 0.16,-1.41);  \draw[red] ( 0.16,-1.53) node{$2$};
\draw [->] (-0.27,-1.02) -- ( 0.43,-0.78);  \draw[red] (-0.38,-1.02) node{$10$};
\draw [->] (-0.20,-0.43) -- (-0.35,-0.90);  \draw[red] (-0.20,-0.33) node{$9$};
\draw [->] (-0.29,-0.29) -- (-0.59,-0.09);
\draw [->] (-1.26,-0.21) -- (-1.26, 0.27);  \draw[red] (-1.26,-0.27) node{$1\!4$};
\draw [->] (-0.50,-1.00) -- (-1.19,-0.36);
%arrows
%\draw[blue] ( *0.5 *0.5, *0.5 *0.5) node{$ $};
\draw[blue] ( 0.75,-0.93) node{$a_1$};
\draw[blue] ( 0.22,-1.09) node{$a_2$};
\draw[blue] ( 0.81,-0.51) node{$a_3$};
\draw[blue] ( 1.22,-0.04) node{$a_4$};
\draw[blue] ( 1.11, 0.65) node{$a_5$};
\draw[blue] ( 0.35, 1.15) node{$a_6$};
\draw[blue] (-0.36, 0.73) node{$a_7$};
\draw[blue] (-0.43,-0.29) node{$a_8$};
\draw[blue] (-0.41,-0.62) node{$a_9$};
\draw[blue] ( 0.11,-0.73) node{$a_{1\!0}$};
\draw[blue] (-0.67, 0.21) node{$a_{1\!1}$};
\draw[blue] (-0.95*0.5-0.78*0.5, -0.11*0.5-0.31*0.5) node{$a_{1\!2}$};
\draw[blue] (-1.02, 0.27) node{$a_{1\!3}$};
\draw[blue] (-1.46*0.5-1.39*0.5, -0.27*0.5+0.35*0.5) node{$a_{1\!4}$};
\draw[blue] (-0.89,-0.76) node{$a_{1\!0}'$};
\draw (0,-1.70) node{subfigure 2};
\end{tikzpicture}
\caption{The subfigure 2 is the gentle algebra whose marked ribbon surface is shown in subfigure 1.}
\label{Examples 2}
\end{center}
\end{figure}

\noindent The elementary polygon of the marked ribbon surface gives the projective dimension of each simple module.
Such as the arc $S(8)$, by Proposition \ref{prop. pdM and idM}, each term in
the minimal projective resolution of $S(8)$ corresponds to an arc of the elementary polygon $\Delta_j$
whose arc set $\mathfrak{S}(\Delta_j) = \{6,7,8,9\}$.
Then the minimal projective resolution of $S(8)$ is
\[ 0 \longrightarrow M(c_{\proj}^{6}) \longrightarrow M(c_{\proj}^{7}) \longrightarrow
M(c_{\proj}^{8}) \longrightarrow S(8)=M(c_{\simp}^{8}) \to 0,\]
thus $\pdim S(8) = 2$. Similarly, we get
\begin{align}
 & \pdim S(1)=2, &&\pdim S(2)=0,  && \pdim S(4)=0,  && \pdim S(5)=1,  && \pdim S(6)=1, \nonumber\\
 & \pdim S(7)=1, &&\pdim S(11)=1, && \pdim S(12)=1, && \pdim S(13)=1, && \pdim S(14)= 2.  \nonumber
\end{align}
The simple module $S(9)=M(c_{\simp}^9)$ corresponds to the arc $9$
which satisfies the {\rm P}-condition, its minimal projective resolution is
\begin{align}
0 & \longrightarrow M(c_{\proj}^{12}) \longrightarrow M(c_{\proj}^{6}) \oplus M(c_{\proj}^{14})
\longrightarrow M(c_{\proj}^{7}) \oplus M(c_{\proj}^{14}) \nonumber \\
& \longrightarrow M(c_{\proj}^{8}) \oplus M(c_{\proj}^{10})
\longrightarrow M(c_{\proj}^{9}) \longrightarrow S(9)=M(c_{\simp}^{9}) \longrightarrow 0, \nonumber
\end{align}
thus $\pdim S(9) = 4$. Similarly, we get
$$ \pdim S(3)=1, \ \pdim S(10)=3. $$
Consequently we conclude that $\mathrm{gl.dim}A = 4$. It should be pointed out that
we can get $\mathrm{gl.dim}A = 4$ by Theorem \ref{thm. gl.dim} directly.
\end{example}

\begin{example}\label{exam-7.3} \rm
Let $A$ be a gentle algebra whose marked ribbon surface $\Surf_A$ is given in Example \ref{Ex. GProj.}.
Then $\mathrm{gl.dim}A = \infty$ by Theorem \ref{thm. Gdim A} because $\Surf_A$ has an $\infty$-elementary polygon
with the arc set $\{1, 2, 3\}$.
Indeed, the minimal projective resolution of $S(1)$ is
$$\cdots \to  P(3) \to  P(2) \to P(1) \to  P(3) \to  P(2) \to P(1)\to S(1)\to 0 $$
by Proposition \ref{prop. pdM and idM}, and so $\pdim S(1)=\infty$.
\end{example}

\subsection{Some examples for calculating self-injective dimension of gentle algebras}
In this subsection, we provide some examples to calculate projective (respectively, injective)
dimension of indecomposable injective (respectively, projective) modules,
and the self-injective dimension of gentle algebras.

\begin{example} \rm \label{exp. 3}
Consider the gentle algebra $A$ given in Example \ref{exp. 2}. Then $\gldim A=4$.
Since $I(8)={{}^{{}^{^{13}_{12}}_{11}}}{_{8}}{^{9}}=M(c_{\inj}^8)$ is an indecomposable injective module
with the top $M(c_{\simp}^{13}) \oplus M(c_{\simp}^{9}) \cong S(13)\oplus S(9)$, its projective cover is
$M(c_{\proj}^{13}\oplus c_{\proj}^9)$.
Consider the elementary polygons $\Delta_{9}$ with sides $\{9,10,14,13,12\}$ and $\Delta_{12}$ with side $\{12\}$.
We have $\mathfrak{C}(\Delta_{9}) = 5$ and $\mathfrak{C}(\Delta_{12}) = 1$, thus $\pdim I(8) = \max\{5, 1\} -1 = 4$.
Furthermore, by the proof of Proposition \ref{prop. pdim. of inj. mod.}, the minimal projective resolution of $I(8)$ is
as follows:
$$0 \rightarrow M(c_{\proj}^{12}) \longrightarrow M(c_{\proj}^{13}) \longrightarrow M(c_{\proj}^{14}) \nonumber
\longrightarrow M(c_{\proj}^{8})\oplus M(c_{\proj}^{10}) \longrightarrow M(c_{\proj}^{13})\oplus M(c_{\proj}^{9})
\longrightarrow I(8) \rightarrow 0. \nonumber$$
Thus we have $\idim_AA = \idim A_A = \pdim D(A) = 4 = \gldim A$.
%\item[(2)] For the projective module $P(9)$, we have $\idim P(9)=2$. Indeed, its minimal injective resolution is as follows:
%5\[ 0 \longrightarrow P(9) \longrightarrow M(c_{\inj}^{8})\oplus M(c_{\inj}^{4})
%\longrightarrow M(c_{\inj}^{11})\oplus M(c_{\inj}^9)\oplus M(c_{\inj}^{5})\longrightarrow M(c_{\inj}^{6}) \longrightarrow 0. \]
%\item[(3)] We have $\idim_AA = \idim A_A = \pdim D(A) = 4 = \gldim A$.
%Indeed, by Proposition \ref{prop. GProj}, we have that
%$P$ is projective if and only if $P$ is Gorenstein projective in this example,
%because $A$ has no oriented cycle with full relations.
%\end{enumerate}
\end{example}

In Example \ref{exp. 3}, for the gentle algebra $A$, its global and self-injective dimensions are identical.
Now we give two examples of gentle algebras with infinite global dimension.

\begin{example} \rm \label{exp. 4}
\begin{enumerate}
\item[]
\item[(1)] Let $A$ be the gentle algebra given in Example \ref{Ex. GProj.}.
Then $\mathrm{gl.dim}A = \infty$ by Example \ref{exam-7.3}.
Since $\max\limits_{\Delta_j\in\finiteDelta}\mathfrak{C} (\Delta_j)=2$,
we have $\idim_AA = \idim A_A = \max\limits_{\Delta_i\in\finiteDelta}\{1, \mathfrak{C} (\Delta_i)-1\} = 1$ by Theorem \ref{thm. Gdim A}.

\item[(2)] Let $A$ be the gentle algebra whose marked ribbon surface is shown in the \Pic \ref{Ex. Gdim proj} I.
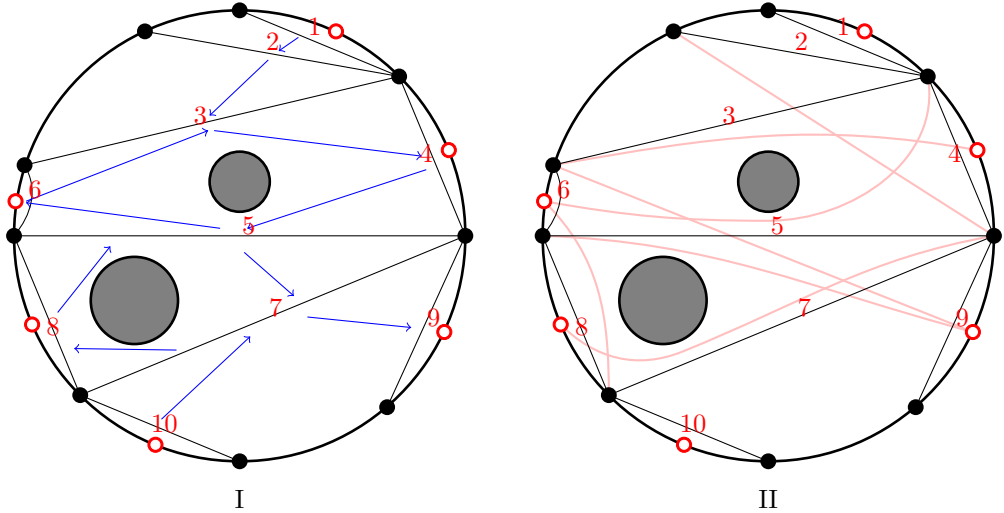
\begin{figure}[htbp]
\begin{center}
\begin{tikzpicture}[scale=2]
% boundaries
\draw [line width=1pt] (0,0) circle (1.5cm);
\filldraw[gray] (0,0.36) circle (0.2cm);
\draw [line width=1pt] (0,0.36) circle (0.2cm);
\filldraw[gray] (-0.7,-0.43) circle (0.29cm);
\draw [line width=1pt] (-0.7,-0.43) circle (0.29cm);
% arcs
\draw (0.98,-1.14)-- (1.5,0);
\draw (1.5,0)-- (1.06,1.06);
\draw (1.06,1.06)-- (0,1.5);
\draw (0,-1.5)-- (-1.06,-1.06);
\draw (-1.06,-1.06)-- (1.5,0);
\draw (-1.06,-1.06)-- (-1.5,0);
\draw (-1.5,0)-- (1.5,0);
\draw (1.06,1.06)-- (-1.43,0.47);
\draw (1.06,1.06)-- (-0.63,1.36);
\draw (-1.43,0.47) to[out=-60,in=50] (-1.5,0);
\draw [->] [blue] (0.39,1.32) -- (0.26,1.23);
\draw [->] [blue] (0.19,1.17) -- (-0.2,0.8);
\draw [->] [blue] (-0.17,0.7) -- (1.21,0.53);
\draw [->] [blue] (1.24,0.44) -- (0.05,0.05);
\draw [->] [blue] (-1.21,-0.51) -- (-0.86,-0.07);
\draw [->] [blue] (-0.42,-0.76) -- (-1.1,-0.75);
\draw [->] [blue] (0.03,-0.11) -- (0.36,-0.4);
\draw [->] [blue] (0.45,-0.54) -- (1.14,-0.61);
\draw [->] [blue] (-0.52,-1.22) -- (0.07,-0.67);
\draw [->] [blue] (-0.13,0.05) -- (-1.42,0.22);
\draw [->] [blue] (-1.42,0.23) -- (-0.21,0.70);
\fill [color=black] (-1.06,-1.06) circle (1.5pt);
\fill [color=black] (0,-1.5) circle (1.5pt);
\fill [color=black] (0.98,-1.14) circle (1.5pt);
\fill [color=black] (1.5,0) circle (1.5pt);
\fill [color=black] (1.06,1.06) circle (1.5pt);
\fill [color=black] (0,1.5) circle (1.5pt);
\fill [color=black] (-1.5,0) circle (1.5pt);
\fill [color=black] (-1.43,0.47) circle (1.5pt);
\fill [color=black] (-0.63,1.36) circle (1.5pt);
\draw[red] (0.5,1.39) node {$1$};
\draw[red] (0.22,1.29) node {$2$};
\draw[red] (-0.26,0.8) node {$3$};
\draw[red] (1.24,0.55) node {$4$};
\draw[red] (0.06,0.07) node {$5$};
\draw[red] (0.24,-0.48) node {$7$};
\draw[red] (-1.36,0.31) node {$6$};
\draw[red] (-1.24,-0.6) node {$8$};
\draw[red] (1.29,-0.54) node {$9$};
\draw[red] (-0.5,-1.25) node {$10$};
% extra points
\filldraw[color=red] ( 0.64, 1.36) circle (0.05); \filldraw[color=white] ( 0.64, 1.36) circle (0.03);
\filldraw[color=red] ( 1.39, 0.57) circle (0.05); \filldraw[color=white] ( 1.39, 0.57) circle (0.03);
\filldraw[color=red] ( 1.36,-0.64) circle (0.05); \filldraw[color=white] ( 1.36,-0.64) circle (0.03);
\filldraw[color=red] (-0.56,-1.39) circle (0.05); \filldraw[color=white] (-0.56,-1.39) circle (0.03);
\filldraw[color=red] (-1.38,-0.59) circle (0.05); \filldraw[color=white] (-1.38,-0.59) circle (0.03);
\filldraw[color=red] (-1.49, 0.23) circle (0.05); \filldraw[color=white] (-1.49, 0.23) circle (0.03);
\draw (0,-1.75) node {\text{I}};
\end{tikzpicture}
\ \ \ \ \ \
\begin{tikzpicture}[scale=2]
% Gorenstein projective modules
\draw [line width=0.8pt,pink] ( 1.50, 0.00) to (-0.63, 1.36);
\draw [line width=0.8pt,pink] (-1.43, 0.47) to[out=10, in=170] ( 1.39, 0.57);
\draw [line width=0.8pt,pink] (-1.43, 0.47) to ( 1.36,-0.64);
\draw [line width=0.8pt,pink] (-1.49, 0.23) to[out=-10, in= 180] (0,0.1) to[out=0,in=-80] ( 1.06, 1.06);
\draw [line width=0.8pt,pink] (-1.06,-1.06) to[out= 90, in=-45 ] (-1.49, 0.23);
\draw [line width=0.8pt,pink] (-1.50, 0.00) to[out= -2, in= 165] ( 1.36,-0.64);
\draw [line width=0.8pt,pink] (-1.38,-0.59) to[out=-45, in=-158] (-0.57,-0.77) to [out=22,in=-170] ( 1.50, 0.00);
% boundaries
\draw [line width=1pt] (0,0) circle (1.5cm);
\filldraw[gray] (-0.7,-0.43) circle (0.29cm);
\draw [line width=1pt] (-0.7,-0.43) circle (0.29cm);
\filldraw[gray] (0,0.36) circle (0.2cm);
\draw [line width=1pt] (0,0.36) circle (0.2cm);
%arcs
\draw (0.98,-1.14)-- (1.5,0);
\draw (1.5,0)-- (1.06,1.06);
\draw (1.06,1.06)-- (0,1.5);
\draw (0,-1.5)-- (-1.06,-1.06);
\draw (-1.06,-1.06)-- (1.5,0);
\draw (-1.06,-1.06)-- (-1.5,0);
\draw (-1.5,0)-- (1.5,0);
\draw (1.06,1.06)-- (-1.43,0.47);
\draw (1.06,1.06)-- (-0.63,1.36);
\draw (-1.43,0.47) to[out=-60,in=50] (-1.5,0);
\draw[red] (0.5,1.39) node {$1$};
\draw[red] (0.22,1.29) node {$2$};
\draw[red] (-0.26,0.8) node {$3$};
\draw[red] (1.24,0.55) node {$4$};
\draw[red] (0.06,0.07) node {$5$};
\draw[red] (0.24,-0.48) node {$7$};
\draw[red] (-1.36,0.31) node {$6$};
\draw[red] (-1.24,-0.6) node {$8$};
\draw[red] (1.29,-0.54) node {$9$};
\draw[red] (-0.5,-1.25) node {$10$};
% marked points
\fill [color=black] (-1.06,-1.06) circle (1.5pt);
\fill [color=black] (0,-1.5) circle (1.5pt);
\fill [color=black] (0.98,-1.14) circle (1.5pt);
\fill [color=black] (1.5,0) circle (1.5pt);
\fill [color=black] (1.06,1.06) circle (1.5pt);
\fill [color=black] (0,1.5) circle (1.5pt);
\fill [color=black] (-1.5,0) circle (1.5pt);
\fill [color=black] (-1.43,0.47) circle (1.5pt);
\fill [color=black] (-0.63,1.36) circle (1.5pt);
% extra points
\filldraw[color=red] ( 0.64, 1.36) circle (0.05); \filldraw[color=white] ( 0.64, 1.36) circle (0.03);
\filldraw[color=red] ( 1.39, 0.57) circle (0.05); \filldraw[color=white] ( 1.39, 0.57) circle (0.03);
\filldraw[color=red] ( 1.36,-0.64) circle (0.05); \filldraw[color=white] ( 1.36,-0.64) circle (0.03);
\filldraw[color=red] (-0.56,-1.39) circle (0.05); \filldraw[color=white] (-0.56,-1.39) circle (0.03);
\filldraw[color=red] (-1.38,-0.59) circle (0.05); \filldraw[color=white] (-1.38,-0.59) circle (0.03);
\filldraw[color=red] (-1.49, 0.23) circle (0.05); \filldraw[color=white] (-1.49, 0.23) circle (0.03);
\draw (0,-1.75) node {\text{II}};
\end{tikzpicture}
\caption{The marked ribbon surface of gentle algebra given in Example \ref{exp. 4} (subfigure I) and
all permissible curves corresponding to indecomposable non-projective Gorenstein
projective modules (subfigure II). }
\label{Ex. Gdim proj}
\end{center}
\end{figure}
Then $\gldim A=\infty$ by Theorem \ref{thm. gl.dim} (because the elementary polygon with sides $\{3,4,5,6\}$
and the elementary polygon with sides $\{5,7,8\}$ are $\infty$-elementary), and $\idim_AA = \idim A_A=2$ by Theorem \ref{thm. Gdim A}.
Furthermore, since its AG-invariant $\phi_A = [(9,4), (0,4), (0,3)]$, we have
$$\sharp \ind(\nonprojGP(A)) = 4\phi_A(0,4) + 3\phi_A(0,3) = 4\cdot 1+3\cdot 1 = 7$$
by Proposition \ref{prop. CM-corresponding}.
Indeed, we have $\ind(\nonprojGP(A)) = \{3,\ 4,\ {^{_5}_{^7_9}},\ {^7_9},\ 8,\ {^5_6},\ 6\}$ by Proposition \ref{prop. GProj}.
\end{enumerate}
\end{example}

\section*{\textbf{Acknowledgement}}
The authors thank the referees for very useful and detailed suggestions.

\end{document}